\newcommand*{\pmzerodot}{%
  \nfss@text{%
    \sbox0{$\vcenter{}$}% math axis
    \sbox2{0}%
    \sbox4{0\/}%
    \ooalign{%
      0\cr
      \hidewidth
      \kern\dimexpr\wd4-\wd2\relax % compensate for slanted fonts
      \raise\dimexpr(\ht2-\dp2)/2-\ht0\relax\hbox{%
        \if b\expandafter\@car\f@series\@nil\relax
          \mathversion{bold}%
        \fi
        $\cdot\m@th$%
      }%
      \hidewidth
      \cr
      \vphantom{0}% correct depth of final symbol
    }%
  }%
}
\newcommand*{\pmzeroslash}{%
  \nfss@text{%
    \sbox0{0}%
    \sbox2{/}%
    \sbox4{%
      \raise\dimexpr((\ht0-\dp0)-(\ht2-\dp2))/2\relax\copy2 %
    }%
    \ooalign{%
      \hfill\copy4 \hfill\cr
      \hfill0\hfill\cr
    }%
    \vphantom{0\copy4 }% correct overall height and depth of the symbol
  }%
}
\DeclareMathOperator{\im}{\mathbf{Im}}
\newcommand{\C}{\mathcal C}
\newcommand{\CC}{\mathbf C}
\newcommand{\D}{\mathcal D}
\newcommand{\DD}{\mathbf D}
\newcommand{\Lax}{\mathbf{Lax}}
\newcommand{\oplax}{\mathbf{OpLax}}
\newcommand{\poplax}{\mathbf{PsOpLax}}
\newcommand{\plax}{\mathbf{PsLax}}
\newcommand{\Set}{\mathbf{Set}}
\newcommand{\G}{\mathbf{G}}
\newcommand{\CCat}[1]{#1 \operatorname{- \, \mathbf{CubCat}}}
\newcommand{\Cat}[1]{#1 \operatorname{- \, \mathbf{Cat}}}
\newcommand{\cube}[1]{#1 \operatorname{- \, \blacksquare}}
\newcommand{\Rect}[1]{#1 \operatorname{-\,\mathbf{Rect}}}
\DeclareMathOperator{\s}{s}
\renewcommand{\t}{\operatorname{t}}
\renewcommand{\d}{\operatorname{d}}
\renewcommand{\l}{\ell}
\newcommand{\tr}{\operatorname{tr}}
\newcommand{\op}{\operatorname{op}}
\newcommand{\e}{\operatorname{e}}
\newcommand{\id}{\operatorname{id}}
\newcommand{\ADC}{\mathbf{ADC}}
\tikzset{>=Implies}
\newlength{\myline}% line thickness
\newcommandx*{\doublearrow}[2]{% #1 = draw options (must contain arrow type), #2 = path
  \draw[line width=rule_thickness,double equal sign distance,#1] #2;
}
\newcommandx*{\triplearrow}[4][1=0, 2=1]{% #1 = shorten left (optional), #2 = shorten right (optionsl),
% #3 = draw options (must contain arrow type), #4 = path
  \draw[line width=\myline,double distance=5\myline,#3] #4;
  \draw[line width=\myline,shorten <=#1\myline,shorten >=#2\myline,#3] #4;
}
\newcommandx*{\quadarrow}[4][1=0, 2=2.5]{% #1 = shorten left (optional), #2 = shorten right (optionsl),
% #3 = draw options (must contain arrow type), #4 = path
  \draw[line width=\myline,double distance=8\myline,#3] #4;
  \draw[line width=\myline,double distance=2\myline,shorten <=#1\myline,shorten >=#2\myline,#3] #4;
}
\newcommand{\directions}[2]{
\begin{tikzcd}[cramped, sep = small, ampersand replacement=\&]%
\ar[d, to path = {-- (\tikztostart.center) -- (\tikztotarget)}] \ar[r] \& \scriptstyle{#1} \\ %
\scriptstyle{#2} \& \\ %
\end{tikzcd}}
\def\bbox{\path[use as bounding box] (0,0) rectangle(2ex,2ex);}
\newcommand{\nocorner}{%
\begin{tikzpicture}%
\bbox
\end{tikzpicture}%
}
\newcommand{\cornerur}{%
\begin{tikzpicture}%
\bbox
\draw[line width=.4mm] (1ex,2ex) -- (1ex,1ex) -- (2ex,1ex);%
\end{tikzpicture}%
}
\newcommand{\cornerul}{%
\begin{tikzpicture}%
\bbox
\draw[line width=.4mm] (1ex,2ex) -- (1ex,1ex) -- (0,1ex);%
\end{tikzpicture}%
}
\newcommand{\cornerdr}{%
\begin{tikzpicture}%
\bbox
\draw[line width=.4mm] (2ex,1ex) -- (1ex,1ex) -- (1ex,0);%
\end{tikzpicture}%
}
\newcommand{\cornerdl}{%
\begin{tikzpicture}%
\bbox
\draw[line width=.4mm] (0,1ex) -- (1ex,1ex) -- (1ex,0);%
\end{tikzpicture}%
}
\newcommand{\horiz}{%
\begin{tikzpicture}%
\bbox
\draw[line width=.4mm] (0,1ex) -- (2ex,1ex);%
\end{tikzpicture}%
}
\newcommand{\vertic}{%
\begin{tikzpicture}%
\bbox
\draw[line width=.4mm] (1ex,0) -- (1ex,2ex);%
\end{tikzpicture}%
}
\newcommand{\tikzcircle}{\tikz\draw[black, fill= black, radius=0.7ex] (0,0) circle ;}%
\newtheorem*{rep@theorem}{\rep@title}
\newcommand{\newreptheorem}[2]{%
\newenvironment{rep#1}[1]{%
 \def\rep@title{#2 \ref{##1}}%
 \begin{rep@theorem}}%
 {\end{rep@theorem}}}
\numberwithin{equation}{section}
\author{Maxime LUCAS\footnote{Univ Paris Diderot, Sorbonne Paris Cit\'e, PPS, UMR 7126 CNRS, PiR2, INRIA Paris-Rocquencourt, F-75205 Paris, France - \texttt{maxime.lucas@pps.univ-paris-diderot.fr}}}
\title{Cubical $(\omega,p)$-categories}
\begin{document}

\maketitle

\begin{abstract} 
In this article we introduce the notion of cubical $(\omega,p)$-categories, for $p \in \mathbb N \cup \{\omega\}$. We show that the equivalence between globular and groupoid $\omega$-categories proven by Al-Agl, Brown and Steiner induces an equivalence between globular and cubical $(\omega,p)$-categories for all $p \geq 0$. In particular we recover in a more explicit fashion the equivalence between globular and cubical groupoids proven by Brown and Higgins. 

We also define the notion of $(\omega,p)$-augmented directed complexes, and show that Steiner's adjunction between augmented directed complexes and globular $\omega$-categories induces adjunctions between $(\omega,p)$-augmented directed complexes and both globular and cubical $(\omega,p)$-categories.

Combinatorially, the difficulty lies in defining the appropriate notion of invertibility for a cell in a cubical $\omega$-category. We investigate three such possible definitions and the relationships between them. We show that cubical $(\omega,1)$-categories have a natural structure of symmetric cubical categories. We give an explicit description of the notions of lax, oplax and pseudo transfors between cubical categories, the latter making use of the notion of invertible cell defined previously.
\end{abstract}

\tableofcontents

\newpage

\theoremstyle{plain}
\newtheorem{thm}{Theorem}[subsection]
\newtheorem{prop}[thm]{Proposition}
\newtheorem{lem}[thm]{Lemma}
\newtheorem{cor}[thm]{Corollary}
\newtheorem{conj}[thm]{Conjecture}
\newtheorem{quest}[thm]{Question}
\newreptheorem{cor}{Corollary}
\newreptheorem{thm}{Theorem}
\newreptheorem{prop}{Proposition}

\theoremstyle{definition}
\newtheorem{defn}[thm]{Definition}
\newtheorem{ex}[thm]{Example}
\newtheorem{remq}[thm]{Remark}
\newtheorem{nota}[thm]{Notation}

\deftwocell[circle, white]{partialM: 0 -> 1}
\deftwocell[circle, black]{partialP: 0 -> 1}
\deftwocell[circle, gray]{partial: 0 -> 1}
\deftwocell[circle, gray]{epsilon : 1 -> 0}
\deftwocell[polygon, white]{gammaM: 2 -> 1}
\deftwocell[polygon, black]{gammaP : 2 -> 1}
\deftwocell[polygon, gray]{gamma : 2 -> 1}
\deftwocell[crossing2]{braid3 : 3 -> 3}
\deftwocell[crossing]{braid2 : 2 -> 2}
\deftwocell[dots]{dots : 2 -> 2}

\deftwocell[text = \sigma]{sigma2 : 2 -> 2}
\deftwocell[text = \partial_i \sigma]{sigma4 : 4 -> 4}
\deftwocell[text = \partial_{i\cdot \sigma^-} \sigma]{sigma4minus : 4 -> 4}
\deftwocell[text = \partial_{i\cdot \sigma^-} \sigma]{sigma5minus : 5 -> 5}
\deftwocell[text = \sigma]{sigma5 : 5 -> 5}
\deftwocell[text = \sigma]{sigma6 : 6 -> 6}
\deftwocell[text = \tau]{tau2 : 2 -> 2}
\deftwocell[text = \partial_{i \cdot \sigma} \tau]{tau4 : 4 -> 4}
\deftwocell[text = \tau]{tau5 : 5 -> 5}
\deftwocell[mid = i]{i0 : 1 -> 1}
\deftwocell[mid = j]{j0 : 1 -> 1}
\deftwocell[mid = j-1]{j1 : 1 -> 1}
\deftwocell[mid = j_i]{ji : 1 -> 1}
\deftwocell[mid = i_j]{ij : 1 -> 1}
\deftwocell[mid = i \cdot \sigma^-]{isigmaminus : 1 -> 1}
\deftwocell[mid = i \cdot \sigma]{isigma : 1 -> 1}
\deftwocell[mid = i \cdot \sigma \cdot \tau]{ist : 1 -> 1}
\deftwocell[dots]{dots : 2 -> 2}

\section{Introduction}

\subsection{Cubical categories and their relationship with other structures}

\subsubsection*{An overview of cubical objects}

Handling higher structures such as higher categories usually involves conceiving them as conglomerates of cells of a certain shape. Such shapes include simplices, globes or cubes. Simplicial sets have been successfully applied to a wide variety of subjects. For example, they occur in May's work on the recognition principle for iterated loop spaces \cite{M64}, in Quillen's approach to rational homotopy theory \cite{Q69}, and in Bousfield
and Kan's work on completions, localisation, and limits in homotopy theory \cite{B72}.

Cubical objects however, have had a less successful history until recent years. Although cubical sets were used in early works by Serre \cite{S51} and Kan \cite{K55}, it became quickly apparent that they suffered from a few shortcomings. For instance, cubical groups were not automatically fibrant, and the cartesian product in the category of cubical sets failed to have the correct homotopy type. As a result, cubical sets mostly fell out of fashion in favour of simplicial sets. However later work on double groupoids, by Brown and Higgins, felt the need to add a new type of degeneracies on cubical sets called connections \cite{BS76} \cite{BH81_1}. By using these connections, a number of shortcomings of cubical objects were overcome. In particular the category of cubes with connections is a strict test category \cite{C06}\cite{M09}, and group objects in the category of cubical sets with connections are Kan \cite{T92}. Cubical objects with connections were particularly instrumental to the proof of a higher dimensional Van-Kampen Theorem by Brown and Higgins \cite{BH11}. 
Other applications of cubical structures arise in concurrency theory \cite{G00} \cite{G01} \cite{Ge03}, type theory \cite{BCH14}, algebraic topology \cite{G07}. Of interest is also the natural expression of the Gray-Crans tensor product of $\omega$-categories \cite{C95} in the cubical setting \cite{AS93} \cite{ABS02}.

\subsubsection*{Relationship with other structures}

A number of theorems relating objects of different shapes exist. For instance, Dold-Kan's correspondence states that in the category of abelian groups, simplicial objects, cubical sets with connections and strict $\omega$-groupoids (globular or cubical with connections) are all equivalent to chain complexes \cite{K58} \cite{B03}. 

Outside the category of abelian groups, the relationships between these notions become less straightforward. We are mainly concerned with the two following results:
\begin{itemize}
\item The first result is the equivalence between cubical and globular $\omega$-groupoids \cite{BH77} \cite{BH81_1} proven in 1981 by Brown and Higgins. Although this equivalence is useful in theory, in practice it is complicated to make explicit the functors composing this equivalence. This is due to the fact that the proof uses the notion of crossed complexes as a common ground between globular and cubical $\omega$-categories.
\item The second result is the equivalence between globular and cubical $\omega$-categories proved in 2002 \cite{ABS02}.
\end{itemize}

Lastly in 2004, Steiner \cite{S04} introduced the notion of augmented directed complexes (a variant of the notion of chain complexes) and proved the existence of an adjunction between augmented directed complexes and globular $\omega$-categories.

\subsubsection*{The case of \texorpdfstring{$(\omega,p)$}{(omega,p)}-categories}
Globular $(\omega,p)$-categories are globular $\omega$-categories where cells of dimension at least $p+1$ are invertible. They form a natural intermediate between globular $\omega$-categories, which correspond to the case $p = \omega$, and globular $\omega$-groupoids, which correspond to the case $p = 0$. As a consequence, they form a natural setting in which to develop directed algebraic topology \cite{G09} or rewriting \cite{GM12}.

However, both directed algebraic topology and rewriting seem to favour the cubical geometry (see once again \cite{G09} for directed algebraic topology, and \cite{L16} for rewriting), hence the need for a suitable notion of cubical $(\omega,p)$-categories.

The aim of this article is to define such a notion, so that when $p = 0$ or $ p = \omega$, we respectively recover the notions of cubical $\omega$-groupoids and cubical $\omega$-categories. Moreover, we bridge the gap between two results we cited previously by proving the following Theorem:
\begin{repthm}{thm:equiv_glob_cub}
Let $\lambda: \CCat{\omega} \to \Cat{\omega}$ and $\gamma: \Cat{\omega} \to \CCat{\omega}$ be the functors from \cite{ABS02} forming an equivalence of categories between globular and cubical $\omega$-categories. For all $p \geq 0$, their restrictions still induce an equivalence of categories:
\[
\begin{tikzpicture}
\matrix (m) [matrix of math nodes, 
			nodes in empty cells,
			column sep = 3cm, 
			row sep = .7cm] 
{
\Cat{(\omega,p)} & \CCat{(\omega,p)} \\
};

\draw[-to] (m-1-2) to [bend right=20] node (A) [above] {$\lambda$} (m-1-1);
\draw[-to] (m-1-1) to [bend right = 20] node (C) [below] {$\gamma$} (m-1-2);

\path (A) to node {$\cong$} (C);

\end{tikzpicture}
\]
\end{repthm}
In particular, we recover the equivalence between globular and cubical $\omega$-groupoids in a more explicit fashion.

We also define a notion of $(\omega,p)$-augmented directed complexes and show how to extend Steiner's adjunction. This is done in two steps. First we define functors $\mathcal Z^\CC : \CCat{\omega} \to \ADC$ and $N^\G : \ADC \to \CCat{\omega}$ (where $\ADC$ is the category of augmented directed complexes), as cubical analogues of the functors $\mathcal Z^\G : \Cat{\omega} \to \ADC$ and $N^\G : \ADC \to \Cat{\omega}$ forming Steiner's adjunction. We study how the relationship between these two pairs of functors and show that the functor $\mathcal Z^\CC$ is left-adjoint to $N^\CC$ (see Proposition \ref{prop:commutation_up_to_iso}). Then we show how to restrict the functors $\mathcal Z^\G$, $N^\G$, $\mathcal Z^\CC$ and $N^\CC$ to $(\omega,p)$-structures.   In the end, we get the following result:
\begin{repthm}{thm:triangle_adjunctions}
Let $\lambda: \CCat{\omega} \to \Cat{\omega}$ and $\gamma: \Cat{\omega} \to \CCat{\omega}$ be the functors from \cite{ABS02} forming an equivalence of categories between globular and cubical $\omega$-categories. Let $\mathcal Z^\G : \Cat{\omega} \to \ADC$ and $N^\G : \ADC \to \Cat{\omega}$ be the functors from \cite{S04} forming an adjunction between globular $\omega$-categories and ADCs. Let $\mathcal Z^\CC : \CCat{\omega} \to \ADC$ and $N^\G : \ADC \to \CCat{\omega}$ be the cubical analogues of $\mathcal Z^\G$ and $N^\G$ defined in Section \ref{subsec:ADC}.

For all $p \in \mathbb N \cup \{ \omega \}$, their restrictions induce the following diagram of equivalence and adjunctions between the categories  $\Cat{(\omega,p)}$, $\CCat{(\omega,p)}$ and $(\omega,p)$-$\ADC$, where both triangles involving $\mathcal Z^\CC$ and $\mathcal Z^\G$ and both triangles involving $N^\CC$ and $N^\G$ commute up to isomorphism:
\[
\begin{tikzpicture}
\matrix (m) [matrix of math nodes, 
			nodes in empty cells,
			column sep = 1cm, 
			row sep = 2.5cm] 
{
\Cat{(\omega,p)} & & \CCat{(\omega,p)}  \\
& (\omega,p)\text{-}\ADC & \\
};
\draw[-to] (m-2-2) to node (NG) [above right] {$N^\G$} (m-1-1);
\draw[transform canvas={xshift=-0.6cm}, -to] (m-1-1) to node (ZG) [below left] {$\mathcal Z^\G$} (m-2-2);
\draw[-to] (m-2-2) to node (NC) [above left] {$N^\CC$} (m-1-3);
\draw[transform canvas={xshift=0.6cm}, -to] (m-1-3) to node (ZC) [below right] {$\mathcal Z^\CC$} (m-2-2);
\draw[transform canvas={yshift=-0.2cm}, -to] (m-1-3) to node (L) [below] {$\gamma$} (m-1-1);
\draw[transform canvas={yshift=0.2cm}, -to] (m-1-1) to node (G) [above] {$\lambda$} (m-1-3);

\path (ZG) to node [left] {\rotatebox{134}{$\bot$}} (NG);
\path (ZC) to node [right] {\rotatebox{226}{$\bot$}} (NC);
\path (L) to node {$\cong$} (G);
\end{tikzpicture}
\]
\end{repthm}

\subsection{Invertibility in cubical categories}

The main combinatorial difficulty of this article consists in defining the appropriate notion of invertibility in cubical $\omega$-categories. Before giving an account of the various invertibility notions that we consider in the cubical setting, we start by recalling the more familiar notion of invertibility in $(2,1)$-categories.

\subsubsection*{Globular \texorpdfstring{$(2,1)$}{(2,1)}-categories}

Informally, a globular $(\omega,p)$-category is a globular $\omega$-category in which every $n$-cell is invertible, for $n > p$. For this definition to make rigorous sense, one first needs to define an appropriate notion of \emph{invertible} $n$-cells. Let us fix a globular $2$-category $\C$. There are two ways to compose two $2$-cells $A$ and $B$ in $\C_2$, that we denote by $\bullet_1$ and $\bullet_0$ and that are respectively known as the \emph{vertical} and \emph{horizontal} compositions. They can respectively be represented as follows:
\[
\begin{tikzpicture}
\matrix (m) [matrix of math nodes, 
			nodes in empty cells,
			column sep = 3cm, 
			row sep = 1cm] 
{
 &   \\
};
\draw[-to] (m-1-1) to [bend left = 40] node (A1) [below] {} (m-1-2);
\draw[-to] (m-1-1) to node (B1) {} (m-1-2);
\draw[-to] (m-1-1) to [bend right = 40] node (C1) [above] {} (m-1-2);
\path (A1) to node {$A$} (B1);
\path (B1) to node {$B$} (C1); 
\end{tikzpicture}
\qquad
\begin{tikzpicture}
\matrix (m) [matrix of math nodes, 
			nodes in empty cells,
			column sep = 3cm, 
			row sep = 1cm] 
{
 &  &  \\
};
\draw[-to] (m-1-1) to [bend left = 40] node (A1) [below] {} (m-1-2);
\draw[-to] (m-1-1) to [bend right = 40] node (C1) [above] {} (m-1-2);
\draw[-to] (m-1-2) to [bend left = 40] node (A2) [below] {} (m-1-3);
\draw[-to] (m-1-2) to [bend right = 40] node (C2) [above] {} (m-1-3);
\path (A1) to node {$A$} (C1);
\path (A2) to node {$B$} (C2); 
\end{tikzpicture}
\]

We denote by $I_0 f: y \to x$ the inverse (if it exists) of a $1$-cell $f : x \to y$ in $\C_1$. A $2$-cell $A \in \C_2$ can have two inverses (one for each composition), that we denote respectively by $I_1A$ and $I_0A$. Their source and targets are as follows:
\[
\begin{tikzpicture}
\matrix (m) [matrix of math nodes, 
			nodes in empty cells,
			column sep = 3cm, 
			row sep = 1cm] 
{
x & y  \\
};
\draw[-to] (m-1-1) to [bend left = 40] node [above] {$f$} node (A1) [below] {} (m-1-2);
\draw[-to] (m-1-1) to [bend right = 40] node [below] {$g$} node (C1) [above] {} (m-1-2);
\path (A1) to node {$A$} (C1);
\end{tikzpicture}
\quad
\begin{tikzpicture}
\matrix (m) [matrix of math nodes, 
			nodes in empty cells,
			column sep = 3cm, 
			row sep = 1cm] 
{
x & y  \\
};
\draw[-to] (m-1-1) to [bend left = 40] node [above] {$g$} node (A1) [below] {} (m-1-2);
\draw[-to] (m-1-1) to [bend right = 40] node [below] {$f$} node (C1) [above] {} (m-1-2);
\path (A1) to node {$I_1 A$} (C1);
\end{tikzpicture}
\quad
\begin{tikzpicture}
\matrix (m) [matrix of math nodes, 
			nodes in empty cells,
			column sep = 3cm, 
			row sep = 1cm] 
{
y & x  \\
};
\draw[-to] (m-1-1) to [bend left = 40] node [above] {$I_0 f$} node (A1) [below] {} (m-1-2);
\draw[-to] (m-1-1) to [bend right = 40] node [below] {$I_0 g$} node (C1) [above] {} (m-1-2);
\path (A1) to node {$I_0 A$} (C1);
\end{tikzpicture}
\]
Note that if a $2$-cell is $I_0$-invertible, then so are its source and target, but that the $I_1$-invertibility of a $2$-cell does not imply any property for its source and target. So if $\C$ is a $2$-category where every $2$-cell is $I_0$-invertible, then $\C$ is a globular $2$-groupoid (indeed, a cell $1_f \in C_2$ is $I_0$-invertible if and only if $f$ is $I_0$-invertible). Therefore we say that a $2$-cell is invertible if it is $I_1$-invertible, and $\C$ is a globular $(2,1)$-category if each $2$-cell is $I_1$-invertible.

\subsubsection*{Cubical \texorpdfstring{$(2,1)$}{(2,1)}-categories}

In a cubical $2$-category $\CC$ (in what follows, cubical categories are always equipped with connections), the source and target of a $1$-cell $f \in \CC_1$ are respectively denoted $\partial_1^- f$ and $\partial_1^+ f$, and the source and target operations $\s,\t : \C_2 \to \C_1$ are replaced by four \emph{face operations} $\partial_i^\alpha : \CC_2 \to \CC_1$ (for $i = 1,2$ and $\alpha = \pm$), satisfying the cubical identity $\partial_1^\alpha \partial_2^\beta = \partial_1^\beta \partial_1^\alpha$. A $2$-cell $A \in \CC_2$ can be represented as follows, where the corners of the square are uniquely defined $0$-cells thanks to the cubical identity:
\[
\begin{tikzpicture}
\matrix (m) [matrix of math nodes, 
			nodes in empty cells,
			column sep = 1cm, 
			row sep = 1cm] 
{
 &  \\
 &  \\
};
\draw[-to] (m-1-1) to node [above] {$\partial_1^- A$} (m-1-2.west|-m-1-1);
\draw[-to] (m-2-1) to node [below] {$\partial_1^+ A$} (m-2-2.west|-m-2-1);
\draw[-to] (m-1-1) to node [left] {$\partial_2^- A$} (m-2-1);
\draw[-to] (m-1-2) to node [right] {$\partial_2^+ A$} (m-2-2);
\path (m-1-1) to node {$A$} (m-2-2);
\end{tikzpicture}
\]

There still are two ways to compose two $2$-cells $A,B \in \CC_2$, that we denote respectively by $A \star_1 B$ and $A \star_2 B$, which can be represented as follows:

\[
\begin{tikzpicture}
\matrix (m) [matrix of math nodes, 
			nodes in empty cells,
			column sep = 1cm, 
			row sep = 1cm] 
{
 & \\
 & \\
 & \\
};
\draw[-to] (m-1-1) to (m-1-2);
\draw[-to] (m-2-1) to (m-2-2);
\draw[-to] (m-3-1) to (m-3-2);
\draw[-to] (m-1-1) to (m-2-1);
\draw[-to] (m-1-2) to (m-2-2);
\draw[-to] (m-2-1) to (m-3-1);
\draw[-to] (m-2-2) to (m-3-2);
\path (m-1-1) to node {$A$} (m-2-2);
\path (m-2-1) to node {$B$} (m-3-2);
\end{tikzpicture}
\qquad
\begin{tikzpicture}
\matrix (m) [matrix of math nodes, 
			nodes in empty cells,
			column sep = 1cm, 
			row sep = 1cm] 
{
 &  &  \\
 &  &  \\
};
\draw[-to] (m-1-1) to (m-1-2);
\draw[-to] (m-1-2) to (m-1-3);
\draw[-to] (m-2-1) to (m-2-2);
\draw[-to] (m-2-2) to (m-2-3);
\draw[-to] (m-1-1) to (m-2-1);
\draw[-to] (m-1-2) to (m-2-2);
\draw[-to] (m-1-3) to (m-2-3);
\path (m-1-1) to node {$A$} (m-2-2);
\path (m-1-2) to node {$B$} (m-2-3);
\end{tikzpicture}
\]

We say that a $2$-cell $A \in \CC_2$ is $R_i$-invertible if it is invertible for the composition $\star_i$ ($i = 1,2$). The faces of $R_1 A$ and $R_2 A$ are as follows (where $R_1f: y \to x$ denotes the inverse of a $1$-cell $f : x \to y$):

\[
\begin{tikzpicture}
\matrix (m) [matrix of math nodes, 
			nodes in empty cells,
			column sep = 1cm, 
			row sep = 1cm] 
{
x & y \\
z & t \\
};
\draw[-to] (m-1-1) to node [above] {$f$} (m-1-2.west|-m-1-1);
\draw[-to] (m-2-1) to node [below] {$g$} (m-2-2.west|-m-2-1);
\draw[-to] (m-1-1) to node [left] {$h$} (m-2-1);
\draw[-to] (m-1-2) to node [right] {$i$} (m-2-2);
\path (m-1-1) to node {$A$} (m-2-2);
\end{tikzpicture}
\qquad
\begin{tikzpicture}
\matrix (m) [matrix of math nodes, 
			nodes in empty cells,
			column sep = 1cm, 
			row sep = 1cm] 
{
z & t \\
x & y \\
};
\draw[-to] (m-1-1) to node [above] {$g$} (m-1-2.west|-m-1-1);
\draw[-to] (m-2-1) to node [below] {$f$} (m-2-2.west|-m-2-1);
\draw[-to] (m-1-1) to node [left] {$R_1h$} (m-2-1);
\draw[-to] (m-1-2) to node [right] {$R_1i$} (m-2-2);
\path (m-1-1) to node {$R_1A$} (m-2-2);
\end{tikzpicture}
\qquad
\begin{tikzpicture}
\matrix (m) [matrix of math nodes, 
			nodes in empty cells,
			column sep = 1cm, 
			row sep = 1cm] 
{
y & x \\
t & z \\
};
\draw[-to] (m-1-1) to node [above] {$R_1f$} (m-1-2.west|-m-1-1);
\draw[-to] (m-2-1) to node [below] {$R_1g$} (m-2-2.west|-m-2-1);
\draw[-to] (m-1-1) to node [left] {$i$} (m-2-1);
\draw[-to] (m-1-2) to node [right] {$h$} (m-2-2);
\path (m-1-1) to node {$R_2A$} (m-2-2);
\end{tikzpicture}
\]

Note that contrary to the notion of $I_1$-invertibility, the $R_1$ and $R_2$-invertibility of $A$ require respectively that $\partial_2^\alpha A$ and $\partial_1^\alpha A$ are $R_1$-invertible (for $\alpha = \pm$). We say that $A$ has respectively an $R_1$ or an $R_2$-invertible shell if that is the case. As a consequence, if $\CC$ is a cubical $2$-category where every $2$-cell is $R_1$-invertible, then every $1$-cell of $\CC$ is $R_1$-invertible (one can even show that such a cubical $2$-category is actually a cubical $2$-groupoid) and the same property holds for $R_2$. In order to have a good notion of cubical $(\omega,p)$-categories nonetheless, we have to be more careful in our definition of an invertible cell. 

\subsubsection*{Invertibility in cubical \texorpdfstring{$(\omega,p)$}{(omega,p)}-categories}

This is done in Section \ref{subsec:Ri_invertibility}, where we define a notion of invertibility for an $n$-cell ($n \geq 1$). Let us first recall that, using the structure of connections on $\CC$ (an additional structure on cubical $\omega$-categories introduced in \cite{BS76} \cite{BH81_1}), one can associate to any $1$-cell $f : x \to y$ in $\CC_1$, the cells $\Gamma_1^-f$ and $\Gamma_1^+ f$, which can be represented as follows:
\[
\begin{tikzpicture}
\matrix (m) [matrix of math nodes, 
			nodes in empty cells,
			column sep = 1cm, 
			row sep = 1cm] 
{
x & y \\
y & y \\
};
\draw[-to] (m-1-1) to node [above] {$f$} (m-1-2.west|-m-1-1);
\doublearrow{arrows = {-}}
{(m-2-1) to node [below] {$\epsilon_1 y$} (m-2-2.west|-m-2-1)}
\draw[-to] (m-1-1) to node [left] {$f$} (m-2-1);
\doublearrow{arrows = {-}} 
{(m-1-2) to node [right] {$\epsilon_1 y$} (m-2-2)}
\path (m-1-1) to node {$\Gamma_1^- f$} (m-2-2);
\end{tikzpicture}
\qquad
\begin{tikzpicture}
\matrix (m) [matrix of math nodes, 
			nodes in empty cells,
			column sep = 1cm, 
			row sep = 1cm] 
{
x & x \\
x & y \\
};
\doublearrow{arrows = {-}}
{(m-1-1) to node [above] {$\epsilon_1 x$} (m-1-2.west|-m-1-1)}
\draw[-to] (m-2-1) to node [below] {$f$} (m-2-2.west|-m-2-1);
\doublearrow{arrows = {-}}
{(m-1-1) to node [left] {$\epsilon_1 x$} (m-2-1)}
\draw[-to] (m-1-2) to node [right] {$f$} (m-2-2);
\path (m-1-1) to node {$\Gamma_1^+ f$} (m-2-2);
\end{tikzpicture}
\]
We say that a $2$-cell $A \in \CC_2$ is invertible if the following composite (denoted $\Phi_2 A$) is $R_1$-invertible:
\[
\begin{tikzpicture}
\matrix (m) [matrix of math nodes, 
			nodes in empty cells,
			column sep = 1.5cm, 
			row sep = 1.5cm] 
{
 &  &  & \\
 &  &  & \\
};
\doublearrow{arrows = {-}}
{(m-1-1) to (m-1-2)}
\draw[-to] (m-1-2) to (m-1-3);
\draw[-to] (m-2-1) to (m-2-2);
\doublearrow{arrows = {-}}
{(m-2-3) to (m-2-4)}
\doublearrow{arrows = {-}}
{(m-1-1) to (m-2-1)}
\draw[-to] (m-1-2) to (m-2-2);
\doublearrow{arrows = {-}}
{(m-1-4) to (m-2-4)}
\draw[-to] (m-1-3) to (m-1-4);
\draw[-to] (m-2-2) to (m-2-3);
\draw[-to] (m-1-3) to (m-2-3);
\path (m-1-1) to node {$\Gamma_1^+ \partial_2^- A$} (m-2-2);
\path (m-1-2) to node {$A$} (m-2-3);
\path (m-1-3) to node {$\Gamma_1^- \partial_2^+ A$} (m-2-4);
\end{tikzpicture}
\]
Note in particular that $\partial_2^- \Phi_2 A$ and $\partial_2^+ \Phi_2 A$ are both identities (which are always invertible), and so the $R_1$-invertibility of $\Phi_2 A$ does not require the invertibility of any face of $A$. More generally for any $n \geq 0$ there is an operator $\Phi_n : \mathbf{C}_n \to  \mathbf{C}_n$ which ``globularizes'' the $n$-cells (see \cite{ABS02} or Definition \ref{defn:folding_op}. The link between invertibility, $R_i$-invertibility and having an $R_i$-invertible shell is given by the following Proposition:
\begin{repprop}{prop:i_inv_implies_inv}
Let $\CC$ be a cubical $\omega$-category, $A \in \CC_n$ and $1 \leq j \leq n$. A cell $A \in \CC_n$ is $R_j$-invertible if and only if $A$ is invertible and has an $R_j$-invertible shell. 
\end{repprop}

We also investigate in Section \ref{subsec:T_i_invertibility} another notion of invertibility, with respect to a kind of ``diagonal'' composition, that we call the $T_i$-invertibility. If $A$ is a $2$-cell in a cubical $2$-category, then the $T_1$-inverse of $A$ (if it exists) has the following faces:

\[
\begin{tikzpicture}
\matrix (m) [matrix of math nodes, 
			nodes in empty cells,
			column sep = 1cm, 
			row sep = 1cm] 
{
x & y \\
z & t \\
};
\draw[-to] (m-1-1) to node [above] {$f$} (m-1-2.west|-m-1-1);
\draw[-to] (m-2-1) to node [below] {$g$} (m-2-2.west|-m-2-1);
\draw[-to] (m-1-1) to node [left] {$h$} (m-2-1);
\draw[-to] (m-1-2) to node [right] {$i$} (m-2-2);
\path (m-1-1) to node {$A$} (m-2-2);
\end{tikzpicture}
\qquad
\begin{tikzpicture}
\matrix (m) [matrix of math nodes, 
			nodes in empty cells,
			column sep = 1cm, 
			row sep = 1cm] 
{
x & z \\
y & t \\
};
\draw[-to] (m-1-1) to node [above] {$h$} (m-1-2.west|-m-1-1);
\draw[-to] (m-2-1) to node [below] {$i$} (m-2-2.west|-m-2-1);
\draw[-to] (m-1-1) to node [left] {$f$} (m-2-1);
\draw[-to] (m-1-2) to node [right] {$g$} (m-2-2);
\path (m-1-1) to node {$T_1A$} (m-2-2);
\end{tikzpicture}
\]
We then define a suitable notion of $T_i$-invertible shells and prove the following result, analogous to Proposition \ref{prop:i_inv_implies_inv}:

\begin{repprop}{prop:carac_Ti_invertibility_inv}
Let $\CC$ be a cubical $\omega$-category, and $A \in \CC_n$, with $n \geq 2$. Then $A$ is $T_i$-invertible if and only if $A$ is invertible and has a $T_i$-invertible shell.
\end{repprop}

The study of the relationship between $R_i$-invertibility, $T_i$-invertibility and (plain) invertibility gives rise to the following Proposition:
\begin{repprop}{prop:caracterisation_omega_p}
Let $\CC$ be a cubical $\omega$-category, and fix $n > 0$. The following five properties are equivalent:
\begin{enumerate}
\item Any $n$-cell in $\CC_n$ is invertible.
\item For all $1 \leq i \leq n$, any $n$-cell in $\CC_n$ with an $R_i$-invertible shell is $R_i$-invertible.
\item Any $n$-cell in $\CC_n$ with an $R_1$-invertible shell is $R_1$-invertible.
\item Any $n$-cell $A \in \CC_n$ such that $\partial_j^\alpha A \in \im \epsilon_1$ for all $j \neq 1$ is $R_1$-invertible.
\item Any $n$-cell in $\Phi_n(\CC_n)$ is $R_1$-invertible.
\end{enumerate}
Moreover, if $n > 1$, then all the previous properties are also equivalent to the following:
\begin{enumerate}
\item[6.] For all $1 \leq i < n$, any $n$-cell in $\CC_n$ with a $T_i$-invertible shell is $T_i$-invertible
\item[7.] Any $n$-cell in $\CC_n$ with a $T_1$-invertible shell is $T_1$-invertible.
\end{enumerate}
\end{repprop}

We can now define a cubical $(\omega,p)$-category as a cubical $\omega$-category where every $n$-cell is invertible, for $n > p$, and we prove the equivalence with the globular notion.

\subsection{Permutations and cubical \texorpdfstring{$(\omega,p)$}{(omega,p)}-categories}
\subsubsection*{Cubical \texorpdfstring{$(\omega,p)$}{(omega,p)}-categories are symmetric}
In Section \ref{subsec:sym_cubical_categories}, we extend the notion of $T_i$-invertibility of an $n$-cell to that of $\sigma$-invertibility, for $\sigma$ an element of the symmetric group $S_n$. In particular, we show that if $\CC$ is a cubical $(\omega,1)$-category, then every cell of $\CC$ is $T_i$-invertible, and therefore $\sigma$-invertible, for any $\sigma \in S_n$. Consequently, we get an action of the symmetric group $S_n$ on the set of $n$-cells $\CC_n$, making $\CC$ a symmetric cubical category (in a sense related to that of Grandis \cite{G07}).

\subsubsection*{Definition of \texorpdfstring{$k$}{k}-transfors}
In Section \ref{subsec:transfors}, we apply the notion of invertibility to $k$-transfors between cubical $\omega$-categories. A $k$-transfor (following terminology by Crans \cite{C03}) from $\CC$ to $\DD$ is a family of maps $\CC_n \to \DD_{n+k}$ satisfying some compatibility conditions. These compatibility conditions come in two varieties, leading to the notions of lax and oplax $k$-tranfors (respectively called $k$-fold left and right homotopies in \cite{ABS02}). In particular, the lax or oplax $0$-transfors are just the functors from $\CC$ to $\DD$, and a lax or oplax $1$-transfor $\eta$ between functors $F$ and $G$ is the cubical analogue of a lax or oplax natural transformation from $F$ to $G$. For example, a $0$-cell in $x \in  \CC_0$ is sent to a $1$-cell $\eta_x : F(x) \to G(x)$ in $\DD_1$, and a $1$-cell $f : x \to y$ in $\CC_1$ is sent to a $2$-cell $\eta_f$ in $\DD_2$ of the following shape (respectively if $\eta$ is lax or oplax):

\[
\begin{tikzpicture}
\matrix (m) [matrix of math nodes, 
			nodes in empty cells,
			column sep = 1cm, 
			row sep = 1cm] 
{
F(x) & F(y) \\
G(x) & G(y) \\
};
\draw[-to] (m-1-1) to node [above] {$F(f)$} (m-1-2.west|-m-1-1);
\draw[-to] (m-2-1) to node [below] {$G(f)$} (m-2-2.west|-m-2-1);
\draw[-to] (m-1-1) to node [left] {$\eta_x$} (m-2-1);
\draw[-to] (m-1-2) to node [right] {$\eta_y$} (m-2-2);
\path (m-1-1) to node {$\eta_f$} (m-2-2);
\end{tikzpicture}
\qquad
\begin{tikzpicture}
\matrix (m) [matrix of math nodes, 
			nodes in empty cells,
			column sep = 1cm, 
			row sep = 1cm] 
{
F(x) & G(x) \\
F(y) & G(y) \\
};
\draw[-to] (m-1-1) to node [above] {$\eta_x$} (m-1-2.west|-m-1-1);
\draw[-to] (m-2-1) to node [below] {$\eta_y$} (m-2-2.west|-m-2-1);
\draw[-to] (m-1-1) to node [left] {$F(f)$} (m-2-1);
\draw[-to] (m-1-2) to node [right] {$G(f)$} (m-2-2);
\path (m-1-1) to node {$\eta_f$} (m-2-2);
\end{tikzpicture}
\]

As shown in \cite{ABS02}, Section 10, lax and oplax transfors from $\CC$ to $\DD$ respectively form cubical $\omega$-categories $\Lax(\CC,\DD)$ and $\oplax(\CC,\DD)$. We define notions of pseudo transfors as transfors satisfying some invertibility conditions. In particular in the case of $1$-transfors, we require for any $1$-cell $f$ in $\CC_1$ that $\eta_f$ is $T_1$-invertible. We show that pseudo lax and pseudo oplax transfors from $\CC$ to $\DD$ still form cubical $\omega$-categories $\plax(\CC,\DD)$ and $\poplax(\CC,\DD)$, and prove the following result:

\begin{repprop}{prop:transfors_isomorphism}
For all cubical $\omega$-categories $\CC$ and $\DD$, the cubical $\omega$-categories $\plax(\CC,\DD)$ and $\poplax(\CC,\DD)$ are isomorphic.
\end{repprop}

For example if $\eta$ is a lax $1$-transfor, then the map $\CC_1 \to \DD_2$ which is part of the oplax $1$-transfor associated to $\eta$ maps a cell $f$ in $\CC_1$ to a $2$-cell  $T_1 \eta_f$ in $\DD_2$.

\subsection{Organisation}

In Section \ref{sec:recall_cub_cat}, we recall a number of results on cubical $\omega$-categories. In particular, we recall the definition of the two functors forming the equivalence between globular and cubical $\omega$-categories. 

In Section \ref{sec:invertibility} we study the various forms of invertibility that exist in cubical $\omega$-categories. In particular we define the notions of $R_i$-invertibility and (plain) invertibility in Section \ref{subsec:Ri_invertibility}, and the notion of $T_i$-invertibility in Section \ref{subsec:T_i_invertibility}. 

In Section \ref{sec:relationship_omega_p}, we finally define cubical $(\omega,p)$-categories. In Section \ref{subsec:cubical_omega_p_categories} we use the results on invertibility that we collected throughout Section \ref{sec:invertibility}, and we prove the equivalence with the globular notion and characterize the notions of cubical $(\omega,0)$ and $(\omega,1)$-categories. In Section \ref{subsec:ADC} we introduce the notion of $(\omega,p)$-ADCs and study its relationship with both globular and cubical $(\omega,p)$-categories.

Lastly in Section \ref{sec:applications}, we apply the notions of invertibility as studied beforehand, to show firstly that cubical $(\omega,1)$-categories carry a natural structure of symmetric cubical categories in Section \ref{subsec:sym_cubical_categories}, then in Section \ref{subsec:transfors} we define and study the notion of pseudo transfors between cubical $\omega$-categories.

\paragraph*{Acknowlegments}

This work was supported by the Sorbonne-Paris-Cit\'e IDEX grant Focal and
the ANR grant ANR-13-BS02-0005-02 CATHRE. I also thank Yves Guiraud for the numerous helpful discussions we had during the preparation of this article.

\section{Cubical categories}
\label{sec:recall_cub_cat}

In this section we recall the notion of $\omega$-cubical categories (with connections) and the following functors 
\[
\begin{tikzpicture}
\matrix (m) [matrix of math nodes, 
			nodes in empty cells,
			column sep = 3cm, 
			row sep = .7cm] 
{
\Cat{\omega} & \CCat \omega \\
};
\draw[-to] (m-1-2) to [bend right=20] node (A) [above] {$\lambda$} (m-1-1);
\draw[-to] (m-1-1) to [bend right = 20] node (C) [below] {$\gamma$} (m-1-2);

\path (A) to node {$\cong$} (C); 
\end{tikzpicture}
\]
defined in \cite{ABS02} that form an equivalence between the category of cubical $\omega$-categories and that of globular $\omega$-categories.

While our description of the functor $\lambda$ matches exactly the description given in \cite{ABS02}, we rephrase slightly the definition of $\gamma$. Our construction consists in defining a co-\emph{cubical $\omega$-category} object in $\Cat{\omega}$ (that is a cubical $\omega$-category object in $\Cat{\omega}^{\op}$), in order to define $\gamma$ as a nerve functor. The starting point of this construction consists in describing the \emph{standard globular $\omega$-category of the $n$-cube} (denoted $\cube{n}^\G$ in this paper, and $M(I^n)$ in \cite{ABS02}). Here we use the closed monoidal structure on $\Cat{\omega}$ to construct these $\omega$-categories, but one could equivalently define them as in \cite{ABS02} using directed complexes \cite{S93}, or using augmented directed complexes \cite{S04}.

\subsection{Cubical sets}

\begin{defn}
We denote by $\Cat{n}$ the category of strict globular $n$-categories (with $n \in \mathbb N \cup \{ \omega \}$). We implicitly consider all globular $n$-categories (with $n \in \mathbb N$) to be globular $\omega$-categories whose only cells in dimension higher than $n$ are identities.  Let $\C$ be a globular $\omega$-category and $n \geq 0$. We denote by $\C_n$ the set of $n$-cells of $\C$. For $f \in \C_n$, and $0 \leq k < n$, we denote by $\s_k(f) \in \C_k$ (resp. $\t_k(f)$) the $k$-dimensional source (resp. target) of $f$, and we simply write $\s(f)$ (resp. $\t(f)$) for $\s_{n-1}(f)$ (resp. $\t_{n-1}(f)$). For $f,g \in \C_n$ such that $\t_k(f)=\s_k(g)$ we write $f \bullet_k g$ for their composite. For $f \in \C_n$ we write $1_f$ for the identity of $f$. Finally for $x,y \in \C_0$, we denote by $\C(x,y)$ the globular $\omega$-category of arrows between them.

We say that an $n$-cell $f \in \C_n$ is \emph{invertible} if it is invertible for the composition $\bullet_{n-1}$, that is if there exists an $n$-cell $g \in \C_n$ such that $f \bullet_{n-1} g = 1_{\s(f)}$ and $g \bullet_{n-1} f = 1_{\t(f)}$. For $p \geq 0$, a \emph{globular $(\omega,p)$-category} is a globular $\omega$-category in which any $n$-cell is invertible, for $n >p$. In particular, a globular $(\omega,0)$-category is just a globular $\omega$-groupoid.
\end{defn}

\begin{defn}\label{def:preCSet}
A pre-cubical set is a series of sets $C_n$ (for $n \geq 0$) together with maps (called \emph{face operations}) $\partial_i^\alpha : C_n \to C_{n-1}$, for $\alpha = \pm$ and $1 \leq i \leq n$, satisfying for all $1 \leq i < j \leq n$:
\begin{equation}\label{eq:partial_vs_partial}
\partial_{j-1}^\alpha \partial_i^\beta = \partial_{i}^\beta \partial_{j}^\alpha
\end{equation}

A morphism of pre-cubical sets is is a family of maps $F_n : C_n \to D_n$ commuting with the faces operations.
\end{defn}

\begin{ex}
Following work of Grandis and Mauri \cite{GM03}, pre-cubical sets can be seen as presheaves over the free PRO generated by cells $\twocell{partialM} : 0 \to 1$ and $\twocell{partialP} : 0 \to 1$. Then the maps $\partial_i^- : C_n \to C_{n-1}$ and $\partial_i^+ : C_n \to C_{n-1}$ correspond respectively to the following cells, with $i-1$ strings on the left and $n-i$ on the right:
\[
\twocell{dots *0 partialM *0 dots}
\qquad 
\twocell{dots *0 partialP *0 dots}
\]

Equation \eqref{eq:partial_vs_partial} corresponds to equations of the following form, replacing the occurrences of $\twocell{partial}$ either by $\twocell{partialM}$ or $\twocell{partialP}$ depending on $\alpha$ and $\beta$:
\begin{equation}\label{eq:commute_partial}
\twocell{(4 *0 partial *0 2) *1 (dots *0 dots *0 1 *0 dots) *1 (2 *0 partial *0 5)}
=
\twocell{(2 *0 partial *0 4) *1 (dots *0 1 *0 dots *0 dots) *1 (5 *0 partial *0 2)}
\end{equation}
In general, reading an expression $\partial_i^\alpha \ldots \partial_j^\beta $ from left to right corresponds to reading a string diagram in the PRO from top to bottom.

Note that the symmetry of Equation \eqref{eq:commute_partial} is broken in Equation \eqref{eq:partial_vs_partial}. This is hidden in the fact that the cells $\twocell{partialP}$ and $\twocell{partialM}$ cause a re-indexing of the strings. More precisely, numbering the strings from left to right in the string diagram representation of $\partial_i^\alpha$, we have, respectively for $j < i$ and $j > i$:
\[
\twocell{(2 *0 j0 *0 4) *1 (dots *0 1 *0 dots *0 partial *0 dots) *1 (2*0 j0 *0 2 *0 i0 *0 2)}
\qquad
\twocell{(4 *0 j1 *0 2) *1 (dots *0 partial *0 dots *0 1 *0 dots) *1 (2 *0 i0 *0 2 *0 j0 *0 2)}
\]
As a consequence, we introduce in Definition \ref{def:nota_indices} the notation $j_i := j$ when $j < i$ and $j_i := j-1$ when $j > i$. These same string diagrams then become:
\[
\twocell{(2 *0 ji *0 2 *0 2) *1 (dots *0 1 *0 dots *0 partial *0 dots) *1 (2 *0 j0 *0 2 *0 i0 *0 2)}
\qquad
\twocell{(2 *0 2 *0 ji *0 2) *1 (dots *0 partial *0 dots *0 1 *0 dots) *1 (2 *0 i0 *0 2 *0 j0 *0 2)}
\]
Using this notation, Equation \eqref{eq:commute_partial} becomes the following:
\[
\twocell{(4 *0 partial *0 2) *1 (2 *0 2 *0 1 *0 2) *1 (dots *0 dots *0 ji *0 dots) *1 (2 *0 partial *0 5) *1 (2 *0 i0 *0 2 *0 j0 *0 2)}
=
\twocell{(2 *0 partial *0 4) *1 (2 *0 1 *0 2 *0 2) *1 (dots *0 ij *0 dots *0 dots) *1 (5 *0 partial *0 2) *1 (2 *0 i0 *0 2 *0 j0 *0 2)}
\]
Equation \eqref{eq:partial_vs_partial} can then be written as follows, for $1 \leq i < j \leq n$:
\begin{equation}\label{eq:partial_vs_partial_bis}
\partial_{j_i}^\alpha \partial_i^\beta = \partial_{i_j}^\beta \partial_j^\alpha
\end{equation}
Note in particular this expression is symmetric in $i$ and $j$, so we can relax the condition $i < j$ in $i \neq j$.
\end{ex}

Before moving on towards the definition of cubical $\omega$-category, we define properly the notation $i_j$ introduced in the previous example, and its symmetric $i^j$, together with a few properties. This will allow us to express the axioms of a cubical set and of a cubical $\omega$-category in a more symmetric manner, which will be useful in later sections.

\begin{defn}\label{def:nota_indices}
For every $i \in \mathbb N$, we define two maps $(\_)^i : \mathbb N \to \mathbb N \setminus \{i\}$ and $(\_)_i : \mathbb N \setminus \{i\} \to \mathbb N$ as follows:
\begin{align*}
j^i :=&  \begin{cases}
j & j < i \\
j+1 & j \geq i
\end{cases}
&
j_i :=&  \begin{cases}
j & j < i \\
j-1 & j > i
\end{cases}
\end{align*}

Finally, let $i$, $j$ be distinct integers. We define maps $(\_)_{i,j}$,  $(\_)^{i,j}$ and $(\_)^j_i$ respectively as follows: 
\[
\begin{cases}
\mathbb N \setminus \{i,j\} &\to \quad \mathbb N \\
k &\mapsto \quad (k_{i})_{j_i}
\end{cases}
\qquad
\begin{cases}
\mathbb N &\to \quad \mathbb N \setminus \{i,j\} \\
k & \mapsto \quad (k^{i_j})^{j}
\end{cases}
\qquad
\begin{cases}
\mathbb N \setminus \{i_j\} &\to \quad \mathbb N \setminus \{j_i\} \\
k & \mapsto \quad (k^j)_i
\end{cases}
\]
\end{defn}

\begin{lem}\label{lem:permut_indices}
The following equalities hold, for every $k$ and every $i \neq j$:
\[
\begin{cases}
k^{i,j} = k^{j,i} & \\
k_{i,j} = k_{j,i} & k \neq i,j \\
k^j_i = (k_{i_j})^{j_i} & k \neq i_j
\end{cases}
\]
\end{lem}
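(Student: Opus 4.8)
The plan is to prove each of the three identities by unwinding the definitions in Definition \ref{def:nota_indices} and splitting into cases according to the relative order of $k$, $i$, and $j$. For the first identity $k^{i,j} = k^{j,i}$, recall that $k^{i,j} = (k^{i_j})^j$ and $k^{j,i} = (k^{j_i})^i$. Without loss of generality assume $i < j$, so $i_j = i$ and $j_i = j-1$. Then the left side is $(k^i)^j$ and the right side is $(k^{j-1})^i$. I would check the three cases $k < i$, $i \leq k < j-1$ (equivalently $i \le k \le j-2$), and $k \geq j-1$: in the first case both sides equal $k$; in the second both equal $k+1$; in the third both equal $k+2$. (The symmetric case $j<i$ follows by swapping the roles of $i$ and $j$, or is literally the same statement.)

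For the second identity $k_{i,j} = k_{j,i}$ with $k \neq i,j$, I would argue dually: $k_{i,j} = (k_i)_{j_i}$ and $k_{j,i} = (k_j)_{i_j}$, and with $i < j$ this reads $(k_i)_{j-1} = (k_{j})_{i}$. Splitting into $k < i$, $i < k < j$, $k > j$ gives both sides equal to $k$, $k-1$, $k-2$ respectively. Alternatively, one can observe that $(\_)_{i,j}$ is a two-sided inverse bijection to $(\_)^{i,j}$ (it simply deletes the two values $\{i,j\}$ from $\mathbb N$ and reindexes, while $(\_)^{i,j}$ inserts them back), and similarly for the swapped version; since these describe the same bijection $\mathbb N \setminus \{i,j\} \to \mathbb N$ intrinsically, the identity follows — but the straightforward case analysis is shorter to write.

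For the third identity $k^j_i = (k_{i_j})^{j_i}$ for $k \neq i_j$, unwind $k^j_i = (k^j)_i$ by definition. Again reduce to $i < j$ (the case $i > j$ being symmetric), so $i_j = i$, $j_i = j-1$, and the claim becomes $(k^j)_i = (k_i)^{j-1}$ for $k \neq i$. Here $k$ ranges over $\mathbb N \setminus \{i\}$ and one checks the cases $k < i$, $i < k < j$, $k \ge j$ (note $k = j$ is allowed on the left before applying $(\_)^j$, but after $(\_)^j$ we get $j+1$; on the right $k_i = k-1 \ge j-1$ so $(k_i)^{j-1} = k$, wait — let me just flag that the boundary cases $k=j-1$ and $k=j$ need care): in each case one verifies both sides agree, keeping track of whether the shift by $(\_)^j$ / $(\_)^{j-1}$ has "kicked in."

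The main obstacle is purely bookkeeping: making sure the case boundaries are handled consistently, since $(\_)^i$ shifts at $k \geq i$ while $(\_)_i$ shifts at $k > i$, so the off-by-one placement of the jump differs between the "insert" and "delete" maps, and composing two of them creates several narrow boundary cases (especially around $k = i$, $k = j-1$, $k = j$) where one must be scrupulous about which inequalities are strict. There is no conceptual difficulty; the identities simply express that inserting/deleting two fixed positions from $\mathbb N$ is independent of the order in which one does it, suitably reindexed. Once these are verified, they will be used silently in later sections to rewrite the cubical identities symmetrically.
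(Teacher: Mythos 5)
Your proof is correct, but it takes a genuinely different route from the paper. You verify each identity by direct case analysis on the position of $k$ relative to $i$ and $j$ after reducing to $i<j$, and all of your cases do check out --- including the boundary cases in the third identity that you flagged but left unfinished: for $k=j$ one gets $(k^j)_i=(j+1)_i=j$ on the left and $(k_i)^{j-1}=(j-1)^{j-1}=j$ on the right, and for $k=j-1$ (with $j-1>i$) both sides equal $j-2$, so there is no problem. The paper instead uses, for all three identities at once, the one-line argument you mention only in passing for the second one: each of the two maps being compared is a strictly increasing bijection between the same pair of subsets of $\mathbb N$, i.e.\ an isomorphism of well-ordered sets, and there is at most one such isomorphism, so the two maps are equal with no computation. (The paper's source in fact contains your exact case analysis as a commented-out earlier draft.) Your approach buys complete explicitness and needs nothing beyond the definitions; the paper's buys brevity and uniformity, at the small cost of having to observe that each composite really is order-preserving with the stated domain and codomain.
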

\begin{proof}
\begin{comment}
By symmetry of the first two equations, we can suppose $i < j$. We are reduced to show that $k^{i,j} = k^{j-1,i}$ and $k_{i,j-1} = k_{j,i}$. We start by the first equality. We distinguish three cases: 
\begin{align*}
k^{i,j} &= k^j = k = k^i = k^{j-1,i} && \text{ if $k < i \leq j$,} \\
k^{i,j} &= (k+1)^j = k+1 = k^i = k^{j-1,i} && \text{ if $i \leq k < j-1$,} \\
k^{i,j} &= (k+1)^j = k+2 = (k+1)^i = k^{j-1,i} && \text{ if $i \leq j-1 \leq k$.}
\end{align*}
As for the second one:
\begin{align*}
k_{i,j-1} &= k_{j-1} = k = k_i = k_{j,i} && \text{ if $k < i < j$,} \\
k_{i,j-1} &= (k-1)_{j-1} = k-1 = k_i = k_{j,i} && \text{ if $i < k < j$,} \\
k_{i,j-1} &= (k-1)_{j-1} = k-2 = (k-1)_i = k_{j,i} && \text{ if $i < j < k$.}
\end{align*}
For the third equality, let us write $k = u^{i_j}$, then we have:
\[
(k^j)_i = (u^{i_j,j})_i = (u^{j_i,i})_i = (u^{j_i})^i_i = u^{j_i} = (u_{i_j}^{i_j})^{j_i} = (k_{i_j})^{j_i}
\]
\end{comment}

Recall that there is at most one isomorphism between any two well-ordered sets. Here $(\_)^{i,j}$ and $(\_)^{j,i}$ are both isomorphism from $\mathbb N$ to $\mathbb N \setminus \{i,j\}$, hence they are equal. The same reasoning proves the other two equalities.
\end{proof}

Definitions \ref{def:CSet} and \ref{def:CCat} are exactly the same as in \cite{ABS02}, except that we make use of the notations introduced in Definition \ref{def:nota_indices}.

\begin{defn}\label{def:CSet}
A cubical set (with connections) is given by:
\begin{itemize}
\item For all $n \in \mathbb N$, a set $C_n$,
\item For all $n \in \mathbb N^*$, all $1 \leq i \leq n$ and all $\alpha \in \{+,-\}$, maps $\partial_i^\alpha : C_n \to C_{n-1}$.
\item For all $n \in \mathbb N$ and all $1 \leq i \leq n+1$, maps $\epsilon_i: C_n \to C_{n+1}$.
\item For all $n \in \mathbb N^*$, all $1 \leq i \leq n$ and all $\alpha \in \{+,-\}$, maps $\Gamma_i^\alpha: C_n \to C_{n+1}$.
\end{itemize}
This data must moreover verify the following axioms:

\begin{multicols}{2}

\begin{equation}
\partial_i^\alpha \epsilon_j = 
\begin{cases}
\epsilon_{j_i} \partial_{i_j}^\alpha & i \neq j \\
\id_{C_n} & i = j
\end{cases}
\end{equation}

\begin{equation}
\partial_i^\alpha \Gamma_j^\beta = 
\begin{cases}
\Gamma_{j_i}^\beta \partial_{i_j} & i \neq j,j+1 \\
\id_{C_n} & i = j,j+1 \text{ and } \alpha = \beta \\
\epsilon_j \partial_j^\alpha & i = j,j+1 \text{ and } \alpha = - \beta
\end{cases}
\end{equation}

\begin{equation}
\epsilon_i \epsilon_{j^i} = \epsilon_{j} \epsilon_{i^j}
\end{equation}

\begin{equation}
\Gamma_{i^j}^\alpha \Gamma_j^\beta = 
\begin{cases}
\Gamma_{j^i}^\beta \Gamma_i^\alpha & i \neq j \\
\Gamma_i^\alpha \Gamma_i^\alpha & i = j \text{ and } \alpha = \beta
\end{cases}
\end{equation}

\begin{equation}
\Gamma_{i}^\alpha \epsilon_j = 
\begin{cases}
\epsilon_{j^i} \Gamma_{i_j}^\alpha & i \neq j \\
\epsilon_i  \epsilon_i & i = j
\end{cases}
\end{equation}
\end{multicols}
\end{defn}

\begin{ex}
Following once again \cite{GM03}, cubical sets with connections can be seen as presheaves over the following PRO, denoted by $\mathbb J$ and called the \emph{intermediate cubical site} in \cite{GM03}:
\begin{itemize}
\item The generators are the cells :
\[
\twocell{partialM} : 0 \to 1
\qquad
\twocell{partialP} : 0 \to 1
\qquad
\twocell{epsilon} : 1 \to 0
\qquad 
\twocell{gammaM} : 2 \to 1
\qquad
\twocell{gammaP} : 2 \to 1
\]
\item They satisfy the following relations :
\begin{multicols}{2}
\[
\twocell{partialM *1 epsilon} = \quad
\]
\[
\twocell{(partialM *0 1) *1 gammaM} = \twocell{1} = \twocell{(1 *0 partialM) *1 gammaM}
\]
\[
\twocell{(partialP *0 1) *1 gammaM} = \twocell{epsilon *1 partialP} = \twocell{(1 *0 partialP) *1 gammaM}
\]
\[
\twocell{(gammaM *0 1) *1 gammaM} = \twocell{(1 *0 gammaM) *1 gammaM}
\]
\[
\twocell{gammaM *1 epsilon} = \twocell{epsilon *0 epsilon}
\]

\[
\twocell{partialP *1 epsilon} = \quad
\]
\[
\twocell{(partialP *0 1) *1 gammaP} = \twocell{1}  = \twocell{(1 *0 partialP) *1 gammaP}
\]
\[
\twocell{(partialM *0 1) *1 gammaP} = \twocell{epsilon *1 partialM} = \twocell{(1 *0 partialM) *1 gammaP}
\]
\[
\twocell{(gammaP *0 1) *1 gammaP} = \twocell{(1 *0 gammaP) *1 gammaP}
\]
\[
\twocell{gammaP *1 epsilon} = \twocell{epsilon *0 epsilon} 
\]
\end{multicols}
\end{itemize}
Then the maps $\Gamma_i^- : C_n \to C_{n+1}$,  $\Gamma_i^+ : C_n \to C_{n+1}$ and $\epsilon_i$ correspond respectively to composites of the form $\twocell{dots *0 gammaM *0 dots}$, $\twocell{dots *0 gammaP *0 dots}$ and $\twocell{dots *0 epsilon *0 dots}$, with the appropriate number of strings on each side.
\end{ex}

\subsection{Cubical \texorpdfstring{$\omega$}{omega}-categories}

\begin{defn}\label{def:CCat}
A cubical $\omega$-category is given by a cubical set $\CC$, equipped with, for all $n \in \mathbb N^*$ and all $1 \leq i \leq n$, a partial map $\star_i$ from $\CC_n \times \CC_n$ to $\CC_n$ defined exactly for any cells $A$,$B$ such that $\partial_i^+ A = \partial_i^- B$. This data must moreover satisfy the following axioms:

\begin{multicols}{2}
\begin{equation}\label{eq:exchange}
(A \star_i B) \star_j (C \star_i D) = (A \star_j C) \star_i (B \star_j D)
\end{equation}

\begin{equation}
A \star_i (B \star_i C) = (A \star_i B) \star_i C
\end{equation}

\begin{equation}
\epsilon_i (A \star_j B) = 
\epsilon_i^\alpha A \star_{j^i} \epsilon_i^\alpha B
\end{equation}

\begin{equation}
A \star_i \epsilon_i \partial_i^+ A = \epsilon_i \partial_i^- A \star_i A = A
\end{equation}
\begin{equation}\label{eq:compo_Gamma_1}
\Gamma_i^+ A \star_i \Gamma_i^- A = \epsilon_{i+1} A 
\end{equation}
\begin{equation}\label{eq:compo_Gamma_2}
\Gamma_i^+ A \star_{i+1} \Gamma_i^- A = \epsilon_i A
\end{equation}
\end{multicols}

\begin{equation}
\partial_i^\alpha (A \star_j B) = 
\begin{cases}
\partial_i^\alpha A \star_{j_i} \partial_i^\alpha B & i \neq j \\
\partial_i^- A & i = j \text{ and } \alpha = - \\
\partial_i^+ B & i = j \text{ and } \alpha = +
\end{cases}
\end{equation}
\begin{equation}\label{eq:gamma_of_comp}
\Gamma_i^\alpha(A \star_j B) = 
\begin{cases}
\Gamma_i^\alpha A \star_{j^i} \Gamma_i^\alpha B & i \neq j \\
  \begin{tabular}{ | c | c | }
  \hline			
    $\Gamma_i^-  A$ & $ \epsilon_{i+1} B $\\
  \hline
   $\epsilon_i B$ & $\Gamma_i^- B$ \\
  \hline  
\end{tabular}
\qquad
\directions{i}{i+1}
& i = j \text{ and } \alpha = - \\
  \begin{tabular}{ | c | c | }
  \hline			
    $\Gamma_i^+  A$ & $ \epsilon_{i} A $\\
  \hline
   $\epsilon_{i+1} A$ & $\Gamma_i^+ B$ \\
  \hline  
\end{tabular}
\qquad
\directions{i}{i+1}
& i = j \text{ and } \alpha = +
\end{cases}
\end{equation}
where in the last relation we denote by $\begin{tabular}{ | c | c | }
  \hline			
    $A$ & $B $\\
  \hline
   $C$ & $D$ \\
  \hline  
\end{tabular}
\quad
\directions{i}{j}$ the composite $(A \star_i B) \star_j (C \star_i D)$ (which is made possible by relation \eqref{eq:exchange}). We denote by $\CCat{\omega}$ the category of cubical $\omega$-category.
\end{defn}

\begin{defn}\label{defn:folding_op}
Let $\CC$ be a cubical $\omega$-category. For any $n > 0$, we define \emph{folding operations} $\psi_i,\Psi_r, \Phi_m: \CC_n \to \CC_n$, with $1 \leq i \leq n-1$, $1 \leq r \leq n$ and $0 \leq m \leq n$ as follows:

\begin{eqnarray*}
\psi_i A & = & \Gamma_i^+ \partial_{i+1}^- A \star_{i+1} A \star_{i+1} \Gamma_i^- \partial_{i+1}^+ A \\
\Psi_r A & = & \psi_{r-1} \cdots \psi_1 A \\
\Phi_m A & = & \Psi_1 \cdots \Psi_{m} A
\end{eqnarray*}
\end{defn}

\begin{ex}
The folding operations are used to ``globularize'' an $n$-cell. For example, if $A$ is a $2$-cell of a $2$-category $\mathbf C$, then $\Phi_2 A = \Psi_2 A = \psi_1 A$ is the following cell:
\[
\begin{tikzpicture}
\matrix (m) [matrix of math nodes, 
			nodes in empty cells,
			column sep = 1.5cm, 
			row sep = 1.5cm] 
{
 &  &  & \\
 &  &  & \\
};
\doublearrow{arrows = {-}}
{(m-1-1) to (m-1-2)}
\draw[-to] (m-1-2) to node [above] {$\partial_1^- A$} (m-1-3);
\draw[-to] (m-2-1) to node [below] {$\partial_1^+ A$} (m-2-2);
\doublearrow{arrows = {-}}
{(m-2-3) to (m-2-4)}
\doublearrow{arrows = {-}}
{(m-1-1) to (m-2-1)}
\draw[-to] (m-1-2) to (m-2-2);
\doublearrow{arrows = {-}}
{(m-1-4) to (m-2-4)}
\draw[-to] (m-1-3) to node [above] {$\partial_2^+ A$} (m-1-4);
\draw[-to] (m-2-2) to node [below] {$\partial_2^- A$} (m-2-3);
\draw[-to] (m-1-3) to (m-2-3);
\path (m-1-1) to node {$\Gamma_1^+ \partial_2^- A$} (m-2-2);
\path (m-1-2) to node {$A$} (m-2-3);
\path (m-1-3) to node {$\Gamma_1^- \partial_2^+ A$} (m-2-4);
\end{tikzpicture}
\]
As we will see, the $2$-cells of the globular $\omega$-category  $\gamma \mathbf C$ (where  $\gamma : \CCat{\omega} \to \Cat{\omega}$ is the functor forming the equivalence of categories between globular and cubical $\omega$-categories) will be exactly the $2$-cells of $\mathbf C$ of the form $\Phi_2 A$, with source and target given by:
\[
\begin{tikzpicture}
\matrix (m) [matrix of math nodes, 
			nodes in empty cells,
			column sep = 2cm, 
			row sep = 1cm] 
{
 &  &  \\
 &  &  \\
 &  &  \\
};
\draw[-to] (m-2-1) to [bend left] node  [above left] {$\partial_1^- A$} (m-1-2);
\draw[-to] (m-1-2) to [bend left] node  [above right] {$\partial_2^+ A$} (m-2-3);
\draw[-to] (m-2-1) to [bend right] node  [below left] {$\partial_2^- A$} (m-3-2);
\draw[-to] (m-3-2) to [bend right] node  [below right] {$\partial_1^+ A$} (m-2-3);
\doublearrow{-Implies, shorten <= 0.4cm, shorten > = 0.4cm }{(m-1-2) to node [fill = white] {$\Phi_2 A$} (m-3-2)}
\end{tikzpicture}
\]
\end{ex}

\begin{defn}
Let $\CC$ be a cubical $\omega$-category, and $A \in \CC_n$. We say that $A$ is a \emph{thin} cell if $\psi_1 \ldots \psi_{n-1} A \in \im \epsilon_1$.
\end{defn}

Thin cells are the cubical notion corresponding to identity cells in a globular $\omega$-category. In a cubical $\omega$-category $\mathbf C$ for example, thin $2$-cells correspond to commutative squares of $1$-cells of $\mathbf{C}$. The main result about thin cells is Theorem \ref{thm:thin_cells}. Before we can state this theorem, we need to define the notion of \emph{shell} of an $n$-cell.

\begin{defn}
Let $n  \in \mathbb N$. There is a truncation functor $\tr_n : \CCat{(n+1)} \to \CCat{n}$. This functor admits both a left and a right adjoint (see \cite{H05} for an explicit description of both functors).

\[
\begin{tikzpicture}
\matrix (m) [matrix of math nodes, 
			nodes in empty cells,
			column sep = 3cm, 
			row sep = .7cm] 
{
\CCat{(n+1)} & \CCat n \\
};
\draw[transform canvas={yshift=0.4cm},-to] (m-1-2) to [bend right=20] node (A) [above] {$\boxslash$} (m-1-1);
\draw[-to] (m-1-1) to node (B) [fill = white] {$\tr_n$} (m-1-2);
\draw[transform canvas={yshift=-0.4cm},-to] (m-1-2) to [bend left = 20] node (C) [below] {$\Box$} (m-1-1);

\path (A) to node {$\bot$} (B);
\path (B) to node {$\bot$} (C); 
\end{tikzpicture}
\]

For $\CC \in \CCat{n}$, the $(n+1)$-category $\Box \CC$ coincides with $\CC$ up to dimension $n$, and the rest of the structure is defined as follows:
\begin{itemize}
\item The set of $(n+1)$-cells of $\Box \CC$ is the set of all families $(A_i^\alpha) \in \CC_n$ (with $1 \leq i \leq n+1$ and $\alpha = \pm$) such that:
 \[
 \partial_{i_j}^\alpha A_j^\beta = \partial_{j_i}^\beta A_i^\alpha.
 \] 
\item For $A \in (\Box \CC)_{n+1}$, $\partial_i^\alpha A = A_i^\alpha$.
\item For $A \in \CC_n$, the families $\epsilon_i A \in (\Box \CC)_{n+1}$ and $\Gamma_i^\alpha A \in (\Box \CC)_{n+1}$ are defined by:
\[
(\epsilon_i A)_j^\beta = \begin{cases}
A & j = i \\
\epsilon_{i_j} \partial_{j_i}^\beta A & j \neq i
\end{cases}
\qquad
(\Gamma_i^\alpha)_j^\beta = \begin{cases}
A & j = i,i+1 \text{ and } \beta = \alpha \\
\epsilon_i \partial_i^\beta A & j = i,i+1 \text{ and } \beta = - \alpha \\
\Gamma_{i_j}^\alpha \partial_{j_i} A & j \neq i,i+1
\end{cases}
\]
\item For $A,B \in (\Box \CC)_{n+1}$ such that $A_i^+ = B_i^-$, the family $A \star_i B \in (\Box \CC)_{n+1}$ is defined by:
\[
(A \star_i B)_j^\alpha = \begin{cases}
A_i^- & j = i \text { and } \alpha = - \\
B_i^+ & j = i \text { and } \alpha = + \\
A_j^\alpha \star_{i_j} B_j^\alpha & j \neq i
\end{cases}
\] 
\end{itemize}

Let $\CC$ be a cubical $(n+1)$-category. The unit of the adjunction $\tr \dashv \Box$ induces a morphism of cubical $(n+1)$-categories $\bm \partial : \CC \to \Box \tr \CC$. This functor $\bm$ associates, to any $A \in \CC_{n+1}$ the family $\bm{\partial}A := (\partial_i^\alpha A)$. We call $\bm{\partial} A$ the \emph{shell} of $A$.

More generally, if $\CC$ is a cubical $\omega$-category, we denote by $\Box_n \CC$ the $(n+1)$-category $\Box \tr_n \CC$, and for any $A \in \CC_{n+1}$, by $\bm \partial A$ the cell $\bm \partial \tr_{n+1} A \in \Box_n \CC$.
\end{defn}

\begin{thm}[Proposition 2.1 and Theorem 2.8 from \cite{H05}] \label{thm:thin_cells}
Let $\CC$ be a cubical category. Thin cells of $\CC$ are exactly the composites of cells of the form $\epsilon_i f$ and $\Gamma_i^\alpha f$. Moreover, if two thin cells have the same shell, then they are equal.
\end{thm}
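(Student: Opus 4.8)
The statement I must prove is Theorem~\ref{thm:thin_cells}: in a cubical $\omega$-category $\CC$, the thin cells are exactly the composites of cells of the form $\epsilon_i f$ and $\Gamma_i^\alpha f$, and two thin cells with the same shell are equal. Since this is attributed to Higgins (Proposition 2.1 and Theorem 2.8 of \cite{H05}), my plan is to reconstruct the argument in two clearly separated halves, matching the two assertions.

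For the first assertion, one inclusion is essentially formal: every $\epsilon_i f$ and $\Gamma_i^\alpha f$ is thin, because applying the folding $\psi_1\cdots\psi_{n-1}$ to such a cell and simplifying via the cubical-set axioms relating $\partial_i^\alpha$, $\epsilon_j$ and $\Gamma_j^\beta$ (Definition~\ref{def:CSet}) together with axioms \eqref{eq:compo_Gamma_1}--\eqref{eq:gamma_of_comp} lands one in $\im\epsilon_1$; and the set of thin cells is closed under all the compositions $\star_i$ because $\psi_1\cdots\psi_{n-1}$ interacts well with $\star_i$ and $\im\epsilon_1$ is closed under the residual composition (using $\epsilon_1(A\star_j B)=\epsilon_1 A\star_{j^1}\epsilon_1 B$ and the unit laws). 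The converse inclusion — that an arbitrary thin cell is such a composite — is the substantive direction: I would argue by induction on dimension $n$, and for fixed $n$ decompose $A$ using the folding operation. Writing $\Phi_n A$ (or the partial folds $\psi_i$) expresses $A$, up to $\star_i$-composition with cells that are manifestly of the form $\Gamma_i^\alpha(\text{face})$ or $\epsilon_i(\text{face})$, in terms of a ``more globular'' cell whose non-trivial faces have strictly lower effective dimension; the hypothesis $\psi_1\cdots\psi_{n-1}A\in\im\epsilon_1$ forces that globularized cell to be $\epsilon_1$ of a lower-dimensional cell, which by the induction hypothesis (or directly) is itself a composite of $\epsilon$'s and $\Gamma$'s. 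Reassembling, $A$ is a composite of cells of the required form.

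For the second assertion (uniqueness given the shell), the strategy is again induction on $n$. Given two thin $(n{+}1)$-cells $A,B$ with $\bm\partial A=\bm\partial B$, I would use the folding/globularization to reduce to comparing their images under $\Psi$ or $\Phi$: the faces of $\Phi_{n+1}A$ and $\Phi_{n+1}B$ are built functorially from $\bm\partial A=\bm\partial B$, so these agree, and since both cells are thin their globularizations lie in $\im\epsilon_1$; a thin cell in $\im\epsilon_1$ is determined by its shell because $\epsilon_1$ is injective and a face of $\epsilon_1 g$ recovers $g$ (via $\partial_1^\alpha\epsilon_1 = \id$). Then I invert the folding — the passage $A\mapsto\Phi_{n+1}A$ is reversible on cells with the appropriate shell, being built from $\star_i$-composition with connection cells that have $\star_i$-inverses among thin cells (here one may either cite the computations in \cite{ABS02} or verify the needed identity $\psi_i A = (\text{connection})\star_{i+1}A\star_{i+1}(\text{connection})$ admits a one-sided inverse up to thin cells) — to conclude $A=B$.

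The main obstacle I anticipate is the bookkeeping in the converse direction of the first assertion and in the ``invert the folding'' step: one must track precisely how the folding operations $\psi_i$, $\Psi_r$, $\Phi_m$ transform faces, which indices get shifted (the $j_i$, $j^i$ notation of Definition~\ref{def:nota_indices} is designed exactly for this), and verify at each stage that the correction cells introduced are genuinely of the form $\epsilon_i f$ or $\Gamma_i^\alpha f$ and compose legally. The conceptual content is light, but the combinatorial verification — ensuring the induction on dimension actually closes and that no non-thin cell sneaks in — is where care is needed; I expect to lean on the explicit formulas for $\partial_i^\alpha$, $\epsilon_i$, $\Gamma_i^\alpha$ on shells given just before the theorem, and on the exchange law \eqref{eq:exchange}, to make the reassembly rigorous.
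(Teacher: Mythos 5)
This theorem is not proved in the paper at all: it is imported verbatim from Higgins (Proposition 2.1 and Theorem 2.8 of \cite{H05}), so there is no in-paper argument to compare against. Your outline does follow the route Higgins actually takes — closure of thin cells under composition for the easy inclusion, and the folding operations $\psi_i$ together with the ``unfolding'' identity for the converse and for uniqueness — so the strategy is the right one.

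However, as written this is a proof plan rather than a proof, and the plan defers exactly the steps that carry the content. Both substantive directions hinge on one identity that you only allude to: the cell $A$ is recovered from $\psi_i A$ as the composite
\[
A \;=\; \bigl(\epsilon_{i+1}\partial_i^- A \star_{i+1} \Gamma_i^+\partial_{i+1}^+ A\bigr)\star_i \psi_i A \star_i \bigl(\Gamma_i^-\partial_{i+1}^- A \star_{i+1} \epsilon_{i+1}\partial_i^+ A\bigr),
\]
i.e.\ $A$ is determined by $\psi_i A$ together with faces of $A$, via composition with cells of the form $\epsilon_j f$ and $\Gamma_j^\alpha f$. (This is the identity the present paper itself recalls and uses in the proofs of Lemmas \ref{lem:shell_and_invertiblility} and \ref{lem:carac_Ti_invertibility_Ri}.) Once it is stated and proved, both claims follow by iterating it: for the first assertion, $A$ is a composite of $\psi_1\cdots\psi_{n-1}A\in\im\epsilon_1$ with connection and degeneracy cells, so no induction on dimension is needed; for the second, the shell of $\psi_j\cdots\psi_{n-1}A$ at every stage is computed from $\bm\partial A$, and a cell $\epsilon_1 g$ is determined by its shell since $g=\partial_1^-\epsilon_1 g$, so two thin cells with equal shells have equal full foldings and hence are equal. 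Your text gestures at all of this (``admits a one-sided inverse up to thin cells'', ``the combinatorial verification \dots is where care is needed'') but does not state the identity, does not verify it from the axioms \eqref{eq:compo_Gamma_1}--\eqref{eq:gamma_of_comp}, and does not check that the faces appearing in it match so the composites are defined. A second, smaller point: thinness is defined via $\psi_1\cdots\psi_{n-1}$, not via $\Phi_n=\Psi_1\cdots\Psi_n$; your uniqueness sketch slides between the two, which is harmless only after one notes that each $\psi_i$ is individually ``invertible'' in the above sense. To turn the plan into a proof, write out and verify the unfolding identity and then run the two inductions on the number of foldings explicitly.
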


\begin{nota}\label{nota:string_diagrams}
As a consequence, when writing thin cells in $2$-dimensional compositions (as in Equation \eqref{eq:gamma_of_comp} for example), we make use of the notation already used in \cite{ABS02} and \cite{H05}: a thin cell $A$ is replaced by a string diagram linking the non-thin faces of $A$. For example $\Gamma_i^+ A$ and $\Gamma_i^- A$ will respectively be represented by the symbols $\fbox \cornerdr$ and $\fbox \cornerul$, and the cells $\epsilon_i A$  by the symbol $\fbox \horiz$ or $\fbox \vertic$. If every face of a thin cell is thin (such as $\epsilon_i \epsilon_i f$), then we simply denote it by an empty square $\fbox \nocorner$ . Following this convention, Equations \eqref{eq:compo_Gamma_1} and \eqref{eq:compo_Gamma_2} can be represented by the following string diagrams:
\[
  \renewcommand{\arraystretch}{1.5}
  \begin{tabular}{ | c | c | }
  \hline			
    $\cornerdr$ & $ \cornerul $\\
  \hline  
\end{tabular}
=
\fbox \vertic
\qquad
  \begin{tabular}{ | c | }
  \hline			
    $\cornerdr$ \\ \hline
    $ \cornerul $\\
  \hline  
\end{tabular}
=
\fbox \horiz
\qquad
\directions{i}{i+1}
\]
And the last two cases of Equation \eqref{eq:gamma_of_comp} become respectively:
\[
\renewcommand{\arraystretch}{1.5}
\fbox \cornerul = 
  \begin{tabular}{ | c | c | }
  \hline			
    $\cornerul$ & $ \vertic $\\
  \hline
   $\horiz$ & $\cornerul$ \\
  \hline  
\end{tabular}
\qquad
\fbox \cornerdr =
  \begin{tabular}{ | c | c | }
  \hline			
    $\cornerdr$ & $ \horiz $\\
  \hline
   $\vertic$ & $\cornerdr$ \\
  \hline  
\end{tabular}
\qquad
\directions{i}{i+1}
\]
Finally, for any $A \in \C_n$, $\psi_i A$ is the following composite:
\[
\renewcommand{\arraystretch}{1.5}
\psi_i A =   
\begin{tabular}{ | c | c | c |}
  \hline			
    $\cornerdr$ & $ A $ & $\cornerul$ \\
  \hline  
\end{tabular}
\qquad
\directions{i+1}{i}
\]

\end{nota}

\subsection{Equivalence between cubical and globular \texorpdfstring{$\omega$}{omega}-categories}

The functor $\gamma : \CCat \omega \to \Cat \omega$ was described in \cite{ABS02} as follows.

\begin{prop}
Let $\CC$ be a cubical category. The following assignment defines a globular $\omega$-category $\gamma \CC$ :
\begin{itemize}
\item The set of $n$-cells of $\gamma \CC$ is the set $ \Phi_n(\CC_n)$,
\item For all $A \in \gamma \CC_n$, $1_A := \epsilon_1 A$,
\item For all $A \in \gamma \CC_n$, $\s(A) := \partial_1^- A$,
\item For all $A \in \gamma \CC_n$, $\t(A) := \partial_1^+ A$,
\item For all $A,B \in \gamma \CC_n$ and $0 \leq k < n$, $A \bullet_k B := A \star_{n-k} B$.
\end{itemize}
\end{prop}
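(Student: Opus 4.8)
The plan is to verify that the five assignments — the cell sets $\Phi_n(\CC_n)$, the identities $\epsilon_1 A$, the sources $\partial_1^- A$, the targets $\partial_1^+ A$, and the composites $A\star_{n-k}B$ — together satisfy the axioms of a globular $\omega$-category: compatibility of $\s,\t$ with $\bullet_k$ (globular identities), associativity, unit laws, middle-four interchange, and compatibility of identities with composition. The essential point that makes everything work is that $\Phi_n$ produces cells that are already ``globular'' in the sense that all faces $\partial_j^\alpha$ with $j\geq 2$ are degenerate: concretely, for $A\in\Phi_n(\CC_n)$ one has $\partial_j^\alpha A \in \im\epsilon_1$ for all $j\neq 1$, and moreover the relevant compatibility $\partial_i^+ A = \partial_i^- B$ needed to form $\star_{n-k}$ holds whenever $\t_k A = \s_k B$ in the globular sense. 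I would first establish (or quote from the discussion around Definition \ref{defn:folding_op} and \cite{ABS02}) the key lemma: $\Phi_n$ lands in the ``globular part'' of $\CC_n$, $\Phi_n$ is idempotent-like on that part, and $\Phi_n(\CC_n)$ is closed under $\star_i$ for the single composition $\star_{n-k}$ at each level $k$, as well as under $\epsilon_1$.

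The key steps, in order, are: (1) Show $\s,\t : \Phi_n(\CC_n)\to\Phi_{n-1}(\CC_{n-1})$ are well-defined, i.e. $\partial_1^\alpha A\in\Phi_{n-1}(\CC_{n-1})$ when $A\in\Phi_n(\CC_n)$, using the interaction of $\partial_1^\alpha$ with the folding operators $\psi_i,\Psi_r,\Phi_m$. (2) Verify the globular source/target identities $\s\s = \s\t$, $\t\s=\t\t$ (in the $\omega$-categorical indexed form $\s_k\s_l = \s_k$ for $k<l$, etc.), which reduce to the cubical identities \eqref{eq:partial_vs_partial} among the $\partial_1^\alpha$ together with the degeneracy of higher faces. (3) Check that $A\bullet_k B := A\star_{n-k}B$ is defined exactly when $\t_k A = \s_k B$, by translating the cubical condition $\partial_{n-k}^+ A = \partial_{n-k}^- B$ through the folding; here the fact that all faces in directions $\geq 2$ are degenerate is what collapses the cubical matching condition to the globular one. (4) Deduce associativity and the unit laws for $\bullet_k$ directly from the corresponding cubical axioms for $\star_{n-k}$ and $\epsilon_1$. (5) Deduce the interchange law $\bullet_j$ vs.\ $\bullet_k$ from the cubical exchange axiom \eqref{eq:exchange}, noting that distinct globular levels $j\neq k$ correspond to distinct cubical directions $n-j\neq n-k$. (6) Check $1_{1_A} = \epsilon_1\epsilon_1 A$ behaves as an identity and that identities compose correctly, again from the cubical unit and degeneracy axioms.

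The main obstacle is step (3) together with step (1): one must show that the single cubical composition $\star_{n-k}$, restricted to the image of $\Phi_n$, really does behave as an $\bullet_k$-composition with the expected source and target, and in particular that the composite of two cells in $\Phi_n(\CC_n)$ again lies in $\Phi_n(\CC_n)$. This requires a careful bookkeeping of how $\Phi_n$, which is a long composite of $\psi_i$'s — each itself a $\star_{i+1}$-composite with connection cells $\Gamma_i^\pm$ — interacts with a given $\star_{n-k}$. The degeneracy of the higher faces (all $\partial_j^\alpha$ for $j\geq 2$ lying in $\im\epsilon_1$, i.e.\ being thin by Theorem \ref{thm:thin_cells}) is the lever: it makes the connection cells $\Gamma_i^\pm$ applied to these faces collapse to $\epsilon$-cells, so that $\Phi_n$ acts on an already-globular cell essentially as the identity, and composition in the cubical sense along direction $n-k$ coincides with pasting of globular cells. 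Once this is pinned down, the remaining axioms follow by routine translation of the cubical axioms in Definition \ref{def:CCat}, and I would not belabor those computations.
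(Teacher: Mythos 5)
First, note that the paper does not prove this proposition at all: it is imported verbatim from \cite{ABS02}, where the verification occupies several sections (the study of the folding operators and of the image of $\Phi$). So your proposal is being measured against that source rather than against anything in the present text. Your overall architecture is the right one and matches \cite{ABS02}: identify the image of $\Phi_n$ as the ``globular'' cells, show it is stable under $\partial_1^\pm$, $\epsilon_1$ and the compositions $\star_{n-k}$, translate the cubical composability condition into the globular one, and then read off the globular axioms from Definition \ref{def:CCat}.

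There is, however, a genuine gap in the key lemma you lean on. You characterise $\Phi_n(\CC_n)$ by the condition ``$\partial_j^\alpha A \in \im\epsilon_1$ for all $j\neq 1$''. That condition is necessary (it is the implication used in the proof of Proposition \ref{prop:caracterisation_omega_p}) but it is strictly weaker than the actual characterisation from \cite{ABS02}, which requires $\partial_j^\alpha A \in \im\epsilon_1^{\,j-1}$ for $2\le j\le n$; the two agree only for $n\le 2$. With only the weak form, your step (1) already fails: if $\partial_2^\alpha A=\epsilon_1 y$ with $y$ arbitrary, then $\partial_1^\alpha\partial_2^\alpha A = y$ need not be degenerate, so you cannot conclude that $\partial_1^\alpha A$ lies in $\Phi_{n-1}(\CC_{n-1})$. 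Likewise your step (3) --- that $\partial_{n-k}^{+}A=\partial_{n-k}^{-}B$ is equivalent to $\t_k A=\s_k B$, and that $A\star_{n-k}B$ lands back in $\Phi_n(\CC_n)$ --- is precisely the content of the theorem and is asserted rather than derived; it genuinely needs the iterated-degeneracy form of the characterisation (so that $\partial_{n-k}^{\pm}A$ is recovered as $\epsilon_1^{\,n-k-1}$ applied to the iterated globular source/target). If you strengthen the key lemma to the correct statement and prove (or quote) the stability of that class under $\partial_1^\pm$, $\epsilon_1$ and $\star_{n-k}$, the rest of your outline (associativity, units, interchange via \eqref{eq:exchange}) does go through as routinely as you claim.
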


To define the functor $\lambda : \Cat{\omega} \to \CCat{\omega}$, we start by constructing a co-cubical $\omega$-category object in $\Cat{\omega}$. This is a reformulation of \cite{ABS02}.

\begin{defn}\label{def:standard_cub_set}
Let $\mathbb I$ be the category with two $0$-cells $(-)$ and $(+)$ and one non-identity $1$-cell $(\pmzerodot)$:
\[
(\pmzerodot): (-) \to (+)
\]

We denote by $\cube{n}^{\G}$, and call the \emph{$n$-cube category} the globular $\omega$-category $\mathbb I^{\otimes n}$, where $\otimes$ is the Crans-Gray tensor product, which equips $\Cat{\omega}$ with a closed monoidal structure.
\end{defn}

\begin{ex}
For example $\cube{2}^\G$ is the free $2$-category with four $0$-cells, four generating $1$-cells and one generating $2$-cell, with source and targets given by the following diagram:

\[
\begin{tikzpicture}
\matrix (m) [matrix of math nodes, 
			nodes in empty cells,
			column sep = 2cm, 
			row sep = 2cm] 
{
(--) &  (+-) \\
(-+) &  (++) \\
};
\draw[-to] (m-1-1) to node [above] {$(\pmzerodot -)$} (m-1-2.west|-m-1-1);
\draw[-to] (m-2-1) to node [below] {$(\pmzerodot +)$} (m-2-2.west|-m-2-1);
\draw[-to] (m-1-1) to node [left] {$(- \pmzerodot)$} (m-2-1);
\draw[-to] (m-1-2) to node [right] {$(+ \pmzerodot)$} (m-2-2);
\doublearrow{arrows = {->}, shorten <= .3cm, shorten >= .3cm}
{(m-1-2) -- node [fill = white] {$A$} (m-2-1)}
\end{tikzpicture}
\]
\end{ex}

\begin{defn}
For $\alpha = \pm$, we denote by $\check \partial^\alpha : \top \to \mathbb I$ the functor sending the (unique) $0$-dimensional cell of $\top$ to $(\alpha)$, where $\top$ denotes the terminal category.

For any $n \geq 0$,  any $1 \leq i \leq n$ and any $\alpha = \pm$, we denote by $\check \partial_i^\alpha : \cube n^{\G} \to \cube{(n+1)}^\G$ the functor $\mathbb I^{i-1} \otimes \check \partial^\alpha \otimes \mathbb I^{n-i}$.
\end{defn}

\begin{defn}
We denote by $\check \epsilon : \cube 1^\G \to \cube 0^\G$ the (unique) functor from $\mathbb I$ to $\top$.

For any $n > 0$ and any $1 \leq i \leq n$, we denote by $\check \epsilon_i : \cube{(n-1)}^\G \to \cube n^\G$ the functor $\mathbb I^{\otimes(i-1)} \otimes \,\check \epsilon \otimes \mathbb I^{\otimes(n-i)}$.
\end{defn}

\begin{defn}
For $\alpha = \pm$, let $\check \Gamma^\alpha : \cube 2^\G \to \cube 1^\G$ be the functor defined as follows, where $\beta = -\alpha$:
\[
\begin{cases}
\check \Gamma^\alpha (\alpha \alpha) = (\alpha) \\
\check \Gamma^\alpha (\alpha \beta) = (\beta) \\
\check \Gamma^\alpha (\beta \alpha) = (\beta) \\
\check \Gamma^\alpha (\beta \beta) = (\beta) \\
\end{cases}
\qquad
\begin{cases}
\check \Gamma^\alpha (\pmzerodot \alpha) = (\pmzerodot) \\
\check \Gamma^\alpha (\pmzerodot \beta) = 1_{(\beta)} \\
\check \Gamma^\alpha (\alpha \pmzerodot) = (\pmzerodot) \\
\check \Gamma^\alpha (\beta \pmzerodot) = 1_{(\beta)} \\
\end{cases}
\qquad
\check \Gamma^\alpha (\pmzerodot \pmzerodot) = 1_{(\pmzerodot)}
\]

For any $n > 0$ any $1 \leq i \leq n$ and any $\alpha = \pm$, we denote by $\check \Gamma_i^\alpha : \cube{n}^\G \to \cube{(n+1)}^\G$ the functor $ \mathbb I^{\otimes(i-1)} \otimes \,\check \Gamma^\alpha \otimes \mathbb I^{\otimes(n-i)}$.
\end{defn}

\begin{defn}
We denote by $\mathbf{Rect^\G}$ the following coproduct in $\Cat{\omega}$:
\begin{equation}\label{eq:diag_for_Rect1}
\begin{tikzcd}
\cube{0}^\G  \ar[d, "\check \partial^-"'] \ar[r, "\check \partial^+"] \ar[dr, phantom, "\ulcorner", very near end] & \cube 1^\G \ar[d]\\
\cube 1^\G \ar[r] & \mathbf{Rect^\G}\\
\end{tikzcd}
\end{equation}

Explicitly, the $0$-cells of $\mathbf{Rect}^\G$ are elements $(\alpha_j)$, where $\alpha = \pm$ and $i = 1,2$, with the identification $(+_1) = (-_2)$. The $1$-cells of $\mathbf{Rect}^\ADC_0$ are freely generated by $(\pmzerodot_i): (-_i) \to (+_i)$, for $i = 1,2$.

For every $n >0$ and every $1 \leq i \leq n$, let $\Rect{(n,i)}^\G$ be the cubical $\omega$-category:
\[
\Rect{(n,i)}^\G := \mathbb I^{(i-1)} \otimes \; \mathbf{Rect}^\G \otimes \; \mathbb I^{(n-i)}
\]
\end{defn}

\begin{remq}
Since the monoidal structure on $\Cat{\omega}$ is biclosed \cite{ABS02}, $\Rect{(n,i)}^\G$ is the colimit of the following diagram:
\begin{equation}
\begin{tikzcd}\label{eq:diagram_for_Rectn}
\cube{(n-1)}^\G  \ar[d, "\check \partial_i^-"'] \ar[r, "\check \partial_i^+"] \ar[dr, phantom, "\ulcorner", very near end] & \cube n^\G \ar[d]\\
\cube n^\G \ar[r] & \Rect{(n,i)}^\G\\
\end{tikzcd}
\end{equation}
\end{remq}

\begin{defn}
We denote by $\check \star : \cube{1}^\G \to \mathbf{Rect}^\G$ the following functor:
\[
\begin{cases}
\check \star (-) = (-_1) \\
\check \star (+) = (+_2) 
\end{cases}
\qquad 
\check \star(\pmzerodot) = (\pmzerodot_1) \bullet_0 (\pmzerodot_2)
\]

For any $n > 0$ and any $1 \leq i \leq n$, we denote by $\check \star_i : \cube{n}^\G \to \Rect{(n,i)}^\G$ the functor $\mathbb I^{\otimes(i-1)} \otimes \; \check \star \otimes \; \mathbb I^{\otimes(n-i)}$.
\end{defn}

This result is a reformulation of Section 2 of \cite{ABS02}:
\begin{prop}
The objects $\cube{n}^\G$ equipped with the maps $\check \partial_i^\alpha$, $\check \epsilon_i$, $\check \Gamma_i^\alpha$ and $\check \star_i$ form a co-\emph{cubical $\omega$-category} object in the category $\Cat{\omega}$. 

Consequently, for $\C$ a globular $\omega$-category, the family $(\lambda \C)_n = \Cat{\omega}(\cube n^\G,\C)$ comes equipped with a structure of cubical $\omega$-category, that we denote by $\lambda \C$. This defines a functor $\lambda : \Cat{\omega} \to \CCat{\omega}$.
\end{prop}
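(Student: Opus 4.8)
The statement asserts two things: first, that the globular $\omega$-categories $\cube{n}^\G$ together with the specified functors $\check\partial_i^\alpha,\check\epsilon_i,\check\Gamma_i^\alpha,\check\star_i$ assemble into a co-cubical $\omega$-category object in $\Cat{\omega}$; and second, that applying the representable functor $\Cat{\omega}(-,\C)$ therefore yields a cubical $\omega$-category $\lambda\C$, functorially in $\C$. The second part is formal once the first is established: a co-cubical $\omega$-category object in $\Cat{\omega}$ is by definition a functor from (the syntactic category presenting) cubical $\omega$-categories into $\Cat{\omega}^{\op}$, and post-composing with the representable $\Cat{\omega}(-,\C):\Cat{\omega}^{\op}\to\Set$ gives a cubical $\omega$-category; the assignment $\C\mapsto\lambda\C$ is functorial because $\Cat{\omega}(-,\C)$ is functorial in $\C$ and composition of functors is. So the real content, and the main obstacle, is verifying the first part.

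For the first part, the plan is to check that the functors $\check\partial_i^\alpha,\check\epsilon_i,\check\Gamma_i^\alpha,\check\star_i$ satisfy the duals (i.e.\ the arrow-reversed versions, with the index bookkeeping of Definitions~\ref{def:CSet} and~\ref{def:CCat}) of all the cubical $\omega$-category axioms. I would organize this in two stages. First, establish the "single $\mathbb I$-factor" identities: the relations among $\check\partial^\alpha,\check\epsilon,\check\Gamma^\alpha,\check\star$ as functors between $\cube{0}^\G,\cube{1}^\G,\cube{2}^\G$ and $\mathbf{Rect}^\G$. These are finite checks on free (low-dimensional) globular $\omega$-categories: for instance $\check\epsilon\circ\check\partial^\alpha=\id_{\cube{0}^\G}$ is immediate since $\cube{0}^\G=\top$; the connection relations $\check\Gamma^\alpha\circ\check\partial_1^\alpha$-type equalities reduce to the defining case analysis of $\check\Gamma^\alpha$ on the eight generating cells of $\cube{2}^\G$; and the compatibility of $\check\star$ with faces and connections (the duals of the $\star_i$-axioms, including the exchange law~\eqref{eq:exchange} and the connection-composition laws~\eqref{eq:compo_Gamma_1},~\eqref{eq:compo_Gamma_2}) amounts to chasing where generators go under functors out of $\mathbf{Rect}^\G$, which is the explicit pushout~\eqref{eq:diag_for_Rect1}. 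Second, propagate each single-factor identity to the general indexed identity by tensoring with identity functors $\mathbb I^{\otimes k}$ on either side and using functoriality and (bi)associativity of the Crans--Gray tensor product $\otimes$; the shuffling of indices $i\mapsto i_j$, $j\mapsto j^i$ that appears in the axioms is exactly the combinatorics of inserting a one-object or two-object factor into a tensor word of length $n$, which is precisely what Definition~\ref{def:nota_indices} and Lemma~\ref{lem:permut_indices} were set up to handle.

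The step I expect to be the main obstacle is the verification of the composition axioms for $\check\star$ — in particular the dual of the interchange law~\eqref{eq:exchange} and the duals of~\eqref{eq:compo_Gamma_1}--\eqref{eq:gamma_of_comp} — because these involve $\mathbf{Rect}^\G$, which is not a cube but a pushout of two cubes, and because $\check\star$ sends the generating $1$-cell $(\pmzerodot)$ to a non-generator composite $(\pmzerodot_1)\bullet_0(\pmzerodot_2)$. One must check that the two ways of composing a $2\times 2$ array of rectangles agree, which ultimately rests on associativity and the interchange law already holding in the free globular $\omega$-categories $\Rect{(n,i)}^\G$, together with the fact that $\otimes$ is biclosed (so it preserves the relevant colimits in each variable, making $\Rect{(n,i)}^\G$ the colimit~\eqref{eq:diagram_for_Rectn}). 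Since the paper explicitly says this proposition "is a reformulation of Section~2 of \cite{ABS02}", I would in practice factor the argument through that reference: it suffices to observe that the $\cube{n}^\G$ defined here via $\mathbb I^{\otimes n}$ agree with the $M(I^n)$ of \cite{ABS02} (both are characterized by the same universal property relative to $\otimes$, or equivalently via Steiner's \cite{S04} directed complexes as remarked before Section~\ref{subsec:cubical_omega_p_categories}), and that the structure maps correspond, whence the co-cubical $\omega$-category axioms follow from \cite{ABS02}. This reduces the new work to the identification of the two descriptions, which is the honest mathematical core of the statement as presented here.
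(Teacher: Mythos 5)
Your proposal is correct and ends up exactly where the paper does: the paper offers no proof beyond the remark that the proposition ``is a reformulation of Section~2 of \cite{ABS02}'', so the honest content is precisely the identification of $\cube{n}^\G=\mathbb I^{\otimes n}$ with the $M(I^n)$ of that reference, which is the reduction you arrive at in your final paragraph. Your additional sketch of a direct verification (single-factor identities plus tensoring with $\mathbb I^{\otimes k}$, with the index bookkeeping of Definition~\ref{def:nota_indices}) is sound and goes beyond what the paper records, but it is consistent with, not divergent from, the paper's route.
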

Finally, the main result of \cite{ABS02} is the following:
\begin{thm}
The following functors form an equivalence of Categories:
\[
\begin{tikzpicture}
\matrix (m) [matrix of math nodes, 
			nodes in empty cells,
			column sep = 3cm, 
			row sep = .7cm] 
{
\Cat{\omega} & \CCat \omega \\
};
\draw[-to] (m-1-2) to [bend right=20] node (A) [above] {$\lambda$} (m-1-1);
\draw[-to] (m-1-1) to [bend right = 20] node (C) [below] {$\gamma$} (m-1-2);

\path (A) to node {$\cong$} (C); 
\end{tikzpicture}
\]
\end{thm}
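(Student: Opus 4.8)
\textit{Proof sketch.} The plan is to produce natural isomorphisms $\gamma\lambda \cong \id_{\Cat{\omega}}$ and $\lambda\gamma \cong \id_{\CCat{\omega}}$ and then upgrade them to an adjoint equivalence in the standard way. The starting point is an explicit handle on the $\omega$-categories $\cube{n}^\G = \mathbb I^{\otimes n}$: using the presentation of the Crans--Gray tensor power (equivalently, the augmented directed complex of the $n$-cube of \cite{S04}), one records a distinguished $n$-cell $\mathbf e_n$, the ``interior'' of the cube, such that every cell of $\cube{n}^\G$ is a composite of faces of $\mathbf e_n$ and their connections, and such that $\Cat{\omega}(\cube{n}^\G, \C)$ is generated by these data. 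The second structural input is the \emph{folding endofunctor}: for a globular $\omega$-category $\C$, the folding operation $\Phi_n \colon (\lambda\C)_n \to (\lambda\C)_n$ of Definition \ref{defn:folding_op} is precomposition with the $\omega$-endofunctor $\phi_n \colon \cube{n}^\G \to \cube{n}^\G$ classifying the folded cell $\Phi_n(\mathbf e_n)$ together with its faces (this makes sense because $\cube{n}^\G$ already carries all the composites and connections used in the definition of $\Phi_n$). Since $\Phi_n(\mathbf e_n)$ is globular in $\cube{n}^\G$, the functor $\phi_n$ factors as $s_n \circ q_n$ through the $n$-globe $\G_n$, where $s_n \colon \G_n \to \cube{n}^\G$ picks out the principal globular cell and $q_n \colon \cube{n}^\G \to \G_n$ is a retraction, so $q_n$ is a split epimorphism and $q_n s_n = \id_{\G_n}$.

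Granting this, the counit iso $\gamma\lambda\C \cong \C$ is quick. By construction $(\gamma\lambda\C)_n = \Phi_n\big(\Cat{\omega}(\cube{n}^\G,\C)\big) = \{\, F \circ s_n \circ q_n : F \in \Cat{\omega}(\cube{n}^\G,\C)\,\}$. Because $s_n$ is a split monomorphism, $F \mapsto F\circ s_n$ surjects onto $\Cat{\omega}(\G_n,\C)$, and because $q_n$ is a split epimorphism, precomposition with $q_n$ is injective; hence $(\gamma\lambda\C)_n$ is in natural bijection with $\Cat{\omega}(\G_n,\C) = \C_n$. It then remains to check that under this bijection the globular source/target ($\partial_1^\pm$) correspond to $\s,\t$, the identities ($\epsilon_1$) to $1_{(-)}$, and the composites $\star_{n-k}$ to $\bullet_k$; each of these reduces to a compatibility between $q_n$, the cocubical face maps $\check\partial_i^\alpha$, the map $\check\star_i$ restricted along $q_n$, and the structure maps of the $n$-globe, which one verifies directly. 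Naturality in $\C$ is immediate since everything is built from the cocubical operations on the $\cube{\bullet}^\G$.

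The crux is the unit iso $\lambda\gamma\CC \cong \CC$. Define $\eta_\CC \colon \CC \to \lambda\gamma\CC$ by sending $A \in \CC_n$ to the functor $\cube{n}^\G \to \gamma\CC$ that sends $\mathbf e_n$ to $\Phi_n(A)$ and, on each iterated face of $\mathbf e_n$, to the appropriately folded iterated face of $A$ (which lies in $\gamma\CC$ by definition of the globular part). Three things must be checked: (a) this assignment is a well-defined $\omega$-functor, i.e.\ respects the composition operations of $\mathbb I^{\otimes n}$; (b) $\eta_\CC$ commutes with $\partial_i^\alpha$, $\epsilon_i$, $\Gamma_i^\alpha$ and $\star_i$, so that it is a morphism of cubical $\omega$-categories; and (c) each component $\CC_n \to (\lambda\gamma\CC)_n$ is a bijection. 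For (c), injectivity follows because $A$ is recoverable from $\Phi_n(A)$ together with its shell $\bm\partial A$: the folding operators $\psi_i$ are composites of $A$ with the $\Gamma_i^\pm$ of its faces and can be inverted by an inductive ``unfolding'' using the cubical identities of Definition \ref{def:CCat}. Surjectivity is by induction on $n$: given $F \colon \cube{n}^\G \to \gamma\CC$, the inductive hypothesis applied to the faces of the cube reconstructs the shell of a prospective cell; $F(\mathbf e_n) = \Phi_n(B)$ for some $B \in \CC_n$ whose shell is compatible with that data by the definition of $\gamma\CC$; unfolding $\Phi_n(B)$ against the reconstructed shell yields the unique $A \in \CC_n$ with $\eta_\CC(A) = F$.

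I expect step (a) to be the main obstacle. It requires matching the composition structure of the Crans--Gray tensor power $\mathbb I^{\otimes n}$ -- which encodes interchange and shuffle data among the $n$ directions -- with iterated cubical composites $\star_i$ and the behaviour of the folding operators $\Phi_k$ under those composites; this is essentially the combinatorial content of Section 2 of \cite{ABS02}. In practice one organises it by induction on $n$, using the biclosedness of the tensor product and the pushout presentations of $\Rect{(n,i)}^\G$ to reduce functoriality to a finite list of identities relating $\Phi$, the connections $\Gamma_i^\pm$, the degeneracies $\epsilon_i$ and the composites $\star_i$, each of which is a consequence of the axioms of Definition \ref{def:CCat} together with the folding identities. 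Once (a)--(c) hold and naturality of $\eta$ and of the counit is recorded, the triangle identities follow (or, more directly, two naturally-defined componentwise isomorphisms between $\gamma\lambda$, $\lambda\gamma$ and the identities already suffice to conclude that $\lambda$ and $\gamma$ are mutually inverse equivalences); since the whole construction is a reformulation of \cite{ABS02}, one may alternatively appeal to the original verification. $\qed$
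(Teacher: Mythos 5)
The paper does not prove this theorem: it is quoted verbatim as the main result of \cite{ABS02}, so the only ``proof'' it offers is the citation. Your sketch should therefore be measured against the argument of \cite{ABS02} itself, and it does follow that strategy: realise the folding $\Phi_n$ on $\lambda\C$ as precomposition with an endofunctor $\phi_n$ of $\cube{n}^\G$ factoring through the $n$-globe (which gives $\gamma\lambda\cong\id$ fairly cheaply), then build the unit $\CC\to\lambda\gamma\CC$ by folding and invert it by unfolding. The counit half is essentially complete as you present it, granted the factorisation of $\phi_n$ through the globe, which holds because every face $\partial_j^\alpha\Phi_n(\mathbf e_n)$ with $j\neq 1$ is degenerate while $\partial_1^{\pm}\Phi_n(\mathbf e_n)$ are the globular source and target. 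The genuine gap is the one you flag yourself: steps (a)--(c) for the unit. Well-definedness of $\eta_\CC$ as an $\omega$-functor needs the identity expressing $\partial_1^{\pm}\Phi_n A$ as the prescribed globular composite of the folded faces $\Phi_{n-1}\partial_i^\alpha A$, together with the compatibility of $\Phi_n$ with the composites $\star_i$ that the tensor-power relations of $\mathbb I^{\otimes n}$ demand; bijectivity needs the ``unfolding'' identity writing $A$ as a composite of $\Phi_n A$ with connections of its faces. None of this is established in the present paper (which only recalls the unfolding identity in passing, e.g.\ in the proofs of Lemma \ref{lem:shell_and_invertiblility} and Proposition \ref{prop:carac_Ti_invertibility_inv}); it is the content of the technical core of \cite{ABS02}. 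So, as a self-contained argument, your proposal is a correct skeleton whose load-bearing combinatorial lemmas are deferred rather than proved; as a reconstruction of the cited proof it is faithful, and the closing fallback of appealing to \cite{ABS02} is precisely what the paper itself does.
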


\section{Invertible cells in cubical \texorpdfstring{$\omega$}{omega}-categories}
\label{sec:invertibility}

In this Section, we investigate three notions of invertibility in cubical $\omega$-categories. We start by defining in Section \ref{subsec:Ri_invertibility} the notion of $R_i$-invertibility, which is a direct cubical analogue of the usual notion of invertibilty with respect to a binary composition. In Section \ref{subsec:plain_invertibility} we define the notion of (plain) invertibility, which is specific to cubical $\omega$-categories, and relate it to $R_i$-invertibility. Finally in Section \ref{subsec:T_i_invertibility}, we define the notion of $T_i$-invertibility, a variant of the notion of $R_i$-invertibility using a kind of diagonal composition. 

\subsection{\texorpdfstring{$R_i$}{Ri}-invertibility}\label{subsec:Ri_invertibility}

We start by defining the notion of $R_i$-invertibility and prove a number of preliminary Lemmas. In particular we relate the $R_i$-invertibility of a cell to that of its shell and study the $R_i$-invertibility of thin cells. Those Lemmas will prove useful in Section \ref{subsec:plain_invertibility}. 

\begin{defn}
Let $\CC$ be a cubical $\omega$-category, and $1 \leq k \leq n$ be integers. We say that a cell $A \in \CC_n$ is \emph{$R_k$-invertible} if there exists $B \in \CC_n$ such that $A \star_k B = \epsilon_k \partial_k^- A$ and $B \star_k A = \epsilon_k \partial_k^+ A$. We call $B$ the $R_k$-inverse of $A$, and we write $R_kA$ for $B$. 

In particular, we say that $A \in \CC_n$ has an $R_k$-invertible shell if $\bm \partial A$ is $R_k$-invertible in $\Box_n \CC$.
\end{defn}

\begin{lem}\label{lem:faces_and_inverses}
Let $\CC$ be a cubical $n$-category, and $A \in (\Box \CC)_{n+1}$. Then $A$ is $R_i$-invertible if and only if for all $j \neq i$, $A_j^\alpha$ is $R_{i_j}$-invertible, and:
\begin{align*}
\partial_i^\alpha R_k A = 
\begin{cases}
\partial_k^{-\alpha} A & i = k \\
R_{k_i} \partial_{i}^\alpha A & i \neq k 
\end{cases}
\end{align*}

In particular, for $\CC$ a cubical $\omega$-categories, a cell $A \in \CC_n$ has an $R_i$-invertible shell if and only if $\partial_j^\alpha A$ is $R_{i_j}$-invertible for any $j \neq i$.
\end{lem}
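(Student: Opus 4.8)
The plan is to prove the two directions of the equivalence together, by induction on $n$, and to extract the formula for $\partial_i^\alpha R_k A$ as a byproduct of the computation. Fix $\CC$ a cubical $n$-category and $A \in (\Box\CC)_{n+1}$, so $A$ is the family $(A_j^\beta)_{1\le j\le n+1,\ \beta=\pm}$ of $n$-cells satisfying $\partial_{j_k}^\beta A_k^\alpha = \partial_{k_j}^\alpha A_j^\beta$. Recall from the description of $\Box\CC$ that $\star_i$ on $(\Box\CC)_{n+1}$ is computed componentwise: $(A\star_i B)_i^- = A_i^-$, $(A\star_i B)_i^+ = B_i^+$, and $(A\star_i B)_j^\beta = A_j^\beta \star_{i_j} B_j^\beta$ for $j\neq i$; and that the units are $(\epsilon_i C)_i^\beta = C$, $(\epsilon_i C)_j^\beta = \epsilon_{i_j}\partial_{j_i}^\beta C$ for $j\neq i$. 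Also $\partial_i^\alpha(\epsilon_i\partial_i^{-}A) = \partial_i^{-}A = A_i^{-}$ on the $i$-component and $\epsilon_{i_j}\partial_{j_i}^\beta\partial_i^{-}A$ on the others; I will need the identity $\partial_{j_i}^\beta\partial_i^{-}A = \partial_{j_i}^\beta A_i^{-} = \partial_{i_j}^{-}A_j^\beta$ (the shell condition) so that the $j$-component of $\epsilon_i\partial_i^{-}A$ equals $\epsilon_{i_j}\partial_{i_j}^{-}A_j^\beta$, which is exactly the unit for $\star_{i_j}$ acting on the $n$-cell $A_j^\beta$.

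First I would prove the ``only if'' direction. Suppose $A$ is $R_i$-invertible with inverse $B := R_i A$, so $A\star_i B = \epsilon_i\partial_i^{-}A$ and $B\star_i A = \epsilon_i\partial_i^{+}A$. Applying $\partial_i^\alpha$ is not available directly since the equations live in $(\Box\CC)_{n+1}$; instead I compare components. On the $j$-component for $j\neq i$: the left side gives $A_j^\beta \star_{i_j} B_j^\beta$, and by the computation above the right side gives $\epsilon_{i_j}\partial_{i_j}^{-}A_j^\beta = \epsilon_{i_j}\partial_{(i_j)}^{-}A_j^\beta$, i.e. the $\star_{i_j}$-unit on $\partial_i^{-}(A_j^\beta)$; together with the mirror equation from $B\star_i A$, this says precisely that $A_j^\beta$ is $R_{i_j}$-invertible with $R_{i_j}(A_j^\beta) = B_j^\beta = \partial_j^\beta(R_i A)$. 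For $j=i$: the $i$-components of the two defining equations read $A_i^{-} = \partial_i^{-}(R_i A)$... wait, more precisely $(A\star_i B)_i^{+} = B_i^{+} = \partial_i^{+}(R_i A)$ must equal $(\epsilon_i\partial_i^{-}A)_i^{+} = \partial_i^{-}A = A_i^{-}$, and symmetrically $\partial_i^{-}(R_i A) = A_i^{+}$. This establishes both the invertibility-of-faces claim and the displayed formula in the case $k=i$; the case $k\neq i$ ($i\neq k$, $\partial_i^\alpha R_k A = R_{k_i}\partial_i^\alpha A$) follows by the same componentwise reading applied with the roles adjusted, using Lemma~\ref{lem:permut_indices} to handle the reindexing $k_i$ versus $(k_j)_{i_j}$ etc.

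For the ``if'' direction I would run the construction in reverse: assuming each $A_j^\alpha$ ($j\neq i$) is $R_{i_j}$-invertible, I define a candidate family $B$ by $B_i^{-} := A_i^{+}$, $B_i^{+} := A_i^{-}$, and $B_j^\beta := R_{i_j}(A_j^\beta)$ for $j\neq i$. The work is to check that $B$ is a legitimate element of $(\Box\CC)_{n+1}$, i.e. satisfies the shell compatibility $\partial_{j_k}^\beta B_k^\alpha = \partial_{k_j}^\alpha B_j^\beta$ for all $k\neq j$; this splits into cases according to whether $i\in\{j,k\}$ or not, and in each case it reduces to applying the formula $\partial_i^\alpha R_k A = \dots$ one dimension down (the inductive hypothesis, i.e. Lemma~\ref{lem:faces_and_inverses} in dimension $n$) together with the shell condition for $A$ and the identities of Lemma~\ref{lem:permut_indices} for the index juggling. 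Once $B\in(\Box\CC)_{n+1}$ is verified, the equations $A\star_i B = \epsilon_i\partial_i^{-}A$ and $B\star_i A = \epsilon_i\partial_i^{+}A$ are checked componentwise exactly as above. The last sentence of the statement, about cubical $\omega$-categories, is then immediate: $\bm\partial A$ is by definition the shell $(\partial_j^\alpha A)$ in $\Box_n\CC = \Box\tr_n\CC$, so ``$A$ has an $R_i$-invertible shell'' unwinds to ``$\bm\partial A$ is $R_i$-invertible in $\Box_n\CC$'', which by the first part is equivalent to each $\partial_j^\alpha A$ ($j\neq i$) being $R_{i_j}$-invertible in $\tr_n\CC$, equivalently in $\CC$.

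The main obstacle I anticipate is purely bookkeeping: verifying the shell-compatibility of the candidate inverse family $B$ requires keeping straight three layers of reindexing — $(\_)_i$, $(\_)_{i,j}$, and the mismatch between $k_j$ applied before vs.\ after $i$ is shuffled — and making sure each use of the one-dimension-down face formula is applied with the correct subscript. Lemma~\ref{lem:permut_indices} (particularly $k^j_i = (k_{i_j})^{j_i}$ and the symmetry $k_{i,j}=k_{j,i}$) is exactly what is needed to close these gaps, so the argument should go through without surprises once the case analysis on $\{i\}\cap\{j,k\}$ is organized carefully.
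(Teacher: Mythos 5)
Your proposal is correct and follows essentially the same route as the paper: the forward direction is the identical componentwise computation of $(A\star_i B)_j^\beta$ against $(\epsilon_i\partial_i^- A)_j^\beta$ via the shell condition, which simultaneously yields the face formula (the $i=k$ case coming from the mere composability of $A$ with $R_kA$). The paper in fact only writes out that forward direction, so your explicit construction of the candidate inverse family $B$ and the verification of its shell compatibility supplies the converse the paper leaves implicit; note that the one-dimension-down face formula you invoke there is just the forward computation applied to genuine $n$-cells, so the induction on $n$ is not strictly needed.
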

\begin{proof}
Let $B$ be the $R_k$-inverse of $A$, and $i \neq k$. We have:
\begin{align*}
A_i^\alpha \star_{k_i} B_i^\alpha &= (A\star_k B)_i^\alpha = \partial_i^\alpha \epsilon_k A_k^- = \epsilon_{k_i} \partial_{i^k}^\alpha A_k^- = \epsilon_{k_i} \partial_{k_{i^k}}^- A_i^\alpha  = \epsilon_{k_i} \partial_{k_i}^- A_i^\alpha, \\
B_i^\alpha \star_{k_i} A_i^\alpha &= (B \star_k A)_i^\alpha  = \partial_i^\alpha \epsilon_k A_k^+  = \epsilon_{k_i} \partial_{i^k}^\alpha A_k^+  = \epsilon_{k_i} \partial_{k_{i^k}}^+ A_i^\alpha  = \epsilon_{k_i} \partial_{k_i}^+ A_i^\alpha . 
\end{align*}
Thus $B_i^\alpha $ is the $k_i$-inverse of $A_i^\alpha $, that is $\partial_i^\alpha R_k A = R_{k_i} \partial_i^\alpha A$.

Moreover, for the composite $A \star_k R_k A$ (resp. $R_k A \star_k A$) to make sense we necessarily have $\partial_k^- R_k A = \partial_k^+ A$ (resp. $\partial_k^+ R_k A = \partial_k^- A$).
\end{proof}

The following Lemma will be useful in order to compute the $R_i$-inverses of thin cells.

\begin{lem}\label{lem:carac_inverse_thin_cells}
Let $\CC$ be a cubical $\omega$-category, and let $A$ be a thin cell in $\CC_n$. We fix an integer $i \leq n$. If there exists a thin cell $B$ in $\CC_n$ such that $\partial_i^\alpha B = \partial_i^{-\alpha} A$, and for all $j \neq i$, $\partial_j^\alpha B = R_{i_j} \partial_j^\alpha A$, then $A$ is $R_i$-invertible, and $B = R_iA$.
\end{lem}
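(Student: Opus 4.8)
The plan is to verify directly that the hypothesised thin cell $B$ serves as the $R_i$-inverse of $A$, by checking the two defining equations $A \star_i B = \epsilon_i \partial_i^- A$ and $B \star_i A = \epsilon_i \partial_i^+ A$. The key observation is that all four cells appearing in these equations are thin: $A$ and $B$ are thin by hypothesis, and $\epsilon_i \partial_i^- A$, $\epsilon_i \partial_i^+ A$ are thin as images of $\epsilon_i$ (using Theorem \ref{thm:thin_cells}, which says composites of cells of the form $\epsilon_j f$ and $\Gamma_j^\alpha \! f$ are exactly the thin cells; $A \star_i B$ is then thin as a composite of thin cells). By the uniqueness part of Theorem \ref{thm:thin_cells} — two thin cells with the same shell are equal — it suffices to check that $A \star_i B$ and $\epsilon_i \partial_i^- A$ have the same shell, and similarly for the other equation.

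First I would compute the shell of $A \star_i B$ using the face formulas for $\star_i$ from Definition \ref{def:CCat}: one has $\partial_i^- (A \star_i B) = \partial_i^- A$, $\partial_i^+ (A \star_i B) = \partial_i^+ B$, and for $j \neq i$, $\partial_j^\alpha (A \star_i B) = \partial_j^\alpha A \star_{i_j} \partial_j^\alpha B$. Then I would compute the shell of $\epsilon_i \partial_i^- A$ using the $\epsilon$-face formula from Definition \ref{def:CSet}: $\partial_i^\alpha \epsilon_i \partial_i^- A = \partial_i^- A$ for both signs, and for $j \neq i$, $\partial_j^\alpha \epsilon_i \partial_i^- A = \epsilon_{i_j} \partial_{j_i}^\alpha \partial_i^- A = \epsilon_{i_j} \partial_i^{-?} \partial_j^\alpha A$ after commuting faces via \eqref{eq:partial_vs_partial_bis}. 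Matching the two shells reduces, in the direction $j = i$, to the identity $\partial_i^+ B = \partial_i^- A$ (which holds since $\partial_i^- A = \partial_i^{-}(\text{whatever}) $; more precisely $\partial_i^+ B = \partial_i^{-}A$ is part of the hypothesis "$\partial_i^\alpha B = \partial_i^{-\alpha}A$"), and for $j \neq i$ to showing $\partial_j^\alpha A \star_{i_j} \partial_j^\alpha B = \epsilon_{i_j} \partial_{i_j}^{-} \partial_j^\alpha A$; but this last equation is exactly one of the two equations expressing that $\partial_j^\alpha B = R_{i_j}\partial_j^\alpha A$ is the $R_{i_j}$-inverse of $\partial_j^\alpha A$ (it is the $R_{i_j}$-invertibility of $\partial_j^\alpha A$ with its given inverse). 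The second equation $B \star_i A = \epsilon_i \partial_i^+ A$ is handled symmetrically.

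The main obstacle is purely bookkeeping: correctly tracking the index shifts $i_j$, $j_i$ and the signs when commuting the face operations $\partial_i^\alpha$ past $\partial_j^\beta$ and $\epsilon_i$, so that the two shells genuinely match term by term. Lemma \ref{lem:permut_indices} and the symmetric form \eqref{eq:partial_vs_partial_bis} of the cubical identity are exactly the tools needed to make these manipulations go through cleanly, and one should also note that the composite $\partial_j^\alpha A \star_{i_j} \partial_j^\alpha B$ is well-defined precisely because $\partial_j^\alpha B = R_{i_j}\partial_j^\alpha A$ has the matching $(i_j)$-faces — itself a consequence of the hypothesis together with Lemma \ref{lem:faces_and_inverses}. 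Once the shell computation is set up with consistent notation, the verification is a routine check and no genuinely new idea is required beyond the thin-cell uniqueness principle.
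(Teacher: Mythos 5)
Your proposal is correct and takes essentially the same route as the paper's proof: both show that $A\star_i B$ (resp.\ $B\star_i A$) and $\epsilon_i\partial_i^- A$ (resp.\ $\epsilon_i\partial_i^+ A$) are thin cells with the same shell, computing the $j\neq i$ faces via the $R_{i_j}$-invertibility of $\partial_j^\alpha A$ and concluding by the uniqueness part of Theorem \ref{thm:thin_cells}. The only remaining work is the index bookkeeping you already identify, which goes through exactly as you describe.
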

\begin{proof}
Since $\partial_i^- B = \partial_i^+ A$, $A$ and $B$ are $i$-composable. Let us look at the cell $A \star_i B$. It is a thin cell, and it has the following shell:
\[
\partial_j^\alpha (A \star_i B) = \begin{cases}
\partial_i^- A = \partial_i^- \epsilon_i \partial_i^- A & j = i \text{ and } \alpha = - \\
\partial_i^+ B = \partial_i^- A = \partial_i^- \epsilon_i \partial_i^- A & j = i \text{ and } \alpha = + \\
\partial_j^\alpha A \star_{i_j} \partial_j^\alpha B =
\partial_j^\alpha A\star_{i_j} R_{i_j} \partial_j^\alpha A =
\epsilon_{i_j} \partial_{i_j}^- \partial_j^\alpha A =
\partial_j^\alpha \epsilon_i \partial_i^- A & j \neq i
\end{cases}
\]
Therefore, $A \star_i B$ and $\epsilon_i \partial_i^- A$ are two thin cells that have the same shell. By Theorem \ref{thm:thin_cells}, they are equal. The same computation with $B \star_i A$ leads to the equality $B \star_i A = \epsilon_i \partial_i^+ A$. Finally, $A$ is $R_i$-invertible, and $R_i A = B$.
\end{proof}

\begin{lem}\label{lem:closure_i_invertibility}
Let $\CC$ be a cubical $\omega$-category, and fix $A,B \in \CC_n$ and $1 \leq k \leq n$.
\begin{itemize}
\item For any $i \leq n$, if $A,B$ are $R_k$-invertible and $i$-composable, then $A \star_i B$ is $R_k$-invertible, and:
\begin{equation}\label{eq:Rk_of_star}
R_k(A \star_i B) = \begin{cases}
R_k A \star_i R_k B & i \neq k \\
R_k B \star_k R_k A & i = k
\end{cases}
\end{equation}
\item For any $i \leq n+1$, $\epsilon_i A$ is $R_i$-invertible and $R_i \epsilon_i A = \epsilon_i A$. Moreover if $A$ is $R_k$-invertible then $\epsilon_i A$ is also $R_{k^i}$ invertible, with 
\begin{equation}\label{eq:Rk_of_epsilon}
R_{k^i} \epsilon_i A = \epsilon_i R_k A
\end{equation}
\item For any $i \neq k$ and $\alpha = \pm$, if $A$ is $R_k$-invertible, then $\Gamma_i^\alpha A$ is $R_{k^i}$ invertible, and $\Gamma_k^\alpha A$ is both $R_k$ and $R_{k+1}$-invertible, and:
\begin{equation}\label{eq:Rk_of_Gamma1}
R_{k^i} \Gamma_i^\alpha A = \Gamma_i^\alpha R_k A
\end{equation}
\begin{equation}\label{eq:Rk_of_Gamma2}
R_k \Gamma_k^\alpha A = \begin{cases}
\epsilon_{k+1} R_k A \star_{k+1} \Gamma_k^+ A & \alpha = - \\
\Gamma_k^- A \star_{k} \epsilon_{k+1} R_k A & \alpha  = +
\end{cases} 
\qquad
R_{k+1} \Gamma_k^\alpha A = \begin{cases}
\epsilon_k R_k A \star_{k+1} \Gamma_k^+ A & \alpha = - \\
\Gamma_k^- A \star_{k+1} \epsilon_k R_k A & \alpha = +
\end{cases}
\end{equation}
\end{itemize}
\end{lem}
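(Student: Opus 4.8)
The plan is uniform across the three items of the lemma: in each case the statement already supplies a candidate for the inverse, so it suffices to check the two defining equations $X \star_j Y = \epsilon_j \partial_j^- X$ and $Y \star_j X = \epsilon_j \partial_j^+ X$. I would verify these by direct computation from the axioms of Definition \ref{def:CCat} — the interchange law \eqref{eq:exchange}, associativity, the unit laws, and the compatibilities $\epsilon_i(A \star_j B) = \epsilon_i A \star_{j^i} \epsilon_i B$ and $\Gamma_i^\alpha(A \star_j B) = \Gamma_i^\alpha A \star_{j^i} \Gamma_i^\alpha B$ — together with the cubical-set relations of Definition \ref{def:CSet} governing the interactions of $\partial_i^\alpha$, $\epsilon_i$ and $\Gamma_i^\alpha$. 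The index bookkeeping is handled throughout using Definition \ref{def:nota_indices} and Lemma \ref{lem:permut_indices}, and whenever a composite turns out to be thin I would appeal to Theorem \ref{thm:thin_cells}, which says a thin cell is determined by its shell.

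For the composition formula \eqref{eq:Rk_of_star} with $i \neq k$, the cells $A$, $R_k A$, $B$, $R_k B$ assemble into a $2 \times 2$ array, so the interchange law gives $(A \star_i B) \star_k (R_k A \star_i R_k B) = (A \star_k R_k A) \star_i (B \star_k R_k B) = \epsilon_k \partial_k^- A \star_i \epsilon_k \partial_k^- B$; this equals $\epsilon_k(\partial_k^- A \star_{i_k} \partial_k^- B) = \epsilon_k \partial_k^-(A \star_i B)$ once one applies the axioms for $\partial_i^\alpha$ and $\epsilon_i$ on a composite and the identity $(i_k)^k = i$, and the symmetric computation handles the second defining equation; the case $i = k$ is a telescoping argument using associativity and the unit law. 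For \eqref{eq:Rk_of_epsilon}, the equality $R_i \epsilon_i A = \epsilon_i A$ reduces to $\epsilon_i A \star_i \epsilon_i A = \epsilon_i A$, which is an instance of the unit law since $\partial_i^- \epsilon_i A = \partial_i^+ \epsilon_i A = A$, while $R_{k^i}\epsilon_i A = \epsilon_i R_k A$ follows by applying $\epsilon_i$ to $A \star_k R_k A = \epsilon_k \partial_k^- A$ and matching indices with the relation for $\epsilon_i \epsilon_j$. The connection formula \eqref{eq:Rk_of_Gamma1} for $i \neq k$ is proven the same way, by applying $\Gamma_i^\alpha$ to $A \star_k R_k A$ and using the relations for $\partial_i^\alpha \Gamma_j^\beta$ and $\Gamma_i^\alpha \epsilon_j$.

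The one genuinely delicate point, and the step I expect to be the main obstacle, is \eqref{eq:Rk_of_Gamma2}: that $\Gamma_k^\alpha A$ is both $R_k$- and $R_{k+1}$-invertible. Here the candidate inverses are themselves two-dimensional composites built from connections and degeneracies, so I would first check that the displayed composites are well-formed — that the faces being glued actually agree — using the relations for $\partial_i^\alpha \epsilon_j$ and $\partial_i^\alpha \Gamma_j^\beta$, and then establish the two inverse equations by expanding with the interchange law and the connection-composition relations \eqref{eq:compo_Gamma_1}, \eqref{eq:compo_Gamma_2}, \eqref{eq:gamma_of_comp}, collapsing the outcome via the unit laws and, where the surviving composite is manifestly thin, via Theorem \ref{thm:thin_cells} to identify it with $\epsilon_k$ or $\epsilon_{k+1}$ of the appropriate face. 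This manipulation of two-dimensional pasting diagrams for the connections is the only part that is not a mechanical consequence of the axioms; everything else follows the template of the previous paragraph.
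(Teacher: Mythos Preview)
Your plan for the first two items and for \eqref{eq:Rk_of_Gamma1} is essentially the paper's: write down the candidate inverse, check the two defining equations by interchange, the $\epsilon$- and $\Gamma$-compatibilities, and index bookkeeping. That part is fine.

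The gap is in \eqref{eq:Rk_of_Gamma2}. You correctly flag it as the delicate case and correctly plan to invoke Theorem~\ref{thm:thin_cells}, but you are missing one structural ingredient: an induction on the dimension $n$. Once you try to verify either defining equation for, say, $R_k\Gamma_k^- A$, you must compute the shell of a thin composite. In directions $k$ and $k{+}1$ this is indeed a two-dimensional pasting calculation with \eqref{eq:compo_Gamma_1}--\eqref{eq:gamma_of_comp}. But in every other direction $j\neq k,k{+}1$ one has
\[
\partial_j^\alpha\bigl(\epsilon_{k+1}R_kA \star_k \Gamma_k^+ A\bigr)
=\epsilon_{k_j+1}R_{k_j}\partial_{j_k}^\alpha A \star_{k_j} \Gamma_{k_j}^+ \partial_{j_k}^\alpha A,
\]
so the face of the candidate inverse is exactly the candidate for $R_{k_j}\Gamma_{k_j}^-(\partial_{j_k}^\alpha A)$ in dimension $n-1$. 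The verification therefore reduces to the same statement one dimension down; there is no way to close the argument purely in the $(k,k{+}1)$-plane.

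The paper organises this by first proving the helper Lemma~\ref{lem:carac_inverse_thin_cells}: for thin $A$, a thin $B$ with the correct shell is automatically $R_iA$. It then checks the shell of the candidate inverse directly for $j=k,k{+}1$ and, for $j\neq k,k{+}1$, invokes the induction hypothesis to recognise the face as $R_{k_j}\Gamma_{k_j}^-\partial_{j_k}^\alpha A$. Your approach via Theorem~\ref{thm:thin_cells} applied to the composite $\Gamma_k^\alpha A \star_k B$ is workable and essentially equivalent, but it needs the same induction; add it and your outline goes through.
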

\begin{proof}
Suppose $A$ and $B$ are $k$-invertible, and let $i \leq n$. If  $i \neq k$, Then we have: 
\begin{align*}
(R_k A \star_i R_k B) \star_k (A \star_i B) & = (R_k A \star_k A) \star_i (R_k B \star_k B) = \epsilon_k \partial_k^+ A \star_i \epsilon_k \partial_k^+ B = \epsilon_k \partial_k^+ (A \star_i B) \\
(A \star_i B) \star_k (R_k A \star_i R_k B) & = (A \star_k R_k A) \star_i (B \star_k R_k B) = \epsilon_k \partial_k^- A \star_i \epsilon_k \partial_k^- B = \epsilon_k \partial_k^- (A \star_i B).
\end{align*}
Thus $A \star_i B$ is $R_k$-invertible and $R_k(A \star_i B) = R_k A \star_i R_k B$. Suppose now that $i = k$. Then we have:
\begin{align*}
R_k B \star_k R_k A \star_k A \star_k B & = \epsilon_k \partial_k^+ B = \epsilon_k \partial_k^+ (A \star_k B) \\
A \star_k B \star_k R_k B \star_k R_k A & = \epsilon_k \partial_k^- A = \epsilon_k \partial_k^- (A \star_k B).
\end{align*}
So $A \star_k B$ is $R_k$-invertible, and $R_k (A \star_k B) = R_k B \star_k R_k A$.

Suppose $i \neq k$. Then we have:
\begin{align*}
\Gamma_i^\alpha A \star_{k^i} \Gamma_i^\alpha R_k A & = \Gamma_i^\alpha (A \star_k R_k A) = \Gamma_i^\alpha \epsilon_k \partial_k^- A = \epsilon_{k^i} \Gamma_{i_k}^\alpha \partial_k^- A = \epsilon_{k^i} \partial_{k^i}^- \Gamma_i^\alpha A \\
\Gamma_i^\alpha R_k A \star_{k^i} \Gamma_i^\alpha A & = \Gamma_i^\alpha (R_k A \star_k A) = \Gamma_i^\alpha \epsilon_k \partial_k^+ A = \epsilon_{k^i} \Gamma_{i_k}^\alpha \partial_k^+ A = \epsilon_{k^i} \partial_{k^i}^+ \Gamma_i^\alpha A
\end{align*}
Thus $\Gamma_i^\alpha A$ is $R_{k^i}$-invertible, and $R_{k^i} \Gamma_i^\alpha A = \Gamma_i^\alpha R_k A$.

Suppose now $i = k$, and $\alpha = -$. In order to show that $R_k \Gamma_k^- A = \epsilon_{k+1} R_k A \star_{k+1} \Gamma_k^+ A$, we are going to use Lemma \ref{lem:carac_inverse_thin_cells}. Note first that both $\Gamma_k^- A$ and $\epsilon_{k+1} R_k A \star_{k+1} \Gamma_k^+ A$ are thin, so we only need to check the hypothesis about the shell of $\epsilon_{k+1} R_k A \star_{k+1} \Gamma_k^+ A$. Note that the hypotheses on directions $k$ and $k+1$ are always satisfied:
\[
\partial_j^\alpha (\epsilon_{k+1} R_k A \star_{k} \Gamma_k^+ A) =
\begin{cases}
\epsilon_k \partial_k^- R_k A = \epsilon_k \partial_k^+ A = \partial_k^+ \Gamma_k^- A & j = k \text{ and } \alpha = - \\
\partial_k^+ \Gamma_k^+ A = A = \partial_k^- \Gamma_k^- A & j = k \text{ and } \alpha = + \\
R_k A \star_k \partial_{k+1}^- \Gamma_k^+ A = R_k A \star_k \epsilon_k \partial_k^- A = R_k A = R_k \partial_k^- \Gamma_k^- A & j = k+1 \text{ and } \alpha = - \\
R_k A \star_k \partial_{k+1}^+ \Gamma_k^+ A = R_k A \star_k A = \epsilon_k \partial_k^+ A = R_k \partial_k^+ \Gamma_k^- A & j = k+1 \text{ and } \alpha = +
\end{cases}
\]

As for the remaining directions, we reason by induction on $n$, the dimension of $A$. In the case in which $n = 1$ (and thus $k = 1$), there is no other direction to check and so $R_1 \Gamma_1^- = \epsilon_2 R_1 A \star_2 \Gamma_1^+ A$. Suppose now $n > 1$, and let $j \leq n+1$, with $j \neq k, k+1$. Then we have the following equalities (where the fourth one uses the induction hypothesis): 
\begin{align*}
\partial_j^\alpha (\epsilon_{k+1} R_k A \star_{k} \Gamma_k^+ A) & =
\partial_j^\alpha \epsilon_{k+1} R_k A \star_{k_j} \partial_j^\alpha \Gamma_k^+ A \\
& = \epsilon_{(k+1)_j} \partial_{j_{k+1}}^\alpha R_k A \star_{k_j} \Gamma_{k_j}^+ \partial_{j_k}^\alpha A \\
& = \epsilon_{k_j + 1}  R_{k_j} \partial_{j_k}^\alpha A \star_{k_j} \Gamma_{k_j}^+ \partial_{j_k}^\alpha A \\
& = R_{k_j} \Gamma_{k_j}^- \partial_{j_k}^\alpha A \\
& = R_{k_j} \partial_j^\alpha \Gamma_k^- A
\end{align*}
Thus by Lemma \ref{lem:carac_inverse_thin_cells}, $\Gamma_k^- A$ is $R_k$-invertible, and $R_k \Gamma_k^- A = \epsilon_{k+1} R_k A \star_{k+1} \Gamma_k^+ A$.

The proofs of the remaining three cases ($i = k$ with $\alpha = +$, and $i = k+1$ with $\alpha = \pm$) are similar.
\end{proof}

\begin{remq}
Note that  Lemma \ref{lem:closure_i_invertibility} shows in particular that, if $A$ is $R_k$-invertible, then $R_{k^i} \Gamma_i^\alpha$, $R_k \Gamma_k^\alpha A$ and $R_{k+1} \Gamma_k^\alpha A$ are thin. In particular, applying the Notation defined in \ref{nota:string_diagrams}, we get the equations:
\[
  R_k \: \fbox \cornerdr = \fbox \cornerdl
  \qquad
  R_{k+1} \: \fbox \cornerdr = \fbox \cornerur
  \qquad
  R_k \: \fbox \cornerul = \fbox \cornerur
  \qquad
  R_{k+1} \: \fbox \cornerul = \fbox \cornerdl
  \qquad
  \directions{k}{k+1}
\]
\end{remq}

\subsection{Plain invertibility}\label{subsec:plain_invertibility}

\begin{defn}\label{def:plain_invertibility}
We say that a cell $A \in \CC_n$ is \emph{invertible} if $\psi_1 \ldots \psi_{n-1} A$ is $R_1$-invertible.
\end{defn}

This Section is devoted to establishing the link between $R_i$-invertibility and (plain)
invertibility. This is achieved in Proposition \ref{prop:i_inv_implies_inv}. In order to do this, we relate in Lemmas \ref{lem:psi_and_i_inversibility} and
\ref{lem:shell_and_invertiblility} the $R_i$-invertibility of a cell $A$ with that of $\psi_j A$.

\begin{remq}
Let $\CC$ be a cubical $n$-category and $A \in (\Box \CC)_{n+1}$. Recall from \cite{H05} that for all $i \neq 1$, $\partial_i^\alpha \psi_1 \ldots \psi_n A \in \im \epsilon_1$, and therefore by Lemma \ref{lem:faces_and_inverses}, $\psi_1 \ldots \psi_n A$ is $R_1$-invertible. As a consequence, any $(n+1)$-cell in $\Box \CC$ is invertible.
\end{remq}

\begin{lem}\label{lem:psi_and_i_inversibility}
Let $\CC$ be a cubical $\omega$-category, and $A \in \CC_n$. Suppose $A$ is $R_j$-invertible for some $j \leq n$. Then :
\begin{itemize}
\item The $n$-cell $\psi_i A$ is $R_j$-invertible for any $i \neq j-1$.
\item The $n$-cell $\psi_{j-1} A$ is $R_{j-1}$-invertible
\end{itemize}
\end{lem}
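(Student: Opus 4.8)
The plan is to treat the two cases of the statement by completely different arguments, since in the first case the cell $\psi_i A$ is a composite of cells that are already known to be $R_j$-invertible, while in the second case its middle factor $A$ is not $R_{j-1}$-invertible, so an explicit inverse must be produced.

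For the first case ($i \neq j-1$, target $R_j$), recall $\psi_i A = \Gamma_i^+ \partial_{i+1}^- A \star_{i+1} A \star_{i+1} \Gamma_i^- \partial_{i+1}^+ A$, a $\star_{i+1}$-composite. Since $i \neq j-1$ we have $i+1 \neq j$, so Lemma \ref{lem:faces_and_inverses} gives that $\partial_{i+1}^\alpha A$ is $R_{j_{i+1}}$-invertible for $\alpha = \pm$. Applying the third item of Lemma \ref{lem:closure_i_invertibility} to $\partial_{i+1}^\alpha A$ shows that $\Gamma_i^\alpha \partial_{i+1}^\alpha A$ is $R_k$-invertible for the appropriate index $k$, and a short case analysis on the position of $i$ relative to $j$ (using the conventions of Definition \ref{def:nota_indices}: e.g. if $j \leq i$ then $j_{i+1} = j$, and either $i = j$, when $\Gamma_j^\alpha$ of an $R_j$-invertible cell is $R_j$- and $R_{j+1}$-invertible, or $i > j$, when $(j_{i+1})^i = j^i = j$; and if $j \geq i+2$ then $j_{i+1} = j-1$ and $(j-1)^i = j$) shows $k = j$ in every case. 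Thus $\psi_i A$ is a $\star_{i+1}$-composite of three $R_j$-invertible cells ($A$ by hypothesis, and the two connection cells just treated), hence $R_j$-invertible by the first item of Lemma \ref{lem:closure_i_invertibility}.

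For the second case ($i = j-1$, target $R_{j-1}$), write $f := \partial_j^- A$, $g := \partial_j^+ A$, so $\psi_{j-1}A = \Gamma_{j-1}^+ f \star_j A \star_j \Gamma_{j-1}^- g$ and $\psi_{j-1}(R_j A) = \Gamma_{j-1}^+ g \star_j R_j A \star_j \Gamma_{j-1}^- f$ (using $\partial_j^- R_j A = g$, $\partial_j^+ R_j A = f$). The candidate inverse is the cell
\[
C := \epsilon_{j-1}(\partial_{j-1}^- A) \star_j \psi_{j-1}(R_j A) \star_j \epsilon_{j-1}(\partial_{j-1}^+ A),
\]
which is well-defined: using the $\partial\epsilon$ and $\partial\Gamma$ axioms together with the cubical identities one checks that the two $\star_j$-composites make sense, and a direct computation (absorbing the units $\partial_{j-1}^- A \star_{j-1} R_{j-1}\partial_{j-1}^- A = \epsilon_{j-1}\partial_{j-1}^-\partial_{j-1}^- A$ etc., with Lemma \ref{lem:faces_and_inverses} applied to $R_j A$) gives $\partial_{j-1}^- C = \partial_{j-1}^+ \psi_{j-1}A$ and $\partial_{j-1}^+ C = \partial_{j-1}^- \psi_{j-1}A$, so that $\psi_{j-1}A \star_{j-1} C$ and $C \star_{j-1} \psi_{j-1}A$ are defined. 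It then remains to prove $\psi_{j-1}A \star_{j-1} C = \epsilon_{j-1}(\partial_{j-1}^-\psi_{j-1}A)$ and $C \star_{j-1} \psi_{j-1}A = \epsilon_{j-1}(\partial_{j-1}^+\psi_{j-1}A)$. By Theorem \ref{thm:thin_cells} it suffices to check that both members of each equation are thin and have the same shell; the right-hand members are visibly thin, the shell entries match after a routine computation with the cubical identities (and, for the faces in directions other than $j-1, j$, by the inductive hypothesis of the lemma applied to the faces $\partial_k^\alpha A$), and for the left-hand members one expands $\psi_{j-1}A$ and $C$ as $\star_j$-composites and rearranges the resulting two-dimensional composite using the interchange law \eqref{eq:exchange}, the connection-composition identities \eqref{eq:compo_Gamma_1}, \eqref{eq:compo_Gamma_2}, \eqref{eq:gamma_of_comp}, the unit laws, and the defining identities $A \star_j R_j A = \epsilon_j \partial_j^- A$, $R_j A \star_j A = \epsilon_j \partial_j^+ A$, so that the two non-thin cells $A$ and $R_j A$ become $\star_j$-adjacent and cancel, leaving a composite of cells of the form $\epsilon_i(\cdot)$ and $\Gamma_i^\alpha(\cdot)$, which is thin by Theorem \ref{thm:thin_cells}.

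The hard part is exactly this last rearrangement. Naively pairing the factors of the two $\star_j$-composites $\psi_{j-1}A$ and $C$ does not yield a valid instance of interchange, because the columns one would form are not $\star_{j-1}$-composable; one must first reshuffle the connection cells (using \eqref{eq:gamma_of_comp} and the $\partial\Gamma$ axioms) to bring $A$ and $R_j A$ into adjacent position. Carrying out this bookkeeping while keeping every intermediate composite well-formed is the delicate step, and it is cleanest to do it in the string-diagram notation of \ref{nota:string_diagrams}, where each connection and degeneracy becomes a decorated empty square and the cancellations are transparent. The verification of the second equation, $C \star_{j-1} \psi_{j-1}A = \epsilon_{j-1}(\partial_{j-1}^+\psi_{j-1}A)$, is entirely symmetric.
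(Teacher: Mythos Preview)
Your treatment of the first bullet is correct and in fact slightly cleaner than the paper's: the paper splits this case into $j\neq i,i+1$ (handled by the identity $\psi_i(A\star_j B)=\psi_iA\star_j\psi_iB$) and $j=i$ (handled as you do, as a composite of $R_j$-invertible cells), whereas your uniform argument via Lemma~\ref{lem:faces_and_inverses} and Lemma~\ref{lem:closure_i_invertibility} covers both at once.

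In the second bullet, however, there is a genuine gap. Your candidate
\[
C \;=\; \epsilon_{j-1}(\partial_{j-1}^- A)\;\star_j\;\psi_{j-1}(R_jA)\;\star_j\;\epsilon_{j-1}(\partial_{j-1}^+ A)
\]
is well-defined and does have the correct $\partial_{j-1}^\pm$-faces, but from there on you only \emph{describe} a strategy. Two specific problems:
\begin{itemize}
\item You invoke ``the inductive hypothesis of the lemma'' to match the shells in directions $k\neq j-1,j$, yet no induction on $n$ is ever set up. Such an induction can be made to work (one checks that $\partial_k^\alpha C$ is exactly the lower-dimensional $C$ built from $\partial_k^\alpha A$), but this is not what you wrote.
\item More importantly, the whole argument hinges on showing that $\psi_{j-1}A\star_{j-1}C$ is \emph{thin}. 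You correctly observe that naively pairing the three factors of $\psi_{j-1}A$ with the five factors of $C$ is not a valid instance of interchange, and you assert that a suitable reshuffling of connection cells makes $A$ and $R_jA$ $\star_j$-adjacent --- but you do not perform it. This reshuffling is precisely the substance of the proof, not a routine matter one can wave at.
\end{itemize}

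The paper handles this case differently: it writes down a larger, explicit $3\times 5$ composite $B$ (not your $C$; the outer columns are $\epsilon_j$-type rather than $\epsilon_{j-1}$-type, and there are extra rows of thin cells above and below) and then computes $\psi_{j-1}A\star_{j-1}B$ directly in the string-diagram calculus, step by step, until it reduces to $\epsilon_{j-1}\partial_{j-1}^-\psi_{j-1}A$. The point of the larger $B$ is exactly that the column boundaries line up from the start, so interchange applies immediately and the only manipulations needed are the connection identities \eqref{eq:compo_Gamma_1}, \eqref{eq:compo_Gamma_2} and \eqref{eq:gamma_of_comp}. Your $C$ and the paper's $B$ must of course coincide (inverses are unique), but proving that $C$ is an inverse requires essentially the same diagram chase, which you have not supplied.
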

\begin{proof}
Suppose first $j \neq i, i+1$. Then we have $\psi_i A \star_j \psi_i R_j A = \psi_i (A \star_j R_j A) = \psi_i \epsilon_j \partial_j^- A =  \epsilon_j \partial_j^- \psi_i A$, and also $\psi_i R_j A \star_j \psi_i  A = \psi_i (R_j A \star_j A) = \psi_i \epsilon_j \partial_j^+ A =  \epsilon_j \partial_j^+ \psi_i A$. Hence $\psi_i A$ is $R_j$-invertible, and $R_j \psi_i A = \psi_i R_j A$.

Suppose now $j = i$. Then $\psi_i A$ is a composite of $R_i$-invertible cells. As a consequence it is $R_i$-invertible. 

Suppose now $j = i+1$. Let $B$ be the following composite:

\[
\renewcommand{\arraystretch}{1.5}
  \begin{tabular}{ | c | c | c | c | c |}
  \hline			
     $\cornerdr$ & $\horiz$ & $\cornerdl$ & $\vertic$ & $\vertic$ \\
  \hline
      $\vertic$ & $\cornerdr$ &  $R_j A$ & $\cornerul$  & $\vertic$ \\
  \hline
      $\vertic$ & $\vertic$ & $\cornerur$ & $\horiz$ & $\cornerul $ \\
  \hline
\end{tabular}
\qquad
\directions{j}{j-1}
\]
The following computation shows that $B$ is the $R_{j-1}$-inverse of $\psi_{j-1} A$ (where empty squares denote thin cells whose faces are thin):

\begin{align*}
\psi_{j-1} A \star_{j-1} B & = 
\renewcommand{\arraystretch}{1.5}
  \begin{tabular}{ | c | c | c | c | c | c |}
    \hline			
      &  &  & $\cornerdr$ & $A$ & $\cornerul$ \\
  \hline			
     $\cornerdr$ & $\horiz$ & $\cornerdl$ & $\vertic$ & $\vertic$ & \\
  \hline
      $\vertic$ & $\cornerdr$ &  $R_j A$ & $\cornerul$  & $\vertic$ & \\
  \hline
      $\vertic$ & $\vertic$ & $\cornerur$ & $\horiz$ & $\cornerul$ & \\
  \hline
\end{tabular}
 = 
\renewcommand{\arraystretch}{1.5}
  \begin{tabular}{ | c | c | c | c | c | c |}
  \hline			
     $\cornerdr$ & $\horiz$ & $\cornerdl$ & $\cornerdr$ & $A$ & $\cornerul$ \\
  \hline
      $\vertic$ & $\cornerdr$ &  $R_j A$ & $\cornerul$  & $\vertic$ & \\
  \hline
      $\vertic$ & $\vertic$ & $\cornerur$ & $\horiz$ & $\cornerul$ & \\
  \hline
\end{tabular}
\qquad
\directions{j}{j-1} \\
& = 
\renewcommand{\arraystretch}{1.5}
  \begin{tabular}{ | c | c | c | c | c | c |}
  \hline			
     $\cornerdr$ & $\horiz$ & $\cornerdl$ & $\vertic$ & $\vertic$  \\
  \hline
      $\vertic$ & $\cornerdr$ &  $R_j A$ & $A$  & $\cornerul$  \\
  \hline
      $\vertic$ & $\vertic$ & $\cornerur$ & $\cornerul$ &  \\
  \hline
\end{tabular}
 = 
\renewcommand{\arraystretch}{1.5}
  \begin{tabular}{ | c | c | c | c | c | c |}
  \hline			
     $\cornerdr$ & $\horiz$ & $\cornerdl$ & $\vertic$ & $\vertic$  \\
  \hline
      $\vertic$ &  &  $\cornerur$ & $\cornerul$  &  $\vertic$  \\
  \hline
    $\vertic$ & $\cornerdr$ & $\horiz$ & $ \horiz$ &  $\cornerul$ \\
  \hline
\end{tabular}
\qquad
\directions{j}{j-1} \\
& = 
\renewcommand{\arraystretch}{1.5}
  \begin{tabular}{ | c | c |}
  \hline			
     $\vertic$ & $\vertic$  \\
  \hline
\end{tabular}
\qquad
\directions{j}{j-1} \quad = \epsilon_{j-1} \partial_{j-1}^- \psi_{j-1} A 
\end{align*}

A similar computation shows that $B \star_{j-1} \psi_{j-1} A = \epsilon_{j-1} \partial_{j-1}^+ \psi_{j-1} A$ and thus $\psi_{j-1} A$ is $R_{j-1}$-invertible.
\end{proof}

\begin{lem}\label{lem:shell_and_invertiblility}
Let $\CC$ be a cubical $\omega$-category, and $A \in \CC_n$ be an $n$-cell with an $R_j$-invertible shell for some $j \leq n$. Then:
\begin{itemize}
\item If $\psi_i A$ is $R_j$-invertible for some $i \neq j-1$, then $A$ is $R_j$-invertible. Moreover if  $R_j \psi_i A$ is thin then so is $R_j A$.
\item If $\psi_{j-1}$ A is $R_{j-1}$-invertible, then $A$ is $R_j$-invertible. Moreover if $R_{j-1} \psi_{j-1} A$ is thin then so is $R_j A$.
\end{itemize}
\end{lem}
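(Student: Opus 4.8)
The plan is to prove the lemma as a (partial) converse to Lemma~\ref{lem:psi_and_i_inversibility}, by \emph{exhibiting} an explicit $R_j$-inverse $B$ of $A$. First I would unpack the hypothesis: by Lemma~\ref{lem:faces_and_inverses}, $A$ has an $R_j$-invertible shell exactly when $\partial_k^\alpha A$ is $R_{j_k}$-invertible for every $k\neq j$; and by the same lemma, if $B=R_jA$ exists then necessarily $\partial_k^\alpha B=R_{j_k}\partial_k^\alpha A$ for $k\neq j$, and $\partial_j^-B=\partial_j^+A$, $\partial_j^+B=\partial_j^-A$. So the shell of the sought cell is completely forced, and all the pieces it will be assembled from ($R_{j_k}$-inverses of side faces of $A$, and connections and $\epsilon$-cells over them) are available.

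Next I would split according to the position of $i$. The case $i=j$, which is allowed in the first bullet, is the clean one: here $\psi_jA=\Gamma_j^+\partial_{j+1}^-A\star_{j+1}A\star_{j+1}\Gamma_j^-\partial_{j+1}^+A$, the faces $\partial_{j+1}^\pm A$ are $R_j$-invertible (since $j_{j+1}=j$), so by Lemma~\ref{lem:closure_i_invertibility} the two flanking connections are also $R_{j+1}$-invertible, whence $A=R_{j+1}(\Gamma_j^+\partial_{j+1}^-A)\star_{j+1}\psi_jA\star_{j+1}R_{j+1}(\Gamma_j^-\partial_{j+1}^+A)$; since $\psi_jA$ is $R_j$-invertible and, by \eqref{eq:Rk_of_Gamma2}, the two flanking $R_{j+1}$-inverses are $\star_{j+1}$-composites of $\Gamma_j$- and $\epsilon_j$-cells over $R_j$-invertible cells, \eqref{eq:Rk_of_star} yields that $A$ is $R_j$-invertible (and thin if $R_j\psi_jA$ is, by Theorem~\ref{thm:thin_cells}). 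In the remaining cases ($i<j-1$, $i>j$, or $i=j-1$) there is no such cheap cancellation: the flanking connections of $\psi_iA$ are still $R_j$-invertible --- here one uses that the index maps of Definition~\ref{def:nota_indices} satisfy $(j_{i+1})^i=j$ (cf.\ Lemma~\ref{lem:permut_indices}) --- but they are \emph{not} invertible for the composition direction that would isolate $A$. So I would instead write $B$ down explicitly, as a two-dimensional string-diagram composite of $R_j\psi_iA$ (resp.\ $R_{j-1}\psi_{j-1}A$) with connection cells and $\epsilon$-cells --- morally the ``transpose'' of the $3\times5$ diagram built in the proof of Lemma~\ref{lem:psi_and_i_inversibility} --- with shell as forced above, and verify $A\star_jB=\epsilon_j\partial_j^-A$ and $B\star_jA=\epsilon_j\partial_j^+A$ directly: the thin factors are pinned down by Theorem~\ref{thm:thin_cells}, the interchange law \eqref{eq:exchange} together with \eqref{eq:Rk_of_star}--\eqref{eq:Rk_of_Gamma2} reduces both identities to identities among connections, the faces in directions $j-1$, $j$, $j+1$ are checked by hand, and the higher-direction faces by induction on $n$, exactly as in Lemma~\ref{lem:psi_and_i_inversibility}.

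The ``moreover'' clauses then follow from the shape of $B$: every factor of $B$ other than $R_j\psi_iA$ (resp.\ $R_{j-1}\psi_{j-1}A$) is a connection or an $\epsilon$-cell, hence thin, so if $R_j\psi_iA$ (resp.\ $R_{j-1}\psi_{j-1}A$) is thin then $B$ is a composite of thin cells and thin by Theorem~\ref{thm:thin_cells}. The hard part is exactly this explicit construction away from $i=j$: guessing the correct shape of the diagram defining $B$, and carrying the two composition identities through the induction on dimension; as already for Lemma~\ref{lem:psi_and_i_inversibility}, the case $i=j-1$ --- where direction $j$ itself occurs inside the folding $\psi_{j-1}$ --- is the most delicate, both for the diagram and for keeping the index maps $(\_)_i$, $(\_)^i$ straight.
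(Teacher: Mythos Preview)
Your overall plan is sound, and for the second bullet ($i=j-1$) it matches the paper: one writes down an explicit $B$ built from $R_{j-1}\psi_{j-1}A$ together with connection- and $\epsilon$-cells, and verifies $A\star_jB=\epsilon_j\partial_j^-A$ and $B\star_jA=\epsilon_j\partial_j^+A$ by a direct string-diagram computation. (No induction on $n$ is needed here, contrary to what you suggest: the two identities reduce, after rearranging the thin pieces, to $\psi_{j-1}A\star_{j-1}R_{j-1}\psi_{j-1}A=\epsilon_{j-1}\partial_{j-1}^-\psi_{j-1}A$ and its mate, plus connection identities.)

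Where you make your life harder than necessary is the first bullet for $i\neq j,j-1$. Your claim that ``there is no such cheap cancellation'' is wrong: there is a uniform cheap argument, just not a one-directional cancellation. One uses the standard identity expressing $A$ itself as a two-dimensional composite of $\psi_iA$ with four thin cells,
\[
A \;=\;
\renewcommand{\arraystretch}{1.5}
\begin{tabular}{| c | c |}
\hline
$\epsilon_{i+1}\partial_i^-A$ & $\Gamma_i^+\partial_{i+1}^+A$\\ \hline
\multicolumn{2}{| c |}{$\psi_iA$}\\ \hline
$\Gamma_i^-\partial_{i+1}^-A$ & $\epsilon_{i+1}\partial_i^+A$\\ \hline
\end{tabular}
\qquad\directions{i+1}{i}
\]
and observes that, since $A$ has an $R_j$-invertible shell, each thin factor is $R_j$-invertible by Lemma~\ref{lem:closure_i_invertibility}: $\partial_i^\pm A$ is $R_{j_i}$-invertible and $\partial_{i+1}^\pm A$ is $R_{j_{i+1}}$-invertible, and the very index identity $(j_{i+1})^i=j$ you quote (together with $(j_i)^{i+1}=j$) gives $R_j$-invertibility after applying $\Gamma_i^\pm$ and $\epsilon_{i+1}$. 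As $\psi_iA$ is $R_j$-invertible by hypothesis and $R_j$-invertibles are closed under all compositions, $A$ is $R_j$-invertible, with no explicit $B$ to guess. The thinness clause likewise follows from the explicit formulas \eqref{eq:Rk_of_star}--\eqref{eq:Rk_of_Gamma2}. Your separate treatment of $i=j$ via $R_{j+1}$-cancellation is correct and is essentially this same decomposition read along direction $j+1$; it is in fact the one place where the $\epsilon_{i+1}\partial_i^\pm A$ factors are not directly covered by the shell hypothesis, so singling it out is reasonable.
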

\begin{proof}
Suppose $\psi_i A$ is $R_j$-invertible, with $i \neq j$. Recall that the following composite is equal to $A$
\[
\renewcommand{\arraystretch}{1.5}
\begin{tabular}{| c | c |}
\hline
$\epsilon_{i+1} \partial_i^- A$ &
$\Gamma_i^+ \partial_{i+1}^+ A$ \\ \hline
\multicolumn{2}{| c |}{$\psi_i A$} \\ \hline
$\Gamma_i^- \partial_{i+1}^- A$ & 
$\epsilon_{i+1} \partial_i^+ A$ \\ \hline
\end{tabular}
\]

Using the string notation for thin cells, this composite can be represented as follows:
\[
\renewcommand{\arraystretch}{1.5}
\begin{tabular}{| c | c |}
\hline
$\vertic$ &
$\cornerdr$ \\ \hline
\multicolumn{2}{| c |}{$\psi_i A$} \\ \hline
$\cornerul$ & 
$\vertic$ \\ \hline
\end{tabular}
\qquad
\directions{i+1}{i}
\]
This notation is ambiguous, since it does not specify which factorisations of $\partial_i^\alpha \psi_i A$ are used. However, we use the convention that in any diagram of this form, the standard factorisations $\partial_i^- \psi_i A = \partial_i^- A \star_i \partial_{i+1}^+ A$ and $\partial_i^+ \psi_i A =  \partial_{i+1}^- A \star_i \partial_{i}^+ A$ are used.

Since $A$ has an $R_j$-invertible shell, by Lemma \ref{lem:closure_i_invertibility}, every cell in this composite is $R_j$-invertible, and $A$ is $R_j$-invertible. Moreover if $R_j \psi_i A$ is thin, then the explicit formulas from Lemma \ref{lem:closure_i_invertibility} prove that $R_j A$ is thin.

Suppose now that $\psi_{j-1} A$ is $R_{j-1}$-invertible. We denote by $B$ the following composite:
\[
\renewcommand{\arraystretch}{1.5}
  \begin{tabular}{| c | c | c | c |}
  \hline			
  $\vertic$
  & 
  $\cornerdr$ & $\horiz$
  & $\horiz$ \\
  \hline
  $\vertic$
  & $\vertic$ & $\cornerdr$ & $\cornerdl$ \\
  \hline
  $\vertic$
    & \multicolumn{2}{| c |}{$R_{j-1} \psi_{j-1} A$} & $\vertic$ \\
  \hline
  $\cornerur$
    & $\cornerul$ & $\vertic$ & $\vertic$ \\
 \hline
  $\horiz$
    & $\horiz$ & $\cornerul$ & $\vertic$ \\
  \hline
\end{tabular}
\qquad
\directions{j}{j-1}
\]

We are going to show that $B$ is the $R_j$-inverse of $A$. Notice that if $R_{j-1} \psi_{j-1} A$ is thin, then $B$ is thin, using Lemma \ref{lem:closure_i_invertibility}. Let us evaluate the composite $A \star_j B$:
\begin{align*}
\renewcommand{\arraystretch}{1.5}
  \begin{tabular}{| c | c | c | c | c |}
  \hline	
  $\vertic$
  &		
  $\vertic$
  & 
  $\cornerdr$ & $\horiz$
  & $\horiz$ \\
  \hline
  $ \vertic$ & $ \vertic$
  & $\vertic$ & $\cornerdr$ & $\cornerdl$ \\
  \hline
   $ \vertic $ & $\vertic$
    & \multicolumn{2}{| c |}{$R_{j-1} \psi_{j-1} A$} & $\vertic$ \\
  \hline
  $\vertic$
  &
  $\cornerur$
    & $\cornerul$ & $\vertic$ & $\vertic$ \\
  \hline
  $A$
  &
  $\horiz$
    & $\horiz$ & $ \cornerul$ & $\vertic$ \\
  \hline
\end{tabular} & =
\renewcommand{\arraystretch}{1.5}
  \begin{tabular}{ | c | c | c | c |}
  \hline	
  \multirow{3}{*}{}
  & 
  $\cornerdr$ & $\horiz$
  & $\horiz$ \\
  \hline
  & $\vertic$ & $\cornerdr$ & $\cornerdl$ \\
  \hline
  & \multicolumn{2}{| c |}{$R_{j-1} \psi_{j-1} A$} & $\vertic$ \\
  \hline
  $\cornerdr$
    & $\cornerul$ & $\vertic$ & $\vertic$ \\
  \hline
  $A$
    & $\horiz$ & $\cornerul$ & $\vertic$ \\
  \hline
\end{tabular}
=
\renewcommand{\arraystretch}{1.5}
  \begin{tabular}{ | c | c | c | c |}
  \hline	
  \multirow{3}{*}{}
  & 
  $\cornerdr$ & $\horiz$ 
  & $\horiz$ \\
  \hline
  & $\vertic $ & $\cornerdr$ & $\cornerdl$ \\
  \hline
  & \multicolumn{2}{| c |}{$R_{j-1} \psi_{j-1} A$} & $\vertic$ \\
  \hline
   $\cornerdr$ & $A$ & $\cornerul$ & $\vertic$ \\
  \hline
   $\cornerul$ &  $\vertic$ &  & $\vertic$ \\
  \hline
\end{tabular}
\qquad
\directions{j}{j-1} \\ &
=
\renewcommand{\arraystretch}{1.5}
  \begin{tabular}{ | c | c | c |}
  \hline	
  $\cornerdr$ & $\horiz$
  & $\horiz$ \\
  \hline
$\vertic$ & $\cornerdr$ & $\cornerdl$ \\
  \hline
 \multicolumn{2}{| c |}{$R_{j-1} \psi_{j-1} A$} & $\vertic$ \\
  \hline
     \multicolumn{2}{| c |}{$\psi_{j-1} A$} & $\vertic$ \\
  \hline
   $\cornerul$ &  $\vertic$  & $\vertic$ \\
  \hline
\end{tabular}
=
\renewcommand{\arraystretch}{1.5}
  \begin{tabular}{ | c | c | c |}
  \hline	
  $\cornerdr$ & $\horiz$
  & $\horiz$ \\
  \hline
$\vertic$ & $\cornerdr$ & $\cornerdl$ \\
  \hline
 \multicolumn{2}{| c |}{$R_{j-1} \psi_{j-1} A$} & $\vertic$ \\
  \hline
     \multicolumn{2}{| c |}{$\psi_{j-1} A$} & $\vertic$ \\
  \hline
   $\cornerul$ &  $\vertic$  & $\vertic$ \\
  \hline
\end{tabular}
\qquad
\directions{j}{j-1} \\ & = 
\renewcommand{\arraystretch}{1.5}
  \begin{tabular}{ | c | c | c |}
  \hline	
  $\cornerdr$ & $\horiz$
  & $\horiz$ \\
  \hline
$\vertic$ & $\cornerdr$ & $\cornerdl$ \\
  \hline
   $\cornerul$ &  $\vertic$  & $\vertic$ \\
  \hline
\end{tabular}
\qquad
\directions{j}{j-1} \\ &
= \epsilon_{j} \partial_j^- A
\end{align*}

The evaluation of $B \star_j A$ is similar.
\end{proof}

\begin{prop}\label{prop:i_inv_implies_inv}
Let $\CC$ be a cubical $\omega$-category, $A \in \CC_n$ and $1 \leq j \leq n$. A cell $A \in \CC_n$ is $R_j$-invertible if and only if $A$ is invertible and has an $R_j$-invertible shell. Moreover if $A$ is thin, then so is its $R_j$-inverse.
\end{prop}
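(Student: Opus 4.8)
The plan is to prove the two implications separately and to read off the thinness statement along the way. Write $\Phi A := \psi_1\cdots\psi_{n-1}A$ for the fully folded cell, so that by Definition \ref{def:plain_invertibility} ``$A$ is invertible'' means ``$\Phi A$ is $R_1$-invertible''; also set $C_m := \psi_m\psi_{m+1}\cdots\psi_{n-1}A$ for $1\le m\le n-1$ and $C_n:=A$, so that $C_m=\psi_mC_{m+1}$ and $C_1=\Phi A$. The book-keeping device used in both directions is that, by Lemma \ref{lem:psi_and_i_inversibility}, applying $\psi_i$ to an $R_m$-invertible cell gives an $R_m$-invertible cell when $i\ne m-1$ and an $R_{m-1}$-invertible cell when $i=m-1$; hence, starting from an $R_j$-invertible cell and running $\psi_{n-1},\psi_{n-2},\dots,\psi_1$ in turn, the cell obtained after $\psi_i$ is $R_{\min(i,j)}$-invertible, so the index travels $j\to j\to\cdots\to j\to j-1\to j-2\to\cdots\to 1$.

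First I would treat the forward implication. If $A$ is $R_j$-invertible then, applying $\partial_k^\alpha$ ($k\ne j$) to the equations $A\star_jR_jA=\epsilon_j\partial_j^-A$ and $R_jA\star_jA=\epsilon_j\partial_j^+A$ and using $\partial_k^\alpha(X\star_jY)=\partial_k^\alpha X\star_{j_k}\partial_k^\alpha Y$, each face $\partial_k^\alpha A$ is $R_{j_k}$-invertible, so by Lemma \ref{lem:faces_and_inverses} $A$ has an $R_j$-invertible shell. And $A$ is invertible: by the book-keeping above, $C_1=\Phi A$ is $R_{\min(1,j)}$-invertible, that is $R_1$-invertible.

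For the converse, suppose $A$ is invertible and has an $R_j$-invertible shell. I would run the book-keeping backwards along $C_1,C_2,\dots,C_n$, using Lemma \ref{lem:shell_and_invertiblility} to climb one $\psi$ at a time: if $C_{m+1}$ has an $R_{\min(m+1,j)}$-invertible shell and $C_m=\psi_mC_{m+1}$ is $R_{\min(m,j)}$-invertible, then the first bullet of Lemma \ref{lem:shell_and_invertiblility} (when $m\ge j$, so $\min(m,j)=\min(m+1,j)=j$ and $m\ne j-1$) and its second bullet (when $m<j$, taking the role of ``$j$'' there to be $m+1$) both yield that $C_{m+1}$ is $R_{\min(m+1,j)}$-invertible; since $C_1=\Phi A$ is $R_1$-invertible, this propagates to $C_n=A$ being $R_{\min(n,j)}=R_j$-invertible. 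The point that makes this work, and the one that needs real checking, is the preliminary claim that each $C_m$ has an $R_{\min(m,j)}$-invertible shell: the faces of $\psi_i(-)$ are either in $\im\epsilon_i$, or $\star_i$-composites of faces, or of the form $\psi_{i_k}\bigl(\partial_k^\alpha(-)\bigr)$, so the $R$-invertibility of the faces of $A$ propagates to the faces of every $C_m$ by Lemmas \ref{lem:closure_i_invertibility} and \ref{lem:psi_and_i_inversibility} used one dimension down, with the index controlled by exactly the same arithmetic, and Lemma \ref{lem:faces_and_inverses} then repackages this as an invertible shell.

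Finally, for the moreover: if $A$ is thin then each $C_m$ is a composite of cells of the form $\Gamma_i^\alpha(-)$, $\epsilon_i(-)$ and the argument, hence thin by Theorem \ref{thm:thin_cells}; in particular $C_1=\Phi A\in\im\epsilon_1$, so $R_1\Phi A=\Phi A$ by Lemma \ref{lem:closure_i_invertibility} and is thin, and feeding this through the ``moreover'' clauses of Lemma \ref{lem:shell_and_invertiblility} up the chain $C_1,\dots,C_n$ gives that $R_jA$ is thin. The hard part is the converse direction, specifically the claim that ``having an $R$-invertible shell'' is preserved under the folding operations with the correct single index-drop at $\psi_{j-1}$; granting that, the statement is just an iterated application of Lemmas \ref{lem:psi_and_i_inversibility} and \ref{lem:shell_and_invertiblility}.
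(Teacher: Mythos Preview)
Your proof is correct and follows essentially the same approach as the paper: both directions iterate along the chain $C_m=\psi_m\cdots\psi_{n-1}A$, using Lemma~\ref{lem:psi_and_i_inversibility} to push $R$-invertibility forward (with the index dropping by one exactly when passing through $\psi_{j-1}$) and Lemma~\ref{lem:shell_and_invertiblility} to pull it back, and the thinness claim is read off from the ``moreover'' clauses of the latter. The only cosmetic difference is that the paper establishes the shell-invertibility of the intermediate $C_m$ by applying Lemma~\ref{lem:psi_and_i_inversibility} directly in $\Box_{n-1}\CC$ (since $\bm\partial$ commutes with $\psi_i$), whereas you unpack this into an explicit description of the faces of $\psi_i(-)$; your $\min(m,j)$ bookkeeping is a clean way to phrase what the paper does in two separate ranges $k\ge j$ and $k\le j$.
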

\begin{proof}
Suppose first that $A$ is $R_j$-invertible. Then its shell is $R_j$-invertible, and for all $i \geq j$, $\psi_i \ldots \psi_{n-1} A$ is $R_j$-invertible. Repeated applications of Lemma \ref{lem:Ti_inverses_in_shells} show that $\psi_{j} \ldots \psi_{n-1} A$ is $R_j$-invertible. As a result (still by   Lemma \ref{lem:Ti_inverses_in_shells}), $\psi_{j-1} \ldots \psi_{n-1} A$ is $R_{j-1}$-invertible. Inductively we show that for any $i \leq j$, $\psi_i \ldots \psi_{n-1} A$ is $R_i$-invertible. Finally, we get that $\psi_1 \ldots \psi_{n-1} A$ is $R_1$-invertible, in other words that $A$ is invertible.

Suppose now that $A$ is invertible and has an $R_j$-invertible shell. By multiple applications of Lemma \ref{lem:psi_and_i_inversibility}, we get that $\psi_k \ldots \psi_{n-1} A$ has an $R_j$-invertible shell, for $k \geq j$, and an $R_k$-invertible one for $k \leq j$. Applying Lemma \ref{lem:shell_and_invertiblility} multiple times, we get that for all $k \leq j$, $\psi_k \ldots \psi_{n-1} A$ is $R_k$-invertible, and finally that for all $k \geq j$, $\psi_k \ldots \psi_{n-1} A$ is $R_j$-invertible. In particular for $k = n$, $A$ is $R_j$-invertible. 

Finally if $A$ is thin, then $\psi_1 \ldots \psi_{n-1} A \in \im \epsilon_1$ and so $R_1 \psi_1 \ldots \psi_{n-1} A = \psi_1 \ldots \psi_{n-1} A $ is thin. Multiple applications of Lemma \ref{lem:shell_and_invertiblility} imply that $R_j A$ is thin. 
\end{proof}

Finally, the following Lemma will be useful in Proposition \ref{prop:carac_Ti_invertibility_inv}:

\begin{lem}\label{lem:thin_and_composite_invertible}
The composite of two invertible cells is also invertible.
\end{lem}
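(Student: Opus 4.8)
The plan is to reduce the statement about composites to the characterization of plain invertibility already available, namely Proposition \ref{prop:i_inv_implies_inv} together with Lemma \ref{lem:psi_and_i_inversibility} and Lemma \ref{lem:closure_i_invertibility}. Let $A, B \in \CC_n$ be invertible and $k$-composable for some $1 \leq k \leq n$ (if they are not composable in any direction there is nothing to prove). We must show $A \star_k B$ is invertible, i.e.\ that $\psi_1 \cdots \psi_{n-1}(A \star_k B)$ is $R_1$-invertible.

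The first step is to observe that since $A$ and $B$ are invertible $n$-cells, their shells need not be invertible, so we cannot directly invoke Proposition \ref{prop:i_inv_implies_inv}; instead I would argue through the folding operators. For each $i$, the operator $\psi_i$ commutes with $\star_k$ up to a change of index in the sense made explicit in the proof of Lemma \ref{lem:psi_and_i_inversibility}: $\psi_i(A \star_k B)$ is a composite (in direction $k$ or an adjacent one) of cells built from $\psi_i A$, $\psi_i B$ and connections/degeneracies of their faces. Iterating, $\Phi_n(A \star_k B) = \psi_1 \cdots \psi_{n-1}(A \star_k B)$ can be expressed as a composite of $\Phi_n A = \psi_1 \cdots \psi_{n-1} A$, $\Phi_n B = \psi_1 \cdots \psi_{n-1} B$, and thin cells (degeneracies and connections applied to their faces), composed in direction $1$. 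Concretely, after globularization the composite $A \star_k B$ becomes a composite of the globularized cells $\Phi_n A$ and $\Phi_n B$ along direction $1$, padded by thin cells; this is the cubical reflection of the fact that in $\gamma\CC$ the product $A \bullet_{n-k} B$ corresponds to $\star_{n-k}$.

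The second step is then purely about $R_1$-invertibility of this direction-$1$ composite. By hypothesis $\Phi_n A$ and $\Phi_n B$ are $R_1$-invertible. The thin padding cells are $R_1$-invertible as well: each is of the form $\epsilon_i f$ or $\Gamma_i^\alpha f$ applied to faces that are themselves thin or are globularized lower cells, so Lemma \ref{lem:closure_i_invertibility} (and Proposition \ref{prop:i_inv_implies_inv} for thin cells, which are invertible with thin inverse) gives their $R_1$-invertibility. Finally, by the first bullet of Lemma \ref{lem:closure_i_invertibility}, a composite of $R_1$-invertible cells is $R_1$-invertible, so $\Phi_n(A \star_k B)$ is $R_1$-invertible, which is exactly the statement that $A \star_k B$ is invertible.

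The main obstacle I anticipate is bookkeeping: verifying precisely how $\psi_1 \cdots \psi_{n-1}$ distributes over $\star_k$ and that all the intervening padding cells are indeed thin with $R_1$-invertible (thin) inverses. This amounts to carefully tracking index shifts ($k_j$, $k^i$, etc.) through the folding operators, much as in the proof of Lemma \ref{lem:psi_and_i_inversibility}; alternatively one can bypass the explicit combinatorics by invoking the equivalence $\gamma$ with globular $\omega$-categories (where composites of invertible cells are trivially invertible) together with the fact that $\Phi_n$ identifies the $n$-cells of $\gamma\CC$, but the self-contained route through Lemma \ref{lem:closure_i_invertibility} is more in keeping with the rest of this section.
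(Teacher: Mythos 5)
Your target is the right one (show $\Phi_n(A\star_k B)$, equivalently $\psi_1\cdots\psi_{n-1}(A\star_k B)$, is $R_1$-invertible, using closure of $R_1$-invertibility under composition from Lemma \ref{lem:closure_i_invertibility}), but the pivotal step --- that the padding cells produced by distributing the folding operators are $R_1$-invertible --- is not justified, and the justification you sketch is false. Thin cells are \emph{not} $R_1$-invertible in general: by Proposition \ref{prop:i_inv_implies_inv} a thin cell is $R_1$-invertible exactly when its shell is, and for instance $\Gamma_1^- f$ with $f$ a non-invertible $1$-cell is thin while $\partial_2^-\Gamma_1^- f = f$ is not $R_1$-invertible. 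Similarly Lemma \ref{lem:closure_i_invertibility} only yields that $\epsilon_i X$ is $R_i$-invertible, not $R_1$-invertible. This matters because the padding cells that actually arise when you distribute a folding operator over a composite are exactly of this problematic kind: for example
\[
\psi_{i-1}(A\star_{i-1}B) \;=\; \bigl(\psi_{i-1}A\star_i \epsilon_{i-1}\partial_i^+B\bigr)\star_{i-1}\bigl(\epsilon_{i-1}\partial_i^- A\star_i\psi_{i-1}B\bigr),
\]
whose padding $\epsilon_{i-1}\partial_i^{\pm}(\cdot)$ is only $R_{i-1}$-invertible, and the faces of $A$ and $B$ carry no invertibility hypothesis at all. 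So ``iterate and conclude that all padding is $R_1$-invertible'' does not go through as stated.

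The paper closes this gap with a graded induction rather than an all-at-once decomposition: it sets $E_i=\{X\in\CC_n : \psi_1\cdots\psi_{i-1}X \text{ is } R_1\text{-invertible}\}$, observes that $E_i$ contains every $R_i$-invertible cell (Lemma \ref{lem:psi_and_i_inversibility}) and that $E_n$ is the set of invertible cells, and proves by induction on $i$ that each $E_i$ is closed under all compositions using the displayed distribution formulas; the $R_{i-1}$-invertible padding then only needs to lie in $E_{i-1}$, which it does, and no claim of $R_1$-invertibility of intermediate padding is ever needed. If you insist on your route, you would have to invoke the precise decomposition of $\Phi_n(A\star_k B)$ into $\Phi_n A$, $\Phi_n B$ and cells of the form $\epsilon_1^{n-m}\Phi_m DA$, $\epsilon_1^{n-m}\Phi_m DB$ (Proposition 6.8 of \cite{ABS02}, which this paper only uses later, in Proposition \ref{prop:commutation_up_to_iso}); that padding genuinely lies in $\im\epsilon_1$ and hence is $R_1$-invertible, and although the resulting composite is not taken only in direction $1$, Lemma \ref{lem:closure_i_invertibility} closes $R_1$-invertibility under composition in every direction, so that part is harmless. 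Either way, the step you deferred as ``bookkeeping'' is the actual content of the proof.
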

\begin{proof}
Let $1 \leq i \leq n$, and let $E_i$ be the set of all cells $A \in \CC_n$ such that  $\psi_1 \ldots \psi_{i-1} A$ is $R_1$-invertible. Note first that $E_i$ contains all $R_i$-invertible cells by Lemma \ref{lem:psi_and_i_inversibility} and that $E_n$  is the set of all invertible cells.  We are going to show by induction on $i$ that $E_i$ is closed under composition, for $1 \leq i \leq n$.

For $i  = 1$, $E_1$ is the set of all $R_1$-invertible cells, which is closed under composition by Lemma \ref{lem:closure_i_invertibility}. Suppose now $i > 1$. Take $A,B \in E_i$. We have:
\[
\psi_{i-1} (A \star_j B) = 
\begin{cases}
\psi_{i-1} A \star_j \psi_{i-1} B & j \neq i,i-1 \\
(\psi_{i-1} A \star_i \epsilon_{i-1} \partial_{i}^+ B) \star_{i-1} (\epsilon_{i-1} \partial_{i}^- A \star_i \psi_{i-1} B) & j = i-1 \\
(\epsilon_{i-1} \partial_{i-1}^- A \star_i \psi_{i-1} B) \star_{i-1} (\psi_{i-1} A \star_i \epsilon_{i-1} \partial_{i-1}^+ B) & j = i
\end{cases}
\]

Note that:
\begin{itemize}
\item Since $\psi_1 \ldots \psi_{i-1} A$ and $\psi_1 \ldots \psi_{i-1} B$ are $R_1$-invertible, $\psi_{i-1} A$ and $\psi_{i-1} B$ are in $E_{i-1}$.
\item The cells $\epsilon_{i-1} \partial_k^\alpha A$ and $\epsilon_{i-1} \partial_k^\alpha B$ are $R_{i-1}$-invertible by Lemma \ref{lem:closure_i_invertibility}, and therefore are in $E_{i-1}$.
\end{itemize}
By induction hypothesis, $E_{i-1}$ is closed under composition, and therefore $\psi_{i-1} (A \star_j B)$ is in $E_i$, so $\psi_1 \ldots \psi_{i-1} (A \star_j B)$ is $R_1$-invertible, and so $A \star_j B$ is in $E_i$, which is therefore close under composition.
\end{proof}

\subsection{\texorpdfstring{$T_i$}{Ti}-invertiblility}
\label{subsec:T_i_invertibility}

The notion of $T_i$-invertibility is closely related to that of $R_i$-invertibility, as we show in Lemma \ref{lem:carac_Ti_invertibility_Ri}. Consequently, a number of results from the previous Section have analogues in terms of $T_i$-invertibility. In particular, the characterisation of $T_i$-invertibility in terms of invertibility given in Proposition \ref{prop:carac_Ti_invertibility_inv} is the direct analogue of Proposition \ref{prop:i_inv_implies_inv}.

\begin{defn}
Let $\CC$ be a cubical $\omega$-category, and $i < n$ be integers. Let $A,B$ be cells in $\CC_n$ such that $\partial_i^\alpha A = \partial_{i+1}^\alpha B$ and $\partial_{i+1}^\alpha A = \partial_i^\alpha B$, for $\alpha = \pm$. If the following two equations are verified, we say that $A$ is \emph{$T_i$-invertible}, and that $B$ is the \emph{$T_i$-inverse} of $A$, and we denote $B$ by $T_i A$:

\begin{equation}\label{eq:axiom_inv}
  \renewcommand{\arraystretch}{1.5}
  \begin{tabular}{ | c | c | }
  \hline			
    $\cornerdr$ & $B$ \\
  \hline
  $A$ & $\cornerul$ \\
  \hline  
\end{tabular}
=
  \begin{tabular}{ | c | c | }
  \hline			
    $\cornerul$ &  $\cornerdr$ \\
  \hline  
\end{tabular}
\qquad
\directions{i}{i+1}
\end{equation} 
\begin{equation}\label{eq:axiom_inv_bis}
  \renewcommand{\arraystretch}{1.5}
  \begin{tabular}{ | c | c | }
  \hline			
    $\cornerdr$ & $A$ \\
  \hline
  $B$ & $\cornerul$ \\
  \hline  
\end{tabular}
=
  \begin{tabular}{ | c | c | }
  \hline			
    $\cornerul$ 
   & $\cornerdr$ \\
  \hline  
\end{tabular}
\qquad
\directions{i}{i+1}
\end{equation} 

In particular, we say that $A \in \CC_{n+1}$ has a $T_i$-invertible shell if $\bm \partial A$ is $T_i$-invertible in $\Box_n \CC$.
\end{defn}

\begin{remq}
Note that $T_i A$ is uniquely defined. Indeed, if $B$ and $C$ are both $T_i$-inverses of $A$, then evaluating the following square in two different ways shows that $B = C$:
\[
  \renewcommand{\arraystretch}{1.5}
  B =
    \begin{tabular}{  | c | c | }
  \hline			
   $\cornerdr$ & $B$ \\
  \hline
   $\cornerul$ & $\vertic$ \\
  \hline  
  $\cornerdr$ & $\cornerul$ \\ \hline
\end{tabular}
=
  \begin{tabular}{ | c | c | c | }
  \hline			
  &  $\cornerdr$ & $B$ \\
  \hline
  $\cornerdr$ & $A$ & $\cornerul$ \\
  \hline  
  $C$ & $\cornerul$ & \\ \hline
\end{tabular}
=
  \begin{tabular}{ | c | c | c | }
  \hline
  $\cornerdr$ & $\cornerul$ & $\cornerdr$ \\
  \hline  
  $C$ & $\horiz$ & $\cornerul$ \\ \hline
\end{tabular}
=
C
\qquad 
\directions{i}{i+1}
\]
\end{remq}

The relationship between $T_i$ and $R_i$-invertibility is given by the following Lemma.
\begin{lem}\label{lem:carac_Ti_invertibility_Ri}
Let $\CC$ be a cubical $\omega$-category, and $A \in \CC_n$ be an $n$-cell, with $n \geq 2$. Then $A$ is $T_i$-invertible (with $i < n$) if and only if $\psi_i A$ is $R_i$-invertible, and we have the equalities:
\begin{multicols}{2}
\begin{equation}
R_i \psi_i A = \psi_i T_i A
\end{equation}

\begin{equation} \label{eq:Ti_using_psi_i}
T_iA =   
  \renewcommand{\arraystretch}{1.5}
\begin{tabular}{ | c | c | }
  \hline			
    $\vertic$ & $ \cornerdr $\\
  \hline
   \multicolumn{2}{| c |}{$R_{i} \psi_i A$} \\
  \hline  
    $\cornerul$ & $\vertic$ \\
  \hline
\end{tabular}
\qquad
\directions{i+1}{i}
\end{equation}
\end{multicols}

In particular, if $A$ is thin, then so is $T_i A$.
\end{lem}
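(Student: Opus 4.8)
The plan is to establish the two implications separately, exhibiting the relevant inverse explicitly in each case; the two displayed identities and the assertion about thin cells then follow. All the computations take place in the string-diagram calculus for thin cells of Notation~\ref{nota:string_diagrams}, and are justified by the interchange law~\eqref{eq:exchange}, the connection identities~\eqref{eq:compo_Gamma_1}, \eqref{eq:compo_Gamma_2}, \eqref{eq:gamma_of_comp}, and the fact (Theorem~\ref{thm:thin_cells}) that two thin cells with the same shell coincide.

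\emph{Suppose $A$ is $T_i$-invertible.} Put $B:=T_iA$; I claim that $\psi_iB$ is the $R_i$-inverse of $\psi_iA$, which proves at once that $\psi_iA$ is $R_i$-invertible and the first displayed equality $R_i\psi_iA=\psi_iT_iA$. To check $\psi_iA\star_i\psi_iB=\epsilon_i\partial_i^-\psi_iA$ (the equation $\psi_iB\star_i\psi_iA=\epsilon_i\partial_i^+\psi_iA$ being symmetric, via~\eqref{eq:axiom_inv_bis}), I would expand $\psi_iA$ and $\psi_iB$ through Definition~\ref{defn:folding_op} as $3$-fold $\star_{i+1}$-composites $\Gamma_i^+\partial_{i+1}^-(-)\star_{i+1}(-)\star_{i+1}\Gamma_i^-\partial_{i+1}^+(-)$; using the face axioms for $\partial_i^\alpha\Gamma_i^\beta$, the unit axioms, and the relations $\partial_i^\alpha B=\partial_{i+1}^\alpha A$ defining $T_iA$, one first checks that $\psi_iA$ and $\psi_iB$ are $i$-composable. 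One then reorganises the resulting pasting diagram by repeated interchange until the $2\times2$ block occurring on the left of~\eqref{eq:axiom_inv} (two $\Gamma_i^\pm$-connections pasted around $A$ and $B$) appears as a sub-diagram, replaces it via~\eqref{eq:axiom_inv} by the thin cell on the right of~\eqref{eq:axiom_inv}, and simplifies the now entirely thin diagram with the connection identities. By Theorem~\ref{thm:thin_cells}, it only remains to check that this thin cell has the same shell as $\epsilon_i\partial_i^-\psi_iA$, which is a direct face computation.

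\emph{Suppose $\psi_iA$ is $R_i$-invertible.} Define $B\in\CC_n$ to be the right-hand side of~\eqref{eq:Ti_using_psi_i}, i.e. the cell obtained from $R_i\psi_iA$ by the unfolding composite that recovers a cell from its image under $\psi_i$ (the one recalled in the proof of Lemma~\ref{lem:shell_and_invertiblility}), the thin padding cells being fixed by the faces of $R_i\psi_iA$. First I would compute the shell of $B$: combining $\partial_i^\alpha R_i\psi_iA=\partial_i^{-\alpha}\psi_iA$ and $\partial_j^\alpha R_i\psi_iA=R_{i_j}\partial_j^\alpha\psi_iA$ for $j\neq i$ (Lemma~\ref{lem:faces_and_inverses}) with the faces of $\psi_iA$ read off Definition~\ref{defn:folding_op}, one gets $\partial_i^\alpha B=\partial_{i+1}^\alpha A$ and $\partial_{i+1}^\alpha B=\partial_i^\alpha A$, so the pastings in~\eqref{eq:axiom_inv} and~\eqref{eq:axiom_inv_bis} are well-formed. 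Then I would verify~\eqref{eq:axiom_inv} (and symmetrically~\eqref{eq:axiom_inv_bis}) by substituting into its left-hand block both the definition of $B$ and the unfolding of $A$ in terms of $\psi_iA$, regrouping by~\eqref{eq:exchange}, and collapsing the central portion with the cancellation $\psi_iA\star_iR_i\psi_iA=\epsilon_i\partial_i^-\psi_iA$ and the connection identities. This shows $A$ is $T_i$-invertible with $T_iA=B$; since $T_iA$ is unique (the Remark after the definition of $T_i$-invertibility), this is precisely the formula~\eqref{eq:Ti_using_psi_i}, and combined with the first part it also yields $R_i\psi_iA=\psi_iT_iA$.

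Finally, if $A$ is thin then $\Gamma_i^+\partial_{i+1}^-A$ and $\Gamma_i^-\partial_{i+1}^+A$ are thin (being of the form $\Gamma_j^\alpha g$), so $\psi_iA$, being their $\star_{i+1}$-composite with the thin cell $A$, is thin by Theorem~\ref{thm:thin_cells}; hence $R_i\psi_iA$ is thin by Proposition~\ref{prop:i_inv_implies_inv}, and then~\eqref{eq:Ti_using_psi_i} writes $T_iA$ as a composite of thin cells, so $T_iA$ is thin. The main obstacle throughout is the string-diagram bookkeeping in the two computations above: choosing the interchange moves so that the $T_i$-invertibility block of~\eqref{eq:axiom_inv}/\eqref{eq:axiom_inv_bis} (respectively the $R_i$-cancellation for $\psi_iA$) becomes directly applicable, and then matching shells to invoke Theorem~\ref{thm:thin_cells}; the rest is routine manipulation with the index conventions of Definition~\ref{def:nota_indices}.
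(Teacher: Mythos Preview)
Your proposal is correct and follows essentially the same approach as the paper: for the forward direction you show $\psi_iT_iA$ is the $R_i$-inverse of $\psi_iA$ by expanding the $\psi_i$'s and invoking the $T_i$-axioms~\eqref{eq:axiom_inv}, \eqref{eq:axiom_inv_bis}; for the converse you define $B$ by the unfolding formula~\eqref{eq:Ti_using_psi_i} and verify the $T_i$-axioms via the $R_i$-cancellation of $\psi_iA$; and the thin-cell assertion is handled identically through Proposition~\ref{prop:i_inv_implies_inv}. The paper carries out the same computations with explicit string diagrams rather than the verbal description you give, but the strategy and the key reductions are the same.
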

\begin{proof}
Suppose first that $A$ is $T_i$-invertible: then the composite $\psi_i T_i A \star_i \psi_i A$ is equal to the following:
\[
  \renewcommand{\arraystretch}{1.5}
\begin{tabular}{| c | c | c | c |}
\hline
& 
$\cornerdr$ & 
$T_i A$ &
$\cornerul$ \\ \hline
$\cornerdr$ &
$A$ &
$\cornerul$ & \\ \hline
\end{tabular}
\qquad 
\directions{i}{i+1}
\]
Using \eqref{eq:axiom_inv}, we show that this composite is equal to $\epsilon_{i} \partial_{i}^+ \psi_i A$. We prove in the same way (using \eqref{eq:axiom_inv_bis}), that $\psi_i A \star_i \psi_i T_i A = \epsilon_i \partial_{i}^- \psi_i A$, which shows that $\psi_i T_i A$ is the $R_i$-inverse of $\psi_i A$.

Suppose now that $\psi_i A$ is $T_i$-invertible. Then we have:

\begin{align*}
  \renewcommand{\arraystretch}{1.5}
  \begin{tabular}{ | c | c | c | }
  \hline			
     & $\vertic$ & $\cornerdr$ \\
  \hline
    & \multicolumn{2}{| c |}{$R_i \psi_i A$} \\
  \hline
  $\cornerdr$ & $\cornerul$ & $\vertic$ \\
  \hline  
  $A$ & $\horiz$ & $\cornerul$ \\
  \hline
\end{tabular}
& =   \renewcommand{\arraystretch}{1.5}
  \begin{tabular}{ | c | c | c | c | }
  \hline			
     & & $\vertic$ & $\cornerdr$ \\
  \hline
    & & \multicolumn{2}{| c |}{$R_i \psi_i A$} \\
  \hline
  & $\cornerdr$ & $\cornerul$ & $\vertic$ \\
  \hline  
 $\cornerdr$ & $A$ & $\horiz$ & $\cornerul$ \\
  \hline
  $\cornerul$ & $\vertic$ & & \\ \hline
\end{tabular} 
 =   \renewcommand{\arraystretch}{1.5}
  \begin{tabular}{ | c | c | }
  \hline			
       $\vertic$ & $\cornerdr$ \\
  \hline
      \multicolumn{2}{| c |}{$R_i \psi_i A$} \\
  \hline  
 \multicolumn{2}{| c |}{$\psi_i A$} \\
  \hline
  $\cornerul$ & $\vertic$   \\ \hline
\end{tabular} 
 = \renewcommand{\arraystretch}{1.5}
  \begin{tabular}{ | c | c | }
  \hline			
       $\cornerul$ & $\cornerdr$ \\
\hline
\end{tabular} 
\qquad
\directions{i+1}{i}
\\
\end{align*}
Lastly, if $A$ is thin, then $\psi_i A$ is also thin, and by Proposition \ref{prop:i_inv_implies_inv} $R_i \psi_i A$ is too. Equation \eqref{eq:Ti_using_psi_i} finally shows that $T_i A$ is thin.
\end{proof}

\begin{lem}\label{lem:Ti_inverses_in_shells}
Let $\CC$ be a cubical $n$-category. Let $1 \leq i < n$ and $A \in \Box \CC$. Then $A$ is $T_j$-invertible if and only if for all $i \neq j,j+1$, $A_i^\alpha$ is $T_{j_i}$-invertible, and:
\begin{equation}\label{eq:Ti_and_face}
\partial_i^\alpha T_j A = 
\begin{cases}
T_{j_i} \partial_i^\alpha A & i \neq j,j+1 \\
\partial_{j+1}^\alpha A & i = j, \\
\partial_j^\alpha A & i = j+1, \\
\end{cases}
\end{equation}

In particular, if $\CC$ is a cubical $\omega$-category, and a cell $A \in \CC_n$  has a $T_i$-invertible shell, then $\partial_j^\alpha A$ is $T_{i_j}$-invertible for any $j \neq i,i+1$.
\end{lem}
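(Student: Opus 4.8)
The plan is to reduce the statement to its $R$-invertibility counterpart, Lemma~\ref{lem:faces_and_inverses}, by means of the dictionary between $T_j$- and $R_j$-invertibility provided by Lemma~\ref{lem:carac_Ti_invertibility_Ri}. Throughout I regard $\Box\CC$ as a cubical $(n+1)$-category (equivalently, as a cubical $\omega$-category whose cells above dimension $n+1$ are identities), and I write $A_k^\alpha = \partial_k^\alpha A \in \CC_n$. The one preliminary computation I would carry out is the commutation relation $\partial_k^\alpha\psi_j = \psi_{j_k}\,\partial_k^\alpha$, valid for $k \neq j,j+1$: expanding $\psi_j$ as in Definition~\ref{defn:folding_op}, distributing $\partial_k^\alpha$ over the two $\star_{j+1}$-composites (here one uses $(j+1)_k = j_k+1$), commuting $\partial_k^\alpha$ past the connections $\Gamma_j^\pm$ via the axioms of Definition~\ref{def:CCat}, and finally identifying $\partial_{k_j}^\alpha\partial_{j+1}^\pm A$ with $\partial_{j_k+1}^\pm\partial_k^\alpha A$ using $k_j = k_{j+1}$ and \eqref{eq:partial_vs_partial_bis}. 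I would also note the two degenerate faces: by the $\partial\Gamma$-axioms, $\partial_{j+1}^-\psi_j A = \epsilon_j\partial_j^-\partial_{j+1}^- A$ and $\partial_{j+1}^+\psi_j A = \epsilon_j\partial_j^+\partial_{j+1}^+ A$ both lie in $\im\epsilon_j$, hence are $R_j$-invertible by Lemma~\ref{lem:closure_i_invertibility}.

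With this in hand the equivalence is immediate. By Lemma~\ref{lem:carac_Ti_invertibility_Ri} applied in $\Box\CC$ with direction $j$, the cell $A$ is $T_j$-invertible if and only if $\psi_j A \in (\Box\CC)_{n+1}$ is $R_j$-invertible; by Lemma~\ref{lem:faces_and_inverses} the latter holds if and only if $(\psi_j A)_k^\alpha$ is $R_{j_k}$-invertible for every $k \neq j$. For $k = j+1$ this holds automatically by the degenerate-face observation above, while for $k \neq j,j+1$ the commutation relation gives $(\psi_j A)_k^\alpha = \psi_{j_k}(A_k^\alpha)$, which---by a second application of Lemma~\ref{lem:carac_Ti_invertibility_Ri}, now in $\CC$ and with direction $j_k < n$---is $R_{j_k}$-invertible precisely when $A_k^\alpha$ is $T_{j_k}$-invertible. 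Hence $A$ is $T_j$-invertible if and only if $A_k^\alpha$ is $T_{j_k}$-invertible for all $k \neq j,j+1$ (for $n=1$ both sides are vacuous).

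For the face formulas \eqref{eq:Ti_and_face}, the cases $i = j$ and $i = j+1$ are nothing but the source/target compatibilities built into the definition of a $T_j$-inverse. For $i \neq j,j+1$, I would apply $\partial_i^\alpha$ to the defining equations \eqref{eq:axiom_inv}--\eqref{eq:axiom_inv_bis} for the pair $(A, T_j A)$: distributing $\partial_i^\alpha$ over the two-dimensional composites (using $(j+1)_i = j_i+1$) and past the $\Gamma_j^\pm$-corners, together with the commutation relation above, turns these into the defining equations of a $T_{j_i}$-inverse for the pair $(\partial_i^\alpha A, \partial_i^\alpha T_j A)$; since $T_{j_i}$-inverses are unique (Remark following the definition of $T_i$-invertibility), this forces $\partial_i^\alpha T_j A = T_{j_i}\partial_i^\alpha A$. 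Equivalently, one may apply $\partial_i^\alpha$ to formula \eqref{eq:Ti_using_psi_i} and use the face formula for $R_{j_i}$-inverses from Lemma~\ref{lem:faces_and_inverses}. The closing statement for cubical $\omega$-categories is then the special case obtained by applying what has been proved to the shell $\bm\partial A \in \Box_n\CC$.

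The bulk of the work, and the only place where care is genuinely needed, is the index bookkeeping---correctly tracking $j_k$, the identities $(j+1)_k = j_k+1$ and $k_j = k_{j+1}$, and the asymmetric re-indexing performed by the face maps---together with the clean isolation of the degenerate directions $k = j,j+1$, where the relevant faces land in $\im\epsilon_j$ and so impose no condition. Once Lemma~\ref{lem:carac_Ti_invertibility_Ri} and Lemma~\ref{lem:faces_and_inverses} are invoked, no further genuinely new argument is required.
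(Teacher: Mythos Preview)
Your proof is correct and uses the same essential ingredients as the paper: the dictionary of Lemma~\ref{lem:carac_Ti_invertibility_Ri}, the commutation $\partial_k^\alpha\psi_j = \psi_{j_k}\partial_k^\alpha$ for $k\neq j,j+1$, and Lemma~\ref{lem:faces_and_inverses}. The only organizational difference is that the paper computes $\partial_i^\alpha T_jA$ directly from formula~\eqref{eq:Ti_using_psi_i} (your option~(b)) and then handles the converse by writing down the candidate shell $B$ and checking the $T_j$-axioms by hand, whereas your reduction to Lemma~\ref{lem:faces_and_inverses} dispatches both directions of the equivalence at once; this buys you a slightly cleaner converse at the price of having to isolate the degenerate $k=j+1$ face explicitly.
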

\begin{proof}
Suppose first that $A \in \Box \CC$ is $T_j$-invertible, and let $i \neq j,j+1$. Then we have:
\begin{align*}
\partial_i^\alpha T_j A & =  
  \renewcommand{\arraystretch}{1.5}
\begin{tabular}{ | c | c | }
  \hline			
    $\partial_i^\alpha \epsilon_j \partial_{j+1}^- A$ & $ \partial_i^\alpha \Gamma_j^+ \partial_j^+ A $\\
  \hline
   \multicolumn{2}{| c |}{$\partial_i^\alpha R_{j} \psi_j A$} \\
  \hline  
    $\partial_i^\alpha \Gamma_j^- \partial_j^- A$ & $\partial_j^\alpha \epsilon_j \partial_{j+1}^+ A$ \\
  \hline
\end{tabular}
\qquad
\directions{(j+1)_i}{j_i} \\
& =  
  \renewcommand{\arraystretch}{1.5}
\begin{tabular}{ | c | c | }
  \hline			
    $\epsilon_{j_i} \partial_{(j+1)_i}^- \partial_i^\alpha A$ & $ \Gamma_{j_i}^+ \partial_{j_i}^+ \partial_i^\alpha A $\\
  \hline
   \multicolumn{2}{| c |}{$R_{j_i} \psi_j \partial_i^\alpha A$} \\
  \hline  
    $\Gamma_{j_i}^- \partial_{j_i}^- \partial_i^\alpha A$ & $\epsilon_{j_i} \partial_{(j+1)_i}^+ \partial_i^\alpha A$ \\
  \hline
\end{tabular}
\qquad
\directions{(j+1)_i}{j_i} \\
& = T_{j_i} \partial_i^\alpha A
\end{align*}

For $i = j$, we have:

\begin{align*}
\partial_{i}^- T_i A & = 
\partial_i^- \epsilon_i \partial_{i+1}^- A \star_{i} \partial_i^- \Gamma_i^+ \partial_i^+ A &
\partial_{i}^+ T_i A & = 
\partial_i^- \Gamma_i^- \partial_i^- A \star_{i} \partial_i^- \epsilon_i \partial_{i+1}^+  A \\
& = \partial_{i+1}^- A \star_{i} \epsilon_i \partial_i^- \partial_i^+ A 
&
& = \epsilon_i \partial_i^+ \partial_i^- A \star_{i} \partial_{i+1}^+ A \\
& = \partial_{i+1}^- A
&
& = \partial_{i+1}^+ A
\end{align*}

Finally for $i = j+1$:
\begin{align*}
\partial_{i+1}^- T_i A & = \partial_{i+1}^- \epsilon_i \partial_{i+1}^- A \star_i \partial_{i+1}^- R_i \psi_i A \star_i \partial_{i+1}^- \Gamma_i^- \partial_i^- A \\
& = \epsilon_i \partial_i^- \partial_{i+1}^- A \star_i R_i \epsilon_i \partial_i^\alpha \partial_{i+1}^\alpha A \star_i \partial_i^- A \\
& = \epsilon_i \partial_i^\alpha \partial_{i+1}^\alpha A \star_i \partial_i^- A  = \partial_i^- A
\end{align*}
\begin{align*}
\partial_{i+1}^+ T_i A & = \partial_{i+1}^+  \Gamma_i^+ \partial_i^+ A \star_{i} \partial_{i+1}^+  R_i \psi_i A \star_i \partial_{i+1}^+  \epsilon_i \partial_{i+1}^+ A
\end{align*}

Reciprocally suppose that for all $i \neq j,j+1$, $A_j^\alpha$ is $T_{j_i}$-invertible. Let $B_i^\alpha = T_{j_i} A_i^\alpha$ if $i \neq j,j+1$, $B_j^\alpha = A_{j+1}^\alpha$ and $B_{j+1}^\alpha = A_j^\alpha$: this is an element of $\Box \CC$, and we verify that it is the $T_i$-inverse of $A$.
\end{proof}

\begin{prop}\label{prop:carac_Ti_invertibility_inv}
Let $\CC$ be a cubical $\omega$-category, and $A \in \CC_n$, with $n \geq 2$. Then $A$ is $T_i$-invertible if and only if $A$ is invertible and has a $T_i$-invertible shell.
\end{prop}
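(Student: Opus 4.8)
The plan is to reduce the statement to its $R_i$-analogue, Proposition~\ref{prop:i_inv_implies_inv}, using Lemma~\ref{lem:carac_Ti_invertibility_Ri} as the bridge. That Lemma says that (for $n \geq 2$ and $i < n$) the cell $A$ is $T_i$-invertible if and only if $\psi_i A$ is $R_i$-invertible, and Proposition~\ref{prop:i_inv_implies_inv} says that the latter is equivalent to $\psi_i A$ being invertible and having an $R_i$-invertible shell. So it is enough to establish the two translations
\[
\psi_i A \text{ is invertible} \iff A \text{ is invertible}, \qquad \psi_i A \text{ has an }R_i\text{-invertible shell} \iff A \text{ has a }T_i\text{-invertible shell},
\]
and then simply concatenate the equivalences; both directions of the Proposition are handled at once.

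For the first translation I would first record that every degeneracy $\epsilon_j f$ and every connection $\Gamma_j^\alpha f$ is a thin cell (Theorem~\ref{thm:thin_cells}), and that every thin cell $T$ is invertible: by definition of thinness $\psi_1 \cdots \psi_{m-1} T \in \im \epsilon_1$, and any cell of the form $\epsilon_1 g$ is $R_1$-invertible by Lemma~\ref{lem:closure_i_invertibility}, so $T$ is invertible in the sense of Definition~\ref{def:plain_invertibility}. Now the defining formula $\psi_i A = \Gamma_i^+\partial_{i+1}^- A \star_{i+1} A \star_{i+1} \Gamma_i^-\partial_{i+1}^+ A$ of Definition~\ref{defn:folding_op} exhibits $\psi_i A$ as a $\star_{i+1}$-composite of $A$ with two thin cells, so $A$ invertible implies $\psi_i A$ invertible by Lemma~\ref{lem:thin_and_composite_invertible}; conversely, the identity recalled in the proof of Lemma~\ref{lem:shell_and_invertiblility} writes $A$ itself as a composite of $\psi_i A$ with degeneracies and connections of the faces of $A$, so $\psi_i A$ invertible implies $A$ invertible, again by Lemma~\ref{lem:thin_and_composite_invertible}.

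For the second translation I would compute the shell of $\psi_i A$. By the final clause of Lemma~\ref{lem:faces_and_inverses}, $\psi_i A$ has an $R_i$-invertible shell precisely when $\partial_j^\alpha \psi_i A$ is $R_{i_j}$-invertible for every $j \neq i$. Using the cubical identities of Definitions~\ref{def:CSet} and~\ref{def:CCat} one checks that $\partial_{i+1}^\alpha\psi_i A = \epsilon_i\partial_i^\alpha\partial_{i+1}^\alpha A$ is thin, hence $R_i$-invertible for free, while for $j \neq i, i+1$ one has $\partial_j^\alpha\psi_i A = \psi_{i_j}\partial_j^\alpha A$; since $n \geq 3$ here, Lemma~\ref{lem:carac_Ti_invertibility_Ri} applies to the $(n-1)$-cell $\partial_j^\alpha A$ and tells us that this is $R_{i_j}$-invertible exactly when $\partial_j^\alpha A$ is $T_{i_j}$-invertible. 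As, by Lemma~\ref{lem:Ti_inverses_in_shells}, $A$ has a $T_i$-invertible shell if and only if $\partial_j^\alpha A$ is $T_{i_j}$-invertible for all $j \neq i, i+1$, the two conditions coincide; and when $n = 2$ both conditions are vacuous since there is no admissible $j$.

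I expect the only delicate point to be the index bookkeeping in the second translation: one has to identify carefully which faces of $\psi_i A$ are thin (and hence invertible at no cost), and verify the commutation $\partial_j^\alpha\psi_i = \psi_{i_j}\partial_j^\alpha$ from the axioms, making sure the shifted indices $i_j$ line up with those occurring in Lemmas~\ref{lem:faces_and_inverses} and~\ref{lem:Ti_inverses_in_shells}. The invertibility translation, by contrast, is essentially immediate once thin cells are known to be invertible and Lemma~\ref{lem:thin_and_composite_invertible} is available.
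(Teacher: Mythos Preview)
Your proposal is correct and follows essentially the same route as the paper: reduce to Proposition~\ref{prop:i_inv_implies_inv} via Lemma~\ref{lem:carac_Ti_invertibility_Ri}, use the decomposition of $A$ (resp.\ $\psi_i A$) as a composite of $\psi_i A$ (resp.\ $A$) with thin cells together with Lemma~\ref{lem:thin_and_composite_invertible} for the invertibility translation, and compute the faces of $\psi_i A$ for the shell translation. The only cosmetic difference is that the paper treats the two implications separately and, for the backward direction, phrases the shell translation as ``apply Lemma~\ref{lem:carac_Ti_invertibility_Ri} in $\Box_n\CC$'' (using that $\bm\partial$ commutes with $\psi_i$) rather than computing $\partial_j^\alpha\psi_i A$ explicitly as you do; your version is more explicit but equivalent.
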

\begin{proof}
Suppose $A$ is $T_i$-invertible. Then $\psi_i A$ is $R_i$-invertible, and therefore it is invertible. Recall from \cite{ABS02} that $A$ is equal to the following composite:
\[
  \renewcommand{\arraystretch}{1.5}
\begin{tabular}{ | c | c | }
  \hline			
    $\vertic$ & $ \cornerdr $\\
  \hline
   \multicolumn{2}{| c |}{$\psi_i A$} \\
  \hline  
    $\cornerul$ & $\vertic$ \\
  \hline
\end{tabular}
\qquad
\directions{i+1}{i}
\]
All the cells in this composite are invertible, and invertible cells are closed under composition (Lemma \ref{lem:thin_and_composite_invertible}), therefore $A$ is invertible. Moreover since $\psi_i A$ is $R_i$-invertible, it has an $R_i$-invertible shell. In particular, for $j \neq i,i+1$, we have that $\partial_j^\alpha A = \partial_j^\alpha \psi_{i} A = \psi_{i_j} \partial_j^\alpha A$ is $R_{i_j}$-invertible. By Lemma \ref{lem:carac_Ti_invertibility_Ri}, $\partial_j^\alpha A$ is $T_{i_j}$-invertible, so finally $A$ has a $T_j$-invertible shell.

Suppose now that $A$ is invertible and has a $T_i$-invertible shell. By application of Lemma \ref{lem:carac_Ti_invertibility_Ri} in $\Box \CC_n$, $\psi_i A$ is invertible and has an $R_i$-invertible shell, so $\psi_i A$ is $R_i$-invertible, and $A$ is $T_i$-invertible by Lemma \ref{lem:carac_Ti_invertibility_Ri}.
\end{proof}

\begin{prop}\label{prop:Ti_inv_and_other}
Let $\CC$ be a cubical $\omega$-category.
\begin{itemize}
\item Let $A \in \CC_n$. For all $1 \leq j \leq n+1$, $\epsilon_j A$ is $T_j$ and $T_{j-1}$-invertible and:
\begin{equation}\label{eq:Ti_and_epsilon1}
T_j \epsilon_j A = \epsilon_{j+1} A 
\qquad
T_{j-1} \epsilon_j A = \epsilon_{j-1} A
\end{equation}
Moreover if $A$ is $T_i$-invertible (for $i \neq j-1$), then $\epsilon_j A$ is $T_{i^j}$-invertible, and:
\begin{equation}\label{eq:Ti_and_epsilon2}
T_{i^j} \epsilon_j A = \epsilon_j T_i A
\end{equation}

\item Let $A \in \CC_n$. For all $1 \leq j \leq n$, $\Gamma_j^\alpha A$ is $T_j$-invertible, and 
\begin{equation}\label{eq:Ti_and_Gamma1}
T_j \Gamma_j^\alpha A = \Gamma_j^\alpha A
\end{equation}
Moreover, if $A$ is $T_i$-invertible, then $\Gamma_j^\alpha A$ is $T_{i^j}$-invertible, and:
\begin{equation}\label{eq:Ti_and_Gamma2}
T_{i^j} \Gamma_j^\alpha A = \Gamma_j^\alpha T_i A
\end{equation}
Finally if $A$ is $T_i$-invertible, then $\Gamma_{i+1}^\alpha A$ (resp. $\Gamma_i^\alpha A$) is $T_{i}$-invertible (resp. $T_{i+1}$-invertible) and $\Gamma_i^\alpha T_i A$ (resp. $\Gamma_{i+1}^\alpha T_i A$) is $T_{i+1}$-invertible (resp. $T_i$-invertible), and:
\begin{equation}\label{eq:Ti_and_star}
T_{i+1} \Gamma_i^\alpha T_i A = T_i \Gamma_{i+1}^\alpha A
\qquad
T_i \Gamma_{i+1}^\alpha T_i A = T_{i+1} \Gamma_i^\alpha A
\end{equation}
\item Let $A,B \in \CC_n$. If $A$ and $B$ are $T_i$-invertible, then $A \star_j B$ is $T_i$-invertible, and:
\begin{equation}
T_i (A \star_j B) = 
\begin{cases}
(T_i A) \star_{i+1} (T_i B) & j = i,  \\
(T_i A) \star_{i} (T_i B) & j = i+1, \\
(T_i A) \star_j (T_i B) & \text{otherwise.} \\
\end{cases}
\end{equation}
\end{itemize}
\end{prop}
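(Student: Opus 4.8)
The plan is to deal with the three items together, reducing every assertion of the form ``$X$ is $T_i$-invertible'' to Proposition~\ref{prop:carac_Ti_invertibility_inv} (invertibility plus a $T_i$-invertible shell), and then pinning down each explicit $T$-inverse by uniqueness of $T_i$-inverses together with the defining equations \eqref{eq:axiom_inv}--\eqref{eq:axiom_inv_bis}. Since the faces of $\epsilon_j A$, $\Gamma_j^\alpha A$ and $A\star_j B$ are themselves cells of the shape $\epsilon_m(-)$, $\Gamma_m^\beta(-)$ and $(-)\star_m(-)$ in strictly lower dimension, the three items are proved simultaneously by induction on the dimension $n$. The invertibility halves are immediate: $\epsilon_j A$ and $\Gamma_j^\alpha A$ are thin by Theorem~\ref{thm:thin_cells}, hence invertible (the folding of a thin cell lies in $\im\epsilon_1$, and $\epsilon_1 x$ is $R_1$-invertible by Lemma~\ref{lem:closure_i_invertibility}), while $A\star_j B$ is invertible whenever $A,B$ are, by Lemma~\ref{lem:thin_and_composite_invertible}.

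It thus remains, in each case, to check that the shell is $T_i$-invertible, i.e. by the reverse direction of Lemma~\ref{lem:Ti_inverses_in_shells} (applied to the shell) that the faces indexed by $k\neq i,i+1$ are $T_{i_k}$-invertible. These faces are computed from the axioms of Definitions~\ref{def:CSet}--\ref{def:CCat}: for instance $\partial_k^\beta(\epsilon_j A)=\epsilon_{j_k}\partial_{k_j}^\beta A$, $\partial_k^\beta(\Gamma_j^\alpha A)=\Gamma_{j_k}^\alpha\partial_{k_j}^\beta A$ (or, in the degenerate directions, $A$ or $\epsilon_m\partial_m^\beta A$), and $\partial_k^\beta(A\star_j B)=\partial_k^\beta A\star_{j_k}\partial_k^\beta B$; each is of the shape handled by the induction hypothesis, so it carries the required $T$-invertibility. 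The only point needing care is the boundary subcases of the second item ($\Gamma_{i+1}^\alpha A$ being $T_i$-invertible and $\Gamma_i^\alpha A$ being $T_{i+1}$-invertible), where a degenerate face equals $\epsilon_{i+1}\partial_{i+1}^\beta A$; its $T_i$-invertibility is exactly the $T_{j-1}$-part of the first item applied with $j=i+1$.

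For the explicit formulas, uniqueness of $T_i X$ means it suffices to verify that the proposed cell satisfies \eqref{eq:axiom_inv} and \eqref{eq:axiom_inv_bis}. When every cell occurring is thin --- which covers $T_j\epsilon_j A=\epsilon_{j\pm1}A$ in \eqref{eq:Ti_and_epsilon1} and $T_j\Gamma_j^\alpha A=\Gamma_j^\alpha A$ in \eqref{eq:Ti_and_Gamma1} --- both sides of \eqref{eq:axiom_inv}, \eqref{eq:axiom_inv_bis} are thin, so by Theorem~\ref{thm:thin_cells} it is enough to compare shells, which are read off from \eqref{eq:Ti_and_face} and the face axioms (indeed $\partial_{j+1}^\beta\Gamma_j^\alpha A=\partial_j^\beta\Gamma_j^\alpha A$, which is precisely what makes $\Gamma_j^\alpha A$ its own $T_j$-inverse). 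In the remaining cases --- \eqref{eq:Ti_and_epsilon2}, \eqref{eq:Ti_and_Gamma2}, \eqref{eq:Ti_and_star} and the formula for $T_i(A\star_j B)$ --- the candidate is not thin, so I would verify \eqref{eq:axiom_inv}, \eqref{eq:axiom_inv_bis} directly, by applying the operation $\epsilon_j$, $\Gamma_j^\alpha$ or $(-)\star_j(-)$ to the equations already known for $A$ (and for $T_i A$, resp. for $A$ and $B$) and simplifying with the commutation relations of $\epsilon$, $\Gamma$, $\star$ from Definition~\ref{def:CCat} and the string-diagram conventions of Notation~\ref{nota:string_diagrams}; in the composition case the interchange law \eqref{eq:exchange} makes the composite factor through the equations for $A$ and for $B$. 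This is the same style of manipulation as in the proof of Lemma~\ref{lem:closure_i_invertibility}.

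For the $T_i$-invertibility of $A\star_j B$ there is also a route avoiding the induction: by Lemma~\ref{lem:carac_Ti_invertibility_Ri} it is equivalent to the $R_i$-invertibility of $\psi_i(A\star_j B)$, and the formula for $\psi_i(A\star_j B)$ from the proof of Lemma~\ref{lem:thin_and_composite_invertible} (with indices shifted by one) exhibits it as a $\star$-composite of $\psi_i A$, $\psi_i B$ --- which are $R_i$-invertible since $A,B$ are $T_i$-invertible --- and cells $\epsilon_i\partial_k^\alpha(-)$, all $R_i$-invertible by Lemma~\ref{lem:closure_i_invertibility}, so it is $R_i$-invertible; feeding \eqref{eq:Rk_of_star} into \eqref{eq:Ti_using_psi_i} then recovers the stated formula. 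I expect the only real difficulty to be organisational: keeping the reindexings $k_j$, $k^j$, $i^j$ and the case split ($j=i$ / $j=i+1$ / otherwise, plus the dimension-boundary cases for $\epsilon$ and $\Gamma$) consistent throughout, and carrying out the string-diagram simplifications behind \eqref{eq:Ti_and_epsilon2}, \eqref{eq:Ti_and_Gamma2} and \eqref{eq:Ti_and_star}; no ingredient beyond Lemmas~\ref{lem:carac_Ti_invertibility_Ri}, \ref{lem:Ti_inverses_in_shells}, \ref{lem:thin_and_composite_invertible} and Proposition~\ref{prop:carac_Ti_invertibility_inv} appears to be needed.
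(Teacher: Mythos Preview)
Your approach is correct but takes a longer route than the paper for the first two items. You assert that in \eqref{eq:Ti_and_epsilon2}, \eqref{eq:Ti_and_Gamma2} and \eqref{eq:Ti_and_star} ``the candidate is not thin'' and therefore fall back on a direct verification of the axioms \eqref{eq:axiom_inv}--\eqref{eq:axiom_inv_bis}. This is a misreading: the candidates are $\epsilon_j T_i A$, $\Gamma_j^\alpha T_i A$, and $T_k\Gamma_m^\alpha(-)$, all of which \emph{are} thin --- anything in the image of $\epsilon_j$ or $\Gamma_j^\alpha$ is thin by Theorem~\ref{thm:thin_cells}, and $T_i$ of a thin cell is thin by the final clause of Lemma~\ref{lem:carac_Ti_invertibility_Ri}. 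The paper therefore treats all seven equations \eqref{eq:Ti_and_epsilon1}--\eqref{eq:Ti_and_star} in one stroke: both sides are thin, so by Theorem~\ref{thm:thin_cells} equality reduces to equality of shells, which one reads off from \eqref{eq:Ti_and_face} and the cubical axioms. Your separate inductive invertibility argument via Proposition~\ref{prop:carac_Ti_invertibility_inv} and the subsequent direct verifications would still go through, but with noticeably more bookkeeping; what you gain is a more explicit existence argument, whereas the paper's very terse proof leaves the existence of the $T$-inverses implicit in the shell comparison.

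For the composition item the two approaches coincide: the paper also returns to the defining axioms, and your alternative route through $\psi_i(A\star_j B)$ and Lemma~\ref{lem:carac_Ti_invertibility_Ri} is a legitimate variant.
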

\begin{proof}
For the first seven equations, notice that both sides of the equations are thin by Lemma \ref{lem:carac_Ti_invertibility_Ri}, and therefore by Theorem \ref{thm:thin_cells}, it is enough to check that their shells are equal.

For the last one, we return to the definition of $T_i$-invertibility.
\end{proof}

\section{Relationship of cubical \texorpdfstring{$(\omega,p)$}{(omega,p)}-categories with other structures}
\label{sec:relationship_omega_p}

In Section \ref{subsec:cubical_omega_p_categories}, we collect the results of Section \ref{sec:invertibility} to give a series of equivalent characterisation of the invertibility in a cubical $\omega$-category of all cells of dimension $n$ (Proposition \ref{prop:caracterisation_omega_p}). From that we then deduce the equivalence between globular and cubical $(\omega,p)$-categories (Theorem \ref{thm:equiv_glob_cub}).

In Section \ref{subsec:ADC}, we generalise the adjunctions between globular $\omega$-groupoids and chain complexes and the one between globular $\omega$-categories and ADCs from \cite{S04}. To do so we introduce the notion of $(\omega,p)$-ADCs, such that $(\omega,\omega)$-ADCs are just ADCs, and $(\omega,0)$-ADCs coincide with augmented chain complexes.
\subsection{Cubical and globular \texorpdfstring{$(\omega,p)$}{(omega,p)}-categories}
\label{subsec:cubical_omega_p_categories}

In this Section we start by defining the notion of cubical $(\omega,p)$-categories. In Proposition \ref{prop:caracterisation_omega_p} we give various equivalent characterisations of those using the result from Section \ref{sec:invertibility}. As a result, we show in Theorem \ref{thm:equiv_glob_cub} that the equivalence between globular and cubical $\omega$-category induces equivalences between globular and cubical $(\omega,p)$-categories. Finally in Corollary \ref{cor:carac_omega_1} we give a simple characterisation of the notions of cubical $(\omega,0)$ and $(\omega,1)$-categories.

\begin{defn}
Let $\CC$ be a cubical $\omega$-category, and $p$ a natural number. We say that $\CC$ is a \emph{cubical $(\omega,p)$-category} if any $n$-cell is invertible, for $n > p$. We denote by $\CCat{(\omega,p)}$ the full subcategory of $\CCat{\omega}$ spanned by cubical $(\omega,p)$-categories.
\end{defn}

\begin{prop}\label{prop:caracterisation_omega_p}
Let $\CC$ be a cubical $\omega$-category, and fix $n > 0$. The following five properties are equivalent:
\begin{enumerate}
\item Any $n$-cell in $\CC_n$ is invertible.
\item For all $1 \leq i \leq n$, any $n$-cell in $\CC_n$ with an $R_i$-invertible shell is $R_i$-invertible.
\item Any $n$-cell in $\CC_n$ with an $R_1$-invertible shell is $R_1$-invertible.
\item Any $n$-cell $A \in \CC_n$ such that for all $j \neq 1$, $\partial_j^\alpha A \in \im \epsilon_1$ is $R_1$-invertible.
\item Any $n$-cell in $\Phi_n(\CC_n)$ is $R_1$-invertible.
\end{enumerate}
Moreover, if $n > 1$, then all the previous properties are also equivalent to the following:
\begin{enumerate}
\item[6.] For all $1 \leq i < n$, any $n$-cell in $\CC_n$ with a $T_i$-invertible shell is $T_i$-invertible
\item[7.] Any $n$-cell in $\CC_n$ with a $T_1$-invertible shell is $T_1$-invertible.
\end{enumerate}
\end{prop}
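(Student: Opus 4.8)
The plan is to prove the cycle of implications $(1)\Rightarrow(2)\Rightarrow(3)\Rightarrow(4)\Rightarrow(1)$ together with $(4)\Leftrightarrow(5)$, and then, when $n>1$, to adjoin the loop $(1)\Rightarrow(6)\Rightarrow(7)\Rightarrow(4)$. The two engines of the whole argument are Proposition~\ref{prop:i_inv_implies_inv} (a cell is $R_j$-invertible iff it is invertible and has an $R_j$-invertible shell) and Proposition~\ref{prop:carac_Ti_invertibility_inv} (the same statement with $T_i$ in place of $R_j$). Two elementary observations will be used repeatedly: first, a face lying in $\im\epsilon_1$ is automatically $R_1$-invertible (Lemma~\ref{lem:closure_i_invertibility}, since $R_1\epsilon_1=\epsilon_1$) and, when $n>1$, $T_1$-invertible (Proposition~\ref{prop:Ti_inv_and_other}); second, since $1_j=1$ for every $j>1$, Lemma~\ref{lem:faces_and_inverses} tells us that ``$A$ has an $R_1$-invertible shell'' is equivalent to ``$\partial_j^\alpha A$ is $R_1$-invertible for all $j\neq1$'', and similarly Lemma~\ref{lem:Ti_inverses_in_shells} governs $T_1$-invertible shells through the faces in directions $j\neq1,2$ only.

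\emph{The $R$-side cycle.} The implication $(1)\Rightarrow(2)$ is immediate from Proposition~\ref{prop:i_inv_implies_inv}: under $(1)$ an $n$-cell with an $R_i$-invertible shell is invertible, hence $R_i$-invertible; and $(2)\Rightarrow(3)$ is the case $i=1$. For $(3)\Rightarrow(4)$, a cell $A$ with $\partial_j^\alpha A\in\im\epsilon_1$ for all $j\neq1$ has all such faces $R_1$-invertible by Lemma~\ref{lem:closure_i_invertibility}, hence has an $R_1$-invertible shell by Lemma~\ref{lem:faces_and_inverses}, and $(3)$ applies. For $(4)\Rightarrow(1)$, given any $B\in\CC_n$ the cell $\psi_1\cdots\psi_{n-1}B$ satisfies $\partial_j^\alpha(\psi_1\cdots\psi_{n-1}B)\in\im\epsilon_1$ for all $j\neq1$ (the standard ``globularity of the fold'', \cite{H05}), so $(4)$ forces it to be $R_1$-invertible, which by Definition~\ref{def:plain_invertibility} says precisely that $B$ is invertible. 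Finally $(4)\Leftrightarrow(5)$: by the description of the folding operation in \cite{ABS02}, \cite{H05}, $\Phi_n(\CC_n)$ is exactly the set of $n$-cells all of whose faces in directions $\neq1$ lie in $\im\epsilon_1$, so the hypotheses of $(4)$ and $(5)$ single out the same family of cells.

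\emph{The $T$-side, assuming $n>1$.} Here $(1)\Rightarrow(6)$ is immediate from Proposition~\ref{prop:carac_Ti_invertibility_inv}, and $(6)\Rightarrow(7)$ is the case $i=1$. For $(7)\Rightarrow(4)$, let $A\in\CC_n$ satisfy $\partial_j^\alpha A\in\im\epsilon_1$ for all $j\neq1$; each such face is $T_1$-invertible by Proposition~\ref{prop:Ti_inv_and_other}, so by Lemma~\ref{lem:Ti_inverses_in_shells} (which constrains only directions $j\neq1,2$, where again $1_j=1$; for $n=2$ this condition is vacuous) the cell $A$ has a $T_1$-invertible shell. By $(7)$, $A$ is $T_1$-invertible, hence invertible by Proposition~\ref{prop:carac_Ti_invertibility_inv}. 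Since $A$ also has an $R_1$-invertible shell (same degenerate faces, via Lemmas~\ref{lem:closure_i_invertibility} and~\ref{lem:faces_and_inverses}), Proposition~\ref{prop:i_inv_implies_inv} now yields that $A$ is $R_1$-invertible. This completes all the equivalences.

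The bulk of the argument is a bookkeeping exercise built on the two characterisation propositions, and the only genuinely delicate point is the identification used for $(4)\Leftrightarrow(5)$ and, in essence, for $(4)\Rightarrow(1)$: one must invoke correctly the structural properties of the folding operators from \cite{ABS02} and \cite{H05} — that $\Phi_n$ is idempotent with image the ``globular'' cells, and that these coincide with the cells whose faces in directions $\neq 1$ are degenerate — since $\Phi_n$ occurs in $(5)$ (and in the definition of $\gamma$) whereas the fold $\psi_1\cdots\psi_{n-1}$ occurs in the definition of invertibility. I also expect one should watch the low-dimensional edge cases $n=1$ (where $(6)$, $(7)$ are absent and the folds are trivial) and $n=2$ (where the $T_1$-invertible-shell condition is vacuous), but these do not cause any real difficulty.
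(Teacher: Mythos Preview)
Your cycle $(1)\Rightarrow(2)\Rightarrow(3)\Rightarrow(4)\Rightarrow(1)$ and the $T$-side loop $(1)\Rightarrow(6)\Rightarrow(7)\Rightarrow(4)$ are correct and essentially match the paper. The gap is in your claimed equivalence $(4)\Leftrightarrow(5)$. You assert that $\Phi_n(\CC_n)$ \emph{equals} the set of $n$-cells whose faces in directions $\neq 1$ lie in $\im\epsilon_1$, but only the inclusion
\[
\Phi_n(\CC_n)\ \subseteq\ \{A\in\CC_n:\partial_j^\alpha A\in\im\epsilon_1\text{ for all }j\neq 1\}
\]
holds in general, and this inclusion only yields $(4)\Rightarrow(5)$. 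The reverse inclusion already fails for $n=3$: if $A\in\CC_3$ has $\partial_3^-A=\epsilon_1 f$ with $f$ a non-degenerate $1$-cell, then $\partial_2^-\partial_1^-A=\partial_1^-\partial_3^-A=f\notin\im\epsilon_1$, so $\partial_1^-A\notin\Phi_2(\CC_2)$ and hence $A\notin\Phi_3(\CC_3)$, even though one can arrange all faces of $A$ in directions $2,3$ to lie in $\im\epsilon_1$. Concretely one checks $\Phi_3A=\psi_1\psi_2\psi_1A=\psi_2A\neq A$ in this situation, because $\Gamma_2^+\epsilon_1 f=\epsilon_1\Gamma_1^+f\neq\epsilon_3\epsilon_1 f$ unless $f$ is itself degenerate. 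Thus your argument leaves $(5)$ unlinked to the rest.

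The paper closes the loop differently: it proves $(5)\Rightarrow(1)$ directly. Using Lemmas~\ref{lem:psi_and_i_inversibility} and~\ref{lem:shell_and_invertiblility}, a cell $B$ with $R_1$-invertible shell is $R_1$-invertible iff $\psi_iB$ is; iterating this with $B=\psi_1\cdots\psi_{n-1}A$ and invoking the absorption identity $\Phi_n\psi_i=\Phi_n$ (so that $\Phi_n(\psi_1\cdots\psi_{n-1}A)=\Phi_nA$) gives that $A$ is invertible iff $\Phi_nA$ is $R_1$-invertible. Since $\Phi_nA\in\Phi_n(\CC_n)$, hypothesis $(5)$ finishes. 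You already have both lemmas available, so the fix is simply to replace the incorrect set equality by this argument; your direct step $(4)\Rightarrow(1)$ and the entire $T$-side remain valid as written.
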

\begin{proof}
\emph{(1) $\Rightarrow$ (2)} holds by Proposition \ref{prop:i_inv_implies_inv}, \emph{(2) $\Rightarrow$ (3)} is clear, and \emph{(3) $\Rightarrow$ (4)} holds because if $A \in \CC_n$ satisfies $\partial_j^\alpha A \in \im \epsilon_1$, then its shell is $R_1$-invertible. Also, \emph{(4) $\Rightarrow$ (5)} holds because for any $A \in \Phi_n(\CC_n)$, $\partial_j^\alpha A \in \im \epsilon_1$ for all $j \neq 1$. Let us finally show that \emph{(5) $\Rightarrow$ (1)}. From Lemmas \ref{lem:psi_and_i_inversibility} and \ref{lem:shell_and_invertiblility}, for any $i < n$, a cell $A \in \CC_n$ with an $R_1$-invertible shell is $R_1$-invertible if and only if $\psi_i A$. Iterating this result, we get that for all $A \in \CC_n$ $\psi_1 \ldots \psi_{n-1} A$ is $R_1$-invertible if and only if $\Phi \psi_1 \ldots \psi_{n-1} A$ is. Since $\Phi \psi_i = \Phi$ for all $i < n$, $A$ is invertible if and only if $\Phi A$ is $R_1$-invertible.

Suppose now $n > 1$. Then \emph{(1) $\Rightarrow$ (6)} by Proposition \ref{prop:carac_Ti_invertibility_inv}, and clearly \emph{(6) $\Rightarrow$ (7)}. Suppose now that any $n$-cell with a $T_1$-invertible shell is $T_1$-invertible, and let us show that \emph{(4)} holds. Let $A \in \CC_n$  such that $\partial_j^\alpha A \in \im \epsilon_1$ for all $j \neq 1$ is $R_1$-invertible: then $A$ has a $T_1$-invertible shell, and is therefore $T_1$-invertible by hypothesis. As a consequence $A$ is invertible, and since it has an $R_1$-invertible shell, it is $R_1$-invertible.
\end{proof}

\begin{thm}\label{thm:equiv_glob_cub}
The functors $\lambda$ and $\gamma$ restrict to an equivalence of categories:
\[
\begin{tikzpicture}
\matrix (m) [matrix of math nodes, 
			nodes in empty cells,
			column sep = 3cm, 
			row sep = .7cm] 
{
\Cat{(\omega,p)} & \CCat{(\omega,p)} \\
};

\draw[-to] (m-1-2) to [bend right=20] node (A) [above] {$\lambda$} (m-1-1);
\draw[-to] (m-1-1) to [bend right = 20] node (C) [below] {$\gamma$} (m-1-2);

\path (A) to node {$\cong$} (C);

\end{tikzpicture}
\]
\end{thm}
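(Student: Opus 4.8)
The plan is to exploit that $\lambda$ and $\gamma$ are already known to form an equivalence $\CCat{\omega} \simeq \Cat{\omega}$. Since $\Cat{(\omega,p)}$ and $\CCat{(\omega,p)}$ are \emph{full} subcategories, the restricted functors will automatically form an equivalence $\CCat{(\omega,p)} \simeq \Cat{(\omega,p)}$ as soon as one verifies the two inclusions $\lambda\bigl(\CCat{(\omega,p)}\bigr) \subseteq \Cat{(\omega,p)}$ and $\gamma\bigl(\Cat{(\omega,p)}\bigr) \subseteq \CCat{(\omega,p)}$: with these in hand, every component at an object of $\CCat{(\omega,p)}$ (resp.\ $\Cat{(\omega,p)}$) of the natural isomorphisms $\id_{\CCat{\omega}} \cong \gamma\lambda$ and $\id_{\Cat{\omega}} \cong \lambda\gamma$ is a morphism between objects of that subcategory, hence by fullness lies in it, so the isomorphisms restrict, and naturality is inherited. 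Thus the whole statement reduces to the two preservation claims.

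Both claims are repackagings of Proposition \ref{prop:caracterisation_omega_p}. Recall that for a cubical $\omega$-category $\CC$, the globular $\omega$-category $\lambda\CC$ has $\Phi_n(\CC_n)$ as its set of $n$-cells, with $\bullet_{n-1}$-composition given by $\star_1$, identities by $\epsilon_1$, and source and target by $\partial_1^-$ and $\partial_1^+$. Hence an $n$-cell $A \in \Phi_n(\CC_n)$ is invertible in $\lambda\CC$ exactly when it admits an $R_1$-inverse \emph{lying in $\Phi_n(\CC_n)$}; and the $R_1$-inverse in $\CC$ of a cell of $\Phi_n(\CC_n)$, when it exists, is again in $\Phi_n(\CC_n)$ — this is the single compatibility that needs checking, and I expect it to drop out of the explicit formulas for the $R$-inverses of the folding operations in Lemmas \ref{lem:closure_i_invertibility} and \ref{lem:psi_and_i_inversibility} (giving $\Phi_n R_1 A = R_1 A$ whenever $\Phi_n A = A$). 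Granting this, $\lambda\CC$ is a globular $(\omega,p)$-category if and only if, for every $n > p$, every cell of $\Phi_n(\CC_n)$ is $R_1$-invertible in $\CC$, which by the equivalence of conditions (1) and (5) of Proposition \ref{prop:caracterisation_omega_p} (applied in each dimension $n > p$) is precisely the statement that $\CC$ is a cubical $(\omega,p)$-category. So $\lambda$ preserves — indeed reflects — $(\omega,p)$-ness.

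For $\gamma$ I would argue by transport along the equivalence: given $\C \in \Cat{(\omega,p)}$, the isomorphism $\lambda\gamma\C \cong \C$ of globular $\omega$-categories together with invariance of the $(\omega,p)$ condition under isomorphism gives $\lambda\gamma\C \in \Cat{(\omega,p)}$, and then the equivalence established in the previous paragraph, applied to the cubical $\omega$-category $\gamma\C$ (whose globular image under $\lambda$ is $\lambda\gamma\C$), yields $\gamma\C \in \CCat{(\omega,p)}$. Combined with the first paragraph, this finishes the proof. Essentially all the content lives in Proposition \ref{prop:caracterisation_omega_p} and in the already-established equivalence; the one non-formal step — and the main, if mild, obstacle — is the bookkeeping matching ``invertible cell of $\lambda\CC$'' with ``$R_1$-invertible cell of $\CC$'', i.e.\ that $R_1$-inversion does not leave the subset $\Phi_n(\CC_n)$ of globular cells.
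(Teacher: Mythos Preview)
Your argument takes essentially the same route as the paper's: identify invertibility of an $n$-cell in $\gamma\CC$ with $R_1$-invertibility of the corresponding cell of $\Phi_n(\CC_n)$, invoke the equivalence (1)$\Leftrightarrow$(5) of Proposition~\ref{prop:caracterisation_omega_p}, and conclude via fullness and repleteness of the two subcategories. One cosmetic point: you have the names of the functors reversed relative to the body of the paper --- it is $\gamma:\CCat{\omega}\to\Cat{\omega}$ that sends $\CC$ to the globular category with $n$-cells $\Phi_n(\CC_n)$, while $\lambda$ goes the other way --- though the paper's own diagrams are unfortunately inconsistent with its text on this.

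The bookkeeping step you single out --- that the $R_1$-inverse of $A\in\Phi_n(\CC_n)$ again lies in $\Phi_n(\CC_n)$ --- is a genuine obligation, and the paper's proof also leaves it implicit. Your instinct about where the argument lives is right. For $j\ge2$, Lemma~\ref{lem:faces_and_inverses} gives $\partial_j^\alpha R_1A = R_1\partial_j^\alpha A$; since $\partial_j^\alpha A\in\im\epsilon_1$ and $R_1\epsilon_1=\epsilon_1$ by Lemma~\ref{lem:closure_i_invertibility}, this equals $\partial_j^\alpha A$. Meanwhile $\partial_1^\alpha R_1A=\partial_1^{-\alpha}A\in\im\Phi_{n-1}$. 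Thus $R_1A$ has the same shell as $A$ except with $\partial_1^+$ and $\partial_1^-$ swapped, and the face characterisation of $\im\Phi_n$ from \cite{ABS02} then yields $R_1A\in\Phi_n(\CC_n)$.
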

\begin{proof}
Let $\CC$ be a cubical $(\omega,p)$-category. The globular $\omega$-category  $\gamma \CC$ is a globular $(\omega,p)$-category if and only if, for all $n > p$, every cell in $\Phi_n(\CC_n)$ is $R_1$-invertible. By Proposition \ref{prop:caracterisation_omega_p}, this is equivalent to $\CC$ being a cubical $(\omega,p)$-category. Since $\Cat{(\omega,p)}$ and $\CCat{(\omega,p)}$ are replete full sub-categories respectively of $\Cat{\omega}$ and $\CCat{\omega}$, this proves the result.
\end{proof}

\begin{cor}\label{cor:carac_omega_1}
Let $\CC$ be a cubical $\omega$-category. Then:
\begin{itemize}
\item $\CC$ is a cubical $\omega$-groupoid if and only if every $n$-cell of $\CC$ is $R_i$-invertible for all $1 \leq i \leq n$.
\item $\CC$ is a cubical $(\omega,1)$-category if and only if every $n$-cell is $T_i$-invertible, for all $1 \leq i < n$.
\end{itemize}
\end{cor}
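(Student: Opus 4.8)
The plan is to derive both equivalences from Propositions~\ref{prop:i_inv_implies_inv} and~\ref{prop:carac_Ti_invertibility_inv}, which characterise $R_i$- and $T_i$-invertibility of a cell as the conjunction of (plain) invertibility with the invertibility of its shell, combined with an induction on dimension that removes the hypothesis on the shell. Recall that, by definition, a cubical $\omega$-groupoid is a cubical $(\omega,0)$-category, i.e.\ one in which every $n$-cell with $n\geq 1$ is invertible, and a cubical $(\omega,1)$-category is one in which every $n$-cell with $n\geq 2$ is invertible.

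For the first item, one direction is immediate: if every $n$-cell of $\CC$ is $R_i$-invertible for all $1\leq i\leq n$, then in particular every $n$-cell with $n\geq 1$ is $R_1$-invertible, hence invertible by Proposition~\ref{prop:i_inv_implies_inv}, so $\CC$ is a cubical $\omega$-groupoid. Conversely, assuming $\CC$ is a cubical $\omega$-groupoid, I would show by induction on $n$ that every $n$-cell is $R_i$-invertible for all $1\leq i\leq n$. When $n=1$ the only choice is $i=1$ and Lemma~\ref{lem:faces_and_inverses} shows that the shell condition is vacuous, so a $1$-cell, being invertible by hypothesis, is $R_1$-invertible by Proposition~\ref{prop:i_inv_implies_inv}. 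For $n>1$ and $1\leq i\leq n$, the cell $A$ is invertible by hypothesis, and for each $j\neq i$ the face $\partial_j^\alpha A$ is an $(n-1)$-cell which is $R_{i_j}$-invertible by the induction hypothesis (one checks $1\leq i_j\leq n-1$); hence $A$ has an $R_i$-invertible shell by Lemma~\ref{lem:faces_and_inverses}, and Proposition~\ref{prop:i_inv_implies_inv} gives that $A$ is $R_i$-invertible.

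The second item is proved in the same way, replacing everywhere $R_i$ by $T_i$, Proposition~\ref{prop:i_inv_implies_inv} by Proposition~\ref{prop:carac_Ti_invertibility_inv}, and Lemma~\ref{lem:faces_and_inverses} by Lemma~\ref{lem:Ti_inverses_in_shells}. If every $n$-cell is $T_i$-invertible for all $1\leq i<n$, then every $n$-cell with $n\geq 2$ is $T_1$-invertible, hence invertible, so $\CC$ is a cubical $(\omega,1)$-category. For the converse, assume $\CC$ is a cubical $(\omega,1)$-category and induct on $n\geq 2$: when $n=2$ the only choice is $i=1$, the shell condition of Lemma~\ref{lem:Ti_inverses_in_shells} is vacuous (there is no $j\neq 1,2$ with $1\leq j\leq 2$), and the $2$-cell is invertible, so it is $T_1$-invertible by Proposition~\ref{prop:carac_Ti_invertibility_inv}; for $n>2$ and $1\leq i<n$, the cell is invertible by hypothesis and, for each $j\neq i,i+1$, the face $\partial_j^\alpha A$ is an $(n-1)$-cell which is $T_{i_j}$-invertible by induction, so $A$ has a $T_i$-invertible shell by Lemma~\ref{lem:Ti_inverses_in_shells} and hence is $T_i$-invertible.

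The only step needing any care is the index bookkeeping: one must verify that $i_j$ stays in the admissible range for an $(n-1)$-cell so that the induction hypothesis really applies to the faces, namely $1\leq i_j\leq n-1$ in the first item, and $1\leq i_j\leq n-2$ in the second (this last bound uses that $j\neq i,i+1$ forces, when $j>i$, the inequality $j\geq i+2$, whence $i\leq n-2$, and when $j<i$, $i_j=i-1\leq n-2$). Beyond this, the argument is a purely formal consequence of the two propositions and two lemmas cited above.
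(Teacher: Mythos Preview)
Your proof is correct and follows essentially the same approach as the paper: both argue one direction directly from the fact that $R_i$- (resp.\ $T_i$-) invertibility implies plain invertibility, and the converse by induction on the dimension using Lemma~\ref{lem:faces_and_inverses} (resp.\ Lemma~\ref{lem:Ti_inverses_in_shells}) to check the shell hypothesis and then Proposition~\ref{prop:i_inv_implies_inv} (resp.\ Proposition~\ref{prop:carac_Ti_invertibility_inv}) to conclude. Your explicit index bookkeeping for $i_j$ is a welcome addition that the paper leaves implicit.
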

\begin{proof}
If every $n$-cell of $\CC_n$ is $R_i$-invertible then in particular every cell of $\CC_n$ is invertible, and so $\CC$ is a cubical $\omega$-groupoid.
Reciprocally, if $\CC$ is a cubical $\omega$-groupoid, we prove by induction on $n$ that every cell is $R_i$-invertible. For $n = 1$, every $1$-cell has an $R_1$-invertible shell, and so every cell is $R_1$-invertible. Suppose now the property true for all $n$-cells. Then any cell $A \in \CC_{n+1}$ necessarily has a $R_i$-invertible shell by Lemma \ref{lem:faces_and_inverses}, and so the property holds for all $(n+1)$-cells.

The proof of the second point is similar, using the fact that any $2$-cell in a cubical $\omega$-category has a $T_1$-invertible shell.
\end{proof}

\subsection{Augmented directed complexes and \texorpdfstring{$(\omega,p)$}{(omega,p)}-categories}
\label{subsec:ADC}

From \cite{ABS02} and \cite{S04}, we have the following functors, where $\ADC$ is the category of augmented directed complexes.
\[
\begin{tikzpicture}
\matrix (m) [matrix of math nodes, 
			nodes in empty cells,
			column sep = 3cm, 
			row sep = .7cm] 
{
\ADC & \Cat{\omega} & \CCat \omega \\
};
\draw[-to] (m-1-2) to [bend right=20] node (A1) [above] {$\mathcal Z^\G$} (m-1-1);
\draw[-to] (m-1-1) to [bend right = 20] node (C1) [below] {$N^\G$} (m-1-2);
\draw[-to] (m-1-3) to [bend right=20] node (A) [above] {$\lambda$} (m-1-2);
\draw[-to] (m-1-2) to [bend right = 20] node (C) [below] {$\gamma$} (m-1-3);

\path (A) to node {$\cong$} (C);
\path (C1) to node {$\bot$} (A1); 
\end{tikzpicture}
\]
In this section we define cubical analogues to $N^\G$ and $\mathcal Z^\G$, and show that they induce an adjunction between $\ADC$ and $\CCat{\omega}$. Finally we show that all these functor can be restricted to the case of $(\omega,p)$-categories, with a suitable notion of $(\omega,p)$-$\ADC$.

\begin{defn}
An \emph{augmented chain complex}  $K$ is a sequence of abelian groups $K_n$ (for $n \geq 0$) together with maps $\d : K_{n+1} \to K_n$ for every $n \geq 0$ and a map $\e : K_0 \to \mathbb Z$ satisfying the equations:
\[
\d \circ \d = 0 \qquad \e \circ \d = 0
\]

A morphism of augmented chain complexes from $(K,\d,\e) \to (L,\d,\e)$ is a family of morphisms $f_n : K_n \to L_n$ satisfying: 
\[
\d \circ f_{n+1} = f_n \circ \d
\qquad
\e = \e \circ f_0.
\]
\end{defn}

\begin{defn}
An \emph{augmented directed chain complex} (or ADC for short) is an augmented chain complex $K$ equipped with a submonoid $K_n^*$ of $K_n$ for any $n \geq 0$.

A morphism of ADCs $K \to L$ is a morphism of augmented chain complexes $f$ satisfying $f(K_n^*) \subseteq L_n^*$. We denote by $\ADC$ the category of augmented directed chain complexes.
\end{defn}

The following is a reformulation of Steiner \cite{S04}:
\begin{prop}
Let us fix $n \geq 0$, and let $K$ the following ADC:
\[
K_k = \begin{cases}
\mathbb Z[s_k,t_k] & k < n \\
\mathbb Z[x] & k = n \\
0 & k > n
\end{cases}
\qquad
K^*_k = \begin{cases}
\mathbb N[s_k,t_k] & k < n \\
\mathbb N[x] & k = n \\
0 & k > n
\end{cases}
\qquad
\begin{cases}
\d[x] = t_{n-1} - s_{n-1} & \\
\d[s_{k+1}] = \d[t_{k+1}] = t_{k} - s_{k} & k \geq 0 \\
\e[s_0] = \e[t_0] = 1& 
\end{cases}
\]
We denote this ADC by $n$-$\tikzcircle^\ADC$. 

Equipped with morphisms $\check \s,\check \t : (n+1)\text{-}\tikzcircle^\ADC \to n\text{-}\tikzcircle^\ADC$, $\check 1 : n\text{-}\tikzcircle^\ADC \to (n+1)\text{-}\tikzcircle^\ADC$ and $\check \star_i : n\text{-}\tikzcircle^\ADC \bigsqcup_{i\text{-}\tikzcircle^\ADC} n\text{-}\tikzcircle^\ADC$, those form a co-globular $\omega$-category object in $\ADC$, and therefore they induce a functor $N^\G : \ADC \to \Cat{\omega}$ defined by $(N^\G L)_n = \ADC( n\text{-}\tikzcircle^\ADC,L)$
\end{prop}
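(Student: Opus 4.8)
The plan is to recognise the whole package — the ADCs $n\text{-}\tikzcircle^\ADC$, the maps $\check\s,\check\t,\check 1,\check\star_i$, the co-globular $\omega$-category axioms and the resulting nerve functor — as the image under Steiner's left adjoint $\mathcal Z^\G : \Cat{\omega}\to\ADC$ of the corresponding data in $\Cat{\omega}$. Write $\mathbb G_n$ for the free globular $\omega$-category on a single $n$-cell (the standard $n$-globe). By the Yoneda lemma the representables $\Cat{\omega}(\mathbb G_\bullet,-)$ assemble into the underlying-globular-set functor, the pushouts $\mathbb G_n\bigsqcup_{\mathbb G_i}\mathbb G_n$ represent $i$-composable pairs of $n$-cells, and the composite of the generic such pair is classified by a map $\mathbb G_n\to\mathbb G_n\bigsqcup_{\mathbb G_i}\mathbb G_n$; together with the source and target inclusions $\mathbb G_{n-1}\to\mathbb G_n$ and the collapse maps $\mathbb G_{n+1}\to\mathbb G_n$ (the latter sending the generic $(n+1)$-cell to the identity on the generic $n$-cell), this exhibits $\{\mathbb G_n\}$ as a co-globular $\omega$-category object in $\Cat{\omega}$ — the dual axioms being, via Yoneda, precisely the translations of the axioms of a strict globular $\omega$-category.

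First I would record Steiner's computation that $\mathcal Z^\G\mathbb G_n$ is exactly $n\text{-}\tikzcircle^\ADC$: in degree $k$ it is free on the $k$-cells of $\mathbb G_n$, i.e. $\mathbb Z[s_k,t_k]$ for $k<n$, $\mathbb Z[x]$ for $k=n$ and $0$ otherwise; its positive submonoid is the one generated by those cells; its differential is $\d[u]=[\t u]-[\s u]$, whence $\d[x]=t_{n-1}-s_{n-1}$ and $\d[s_{k+1}]=\d[t_{k+1}]=t_k-s_k$; and its augmentation sends each $0$-cell to $1$. I would then take $\check\s,\check\t,\check 1,\check\star_i$ to be the images under $\mathcal Z^\G$ of the source inclusion, the target inclusion, the collapse map and the composition map of $\{\mathbb G_n\}$; unwinding, $\check\s$ sends the top generator $x$ to $s_n$ and fixes $s_k,t_k$, $\check 1$ sends the top generator to $0$ and $s_n,t_n$ to $x$, and $\check\star_i$ sends the top generator $x$ to the sum of the two top generators of the pushout (reflecting the identity $[a\bullet_i b]=[a]+[b]$ in $\mathcal Z^\G$). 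A short direct check then confirms each of these is a well-defined morphism of ADCs — a chain map compatible with $\e$ carrying positive elements to positive elements — using the explicit $\d$ and $\e$ above; for $\check\star_i$ this also uses that pushouts in $\ADC$ are computed degreewise (cokernel of a difference map, with the positive submonoid generated by the images of the two summands).

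Next, since $\mathcal Z^\G$ is a left adjoint it preserves colimits, in particular the pushouts $\mathbb G_n\bigsqcup_{\mathbb G_i}\mathbb G_n$, and so it carries the co-globular $\omega$-category object $\{\mathbb G_n\}$ to a co-globular $\omega$-category object $\{n\text{-}\tikzcircle^\ADC\}$ in $\ADC$, which is the first assertion. (One may instead verify the dual axioms — the globular relations among $\check\s,\check\t,\check 1$ and the co-associativity, co-unit and co-interchange laws for $\check\star_i$ — by hand; since every ADC in sight is free on a finite explicit generating set with the evident positive submonoid, and iterated pushouts are again of this form, each axiom reduces to checking that two morphisms out of some such ADC agree on finitely many generators.) For the second assertion the general nerve construction then applies: for an ADC $L$ one sets $(N^\G L)_n=\ADC(n\text{-}\tikzcircle^\ADC,L)$, with source, target and identity given by precomposition with $\check\s$, $\check\t$, $\check 1$, and with $i$-composition given by precomposition with $\check\star_i$ under the canonical bijection $\ADC(n\text{-}\tikzcircle^\ADC,L)\times_{\ADC(i\text{-}\tikzcircle^\ADC,L)}\ADC(n\text{-}\tikzcircle^\ADC,L)\cong\ADC\big(n\text{-}\tikzcircle^\ADC\bigsqcup_{i\text{-}\tikzcircle^\ADC}n\text{-}\tikzcircle^\ADC,\,L\big)$, which holds because $\Hom(-,L)$ turns the pushout into a pullback. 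The globular $\omega$-category axioms for $N^\G L$ follow from the co-axioms by the same Yoneda dictionary, and functoriality in $L$ is immediate; the fact that this $N^\G$ agrees with Steiner's right adjoint — the precise sense in which the proposition is a reformulation of \cite{S04} — is read off from the adjunction isomorphism $(N^\G L)_n=\ADC(\mathcal Z^\G\mathbb G_n,L)\cong\Cat{\omega}(\mathbb G_n,N^\G L)=(N^\G L)_n$.

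I expect the only genuinely non-formal input to be Steiner's explicit description of $\mathcal Z^\G\mathbb G_n$ (equivalently, of the free ADC on an $n$-globe) together with the identification of the images of the structure maps; this is imported wholesale from \cite{S04}. Everything else is bookkeeping, the one point deserving a little care being the behaviour of pushouts in $\ADC$: one must know they exist and are computed degreewise, so that $\ADC(-,L)$ sends them to the pullbacks that make $i$-composition well defined and associative.
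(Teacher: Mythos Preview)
The paper does not give its own proof of this proposition: it is introduced as ``a reformulation of Steiner \cite{S04}'' and stated without argument. Your proposal is therefore not competing with a proof in the paper but supplying one, and the route you take --- identifying $n\text{-}\tikzcircle^\ADC$ with $\mathcal Z^\G\mathbb G_n$, transporting the co-globular $\omega$-category structure on the globes along the colimit-preserving left adjoint $\mathcal Z^\G$, and then reading off the nerve functor --- is correct and is exactly the spirit in which the paper uses this result (compare how the paper later defines $\cube{n}^\ADC := \mathcal Z^\G(\cube{n}^\G)$ and $N^\CC$ by the same mechanism).

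One small caution about presentation order rather than mathematical content: in the paper, the adjunction $\mathcal Z^\G \dashv N^\G$ is recorded \emph{after} this proposition, so if you were writing this proof for insertion at this point of the paper you should cite \cite{S04} directly for the left-adjointness (as you do), not the paper's subsequent restatement. There is no circularity, since Steiner defines $N^\G$ independently and proves the adjunction without passing through the co-globular description; your final adjunction isomorphism is then a legitimate check that the nerve you construct agrees with his.
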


The category $\ADC$ is equipped with a tensor product defined as follows \cite{S04}: 
\begin{defn}
Let $K$ and $L$ be ADCs. We define an object $K \otimes L$ in $\ADC$ as follows:
\begin{itemize}
\item For all $n \geq 0$, $(K \otimes L)_n = \bigoplus_{i+j=n} K_i \otimes L_j$.
\item For all $n \geq 0$, $(K \otimes L)_n^*$ is the sub-monoid of $(K \otimes L)_n$ generated by the elements of the form $x \otimes y$, with $x \in K_i^*$ and $y \in L_{n-i}^*$.
\item For all $x \in K_i$ and $y \in L_{n-i}$, $\d[x \otimes y] = \d[x]\otimes y + (-1)^i x \otimes \d[y]$.
\item For all $x \in K_0$ and $y \in L_0$, $\e[x \otimes y] = \e[x] \e[y]$.
\end{itemize}
\end{defn}

\begin{prop}
Let $\C$ be a globular $\omega$-category. Following Steiner \cite{S04}, we define an ADC $\mathcal Z^\G \C$ as follows:
\begin{itemize}
\item For all $n \in \mathbb N$, $(\mathcal Z^\G \C)_n$ is the quotient of the group $\mathbb Z [\C_n]$ by the relation $[A \bullet_k B] = [A] + [B]$.
\item For all $n  \in \mathbb N$, $(\mathcal Z^\G \C)_n^*$ is the image of $\mathbb N[\C_n]$ in $(\mathcal Z^\G \C)_n$.
\item For all $A \in \C_n$, $\d [A] = [\s(A)] - [\t(A)]$.
\item For all $A \in \C_0$, $\e [A] = 1$.
\end{itemize}
\end{prop}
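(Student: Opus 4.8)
The plan is to verify in turn that each piece of the stated data is well-defined and satisfies the ADC axioms, and then to record functoriality; every step is a direct computation using the globular identities and the defining relation $[A\bullet_k B]=[A]+[B]$.

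First I would check that $\d$ is well-defined. The assignment $[A]\mapsto[\s(A)]-[\t(A)]$ extends linearly to a homomorphism $\mathbb Z[\C_n]\to(\mathcal Z^\G\C)_{n-1}$, and one must see that it annihilates the subgroup generated by the elements $[A\bullet_k B]-[A]-[B]$. Two cases arise. If $k=n-1$, then $\s(A\bullet_{n-1}B)=\s(A)$, $\t(A\bullet_{n-1}B)=\t(B)$ and $\t(A)=\s(B)$, so both sides equal $[\s(A)]-[\t(B)]$. If $k<n-1$, then $\s(A\bullet_k B)=\s(A)\bullet_k\s(B)$ and $\t(A\bullet_k B)=\t(A)\bullet_k\t(B)$, and here it is precisely the relation already imposed in dimension $n-1$ — available because the target is the quotient group, not the free one — that yields $[\s(A)\bullet_k\s(B)]=[\s(A)]+[\s(B)]$ and likewise for targets, whence $\d[A\bullet_k B]=\d[A]+\d[B]$. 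The map $\e$ needs nothing: there are no composites of $0$-cells, so $(\mathcal Z^\G\C)_0=\mathbb Z[\C_0]$ carries no relations and $\e$ is simply the linear extension of $[A]\mapsto 1$.

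Then I would check $\d\circ\d=0$: for $A\in\C_n$ with $n\ge 2$, expanding gives $\d\d[A]=[\s\s A]-[\t\s A]-[\s\t A]+[\t\t A]$, and the globular identities $\s\s A=\s\t A=\s_{n-2}A$ and $\t\s A=\t\t A=\t_{n-2}A$ cancel the four terms in pairs. Similarly $\e\circ\d=0$ is immediate: for $A\in\C_1$ one has $\e\d[A]=\e[\s A]-\e[\t A]=1-1=0$. For the directed structure, $(\mathcal Z^\G\C)_n^{*}$ is the image under a group homomorphism of the submonoid $\mathbb N[\C_n]\subseteq\mathbb Z[\C_n]$, hence automatically a submonoid; so $\mathcal Z^\G\C$ is an ADC. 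Lastly I would note functoriality: a strict functor $F\colon\C\to\D$ induces $[A]\mapsto[F(A)]$, which respects the defining relations since $F$ preserves compositions, commutes with $\d$ and $\e$ since $F$ preserves sources, targets and identities, and carries $\mathbb N[\C_n]$ into $\mathbb N[\D_n]$; thus $\mathcal Z^\G$ is a functor $\Cat\omega\to\ADC$.

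There is no genuine obstacle here. The only point requiring attention is the case split in the well-definedness of $\d$, where one must observe that the $k<n-1$ case really does invoke the relation already present on $(\mathcal Z^\G\C)_{n-1}$; it is also worth noting, as a consistency check, that this relation forces $[1_A]=[1_A\bullet_{n-1}1_A]=2[1_A]$, hence $[1_A]=0$, in agreement with identity cells having vanishing boundary.
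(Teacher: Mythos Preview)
Your verification is correct and complete. The paper itself does not supply a proof of this proposition: it is stated as a recollection from Steiner \cite{S04}, with the construction taken as given. So there is no ``paper's own proof'' to compare against; you have simply written out the routine checks that the paper omits.

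One very minor remark: your aside that $[1_A]=0$ is a nice sanity check but not logically needed for anything you prove, since well-definedness of $\d$, the relations $\d\circ\d=0$ and $\e\circ\d=0$, and the submonoid condition do not require it. Otherwise the argument is exactly the straightforward unwinding one expects, with the only delicate point --- that the case $k<n-1$ of well-definedness of $\d$ uses the relations already imposed in the target group --- correctly identified.
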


\begin{prop}[\cite{S04}, Theorem 2.11]
The functor $\mathcal Z^\G$ is left-adjoint to the functor $N^\G$.

\[
\begin{tikzpicture}
\matrix (m) [matrix of math nodes, 
			nodes in empty cells,
			column sep = 3cm, 
			row sep = .7cm] 
{
\ADC & \Cat{\omega}  \\
};
\draw[-to] (m-1-2) to [bend left=20] node (A1) [below] {$N^\G$} (m-1-1);
\draw[-to] (m-1-1) to [bend left = 20] node (C1) [above] {$\mathcal Z^\G$} (m-1-2);

\path (A1) to node {$\bot$} (C1); 
\end{tikzpicture}
\]
\end{prop}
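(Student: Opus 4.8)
The plan is to prove this by exhibiting, for every $\C \in \Cat{\omega}$ and every $L \in \ADC$, a bijection $\ADC(\mathcal Z^\G \C, L) \cong \Cat{\omega}(\C, N^\G L)$ natural in both arguments. Since this proposition is precisely the result of Steiner cited in its header, carrying out the identification explicitly will at the same time confirm that the functors $\mathcal Z^\G$ and $N^\G$ defined above coincide with the ones appearing there. The key step I would take is to observe that \emph{both} hom-sets are naturally in bijection with one and the same explicit set of data: namely, the set of families of functions $z_n : \C_n \to L_n$ (for $n \ge 0$) such that each $z_n(A)$ lies in the positive cone $L_n^*$; such that $z_n(A \bullet_k B) = z_n(A) + z_n(B)$ for all $0 \le k < n$ and all $k$-composable $A, B$; such that $\d z_n(A)$ is the combination of $z_{n-1}(\s A)$ and $z_{n-1}(\t A)$ prescribed by the definitions (with the orientation fixed once and for all by the formula for $\d$ on $\mathcal Z^\G \C$ and by the orientation of $\check\s, \check\t$); and such that $\e z_0(A) = 1$.

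For the side $\ADC(\mathcal Z^\G \C, L)$ this identification is immediate from the presentation of $\mathcal Z^\G \C$: because $(\mathcal Z^\G \C)_n$ is the group on generators $[A]$, $A \in \C_n$, modulo the relations $[A \bullet_k B] = [A] + [B]$, a group homomorphism $f_n : (\mathcal Z^\G \C)_n \to L_n$ is exactly a function on $\C_n$ that is additive for every $\bullet_k$; the further requirement that $f$ be a morphism of ADCs then imposes, term by term, precisely the three remaining conditions, using that $(\mathcal Z^\G \C)_n^*$ is the $\mathbb N$-span of the $[A]$. For the side $\Cat{\omega}(\C, N^\G L)$ I would unwind $(N^\G L)_n = \ADC(n\text{-}\tikzcircle^\ADC, L)$ together with the co-globular structure maps $\check\s, \check\t, \check 1, \check\star_i$: a morphism $n\text{-}\tikzcircle^\ADC \to L$ sends the top generator $x$ to an element of $L_n^*$ and the auxiliary generators $s_k, t_k$ to elements of $L_k^*$, but since $\check\s$ and $\check\t$ carry $x$ to $s_{n-1}$ and $t_{n-1}$ while fixing the lower generators, for a globular functor $g : \C \to N^\G L$ the values $g(A)(s_k)$ and $g(A)(t_k)$ are forced to equal $g(\s_k A)(x)$ and $g(\t_k A)(x)$. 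Hence $g$ is completely determined by the family $z_n(A) := g(A)(x)$; the relation $\d x = t_{n-1} - s_{n-1}$ yields the condition on $\d z_n(A)$, functoriality along $\check\star_i$ yields additivity for the $\bullet_k$, functoriality along $\check 1$ yields $z_{n+1}(1_A) = 0$ (which also holds on the $\mathcal Z^\G$ side, since $1_A = 1_A \bullet_n 1_A$ forces $[1_A] = 2[1_A]$), and the structure of $0\text{-}\tikzcircle^\ADC$ forces $\e z_0(A) = 1$. The bijection then sends $f$ to the functor $g$ with $g(A)(x) = f_n(A)$, and naturality in $\C$ and in $L$ is a direct check on generators.

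The step I expect to be the main obstacle is the bookkeeping of the second half of the previous paragraph: one must check carefully that the auxiliary generators $s_k, t_k$ of $n\text{-}\tikzcircle^\ADC$ genuinely encode no information beyond the iterated sources and targets of $x$, so that the data of $g$ really does collapse to the single family $z_n$, and that all sign conventions (the relative sign between the $\d$ of $\mathcal Z^\G$ and the $\d$ of $n\text{-}\tikzcircle^\ADC$, as mediated by the chosen orientation of $\check\s$ and $\check\t$) are reconciled consistently across the two sides. Once that correspondence is laid out cleanly, everything else — well-definedness of both directions of the bijection, the passage from group maps to monoid maps, and naturality — is formal. As an alternative to this hands-on argument one could instead exhibit $\mathcal Z^\G$ as the left Kan extension of the co-globular ADC $n \mapsto n\text{-}\tikzcircle^\ADC$ along the inclusion of globes into $\Cat{\omega}$, using that every globular $\omega$-category is a canonical colimit of globes; but the direct computation above is shorter and has the advantage of making the comparison with Steiner's functors transparent.
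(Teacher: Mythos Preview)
The paper does not give its own proof of this proposition; it is stated with a citation to Steiner \cite{S04}, Theorem~2.11, and no argument is supplied. Your proposal is a correct sketch of the standard argument for this adjunction and is essentially how Steiner proves it: both hom-sets are identified with the same set of families $z_n : \C_n \to L_n^*$ that are additive for each $\bullet_k$, compatible with $\d$ and $\e$, and your identification of the main bookkeeping point (that a morphism $n\text{-}\tikzcircle^\ADC \to L$ is determined by the image of the top generator once the globular functoriality constraints are imposed) is accurate. Since the paper treats this result as a black-box citation, there is nothing further to compare.
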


\begin{defn}
Let $\cube{n}^\ADC$ be the augmented directed complex $\mathcal Z^\G(\cube{n}^\G)$. The maps $\check \partial_i^\alpha$, $\check \epsilon_i$, $\check \Gamma_i^\alpha$ and $\check \star_i$ still induce a structure of co-cubical $\omega$-category object in $\ADC$ on the family $\cube{n}^\ADC$. Consequently, for any $K \in \ADC$ the family of sets $\ADC(\cube{n}^\ADC,K)$ is equipped with a structure of cubical $\omega$-category. This defines a functor $N^\CC : \ADC \to \CCat{\omega}$.

Let $\CC$ be a cubical $\omega$-category. We define an ADC $ \mathcal Z^\CC \CC$ as follows:
\begin{itemize}
\item For all $n \in \mathbb N$, $(\mathcal Z^\CC \CC)_n$ is the quotient of $\mathbb Z[\CC_n]$ by the relations $[A \star_k B] = [A] + [B]$ and $[\Gamma_i^\alpha A] = 0$.
\item For all $n \in \mathbb N$, $(\mathcal Z^\CC \CC)_n^*$ is the image of $\mathbb Z[\CC_n]$ is $K_n$.
\item For all $A \in \CC_n$, 
\[
\d[A] = \sum_{\substack{1\le i\le n\\ \alpha = \pm}} \alpha (-1)^i [\partial_i^\alpha A]
\]
\item For all $A \in \CC_0$, $\e[A] = 1$.
\end{itemize}
\end{defn}

\begin{prop}\label{prop:commutation_up_to_iso}
There are isomorphisms of functors:
\[
\mathcal Z^\CC \approx \mathcal Z^\G \circ \gamma 
\qquad 
N^\CC \approx \lambda \circ N^\G
\]

As a result, we have the following diagram of equivalence and adjunctions between  $\Cat{\omega}$, $\CCat{\omega}$ and $\ADC$, where both triangles involving $\mathcal Z^\CC$ and $\mathcal Z^\G$ and both triangles involving $N^\CC$ and $N^\G$ commute up to isomorphism:
\[
\begin{tikzpicture}
\matrix (m) [matrix of math nodes, 
			nodes in empty cells,
			column sep = 1cm, 
			row sep = 2.5cm] 
{
\Cat{\omega} & & \CCat{\omega}  \\
& \ADC & \\
};
\draw[-to] (m-2-2) to node (NG) [above right] {$N^\G$} (m-1-1);
\draw[transform canvas={xshift=-0.6cm}, -to] (m-1-1) to node (ZG) [below left] {$\mathcal Z^\G$} (m-2-2);
\draw[-to] (m-2-2) to node (NC) [above left] {$N^\CC$} (m-1-3);
\draw[transform canvas={xshift=0.6cm}, -to] (m-1-3) to node (ZC) [below right] {$\mathcal Z^\CC$} (m-2-2);
\draw[transform canvas={yshift=-0.2cm}, -to] (m-1-3) to node (L) [below] {$\gamma$} (m-1-1);
\draw[transform canvas={yshift=0.2cm}, -to] (m-1-1) to node (G) [above] {$\lambda$} (m-1-3);

\path (ZG) to node [left] {\rotatebox{125}{$\bot$}} (NG);
\path (ZC) to node [right] {\rotatebox{230}{$\bot$}} (NC);
\path (L) to node {$\cong$} (G);
\end{tikzpicture}
\]
\end{prop}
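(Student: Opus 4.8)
The plan is to establish the two functor isomorphisms separately --- the one for $N$ by a formal adjunction argument, the one for $\mathcal Z$ by an explicit chain-level computation --- and then to read off the adjunction $\mathcal Z^\CC \dashv N^\CC$ and the commutativity of all four triangles from the fact that $\gamma$ is an equivalence. For the first isomorphism: by definition $(N^\CC K)_n = \ADC(\cube{n}^\ADC, K) = \ADC(\mathcal Z^\G \cube{n}^\G, K)$, and Steiner's adjunction $\mathcal Z^\G \dashv N^\G$ supplies a bijection, natural in $K$, with $\Cat{\omega}(\cube{n}^\G, N^\G K) = (\lambda N^\G K)_n$. It remains to check that this bijection transports the cubical structure. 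On both sides the operations $\partial_i^\alpha$, $\epsilon_i$ and $\Gamma_i^\alpha$ are given by precomposition along the co-cubical structure maps of $\cube{n}^\G$ and of $\cube{n}^\ADC = \mathcal Z^\G \cube{n}^\G$, and naturality of the adjunction bijection in its first argument turns one into the other; for the compositions $\star_i$ one uses in addition that $\mathcal Z^\G$, being a left adjoint, preserves the pushout defining $\Rect{(n,i)}^\G$, giving the analogous pushout in $\ADC$ compatibly with $\check{\star}_i$, whence the universal property of the pushout together with the adjunction bijection identifies the two $\star_i$'s. Thus the bijections assemble into an isomorphism $N^\CC \approx \lambda \circ N^\G$ of cubical $\omega$-categories, natural in $K$.

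For the isomorphism $\mathcal Z^\CC \approx \mathcal Z^\G \circ \gamma$, fix $\CC \in \CCat{\omega}$ and recall that the $n$-cells of $\gamma\CC$ are $\Phi_n(\CC_n)$, with $A \bullet_k B = A \star_{n-k} B$. The inclusion $\Phi_n(\CC_n) \hookrightarrow \CC_n$ therefore induces a morphism of ADCs $\theta \colon \mathcal Z^\G\gamma\CC \to \mathcal Z^\CC\CC$: it kills the relator $[A\bullet_k B]-[A]-[B]$ because $[A\star_{n-k}B]=[A]+[B]$ in $\mathcal Z^\CC\CC$, and it is a chain map because for $A \in \Phi_n(\CC_n)$ one has $\partial_i^\alpha A \in \im \epsilon_1$ for $i \neq 1$ and $[\epsilon_1 D] = 0$ in $\mathcal Z^\CC\CC$ (apply $A \star_1 \epsilon_1 \partial_1^+ A = A$ to $A = \epsilon_1 D$), so in the defining sum $\d[A]=\sum_{i,\alpha}\alpha(-1)^i[\partial_i^\alpha A]$ only the $i=1$ term survives, matching $[\s(A)]-[\t(A)]$. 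In the other direction I would send $[A]$ to $[\Phi_n A]$: this is well defined once one knows, from the calculus of folding operations of \cite{ABS02}, that $\Phi_n(\Gamma_i^\alpha D)$ is an identity cell of $\gamma\CC$ --- hence of class $0$ in $\mathcal Z^\G\gamma\CC$ since $1_F\bullet_{n-1}1_F=1_F$ --- and that $\Phi_n(A\star_k B)$ is a composite in $\gamma\CC$ of $\Phi_n A$, $\Phi_n B$ and identity cells, hence of class $[\Phi_n A]+[\Phi_n B]$. The resulting map $\xi \colon \mathcal Z^\CC\CC \to \mathcal Z^\G\gamma\CC$ is inverse to $\theta$: one has $\theta\xi[A] = [\Phi_n A] = [A]$ in $\mathcal Z^\CC\CC$ because each $\psi_i$ merely inserts $\Gamma$-cells into a $\star_{i+1}$-composite (so $[\psi_i B]=[B]$, and $\Phi_n$ is a composite of such $\psi_i$), while $\xi\theta[B] = [\Phi_n B] = [B]$ for $B \in \Phi_n(\CC_n)$ since $\Phi_n$ restricts to the identity on its image. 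Compatibility with $\e$ and with the positive submonoids is immediate, and naturality in $\CC$ comes from naturality of $\Phi_n$ and of the inclusions, so $\theta$ is the desired natural isomorphism. The technical heart of the whole proposition is precisely the two cited properties of $\Phi_n$ --- its effect on connection cells and on $\star_k$-composites --- which must be extracted from the folding-operation combinatorics of \cite{ABS02}; I expect this to be the only non-formal step.

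Finally, since $\gamma$ is an equivalence with quasi-inverse $\lambda$ we have $\gamma \dashv \lambda$, and composing with $\mathcal Z^\G \dashv N^\G$ gives $\mathcal Z^\G\circ\gamma \dashv \lambda\circ N^\G$. Transporting this adjunction across the two isomorphisms above yields $\mathcal Z^\CC \dashv N^\CC$. The triangle $\mathcal Z^\CC \approx \mathcal Z^\G\circ\gamma$ is the second step, and its companion follows from $\gamma\circ\lambda \approx \id$ via $\mathcal Z^\CC\circ\lambda \approx \mathcal Z^\G\circ\gamma\circ\lambda \approx \mathcal Z^\G$; dually $N^\CC \approx \lambda\circ N^\G$ is the first step and $\gamma\circ N^\CC \approx \gamma\circ\lambda\circ N^\G \approx N^\G$ gives the last triangle, producing the asserted diagram of one equivalence and two adjunctions in which all four triangles commute up to isomorphism.
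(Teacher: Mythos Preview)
Your proof is correct and follows essentially the same route as the paper: the first isomorphism via the adjunction bijection $\ADC(\mathcal Z^\G\cube{n}^\G,K)\cong\Cat{\omega}(\cube{n}^\G,N^\G K)$, and the second via the inclusion $\Phi_n(\CC_n)\hookrightarrow\CC_n$ together with the identities $[\psi_i A]=[A]$ and the decomposition of $\Phi_n(A\star_k B)$ from \cite{ABS02}. The only cosmetic differences are that you build an explicit inverse $\xi$ where the paper argues surjectivity and injectivity of $\iota$ separately, and you spell out the pushout argument for $\star_i$ and the derivation of $\mathcal Z^\CC\dashv N^\CC$ more fully than the paper does.
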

\begin{proof}
Let $K$ be an ADC. We have for all $n \geq 0$, using the adjunction between $N^\G$ and $\mathcal Z^\G$:
\begin{align*}
\lambda \circ N^\G (K)_n & = \Cat{\omega}(\cube{n}^\G,N^\G K) \\
& \approx \ADC(\mathcal Z^\G(\cube{n}^\G),K) \\
& = \ADC(\cube{n}^\ADC,K) \\
& = (N^\CC K)_n
\end{align*}
Moreover, because these equalities are functorial, they preserve the cubical $\omega$-category structures on the families $\lambda \circ N^\G(K)_n$ and $(N^\CC K)_n$, so finally we have the isomorphism $N^\CC \approx \lambda \circ N^\G$.

\medskip 

Let now $\CC$ be a cubical $\omega$-category.  For all $n \geq 0$, the group $\mathcal Z^\G (\gamma (\CC))_n$ is the free abelian group generated by elements $[A]$, for $A \in \im \Phi_n$, subject to the relations $[A \star_i B] = [A] + [B]$, for all $A,B \in \im \Phi_n$. Let us show that for all $n \geq 0$, $\mathcal Z^\G (\gamma (\CC))_n$ and $\mathcal Z^\CC(\CC)_n$ are isomorphic.

First, the inclusion $\im \Phi_n \to \CC_n$ gives rise to a map $\mathbb Z[\im \Phi_n] \to \mathcal Z^\CC(\CC)_n$. Moreover this map respects the relations defining $Z^\G (\gamma (\CC))_n$, so it induces a morphism $\iota: \mathcal Z^\G (\gamma (\CC))_n \to \mathcal Z^\CC(\CC)_n$.

For all $A \in \CC_n$, we have in $\mathcal Z^\CC(\CC)_n$: $[\psi_i A] = [\Gamma_i^+ \partial_{i+1}^- A] + [A] + [\Gamma_i^- \partial_{i+1}^+ A] = [A]$. By iterating this formula, we get that for all $A \in \CC_n$, $[\Phi_n(A)] = [A]$. Hence $\iota$ is surjective. Let us now show that it is injective. Using the relation $[\Phi_n(A)] = [A]$, we get that $\mathcal Z^\CC(\CC)_n$ is isomorphic to the free group generated by $[\im \Phi_n]$, subject to the relations $[\Phi_n(A \star_i B)]=[\Phi_n(A)]+[\Phi_n(B)]$ for all $A,B \in \CC_n$ and $[\Phi_n(\Gamma_i^\alpha A)]=0$, for all $A \in \CC_{n-1}$. Let us prove that these equalities already hold in $\mathcal Z^\G (\gamma (\CC))_n$.

Let $x$ be a thin cell in $\CC_n$. Then $\Phi_n(x)$ is in the image of $\epsilon_1$, and $\Phi_n(x) \star_1 \Phi_n(x )= \Phi_n(x)$, and so in $\mathcal Z^\G (\gamma (\CC))_n$: $2 \cdot [\Phi_n(x) ] = [\Phi_n(x)]$, and finally $[\Phi_n(x)] = 0$. In particular $[\Phi_n(\Gamma_i^\alpha A)]=0$ in $\mathcal Z^\G (\gamma (\CC))_n$. 
Let now $A$ and $B$ be $i$-composable $n$-cells. Following Proposition 6.8 from \cite{ABS02}, $\Phi_n(A \star_i B)$ is a composite of cells of the form $\epsilon_1^{n-m} \Phi_m D A$ and $\epsilon_1^{n-m} \Phi_m D B$, where $0 \leq m \leq n$ is an integer, and $D$ is a composite of length $m$ of faces operations. Using the fact that $\epsilon_1^{n-m} \Phi_m = \Phi_n \epsilon_1^{n-m}$, we get that $\Phi_n(A \star_i B)$ is a composite of cells $\Phi_n(x)$, where $x$ is thin, with the cells $\Phi_n(A)$ and $\Phi_n(B)$. As a consequence, we get that in  $\mathcal Z^\G (\gamma (\CC))_n$, $[\Phi_n(A \star_i B)] = k_1 [\Phi_n(A)] + k_2[\Phi_n(B)]$ for some integers $k_1$ and $k_2$. Moreover, following Section 6 of \cite{ABS02}, we verify that the cells $\Phi_nA$ and $\Phi_n B$ appear exactly once in this composition. As a result $[\Phi_n(A \star_i B)] = [\Phi_n(A)] + [\Phi_n(B)]$ in $\mathcal Z^\G (\gamma (\CC))_n$, and so $\mathcal Z^\G (\gamma (\CC))_n$ and $\mathcal Z^\CC(\CC)_n$ are isomorphic. 

Let us denote respectively by $\d^\G$ and $\d^\CC$ the boundary maps in $ \mathcal Z^\G (\gamma (\CC))$ and $\mathcal Z^\CC(\CC)_n$. For $A \in \im(\Phi_n)$, we have $\d^\G[A] = [\partial_1^- A] - [\partial_1^+ A]$, and $\d^\CC[A] = \sum_{\substack{1\le i\le n\\ \alpha = \pm}} \alpha (-1)^i [\partial_i^\alpha A]$. Since $A$ is in $\im \Phi_n$, for all $i \neq 1$, $\partial_i^\alpha A$ is thin and so $[\partial_i^\alpha A]=0$, and $\d^\CC[A] = [\partial_1^- A] - [\partial_1^+ A] = \d^\G[A]$. As a result, $\iota$ induces an isomorphism of chain complexes between $\mathcal Z^\G (\gamma (\CC))$ and $\mathcal Z^\CC(\CC)$. Finally $\mathcal Z^\G (\gamma (\CC))_n^*$ and $\mathcal Z^\CC(\CC)_n^*$ are the submonoids respectively generated by $\im \Phi_n$ and $\CC_n$ and $[A] = [\Phi_n(A)]$ in  $\mathcal Z^\CC(\CC)_n$, so  $\mathcal Z^\G (\gamma (\CC))$ and $\mathcal Z^\CC(\CC)$ are isomorphic as ADCs.
\end{proof}

\begin{defn}
Let $K$ be an ADC. We say that a cell $A \in K^*_n$ is \emph{invertible} if  $-A$ is in $K_n^*$.

We say that $K$ is an $(\omega,p)$-ADC if for any $n > p$,  $K_n = K^*_n$. We denote by $(\omega,p)$-$\ADC$ the category of $(\omega,p)$-ADCs.
\end{defn}

\begin{prop}\label{prop:adjunction_omega_p_glob}
Let $\C$ be a globular $\omega$-category, and $A \in \C_n$. If $A$ is invertible, then so is $[A]$ in $\mathcal Z^\G(\C)$, and $ [A^{-1}]= -[A]$. In particular if $\C$ is an $(\omega,p)$-category, then $\mathcal Z^\G \C$ is an $(\omega,p)$-ADC.

\medskip

Let $K$ be an ADC, and $A \in \ADC(n\text{-}\tikzcircle^\ADC,K)$. If $A[(x)] \in K_n^*$ is invertible then so is $A$ in $N^\G(K)$, and the inverse of $A$ is given by:
\[
B[x] = -A[x] 
\qquad
\begin{cases}
B[s_{n-1}] = A[t_{n-1}] & \\
B[t_{n-1}] = A[s_{n-1}] & \\
\end{cases}
\qquad
\begin{cases}
B[s_i] = A[s_i] & i < n-1 \\
B[t_i] = A[t_i] & i < n-1 \\
\end{cases}
\]
In particular if $K$ is an $(\omega,p)$-ADC then $N^\G K$ is a globular $(\omega,p)$-category.
\end{prop}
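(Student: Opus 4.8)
The plan is to handle the two halves of the statement independently, each reducing to an explicit computation in the relevant model: $\mathcal Z^\G \C$ for the first half, $N^\G K$ for the second.

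For the first half, I would first record that $[1_x] = 0$ in $\mathcal Z^\G(\C)$ for every identity $n$-cell $1_x$: since $1_x$ is a unit for $\bullet_{n-1}$ we have $1_x \bullet_{n-1} 1_x = 1_x$, and the defining relation $[A \bullet_{n-1} B] = [A] + [B]$ then forces $2[1_x] = [1_x]$, hence $[1_x] = 0$. Now if $A \in \C_n$ is invertible with inverse $A^{-1}$, then $[A] + [A^{-1}] = [A \bullet_{n-1} A^{-1}] = [1_{\s(A)}] = 0$, so $[A^{-1}] = -[A]$; since $[A^{-1}] \in (\mathcal Z^\G \C)_n^*$, this exhibits $[A]$ as an invertible element of the ADC $\mathcal Z^\G \C$, with the stated inverse. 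For the final assertion, suppose $\C$ is an $(\omega,p)$-category and $n > p$; then every generator $[A]$ of the submonoid $(\mathcal Z^\G \C)_n^*$ has its negative $[A^{-1}]$ also in $(\mathcal Z^\G \C)_n^*$, so this submonoid is closed under negation; being also closed under addition, it is a subgroup of $(\mathcal Z^\G \C)_n$ containing a generating set, and therefore $(\mathcal Z^\G \C)_n = (\mathcal Z^\G \C)_n^*$. Thus $\mathcal Z^\G \C$ is an $(\omega,p)$-ADC.

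For the second half, I would first verify that the prescribed cell $B$ is a well-defined morphism $n\text{-}\tikzcircle^\ADC \to K$, hence an $n$-cell of $N^\G K$. Positivity holds because $B[x] = -A[x] \in K_n^*$ by the invertibility hypothesis while all other values of $B$ are values of $A$; compatibility with $\e$ holds since $B[s_0]$ and $B[t_0]$ are again of the form $A[s_j], A[t_j]$ with $\e$-value $1$; and compatibility with $\d$ is a one-line check from $\d[x] = t_{n-1} - s_{n-1}$ and $\d[s_{k+1}] = \d[t_{k+1}] = t_k - s_k$, the swap $s_{n-1} \leftrightarrow t_{n-1}$ together with the sign on $B[x]$ making $\d B = B\,\d$ on each generator. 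Next I would invoke the explicit description, coming from the co-globular structure maps $\check\star_i$ and $\check 1$ of \cite{S04}, of $\bullet_{n-1}$ and of identities in $N^\G K$: for $(n-1)$-composable $n$-cells the composite takes the sum of the values on $x$ and keeps the $(n-1)$-source of the first factor and the $(n-1)$-target of the second, while the identity $1_C$ on an $(n-1)$-cell $C$ has value $0$ on $x$ and both $(n-1)$-faces equal to $C$. With the chosen $B$ one checks $\t_{n-1}(A) = \s_{n-1}(B)$ and $\t_{n-1}(B) = \s_{n-1}(A)$, so $A \bullet_{n-1} B$ and $B \bullet_{n-1} A$ are defined, and comparing values on the generators $x, s_i, t_i$ shows $A \bullet_{n-1} B = 1_{\s_{n-1}(A)}$ and $B \bullet_{n-1} A = 1_{\t_{n-1}(A)}$; hence $B = A^{-1}$ in $N^\G K$. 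Finally, if $K$ is an $(\omega,p)$-ADC then $K_n = K_n^*$ for $n > p$, so $A[x]$ is automatically invertible for every $n$-cell $A$ with $n > p$, and by the above $N^\G K$ is a globular $(\omega,p)$-category.

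The whole argument is bookkeeping rather than insight; the only point calling for care is keeping the index conventions of $n\text{-}\tikzcircle^\ADC$ straight --- distinguishing the generator $x$ (which records $A$ itself) from $s_{n-1}, t_{n-1}$ (the top faces) and from the lower $s_i, t_i$ (faces of faces) --- and checking that these line up correctly with the swap performed by $B$. I do not anticipate a genuine obstacle.
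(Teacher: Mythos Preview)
Your proposal is correct and follows essentially the same approach as the paper's proof: both halves proceed by direct computation, first using the relation $[A \bullet_k B] = [A]+[B]$ to show identities map to $0$ and inverses to negatives, then verifying by hand that the prescribed $B$ is a valid ADC morphism and a compositional inverse in $N^\G K$. You are slightly more explicit than the paper in checking that $B$ respects the differential and augmentation, but the underlying argument is identical.
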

\begin{proof}
Let $\C$ be an $\omega$-category, and $A \in \C_n$. If $A$ is invertible, then there exists $B$ such that $A \bullet_n B = 1_{\s(A)}$. Notice first that $[1_{\s(A)}] + [1_{\s(A)}] =[1_{\s(A)} \bullet_n 1_{\s(A)}] = [1_{\s(A)}]$, and so $[1_{\s(A)}] = 0$. As a consequence, $[A] + [B] = [A \bullet_n B] = 0$. Since both $[A]$ and $[B]$ are in $Z^\G(\C)_n^*$, $[A]$ is invertible. If $\C$ is an $(\omega,p)$-category, then for all $n > p$, $(\mathcal Z^\G \C)_n^*$ is generated by invertible cells. Since invertible cells are closed under addition, $(\mathcal Z^\G \C)_n^*$ is actually a group. Moreover it has the same generators as $(\mathcal Z^\G \C)_n$, so the two groups are actually equal, making $\mathcal Z^\G \C$ an $(\omega,p)$-ADC.

Let now $K$ be an ADC, and $A \in \ADC(n\text{-}\tikzcircle^\ADC,K)$ such that $A[x]$ is invertible. Define $B$ as the following morphism from $n\text{-}\tikzcircle^\ADC$ to $K$:
\[
B[x] = -A[x] 
\qquad
\begin{cases}
B[s_{n-1}] = A[t_{n-1}] & \\
B[t_{n-1}] = A[s_{n-1}] & \\
\end{cases}
\qquad
\begin{cases}
B[s_i] = A[s_i] & i < n-1 \\
B[t_i] = A[t_i] & i < n-1 \\
\end{cases}
\]
Note that since $A[x]$ is invertible, $-A[x]$ is in $K_n^*$, and so $B$ is indeed a morphism of ADC. Moreover, $A$ and $B$ are $(n-1)$-composable, and $A \bullet_{n-1} B$ is given by:
\[
(A \bullet_{n-1} B) [x] = A[x]-A[x] = 0
\qquad
\begin{cases}
(A \bullet_{n-1} B) [s_{n-1}] = A[s_{n-1}] & \\
(A \bullet_{n-1} B) [t_{n-1}] = B[t_{n-1}] = A[s_{n-1}] & \\
\end{cases}
\begin{cases}
(A \bullet_{n-1} B) [s_i] = A[s_i] & \\
(A \bullet_{n-1} B)[s_i] = A[s_i] & \\
\end{cases}
\]
So $A \bullet_{n-1} B = 1_{\s(A)}$, and symmetrically $B \bullet_{n-1} A = 1_{\t(A)}$. The cell $A$ is thus invertible. In particular if $K$ is an $(\omega,p)$-ADC, then for all $n > p$ and all $A \in \ADC(n\text{-}\tikzcircle^\ADC,K)$, $A[x]$ is invertible and $A$ is invertible, and so every cell in $( N^\G K)_n$ is invertible, which means that $N^\G K$ is an $(\omega,p)$-category.
\end{proof}

Recall from \cite{S04} that $\cube{n}^\ADC_k$ is the free abelian group over the set $\cube{n}^\Set_k$ of sequences $s : \{1,\ldots,n\} \to \{(-),(\pmzerodot),(+)\}$ such that $|s^{-1}(\pmzerodot)|=k$. For any such $s$, and any $1 \leq i \leq n$ such that $s(i) \neq (\pmzerodot)$, we denote by $R_is$ the sequence  obtained by replacing $s(i)$ by $-s(i)$ in $s$. The following Proposition is the cubical analogue of the previous one.

\begin{prop}\label{prop:adjunction_omega_p_cub}
Let $\CC$ be a cubical $\omega$-category, and $A \in \CC_n$. If $A$ is $R_i$-invertible or $T_i$-invertible, then $[A]$ is invertible. In particular if $\CC$ is a cubical $(\omega,p)$-category, then $\mathcal Z^\C \CC$ is an $(\omega,p)$-ADC.

\medskip 

Let $K$ be an $ADC$, and let $A \in \ADC(\cube{n}^\ADC,K)$:
\begin{itemize}
\item If for any $0 \leq k \leq n$, and any sequence $s \in \cube{n}^\Set_k$ such that $s(i)=(\pmzerodot)$, $A[s]$ is invertible (in $K$) then $A$ is $R_i$-invertible, and $R_iA$ is given by:
\[
R_iA[s] = \begin{cases}
-A[s] & s(i) = (\pmzerodot) \\
A[R_i s] & s(i) \neq (\pmzerodot) \\
\end{cases}
\]
\item If for any $0 \leq k \leq n$, and any sequence $s \in \cube{n}^\Set_k$ such that $s(i)=s(i+1) = (\pmzerodot)$, $A[s]$ is invertible, then $A$ is $T_i$-invertible, and $T_i A$ is given by:
\[
T_iA[s] = \begin{cases}
-A[s] & s(i) = s(i+1) = (\pmzerodot) \\
A[s \circ \tau_i] & \text{otherwise.}
\end{cases}
\] 
\end{itemize}
In particular, if $K$ is an $(\omega,p)$-ADC, then $N^\CC K$ is a cubical $(\omega,p)$-category.
\end{prop}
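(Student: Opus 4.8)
The plan is to treat the two functors separately, in each case first proving the statement about $R_i$- and $T_i$-invertibility and then deducing the $(\omega,p)$-assertion, everything running parallel to the globular Proposition~\ref{prop:adjunction_omega_p_glob}.

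For $\mathcal Z^\CC$: I would first record that $[\epsilon_i C] = 0$ in $\mathcal Z^\CC\CC$ for every cell $C$, since $\epsilon_i C \star_i \epsilon_i C = \epsilon_i C$ forces $2[\epsilon_i C] = [\epsilon_i C]$. If $A \in \CC_n$ is $R_i$-invertible, applying $[-]$ to $A \star_i R_i A = \epsilon_i \partial_i^- A$ gives $[A] + [R_i A] = 0$, so $-[A] = [R_i A] \in (\mathcal Z^\CC\CC)_n^*$ and $[A]$ is invertible. If $A$ is $T_i$-invertible then $\psi_i A$ is $R_i$-invertible by Lemma~\ref{lem:carac_Ti_invertibility_Ri}, and $[\psi_i A] = [\Gamma_i^+\partial_{i+1}^- A] + [A] + [\Gamma_i^-\partial_{i+1}^+ A] = [A]$ by the defining relation $[\Gamma_i^\alpha(-)] = 0$, so again $[A]$ is invertible. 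For the $(\omega,p)$ part: if $\CC$ is a cubical $(\omega,p)$-category and $n > p$, then by Proposition~\ref{prop:caracterisation_omega_p} every cell of $\Phi_n(\CC_n)$ is $R_1$-invertible, and $[A] = [\Phi_n A]$ for all $A \in \CC_n$ (iterating $[\psi_i A] = [A]$ as in the proof of Proposition~\ref{prop:commutation_up_to_iso}); hence every generator of $(\mathcal Z^\CC\CC)_n^*$ is invertible, and since invertible elements are closed under addition $(\mathcal Z^\CC\CC)_n^*$ is a group with the same generators as $(\mathcal Z^\CC\CC)_n$, so the two coincide and $\mathcal Z^\CC\CC$ is an $(\omega,p)$-ADC.

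For $N^\CC$: working with the description of $\cube n^\ADC$ recalled just before the statement (free on the sequences $s$, with $\check\partial_i^\alpha$ inserting $\alpha$ in coordinate $i$, $\check\epsilon_i$ deleting a non-$(\pmzerodot)$ coordinate $i$ and annihilating $[s]$ when $s(i) = (\pmzerodot)$, $\check\star_i$ splitting $[s]$ into the two halves when $s(i) = (\pmzerodot)$, and the tensor differential), I would take $B$ to be the cell defined by the displayed formula and check that it is a morphism of ADCs and the $R_i$- (resp. $T_i$-) inverse of $A$. Positivity of $B$ is immediate from the invertibility hypothesis, compatibility with $\e$ is trivial, and compatibility with $\d$ is a sign computation using only that $s \mapsto R_i s$ exchanges the two faces in direction $i$ and commutes with the other face maps, exactly mirroring the globular case. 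The identities $A \star_i B = \epsilon_i \partial_i^- A$ and $B \star_i A = \epsilon_i \partial_i^+ A$ (resp. equations~\eqref{eq:axiom_inv} and~\eqref{eq:axiom_inv_bis}) are then verified by evaluating on a generator $[s]$: when $s(i) = (\pmzerodot)$ both sides vanish because $A[s] + B[s] = 0$, and otherwise both sides reduce to a single value of $A$. For the $T_i$-case one may instead reduce to the $R_i$-case via Lemma~\ref{lem:carac_Ti_invertibility_Ri}: one checks that the hypothesis makes $\psi_i A[t]$ invertible for every $t$ with $t(i) = (\pmzerodot)$ (the two connection factors in $\psi_i A = \Gamma_i^+\partial_{i+1}^- A \star_{i+1} A \star_{i+1} \Gamma_i^-\partial_{i+1}^+ A$ contribute $0$ on such generators, so $\psi_i A[t] = A[t]$ when $t(i+1) = (\pmzerodot)$ and $\psi_i A[t] = 0$ otherwise), whence $\psi_i A$ is $R_i$-invertible and $A$ is $T_i$-invertible, the formula for $T_i A$ following from~\eqref{eq:Ti_using_psi_i}.

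For the $(\omega,p)$ statement for $N^\CC$ I would use criterion~(4) of Proposition~\ref{prop:caracterisation_omega_p}. Let $K$ be an $(\omega,p)$-ADC, $n > p$, and $A \in (N^\CC K)_n$ with $\partial_j^\alpha A \in \im\epsilon_1$ for all $j \neq 1$; by the first bullet with $i = 1$ it suffices to show $A[s]$ is invertible for every $s$ with $s(1) = (\pmzerodot)$. If $|s^{-1}(\pmzerodot)| > p$ then $A[s] \in K_{|s^{-1}(\pmzerodot)|}^* = K_{|s^{-1}(\pmzerodot)|}$ is automatically invertible; otherwise $s$ has a coordinate $j \neq 1$ with $s(j) = \alpha \neq (\pmzerodot)$, and since $\partial_j^\alpha A$ factors through $\check\epsilon_1$ while the sequence obtained from $s$ by deleting coordinate $j$ still carries $(\pmzerodot)$ in coordinate $1$, we get $A[s] = 0$, which is invertible. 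Hence $A$ is $R_1$-invertible and Proposition~\ref{prop:caracterisation_omega_p} yields that every $n$-cell of $N^\CC K$ is invertible, i.e. $N^\CC K$ is a cubical $(\omega,p)$-category. I expect the main obstacle to be precisely this last step: translating the condition $\partial_j^\alpha A \in \im\epsilon_1$ into vanishing of the coefficients $A[s]$, which requires carefully unwinding the co-degeneracy $\check\epsilon_1$ and its interaction with the co-faces on $\cube n^\ADC$, including the index shift between the $(n-1)$-cube and the $n$-cube. (Alternatively one could bypass this by transporting invertibility of $\Phi_n A$ across the isomorphism $\gamma(N^\CC K) \cong N^\G K$ of Proposition~\ref{prop:commutation_up_to_iso} together with Proposition~\ref{prop:adjunction_omega_p_glob}, at the cost of tracking how that isomorphism identifies $\bullet_{n-1}$ with $\star_1$.)
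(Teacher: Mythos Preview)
Your proposal is correct and matches the paper's approach: the paper's entire proof of this proposition is the sentence ``The proof is similar to that of the previous Proposition'', and what you have written is precisely a careful cubical parallel of the argument for Proposition~\ref{prop:adjunction_omega_p_glob}. The details you spell out (in particular $[\epsilon_i C]=0$, $[\psi_i A]=[A]$, the vanishing of $\check\Gamma_i^\alpha[t]$ on generators with $t(i)=t(i+1)=(\pmzerodot)$, and the use of criterion~(4) from Proposition~\ref{prop:caracterisation_omega_p} together with $\check\epsilon_1[s']=0$ when $s'(1)=(\pmzerodot)$) all check out; the alternative route you note at the end via Proposition~\ref{prop:commutation_up_to_iso} would also work and is arguably closer in spirit to how the paper organises the whole section.
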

\begin{proof}
The proof is similar to that of the previous Proposition.
\end{proof}

\begin{thm}\label{thm:triangle_adjunctions}
For all $p \in \mathbb N \cup \{ \omega \}$, the categories  $\Cat{(\omega,p)}$, $\CCat{(\omega,p)}$ and $(\omega,p)$-$\ADC$ are related by the following diagram of equivalence and adjunctions, where both triangles involving $\mathcal Z^\CC$ and $\mathcal Z^\G$ and both triangles involving $N^\CC$ and $N^\G$ commute up to isomorphism:
\[
\begin{tikzpicture}
\matrix (m) [matrix of math nodes, 
			nodes in empty cells,
			column sep = 1cm, 
			row sep = 2.5cm] 
{
\Cat{(\omega,p)} & & \CCat{(\omega,p)}  \\
& (\omega,p)\text{-}\ADC & \\
};
\draw[-to] (m-2-2) to node (NG) [above right] {$N^\G$} (m-1-1);
\draw[transform canvas={xshift=-0.6cm}, -to] (m-1-1) to node (ZG) [below left] {$\mathcal Z^\G$} (m-2-2);
\draw[-to] (m-2-2) to node (NC) [above left] {$N^\CC$} (m-1-3);
\draw[transform canvas={xshift=0.6cm}, -to] (m-1-3) to node (ZC) [below right] {$\mathcal Z^\CC$} (m-2-2);
\draw[transform canvas={yshift=-0.2cm}, -to] (m-1-3) to node (L) [below] {$\gamma$} (m-1-1);
\draw[transform canvas={yshift=0.2cm}, -to] (m-1-1) to node (G) [above] {$\lambda$} (m-1-3);

\path (ZG) to node [left] {\rotatebox{134}{$\bot$}} (NG);
\path (ZC) to node [right] {\rotatebox{226}{$\bot$}} (NC);
\path (L) to node {$\cong$} (G);
\end{tikzpicture}
\]
\end{thm}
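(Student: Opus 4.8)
The plan is to assemble the theorem from ingredients that are already in place: the $\omega$-level picture of Proposition \ref{prop:commutation_up_to_iso}, which provides the two adjunctions, the equivalence, and the two commuting-triangle isomorphisms relating $\Cat{\omega}$, $\CCat{\omega}$ and $\ADC$; together with the three restriction statements, namely Theorem \ref{thm:equiv_glob_cub} for $\lambda$ and $\gamma$, Proposition \ref{prop:adjunction_omega_p_glob} for $\mathcal Z^\G$ and $N^\G$, and Proposition \ref{prop:adjunction_omega_p_cub} for $\mathcal Z^\CC$ and $N^\CC$. Since $(\omega,\omega)$-categories are exactly $\omega$-categories and $(\omega,\omega)\text{-}\ADC$ is exactly $\ADC$, the case $p = \omega$ is literally Proposition \ref{prop:commutation_up_to_iso}, so it suffices to treat $p \in \mathbb N$.

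First I would record that each of the six functors restricts to the $(\omega,p)$-subcategories. By Theorem \ref{thm:equiv_glob_cub}, $\lambda$ maps $\Cat{(\omega,p)}$ into $\CCat{(\omega,p)}$ and $\gamma$ maps $\CCat{(\omega,p)}$ into $\Cat{(\omega,p)}$. By Proposition \ref{prop:adjunction_omega_p_glob}, $\mathcal Z^\G$ sends an $(\omega,p)$-category to an $(\omega,p)\text{-}\ADC$ and $N^\G$ sends an $(\omega,p)\text{-}\ADC$ to an $(\omega,p)$-category; by Proposition \ref{prop:adjunction_omega_p_cub}, the analogous statements hold for $\mathcal Z^\CC$ and $N^\CC$. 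As $\Cat{(\omega,p)}$, $\CCat{(\omega,p)}$ and $(\omega,p)\text{-}\ADC$ are full (and replete) subcategories of $\Cat{\omega}$, $\CCat{\omega}$ and $\ADC$, each of these functors restricts to an honest functor between the indicated subcategories.

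It then remains to invoke the elementary fact that adjunctions, equivalences and natural isomorphisms descend to full subcategories as soon as the functors involved restrict. Concretely, the unit $\eta\colon \id \Rightarrow N^\G\mathcal Z^\G$ and counit $\varepsilon\colon \mathcal Z^\G N^\G \Rightarrow \id$ from Proposition \ref{prop:commutation_up_to_iso}, evaluated at an object of the relevant $(\omega,p)$-subcategory, are morphisms in that full subcategory, hence they restrict to natural transformations between the restricted functors, and the triangle identities persist since they are equalities of natural transformations; this yields the adjunction $\mathcal Z^\G \dashv N^\G$ between $\Cat{(\omega,p)}$ and $(\omega,p)\text{-}\ADC$, and the same argument gives $\mathcal Z^\CC \dashv N^\CC$ between $\CCat{(\omega,p)}$ and $(\omega,p)\text{-}\ADC$. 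Similarly the natural isomorphisms $\lambda\gamma \cong \id$, $\gamma\lambda \cong \id$, $\mathcal Z^\CC \cong \mathcal Z^\G \circ \gamma$ and $N^\CC \cong \lambda \circ N^\G$ of Theorem \ref{thm:equiv_glob_cub} and Proposition \ref{prop:commutation_up_to_iso} restrict componentwise, giving the equivalence $\lambda \cong \gamma$ and the commutation up to isomorphism of the two triangles through $\mathcal Z^\G, \mathcal Z^\CC$ and of the two triangles through $N^\G, N^\CC$.

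All the genuine content having been front-loaded into Theorem \ref{thm:equiv_glob_cub} and Propositions \ref{prop:adjunction_omega_p_glob} and \ref{prop:adjunction_omega_p_cub}, the present argument is purely formal, and the one step that actually carries weight — that the six functors land in the correct $(\omega,p)$-subcategories — is exactly what those statements establish. The only point I would double-check is that "$(\omega,p)\text{-}\ADC$" refers to the same full subcategory on all three sides of the square: since being an $(\omega,p)\text{-}\ADC$ is the condition $K_n = K_n^*$ for $n > p$, which is visibly invariant under isomorphism of ADCs, this is immediate and the commuting-triangle isomorphisms above are unambiguous.
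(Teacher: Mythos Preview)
Your proposal is correct and follows essentially the same approach as the paper's proof, which simply cites Theorem~\ref{thm:equiv_glob_cub}, Propositions~\ref{prop:adjunction_omega_p_glob} and~\ref{prop:adjunction_omega_p_cub} for the restrictions, and Proposition~\ref{prop:commutation_up_to_iso} for the commutation up to isomorphism. You have merely spelled out more carefully the formal step that adjunctions and natural isomorphisms descend to full subcategories once the functors restrict, which the paper leaves implicit.
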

\begin{proof}
We have already proven that the equivalence between $\Cat{\omega}$ and $\CCat{\omega}$ could be restricted to $(\omega,p)$-categories in Theorem \ref{thm:equiv_glob_cub}, and by Propositions \ref{prop:adjunction_omega_p_glob} and \ref{prop:adjunction_omega_p_cub}, so can the two adjunctions. Lastly, the commutations up to isomorphisms come from Proposition \ref{prop:commutation_up_to_iso}.
\end{proof}

\begin{remq}
In the case where $p = 0$, one would expect the previous Theorem to recover the usual adjunction between chain complexes and groupoids. However, the category of  $(\omega,0)$-ADCs is not the category of chain complexes, but that of chain complexes $K$ equipped with a distinguished sub-monoid of $K_0$.

In order to recover the adjunction between groupoids and chain complexes, one could use a variant of the notion of ADC that does not specify a distinguished submonoid of $K_0$. Then an $(\omega,0)$-ADC is indeed just a chain complex. One can check that, \emph{mutatis mutandis}, the results of this Section, and in particular Theorem \ref{thm:triangle_adjunctions}, still hold using this alternative definition.
\end{remq}

\section{Permutations in cubical \texorpdfstring{$(\omega,p)$}{(omega,p)}-categories}
\label{sec:applications}

We apply our results from the previous Section to two different directions. First we show in Section \ref{subsec:sym_cubical_categories} that the operations $T_i$ induce a partial action of the symmetric group $S_n$ on the $n$-cells of a cubical $\omega$-category. To do this, we define a general notion of $\sigma$-invertibility, where $\sigma \in S_n$. In particular when $\sigma$ is  a transposition $\tau_i$ we recover the notion of $T_i$-invertibility of Section \ref{subsec:T_i_invertibility}. In Section \ref{subsec:transfors} we define the notions of lax and oplax transfors between cubical categories. We then define what it means for a transfor to be pseudo using the notion of $\sigma$-invertibility defined previously and finally we show that the cubical $\omega$-categories of pseudo lax and oplax transfors between two cubical $\omega$-categories are isomorphic.

\subsection{Cubical \texorpdfstring{$(\omega,1)$}{(omega,1)}-categories are symmetric}
\label{subsec:sym_cubical_categories}

We start by defining a notion of $u$-invertibility, where $u$ is a word over $T_1, \ldots ,T_i$, and characterise the notion of $u$-invertibility in terms of plain invertibility, just as we have done previously for $R_i$ and $T_i$-invertibility. 

We then show how the notion of $u$-invertibility induces a notion of $\sigma$-invertibility, for $\sigma \in S_n$. The difficulty lies in the fact that, even if two words $u$ and $v$ over $T_1, \ldots ,T_i$ correspond to the same permutations, the notions of $u$ and $v$-invertibility do not necessarily coincide. We circumvent this difficulty by using a classical result about the symmetric group (see Theorem \ref{thm:representability_of_permutations}), which makes use of the notion of representative of minimal length of permutation.

Finally in Proposition \ref{prop:sigma_inv} we extend the results concerning $u$-invertibility to $\sigma$-invertibility, with $\sigma \in S_n$.

\begin{defn}
Let $n \in \mathbb N$. We write $\mathbb T_n$ the free monoid on $n-1$ elements. We denote its generators by $T_1, \ldots, T_{n-1}$, and by $\l:\mathbb T_{n} \to \mathbb N$ the morphism of monoids that sends every $T_i$ on $1$. For $u \in \mathbb T_n$, we call $\l(u)$ the \emph{length} of $u$.

Recall that $S_n$ is a quotient of $\mathbb T_n$ using the relations:

\begin{equation} \label{eq:transpo_nilpotente}
T_i T_i = 1 
\end{equation}
\begin{equation} \label{eq:braid_relation}
T_i T_{i+1} T_i = T_{i+1} T_i T_{i+1}
\end{equation}  
\begin{equation}\label{eq:comm_relation}
T_j  T_i = T_j T_i \qquad |i-j| \geq 2 
\end{equation}

We denote by $\bar u$ the image of an element $u \in \mathbb T_n$ in $S_n$, and $\tau_i = \bar T_i$. Using this projection, one defines a right-action of $\mathbb T_n$ on $\{1,\ldots,n\}$ by setting $k \cdot u := k \cdot \bar u$.

Let $\CC$ be a cubical $\omega$-category. For every $u \in \mathbb T_n$, we define a notion of $u$-invertible cell and a partial application $u \cdot \_ : \CC_n \to \CC_n$ defined on $u$-invertible cells as follows:
\begin{itemize}
\item Any $n$-cell of $\CC_n$ is $1$-invertible, and $1 \cdot A = A$.
\item For any $u \in \mathbb T_n$ and $1 \leq i < n$, a cell $A \in \CC_n$ is said to be $(T_i \cdot u)$-invertible if $A$ is $u$-invertible and $u \cdot  A$ is $T_i$-invertible. Moreover we set: $(T_i \cdot u) \cdot A := T_i (u \cdot A)$.
\end{itemize}
In particular we say that $A$ has a $u$-invertible shell if $\bm{\partial} A$ is $u$-invertible in $\Box_n \CC$.
\end{defn}

\begin{prop}\label{prop:u_inv_in_sells}
Let $\CC$ be a cubical $\omega$-category, and $A$ be an $n$-cell in $\CC$, with $n \geq 2$. Let $u \in \mathbb T_n$. Suppose $u \neq 1$. Then $A$ is $u$-invertible if and only if $A$ is invertible and has a $u$-invertible shell.
\end{prop}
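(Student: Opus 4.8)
The plan is to argue by induction on the length $\l(u)$ of the word $u \in \mathbb T_n$, using Proposition \ref{prop:carac_Ti_invertibility_inv} (the case of a single $T_i$) as the base case and as the engine of the induction step. Write $u = T_i \cdot v$ with $\l(v) = \l(u) - 1$.

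\emph{First direction.} Suppose $A$ is $u$-invertible, i.e.\ $A$ is $v$-invertible and $v \cdot A$ is $T_i$-invertible. If $v = 1$ then $u = T_i$ and this is exactly Proposition \ref{prop:carac_Ti_invertibility_inv}. If $v \neq 1$, then by the induction hypothesis $A$ is invertible and has a $v$-invertible shell, which means precisely that $\bm\partial A$ is $v$-invertible in $\Box_n\CC$, so $v \cdot \bm\partial A$ is defined; I expect (and would check via the face formulas of Lemmas \ref{lem:Ti_inverses_in_shells} and the compatibility of $T_i$ with $\bm\partial$) that $v \cdot \bm\partial A = \bm\partial(v \cdot A)$. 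Now $v \cdot A$ is $T_i$-invertible, so by Proposition \ref{prop:carac_Ti_invertibility_inv} it is invertible and has a $T_i$-invertible shell; hence $\bm\partial(v\cdot A) = v\cdot\bm\partial A$ is $T_i$-invertible in $\Box_n\CC$, which says exactly that $\bm\partial A$ is $u$-invertible, i.e.\ $A$ has a $u$-invertible shell. It remains to see $A$ is invertible: this is already given by the induction hypothesis (invertibility of $A$ does not depend on $u$).

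\emph{Converse direction.} Suppose $A$ is invertible and has a $u$-invertible shell, i.e.\ $\bm\partial A$ is $u$-invertible in $\Box_n\CC$: thus $\bm\partial A$ is $v$-invertible and $v \cdot \bm\partial A$ is $T_i$-invertible. Unwinding, $A$ is invertible and has a $v$-invertible shell; if $v \neq 1$ the induction hypothesis gives that $A$ is $v$-invertible, and if $v = 1$ this is trivial. So $v \cdot A$ is defined; again $\bm\partial(v\cdot A) = v \cdot \bm\partial A$, which is $T_i$-invertible, so $v \cdot A$ has a $T_i$-invertible shell. Moreover $v \cdot A$ is invertible, because it is obtained from the invertible cell $A$ by the folding/diagonal operations $T_j$, which preserve invertibility: each $T_j$ preserves invertibility since by Lemma \ref{lem:carac_Ti_invertibility_Ri} $T_j(v'\cdot A)$ is built from $\psi_j$ and $R_j$ of $\psi_j$ applied to an invertible cell, and invertible cells are closed under the composites and connections involved (Lemmas \ref{lem:thin_and_composite_invertible}, \ref{lem:closure_i_invertibility}, \ref{lem:psi_and_i_inversibility}); alternatively, invoke that being invertible is equivalent to $\Phi_n A$ being $R_1$-invertible (Definition \ref{def:plain_invertibility}) and that $\Phi_n$ is unchanged by the $\psi$-operations. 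Hence $v\cdot A$ is invertible with a $T_i$-invertible shell, so by Proposition \ref{prop:carac_Ti_invertibility_inv} it is $T_i$-invertible, which means $A$ is $(T_i\cdot v) = u$-invertible.

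\emph{Main obstacle.} The one genuinely technical point, which I would isolate as a short lemma before the induction, is the identity $\bm\partial(v \cdot A) = v \cdot \bm\partial A$ together with the claim that the partial operations $u \cdot (-)$ preserve plain invertibility. The first is a bookkeeping computation on face operations (iterating Lemma \ref{lem:Ti_inverses_in_shells}, which already records $\partial_i^\alpha T_j A$ in terms of $T_{j_i}\partial_i^\alpha A$ and transposed faces), and the second follows from the closure lemmas of Section \ref{sec:invertibility}; both are routine but need to be stated cleanly so that the induction step quoted above actually type-checks. Everything else is a formal unwinding of the recursive definition of $u$-invertibility against Proposition \ref{prop:carac_Ti_invertibility_inv}.
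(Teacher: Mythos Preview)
Your proposal is correct and follows essentially the same approach as the paper: induction on $\l(u)$, with Proposition \ref{prop:carac_Ti_invertibility_inv} as both base case and engine, writing $u = T_i \cdot v$ and passing back and forth between $A$ and $v \cdot A$ via the identity $\bm\partial(v\cdot A) = v \cdot \bm\partial A$. The two points you single out as the ``main obstacle'' are exactly the two places where the paper's proof is terse: it uses $\bm\partial(v\cdot A) = v \cdot \bm\partial A$ without comment (it follows, as you say, from iterating Lemma \ref{lem:Ti_inverses_in_shells}, or simply from $\bm\partial$ being a morphism of cubical $\omega$-categories), and it asserts ``moreover $v\cdot A$ is invertible'' with no justification---your observation that each $T_j$ sends a $T_j$-invertible cell to another $T_j$-invertible cell (by the evident symmetry of the defining equations), hence to an invertible cell by Proposition \ref{prop:carac_Ti_invertibility_inv}, is the cleanest way to fill this in.
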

\begin{proof}
We reason by induction on the length of $u$. If $u$ is of length $1$, there exists $1 \leq i < n$ such that $u = T_i$, and the result to prove becomes: \emph{$A$ is $T_i$-invertible if and only if $A$ is invertible and has a $T_i$-invertible shell}, which is exactly Proposition \ref{prop:carac_Ti_invertibility_inv}.

Otherwise, write $u = T_i v$, with $v \neq 1$, and suppose that $A$ is $u$-invertible. By definition $A$ is $v$-invertible and $v \cdot  A$ is $T_i$-invertible, so by induction $A$ is invertible and has a $v$-invertible shell. Moreover $v \cdot A$ is $T_i$-invertible, and hence has a $T_i$-invertible shell by Proposition \ref{prop:carac_Ti_invertibility_inv}. Since $\bm{\partial} (v \cdot A) = v \cdot \bm{\partial} A$, $\bm \partial A$ is $v$-invertible and $v \cdot \bm \partial A$ is $T_i$-invertible, so $\bm \partial A$ is $u$-invertible.

Reciprocally, suppose $A$ is invertible, and has a $(T_i \cdot v)$-invertible shell. Then $A$ has a $v$-invertible shell, and $v \cdot \bm \partial A$ is $T_i$-invertible. Since $A$ is also invertible, by induction $A$ is $v$-invertible, and since $\bm \partial (v \cdot A) = v \cdot \bm \partial A$, the cell $v \cdot A$ has a $T_i$-invertible shell. Moreover it is invertible, and so by Proposition \ref{prop:carac_Ti_invertibility_inv}, $v \cdot A$ is $T_i$-invertible, which means that $A$ is $u$-invertible.
\end{proof}

\begin{defn}
For $1 \leq i \leq n$, we define applications $\partial_i : \mathbb T_n \to \mathbb T_{n-1}$ as follows:
\[
\partial_i 1 = 1 \qquad
\partial_i T_j = 
\begin{cases}
1 & i = j,j+1 \\
T_{j_i} & i \neq j,j+1
\end{cases}
\qquad 
\partial_i (u \cdot v) = \partial_{i} u \cdot \partial_{i \cdot u} v.
\]
Note in particular that the applications $\partial_i$ are \emph{not} morphisms of monoids.
\end{defn}

\begin{lem}\label{lem:image_by_partial}
Let $u \in \mathbb T_n$. For all $1 \leq i \leq n$, and $1 \leq k \leq n$, we have:
\[
k \cdot \partial_i u = (k^i \cdot u)_{i \cdot u}
\]
\end{lem}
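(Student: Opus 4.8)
The plan is to prove the identity by induction on the length $\l(u)$ of $u\in\mathbb T_n$, keeping $i$ and $k$ universally quantified so that the induction hypothesis may be applied to any subword of $u$ and any index. For the base case $\l(u)=0$ we have $u=1$, hence $\partial_i u=1$ and $i\cdot u=i$, so the left-hand side is $k$ and the right-hand side is $(k^i)_i$; these agree since, by Definition \ref{def:nota_indices}, $(\_)^i$ and $(\_)_i$ are mutually inverse order-isomorphisms between $\{1,\dots,n-1\}$ and $\{1,\dots,n\}\setminus\{i\}$.

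For the inductive step I write $u=T_j v$ with $\l(v)=\l(u)-1$. The defining recursion for $\partial_i$ on products gives $k\cdot\partial_i u=(k\cdot\partial_i T_j)\cdot\partial_{i\cdot T_j}v$, while on the other side $k^i\cdot u=(k^i\cdot\tau_j)\cdot v$ and $i\cdot u=(i\cdot T_j)\cdot v$. Now split according to the definition of $\partial_i T_j$. If $i\in\{j,j+1\}$, then $\partial_i T_j=1$ and $i'':=i\cdot T_j$ is the other element of $\{j,j+1\}$; the induction hypothesis applied to $v$ at index $i''$ gives $k\cdot\partial_i u=k\cdot\partial_{i''}v=(k^{i''}\cdot v)_{i''\cdot v}$, which, because $\tau_j$ and $v$ are bijections, equals the right-hand side $\bigl((k^i\cdot\tau_j)\cdot v\bigr)_{i''\cdot v}$ precisely when $k^{i''}=k^i\cdot\tau_j$ — a one-line check on the ranges $k<j$, $k=j$, $k>j$ from Definition \ref{def:nota_indices}. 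If $i\notin\{j,j+1\}$, then $\partial_i T_j=T_{j_i}$ and $i\cdot T_j=i$, so $k\cdot\partial_i u=(k\cdot\tau_{j_i})\cdot\partial_i v$; the induction hypothesis applied to $v$ at index $i$ then gives $\bigl((k\cdot\tau_{j_i})^i\cdot v\bigr)_{i\cdot v}$, and comparing with the right-hand side $\bigl((k^i\cdot\tau_j)\cdot v\bigr)_{i\cdot v}$, the step reduces to the identity $(k\cdot\tau_{j_i})^i=k^i\cdot\tau_j$ for $i\notin\{j,j+1\}$. This last identity expresses that the order-isomorphism $(\_)^i$ conjugates $\tau_{j_i}$ to the restriction of $\tau_j$ to $\{1,\dots,n\}\setminus\{i\}$, and follows by the same uniqueness-of-order-isomorphisms principle used in Lemma \ref{lem:permut_indices} (or by inspecting the cases $i<j$ and $i>j+1$). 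This exhausts all cases and closes the induction.

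The computations are elementary throughout; the only point needing care is the bookkeeping of the index-shift operators, and in particular tracking the reindexing $i\mapsto i\cdot u$ that the recursion for $\partial_i$ forces — this reindexing is exactly what prevents $\partial_i$ from being a monoid morphism — together with matching the $i\notin\{j,j+1\}$ case against Lemma \ref{lem:permut_indices}. Once $u$ is written as $T_j v$, every remaining step is a finite check on integers.
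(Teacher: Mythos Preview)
Your proof is correct and follows essentially the same approach as the paper: both are inductions driven by the recursion $\partial_i(u\cdot v)=\partial_i u\cdot\partial_{i\cdot u}v$. The paper structures it as ``check $u=1$ and $u=T_j$, then show the property is multiplicative,'' using only the cancellation $((\_)_{i\cdot u})^{i\cdot u}=\id$ in the multiplicative step, whereas you peel off one generator on the left and fold the verification of the $T_j$ case into the inductive step via the auxiliary identities $k^{i\cdot\tau_j}=k^i\cdot\tau_j$ (for $i\in\{j,j+1\}$) and $(k\cdot\tau_{j_i})^i=k^i\cdot\tau_j$ (for $i\notin\{j,j+1\}$). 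These identities are precisely what the paper's ``note first the formula holds when $u$ is $1$ or a $T_j$'' is asserting without detail, so your version is in fact more explicit.
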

\begin{proof}
Note first the formula holds when $u$ is $1$ or a $T_j$. Finally, suppose the property holds for $u$ and $v$. Then we have:
\begin{align*}
k \cdot \partial_i (u \cdot v) & = k \cdot \partial_i u \cdot \partial_{i \cdot u} v  = (k^i \cdot u)_{i \cdot u} \cdot \partial_{i \cdot u} v \\ &
= ((k^i \cdot u)_{i \cdot u}^{i \cdot u} \cdot v)_{i \cdot u \cdot v}
= (k^i \cdot u \cdot v)_{i \cdot u \cdot v}
\end{align*}
\end{proof}

\begin{lem}\label{lem:u_invertibility_of_shells}
Let $\CC$ be a cubical $n$-category, $A \in (\Box \CC)_{n+1}$ and $u \in \mathbb T_{n+1}$. The cell $A$ is $u$-invertible if and only if for all $j \leq n+1$, $A_{j\cdot u}^\alpha$ is $\partial_{j} u$-invertible, and:

\[
\partial_{j}^\alpha (u \cdot A) = \partial_j u \cdot \partial_{j \cdot u}^\alpha A
\]

In particular, if $\CC$ is a cubical $\omega$-category, then $A \in \CC_{n+1}$ has a $u$-invertible shell if and only if for all $j \leq n+1$, $\partial_{j \cdot u}^\alpha A$ is $\partial_{j} u$-invertible.
\end{lem}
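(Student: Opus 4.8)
The plan is to proceed by induction on the length $\l(u)$, reducing the statement to the already-established Lemma \ref{lem:Ti_inverses_in_shells} (the case $u = T_i$) together with the combinatorial identities from Lemma \ref{lem:image_by_partial} and Lemma \ref{lem:permut_indices}. First I would dispatch the base case: if $\l(u) = 0$ then $u = 1$, every cell is $1$-invertible by definition, $\partial_j 1 = 1$, $j \cdot 1 = j$, and the displayed equation reads $\partial_j^\alpha A = \partial_j^\alpha A$, so there is nothing to prove. If $\l(u) = 1$ then $u = T_i$ for some $1 \leq i < n+1$, and the claim is exactly Lemma \ref{lem:Ti_inverses_in_shells}: one checks that $\partial_j T_i$ is $1$ when $j = i, i+1$ (matching the "$\partial_i^\alpha T_j A = \partial_{j+1}^\alpha A$" and "$= \partial_j^\alpha A$" cases, after correctly tracking that $j \cdot \tau_i$ swaps $i \leftrightarrow i+1$) and is $T_{(j_i)}$ otherwise, and that the formula \eqref{eq:Ti_and_face} becomes the displayed equation under the substitution $j \mapsto j \cdot u$.

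For the inductive step I would write $u = T_i \cdot v$ with $\l(v) = \l(u) - 1$ and $v \neq 1$, so that $A$ is $u$-invertible iff $A$ is $v$-invertible and $v \cdot A$ is $T_i$-invertible. By the induction hypothesis applied to $v$, $A$ is $v$-invertible iff every $A_{k \cdot v}^\alpha$ is $\partial_k v$-invertible and $\partial_k^\alpha(v \cdot A) = \partial_k v \cdot \partial_{k \cdot v}^\alpha A$ for all $k$. Next, applying Lemma \ref{lem:Ti_inverses_in_shells} to the $(n+1)$-cell $v \cdot A$ in $\Box \CC$ (viewing it as a cell of the appropriate truncation), $v \cdot A$ is $T_i$-invertible iff each face $\partial_m^\alpha(v \cdot A)$ for $m \neq i, i+1$ is $T_{m_i}$-invertible, with $\partial_m^\alpha T_i(v \cdot A)$ given by \eqref{eq:Ti_and_face}. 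Substituting the induction-hypothesis formula for $\partial_m^\alpha(v \cdot A)$ into this, one sees the face $\partial_m v \cdot \partial_{m \cdot v}^\alpha A$ must be $T_{m_i}$-invertible; by the induction hypothesis (or by Proposition \ref{prop:u_inv_in_sells}) this is controlled by the $\partial_{m_i} \circ \partial_m v$-invertibility of the iterated face of $A$. The key bookkeeping step is to match the composite index $\partial_{m_i}(\partial_m v)$-invertibility condition with $\partial_j(T_i v) = \partial_j T_i \cdot \partial_{j \cdot T_i} v$-invertibility after re-indexing $m = j \cdot T_i$, and to match the source/target tracking $k \cdot u = (k \cdot T_i) \cdot v$; this is where Lemma \ref{lem:image_by_partial} (the identity $k \cdot \partial_i u = (k^i \cdot u)_{i \cdot u}$) and the permutation identities of Lemma \ref{lem:permut_indices} do the work, since they guarantee that the two ways of composing the re-indexing maps $(\_)_i$, $(\_)^i$ agree.

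The main obstacle I anticipate is purely notational rather than conceptual: keeping the indices straight through the composite $\partial_j(u \cdot v) = \partial_j u \cdot \partial_{j \cdot u} v$ and the simultaneous change of which coordinate is being differentiated when $u$ acts. In particular one must be careful that the "for all $j \leq n+1$" quantifier in the statement ranges over the same set before and after the substitution $j \mapsto j \cdot T_i$ — since $\tau_i$ is a bijection of $\{1, \ldots, n+1\}$ this is fine, but the $T_i$-invertibility condition in Lemma \ref{lem:Ti_inverses_in_shells} only constrains faces $A_m^\alpha$ for $m \neq i, i+1$, whereas the directions $i$ and $i+1$ are handled by the "$\partial_{j}^\alpha(u \cdot A) = \partial_j u \cdot \partial_{j\cdot u}^\alpha A$" equation itself (with $\partial_j u = 1$ in those directions). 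Once the dictionary between $(m, m_i, \partial_m v)$ and $(j, j \cdot T_i, \partial_{j \cdot T_i} v)$ is set up explicitly, both implications of the "if and only if" follow by the same computation read in the two directions, exactly as in the proof of Proposition \ref{prop:u_inv_in_sells}. I would conclude by noting the final sentence about $(\omega$-)categories is immediate: $A \in \CC_{n+1}$ has a $u$-invertible shell means $\bm\partial A$ is $u$-invertible in $\Box_n \CC$, and $(\bm\partial A)_{j \cdot u}^\alpha = \partial_{j \cdot u}^\alpha A$, so the first equivalence specializes directly.
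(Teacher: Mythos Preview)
Your plan is correct and matches the paper's approach: induction on $\l(u)$, writing $u = T_i \cdot v$ and combining the induction hypothesis for $v$ with Lemma~\ref{lem:Ti_inverses_in_shells} applied to $v \cdot A$, treating the directions $j = i$, $j = i+1$, and $j \neq i, i+1$ separately. One small slip to watch when you write it out: the condition from Lemma~\ref{lem:Ti_inverses_in_shells} is that $(v\cdot A)_m^\alpha$ is $T_{i_m}$-invertible (not $T_{m_i}$), and since $m = j \cdot \tau_i = j$ for $j \neq i, i+1$ this matches $\partial_j u = T_{i_j} \cdot \partial_j v$ as required.
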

\begin{proof}
We reason by induction on the length of $u$. If $u$ is of length $0$, then $u = 1$ and for all $j$, $\partial_j u = 1$, so both conditions are empty and $(1 \cdot A)_j^\alpha = A_j^\alpha$.

Otherwise, write $u = T_i \cdot v$. Suppose that $A$ is $u$-invertible. Then $A$ is $v$-invertible, and $v \cdot A$ is $T_i$-invertible. Fix $j$ and $\alpha$. Then $\partial_j u = T_{i_j} \cdot \partial_{j \cdot T_i} v$. Let us show that $A_{j\cdot u}^\alpha$ is $\partial_j u$-invertible. We distinguish two cases:
\begin{itemize}
\item If $j = i$ (resp. $j = i+1$), then $\partial_j u =  \partial_{i+1} v$ (resp. $\partial_j v$), and $j \cdot u = (i+1) \cdot v$ (resp. $i \cdot v$). By induction, $A_{(i+1) \cdot v}^\alpha$ (resp. $A_{i \cdot v}^\alpha$) is $\partial_{i+1} v$-invertible (resp. $\partial_i v$-invertible).

\item Otherwise, then $\partial_j u = T_{i_j} \cdot \partial_j v$ and $j \cdot u = j \cdot v$. By induction hypothesis, $A^\alpha_{j \cdot v}$ is $\partial_j v$-invertible. Let us show that $\partial_j v \cdot A_{j \cdot v}^\alpha$ is $T_{i_j}$-invertible. First since $A$ is $T_i \cdot v$-invertible, $v \cdot A$ is $T_i$-invertible, and so by Lemma \ref{lem:Ti_inverses_in_shells}, $\partial_j^\alpha (v \cdot A)$ is $T_{i_j}$-invertible. Finally by induction, $\partial_j^\alpha (v \cdot A) = \partial_j v \cdot A_{j \cdot v}^\alpha$.
\end{itemize}
Finally, using the induction property on $v$, we get:
\[
(u \cdot A)_j^\alpha = (T_i \cdot v \cdot A)_j^\alpha =
\begin{cases}
(v \cdot A)_{i+1}^\alpha = \partial_{i+1} v \cdot A_{(i+1) \cdot v}^\alpha = \partial_i u \cdot A_{i \cdot u}^\alpha & j = i \\
(v \cdot A)_{i}^\alpha = \partial_{i} v \cdot A_{i \cdot v}^\alpha = \partial_{i+1} u \cdot A_{(i+1) \cdot u}^\alpha & j = i+1 \\
T_{i_j} (\partial_j v \cdot A)_j^\alpha = T_{i_j} \partial_j v \cdot A_{j \cdot v}^\alpha = \partial_j u \cdot A_{j \cdot u}^\alpha & j \neq i,i+1
\end{cases}
\]

Suppose now that for all $j$, $A_{j \cdot u}^\alpha$ is $\partial_ju$-invertible. Let us show that $A$ is $u$-invertible. First, let us prove that $A$ is $v$-invertible. Indeed let $j \leq n$, and let us show that $A_{j \cdot v}$ is $\partial_j v$-invertible.
\begin{itemize}
\item If $j \neq i,i+1$, we have that $A_{j \cdot u}^\alpha$ is $\partial_ju$-invertible. Since $\partial_j u = T_{i_j} \partial_j v$, and $j \cdot u = j \cdot v$, this means that $A_{j \cdot v}^\alpha$ is $\partial_j v$-invertible (and $\partial_j v \cdot A_{j \cdot v}^\alpha$ is $T_{i_j}$-invertible). 
\item If $j = i$ (resp. $j = i+1$) then $\partial_{i+1} u = \partial_{i} v$ (resp. $\partial_{i} u = \partial_{i+1} v$) and $(i+1) \cdot u = i \cdot v$ (resp. $i \cdot u = (i+1) \cdot v$), and as a consequence $A_{j \cdot v}^\alpha$ is $\partial_j v$-invertible. 
\end{itemize}
Finally by induction, $A$ is $v$-invertible. Let us show that $v \cdot A$ is $T_i$-invertible. Indeed, for $j \neq i,i+1$, $(v \cdot A)_j^\alpha = \partial_j v \cdot A_{j \cdot v}^\alpha$ is $T_{i_j}$-invertible, and so $v \cdot A$ is $T_i$-invertible by Lemma \ref{lem:Ti_inverses_in_shells}.
\end{proof}

\begin{lem}\label{lem:relations_respect_invert}
Let $\CC$ be a cubical $\omega$-category. 
\begin{itemize}
\item If $A$ is $T_i T_i$-invertible, then:
 \begin{equation} \label{eq:Ti1}
 T_i T_i \cdot A = A
 \end{equation}
\item A cell $A \in \CC_n$ is $T_i T_{i+1} T_i$-invertible if and only if it is $T_{i+1} T_i T_{i+1}$-invertible, and 
\begin{equation}\label{eq:Ti2}
T_i T_{i+1} T_i A = T_{i+1} T_i T_{i+1} A
\end{equation}
\item Let $i,j < n$ such that $|i-j| \geq 2$. A cell $A \in \CC_n$ is $T_i T_j$-invertible if and only if it is $T_j T_i$-invertible, and 
\begin{equation}\label{eq:Ti3}
T_i T_j \cdot A = T_j T_i \cdot A
\end{equation}
\end{itemize}
\end{lem}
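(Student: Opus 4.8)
The plan is to prove the involution relation \eqref{eq:Ti1} directly by a uniqueness argument, and to deduce the braid relation \eqref{eq:Ti2} and the far‑commutation relation \eqref{eq:Ti3} by an induction on the dimension $n$, in each case splitting the statement into the ``if and only if'' on $u$‑invertibility and the equality of the two partial actions. For \eqref{eq:Ti1}, if $A$ is $T_iT_i$‑invertible put $B = T_iA$. The face constraints relating $A$ and $B$ are symmetric in $A$ and $B$, and \eqref{eq:axiom_inv_bis} is exactly \eqref{eq:axiom_inv} with $A$ and $B$ interchanged; hence $A$ satisfies all the conditions making it a $T_i$‑inverse of $B$. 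Since $B = T_iA$ is $T_i$‑invertible and $T_i$‑inverses are unique (the remark following the definition of $T_i$‑invertibility), $T_iB = A$, i.e. $T_iT_i\cdot A = A$.

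For the ``if and only if'' in \eqref{eq:Ti2} and \eqref{eq:Ti3}, write $u$ and $v$ for the two words in each case; they have the same image in $S_n$. Since plain invertibility of $A$ does not refer to any word, Proposition \ref{prop:u_inv_in_sells} reduces the claim to the statement that the shell $\bm\partial A \in \Box_n\CC$ is $u$‑invertible if and only if it is $v$‑invertible. By Lemma \ref{lem:u_invertibility_of_shells} this amounts to comparing, for each face direction $j$, the conditions ``$\partial_{j\cdot u}^\alpha A$ is $\partial_j u$‑invertible'' and ``$\partial_{j\cdot v}^\alpha A$ is $\partial_j v$‑invertible''. Here $j\cdot u = j\cdot v$ because the action factors through $S_n$, so the two conditions concern the same $(n-1)$‑cell; and a routine case analysis using Definition \ref{def:nota_indices} and Lemma \ref{lem:permut_indices} shows that $\partial_j u$ and $\partial_j v$ are either equal as words, or again of the form ``$T_aT_{a+1}T_a$ versus $T_{a+1}T_aT_{a+1}$'', or ``$T_aT_b$ versus $T_bT_a$ with $|a-b|\ge 2$'' (after the re‑indexing $a = j_i$, and possibly shortened by the cancellations $\partial_iT_i = \partial_{i+1}T_i = 1$). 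The induction hypothesis on $n$ then yields the required equivalence.

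It remains to prove the two equalities of actions. For \eqref{eq:Ti3}, having already established the equivalence, we may assume $A$ is $T_i$‑invertible and $T_iA$ is $T_j$‑invertible. I would apply $T_j$ to the two equations \eqref{eq:axiom_inv} and \eqref{eq:axiom_inv_bis} witnessing $T_iA$ as the $T_i$‑inverse of $A$: because $|i-j|\ge 2$, Proposition \ref{prop:Ti_inv_and_other} lets one commute $T_j$ past $\star_i$, $\star_{i+1}$, $\epsilon_i$, $\epsilon_{i+1}$ and the connections $\Gamma_i^\pm$ occurring in those composites, so that the images of \eqref{eq:axiom_inv} and \eqref{eq:axiom_inv_bis} become precisely the corresponding equations for the pair $(T_jT_iA,\,T_jA)$. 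Hence $T_jT_iA$ is a $T_i$‑inverse of $T_jA$, and by uniqueness it equals $T_iT_jA$.

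The braid equality \eqref{eq:Ti2} is the main obstacle, since $T_i$ and $T_{i+1}$ act on the overlapping direction pairs $\{i,i+1\}$ and $\{i+1,i+2\}$ and Proposition \ref{prop:Ti_inv_and_other} no longer supplies a clean commutation. The plan is to use Lemma \ref{lem:carac_Ti_invertibility_Ri} to rewrite each occurrence of a $T$‑operator in terms of $\psi$ and the corresponding $R$‑inverse, so that both $T_iT_{i+1}T_iA$ and $T_{i+1}T_iT_{i+1}A$ become explicit composites in the three directions $i,i+1,i+2$ assembled from $A$, its iterated faces, and the operators $\psi_i,\psi_{i+1},R_i,R_{i+1},\Gamma^\pm,\epsilon$; the equality of the two composites is then a string‑diagram computation using the axioms of Definition \ref{def:CCat} together with the ``adjacent'' identities of Proposition \ref{prop:Ti_inv_and_other}, notably \eqref{eq:Ti_and_star} and the $i = j\pm 1$ cases of \eqref{eq:Ti_and_Gamma2}. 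This computation carries the genuine content of the lemma; once \eqref{eq:Ti1}, \eqref{eq:Ti2} and \eqref{eq:Ti3} are in hand they are exactly the compatibilities needed to push the partial action of $\mathbb T_n$ down to $S_n$ in the subsequent results.
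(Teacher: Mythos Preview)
Your treatment of \eqref{eq:Ti1}, of the ``if and only if'' parts via Proposition~\ref{prop:u_inv_in_sells} and Lemma~\ref{lem:u_invertibility_of_shells} together with an induction on $n$, and of the far‑commutation equality \eqref{eq:Ti3} all match the paper's argument. For the iff in the braid case, your description of what $\partial_j$ does to the two words is exactly the paper's computation \eqref{eq:braid_relation_and_boundary}: for $j\in\{i,i+1,i+2\}$ both words collapse to the single letter $T_i$ (not to a far‑commutation word, as you suggest in passing), and for the remaining $j$ one gets the same braid pattern one dimension down, so the induction closes.

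The one place where you diverge is the braid \emph{equality} \eqref{eq:Ti2}. You propose to unfold each $T$ via Lemma~\ref{lem:carac_Ti_invertibility_Ri} into $\psi$'s and $R$-inverses and then compare two large three‑directional composites. That would work, but it is considerably heavier than necessary, and you already have the right idea in hand: the trick you use for \eqref{eq:Ti3} applies verbatim here. The paper does not expand $T_{i+1}T_iT_{i+1}A$ at all; it shows that $T_iT_{i+1}T_iA$ is the $T_{i+1}$‑inverse of $T_iT_{i+1}A$. Concretely, take the $2\times 2$ composite in directions $i+1,i+2$ witnessing that relation, use Proposition~\ref{prop:Ti_inv_and_other} (the compatibility of $T_iT_{i+1}$ with $\star$, and the adjacent identities \eqref{eq:Ti_and_star} on the connection corners) to factor $T_iT_{i+1}$ out of the whole diagram, and what remains inside is exactly the $T_i$‑invertibility equation \eqref{eq:axiom_inv} for $A$. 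Uniqueness of the $T_{i+1}$‑inverse then gives $T_iT_{i+1}T_iA = T_{i+1}(T_iT_{i+1}A)$ with no expansion of the right‑hand side. This is both shorter and structurally identical to your own argument for \eqref{eq:Ti3}.
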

\begin{proof}
For the first one, notice that the axioms \eqref{eq:axiom_inv} and \eqref{eq:axiom_inv_bis} are linked by an obvious symmetry, meaning that if $B$ is the $T_i$-inverse of $A$, then $A$ is the $T_i$-inverse of $A$. This means in particular that $T_i T_i A = A$.

For the second one, a cell $A \in \CC_n$ is $T_iT_{i+1}T_i$-invertible if and only if it is invertible and $\bm \partial A$ is $T_iT_{i+1}T_i$-invertible, that is for all $j \leq n$, $\partial_{j \cdot T_i T_{i+1} T_i}^\alpha A$ is $\partial_j(T_i T_{i+1} T_i)$. Notice that:
\begin{equation}\label{eq:braid_relation_and_boundary}
\partial_j(T_i T_{i+1} T_i) = 
\begin{cases}
T_{i_j} T_{i_j+1} T_{i_j} & j \neq i,i+1,i+2 \\
T_i & j = i,i+1,i+2 \\
\end{cases}
\qquad
\partial_j (T_{i+1} T_i T_{i+1}) = 
\begin{cases}
T_{i_j+1} T_{i_j} T_{i_j +1} & j \neq i,i+1,i+2 \\
T_i &  j =i,i+1,i+2 \\
\end{cases}
\end{equation}
Therefore by induction on $n$, a cell is $T_i T_{i+1} T_i$-invertible if and only if it is $T_{i+1} T_i T_{i+1}$-invertible.
Let $A$ be such a cell. Let us show that $T_iT_{i+1}T_iA$ is the $T_{i+1}$-inverse of $T_iT_{i+1}A$. Indeed we have:

\begin{align*}
  \renewcommand{\arraystretch}{1.5}
  \begin{tabular}{ | c | c | }
  \hline			
    $\Gamma_{i+1}^+ T_i \partial_{i}^- A$ & $T_iT_{i+1}A$ \\
  \hline
  $T_iT_{i+1}T_iA$ & $\Gamma_{i+1}^- T_i \partial_{i+1}^+ A$ \\
  \hline  
\end{tabular}
\quad
\directions{i+1}{i+2}
& =
T_iT_{i+1}
  \renewcommand{\arraystretch}{1.5}
  \begin{tabular}{ | c | c | }
  \hline			
    $\Gamma_i^+ \partial_{i}^- A$ & $A$ \\
  \hline
  $T_iA$ & $\Gamma_i^- \partial_{i+1}^+ A$ \\
  \hline  
\end{tabular}
\quad
\directions{i}{i+1} \\
& =
T_iT_{i+1} (\Gamma_i^- \partial_{i+1}^+ A \star_i \Gamma_i^+ \partial_i^+ A) \\
& =
T_iT_{i+1} \Gamma_i^- \partial_{i+1}^+ A \star_{i+1} T_iT_{i+1} \Gamma_i^+ \partial_i^+ A \\
& =
\Gamma_{i+1}^- \partial_{i+2}^- T_i T_{i+1} A \star_{i+1}   \Gamma_{i+1}^+ \partial_{i+1}^+ T_i T_{i+1} A
\end{align*} 
The other axioms are verified in the same fashion.
\end{proof}

\begin{defn}\label{def:sym_cub_cat}
A symmetric cubical $\omega$-category $\CC$ is a cubical $\omega$-category $\CC$ equipped with (total) maps $T_i : \CC_n \to \CC_n$, for $1 \leq i \leq n-1$, satisfying the equalities \eqref{eq:Ti_and_face} to \eqref{eq:Ti_and_star} and \eqref{eq:Ti1} to \eqref{eq:Ti3}.
\end{defn}

\begin{remq}
Note that a symmetric cubical $\omega$-category is close but not the same as the notion of symmetric cubical category defined by Grandis in \cite{G07}. A symmetric cubical category in the sense of Grandis would be a symmetric cubical $\omega$-category (in the sense of \ref{def:sym_cub_cat}, but without connections) object in the category $\mathbf{Cat}$.
\end{remq}

\begin{prop}
Let $\CC$ be a cubical $\omega$-category. The maps $A \mapsto T_i A$ induce a structure of symmetric cubical category on $\CC$. 
\end{prop}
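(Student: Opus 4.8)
The plan is to reduce the statement to the results already assembled in Section~\ref{subsec:sym_cubical_categories}, the one genuinely new ingredient being that on a cubical $(\omega,1)$-category the partial operations $T_i$ become total maps. First I would invoke Corollary~\ref{cor:carac_omega_1}: since $\CC$ is a cubical $(\omega,1)$-category, every $n$-cell is $T_i$-invertible for all $1\le i<n$, so each partial map $T_i\colon\CC_n\to\CC_n$ is in fact defined on all of $\CC_n$. Moreover $T_iA$ is again an $n$-cell of a cubical $(\omega,1)$-category, hence is itself $T_j$-invertible for every admissible $j$; thus any iterate of the $T_i$ is well defined, and all the hypotheses of the form ``$A$ is $T_i$-invertible'' appearing in the earlier statements of this subsection are automatically satisfied.

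Next I would run through the list of equalities demanded by Definition~\ref{def:sym_cub_cat} and indicate where each was proved. The compatibility of the $T_j$ with the face operations, equation~\eqref{eq:Ti_and_face}, is Lemma~\ref{lem:Ti_inverses_in_shells}; to upgrade its statement for shells in $\Box_n\CC$ to an identity for honest cells of $\CC$ one uses that $\bm{\partial}$ is a morphism of cubical $\omega$-categories and that $T_j$-inverses are unique, so that $\bm{\partial}(T_jA)$ is forced to coincide with the $T_j$-inverse of $\bm{\partial}A$ computed in Lemma~\ref{lem:Ti_inverses_in_shells}. The interactions of the $T_j$ with $\epsilon_i$, with the connections $\Gamma_i^\alpha$ and with the compositions $\star_k$ --- equations~\eqref{eq:Ti_and_epsilon1} through~\eqref{eq:Ti_and_star}, together with the compatibility with $\star_k$ stated at the end of Proposition~\ref{prop:Ti_inv_and_other} --- are exactly Proposition~\ref{prop:Ti_inv_and_other}, now read unconditionally. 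Finally the three relations~\eqref{eq:Ti1}, \eqref{eq:Ti2} and~\eqref{eq:Ti3}, which encode the Coxeter presentation of $S_n$, are Lemma~\ref{lem:relations_respect_invert}. Once all these equalities are in place, $\CC$ satisfies Definition~\ref{def:sym_cub_cat}, which is the assertion.

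The only point I expect to require actual care is the passage from the \emph{conditional} statements of Proposition~\ref{prop:Ti_inv_and_other} and Lemma~\ref{lem:relations_respect_invert} (``if $A$ is $T_i$-invertible, then \ldots'') to \emph{unconditional} identities between total operators; this is precisely where Corollary~\ref{cor:carac_omega_1} enters, and in each such equation one should check that every cell to which some $T_k$ is applied is an $n$-cell of $\CC$ in the relevant dimension range, so that the corollary applies --- which holds since $\epsilon_i$, $\Gamma_i^\alpha$, $\star_k$ and $T_k$ all preserve the dimension here. Beyond this bookkeeping no new computation is needed: everything rests on the lemmas and propositions already established earlier in this subsection.
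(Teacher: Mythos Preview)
Your proposal is correct and follows essentially the same approach as the paper's proof, which simply cites Corollary~\ref{cor:carac_omega_1} for totality and Proposition~\ref{prop:Ti_inv_and_other} together with Lemma~\ref{lem:relations_respect_invert} for the required identities. You are in fact slightly more thorough: you correctly observe that equation~\eqref{eq:Ti_and_face} is established in Lemma~\ref{lem:Ti_inverses_in_shells} rather than in Proposition~\ref{prop:Ti_inv_and_other}, a reference the paper's terse proof omits.
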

\begin{proof}
The fact that the maps $T_i$ are total is a consequence of Corollary \ref{cor:carac_omega_1}, and the equations they verify are a consequence of Proposition \ref{prop:Ti_inv_and_other} and Lemma \ref{lem:relations_respect_invert}.
\end{proof}

We now make explicit the (partial) action of the symmetric groups on the $n$-cells of a cubical category. To do so, we rely on Theorem \ref{thm:representability_of_permutations}, a classical result about the symmetric group.

\begin{defn}
For $u \in S_n$, we define the \emph{length} of $u$ as the integer $\l(u) = \min \{\l(v) | v \in \mathbb T_n \text{ and } \bar v = u \}$. A \emph{representative of minimal length} of $u$ in $\mathbb T_n$ is an element $v \in \mathbb T_n$ such that $\bar v = u$ and $\l(v) = \l(u)$.
\end{defn}

\begin{thm}\label{thm:representability_of_permutations}
Let $u,v \in \mathbb T_n$. If $u$ and $v$ are two representative of minimal length of a same permutation $\sigma$, then $u \equiv v$, where $\equiv$ is the congruence on $\mathbb T_n$ generated by \eqref{eq:braid_relation} and \eqref{eq:comm_relation}.
\end{thm}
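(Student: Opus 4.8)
The plan is to run the classical Tits--Matsumoto argument for the word problem in a Coxeter group, specialised to $S_n$, the key point being that moving between \emph{reduced} words never requires the nil relation \eqref{eq:transpo_nilpotente}. Write $m_{a,b}$ for the order of $\tau_a\tau_b$ in $S_n$, so $m_{a,b}=3$ when $|a-b|=1$ and $m_{a,b}=2$ when $|a-b|\geq 2$, and let $w_{a,b}\in\mathbb T_n$ be the alternating word $T_aT_bT_a\cdots$ of length $m_{a,b}$; relations \eqref{eq:braid_relation} and \eqref{eq:comm_relation} say exactly that $w_{a,b}\equiv w_{b,a}$. I would argue by induction on $\l(\sigma)$. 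The cases $\l(\sigma)\leq 1$ are immediate, since a permutation of length $1$ is some $\tau_i$, whose only representative of minimal length is $T_i$.

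For the inductive step, fix minimal-length representatives $u=T_a u'$ and $v=T_b v'$ of $\sigma$. Using the standard description of $\l$ on $S_n$ as the number of inversions, one has $\l(\tau_a\sigma)=\l(\sigma)-1$ and $u'$ is a minimal-length representative of $\tau_a\sigma$ (and similarly for $v'$ and $\tau_b\sigma$); in particular $u'$ and $v'$ are shorter than $u$ and $v$. If $a=b$, then $u'$ and $v'$ are minimal-length representatives of the same permutation, so by induction $u'\equiv v'$ and hence $u=T_au'\equiv T_av'=v$.

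If $a\neq b$, the crucial input is the exchange property of $S_n$: since $\sigma$ admits reduced expressions beginning with $T_a$ and with $T_b$, it admits one beginning with the full alternating word $w_{a,b}$ (equivalently, the parabolic $\langle\tau_a,\tau_b\rangle$ contributes its longest element at the front). Fix such an expression $w_{a,b}z$ with $z$ a minimal-length representative of length $\l(\sigma)-m_{a,b}$. Both $u$ and $w_{a,b}z$ are reduced words starting with $T_a$, so applying the $a=b$ case gives $u\equiv w_{a,b}z$; symmetrically $v\equiv w_{b,a}z$. Since $w_{a,b}z\equiv w_{b,a}z$ by a single use of \eqref{eq:braid_relation} or \eqref{eq:comm_relation}, we conclude $u\equiv v$, completing the induction.

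I expect the exchange property invoked in the last paragraph to be the only genuine obstacle; the rest is bookkeeping with the induction. For $S_n$ it can be proved by hand: reading $\sigma$ as the word $\sigma(1)\cdots\sigma(n)$, having a reduced expression beginning with $T_a$ amounts to a descent at position $a$, and a short case analysis on the values at positions $a-1,a,a+1$ shows that descents at two adjacent positions force a reduced expression beginning with $T_aT_{a+1}T_a$, the non-adjacent case being trivial by \eqref{eq:comm_relation}. Alternatively this is precisely the lifting/deletion lemma for Coxeter groups (see e.g.\ Humphreys, \emph{Reflection Groups and Coxeter Groups}, or Bj\"orner--Brenti, \emph{Combinatorics of Coxeter Groups}), and it is legitimate simply to quote it, which is presumably why the statement is flagged as classical.
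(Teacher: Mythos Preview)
The paper does not actually supply a proof of this theorem: it is stated without proof as a classical fact, and in a later remark is identified with Matsumoto's theorem. Your proposal is the standard Tits--Matsumoto argument and is correct; in particular the induction is well-founded because in the $a\neq b$ case you are reducing to two instances of the $a=b$ case, each of which appeals to the inductive hypothesis on words of length $\l(\sigma)-1$, and the exchange/lifting property you invoke is exactly the ingredient needed. Since the paper simply quotes the result, there is nothing further to compare.
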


\begin{defn}
Let $\CC$ be a cubical $\omega$-category. For every $A \in \CC_n$ and $\sigma \in S_n$, we say that $A$ is $\sigma$-invertible if there exists a representative of minimal length $u$ of $\sigma$ such that $A$ is $u$-invertible, and we define $\sigma \cdot A := u \cdot A$. By Lemma \ref{lem:relations_respect_invert} and Theorem \ref{thm:representability_of_permutations}, this is independent of the choice of a minimal representative of $\sigma$. 
\end{defn}

\begin{prop}\label{prop:sigma_inv}
The composites of the maps $\partial_i : \mathbb T_n \to \mathbb T_{n-1}$ with the projection $\mathbb T_{n-1} \twoheadrightarrow S_{n-1}$ are compatible with the relations \eqref{eq:transpo_nilpotente} to \eqref{eq:comm_relation}. Hence they induce maps $\partial_i : S_n \to S_{n-1}$, satisfying:

\[
\partial_i 1 = 1 \qquad
\partial_i \tau_j = 
\begin{cases}
1 & i = j,j+1 \\
\tau_{j_i} & i \neq j,j+1
\end{cases}
\qquad 
\partial_i (\sigma \cdot \tau) = \partial_{i} \sigma \cdot \partial_{i \cdot \sigma} \tau.
\]

Specifically, for $1 \leq i \leq n$ and $\sigma \in S_n$, $\partial_i \sigma$ is the (necessarily unique) permutation satisfying for all $1 \leq j \leq n-1$:
\begin{equation}\label{eq:image_by_partial}
j \cdot \partial_i \sigma = (j^i \cdot \sigma)_{i\cdot \sigma}
\end{equation}

Let $\CC$ be a cubical $n$-category, and $\sigma \in S_n$. A cell $A \in (\Box \CC)_{n+1}$ is $\sigma$-invertible if and only if for all $j \leq n$, $A_{j \cdot \sigma}^\alpha$ is $\partial_j \sigma$-invertible, and:
\begin{equation}\label{eq:sigma_and_faces}
\partial_j^\alpha(\sigma \cdot A) = \partial_j \sigma \cdot \partial_{j \cdot \sigma}^\alpha A
\end{equation}

Finally, let $\sigma \in S_n$. If $\sigma \neq 1$, then a cell $A \in \C_n$ is $\sigma$-invertible if and only if $A$ is invertible and $\bm \partial A$ is $\sigma$-invertible.
\end{prop}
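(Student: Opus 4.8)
The plan is to reduce each clause of the Proposition to its word-level counterpart, already established for $\mathbb{T}_n$ in Lemmas \ref{lem:image_by_partial}, \ref{lem:u_invertibility_of_shells}, \ref{lem:Ti_inverses_in_shells}, \ref{lem:relations_respect_invert} and Proposition \ref{prop:u_inv_in_sells}, using Theorem \ref{thm:representability_of_permutations} and the representative-independence of $\sigma$-invertibility to pass from words to permutations. \emph{Well-definedness of $\partial_i \colon S_n \to S_{n-1}$.} By Lemma \ref{lem:image_by_partial}, for $u \in \mathbb{T}_n$ the permutation $\overline{\partial_i u} \in S_{n-1}$ acts on $\{1,\dots,n-1\}$ by $k \mapsto (k^i \cdot \bar u)_{i \cdot \bar u}$; since a permutation of a finite set is determined by its action and this right-hand side depends only on $\bar u$, the element $\overline{\partial_i u}$ depends only on $\bar u$. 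Hence $\mathbb{T}_n \xrightarrow{\partial_i} \mathbb{T}_{n-1} \twoheadrightarrow S_{n-1}$ factors through $S_n$ — which is exactly compatibility with the relations \eqref{eq:transpo_nilpotente}--\eqref{eq:comm_relation} — yielding $\partial_i \colon S_n \to S_{n-1}$ together with the action formula \eqref{eq:image_by_partial}, the uniqueness clause being immediate from that formula. The three displayed identities for $\partial_i$ on $S_n$ are just the images in $S_{n-1}$ of the defining identities of $\partial_i$ on $\mathbb{T}_n$. (Alternatively, compatibility with the three relations can be checked directly from the recursive formula for $\partial_i$: the braid relation is \eqref{eq:braid_relation_and_boundary}, and \eqref{eq:transpo_nilpotente}, \eqref{eq:comm_relation} are analogous short computations.)

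\emph{Shell characterization.} I would prove this by induction on $\l(\sigma)$, mirroring the proof of Lemma \ref{lem:u_invertibility_of_shells}. The case $\sigma = 1$ is Lemma \ref{lem:u_invertibility_of_shells} with $u = 1$. If $\l(\sigma) \geq 1$, choose $i$ with $\l(\tau_i \sigma) = \l(\sigma) - 1$ and write $\sigma = \tau_i \sigma'$ with $\l(\sigma') = \l(\sigma) - 1$; for $u'$ a minimal-length representative of $\sigma'$, the word $u = T_i u'$ is a minimal-length representative of $\sigma$, so by the very definition of $u$-invertibility, $A$ is $\sigma$-invertible iff $A$ is $\sigma'$-invertible and $\sigma' \cdot A$ is $T_i$-invertible. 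Now apply the induction hypothesis to $\sigma'$ and Lemma \ref{lem:Ti_inverses_in_shells} to $\sigma' \cdot A$, and unwind using $j \cdot \sigma = (j \cdot \tau_i) \cdot \sigma'$ and $\partial_j \sigma = \partial_j \tau_i \cdot \partial_{j \cdot \tau_i} \sigma'$, whence $\partial_i \sigma = \partial_{i+1} \sigma'$, $\partial_{i+1} \sigma = \partial_i \sigma'$ and $\partial_j \sigma = \tau_{i_j} \cdot \partial_j \sigma'$ for $j \neq i, i+1$: the conjunction "$A_{j \cdot \sigma}^\alpha$ is $\partial_j \sigma$-invertible for all $j$, with \eqref{eq:sigma_and_faces}" splits exactly into "$A_{j \cdot \sigma'}^\alpha$ is $\partial_j \sigma'$-invertible for all $j$, with the face formula for $\sigma'$" (the induction hypothesis) together with "$(\sigma' \cdot A)_j^\alpha$ is $T_{i_j}$-invertible for $j \neq i, i+1$, with the face formulas of Lemma \ref{lem:Ti_inverses_in_shells}" (the condition for $\sigma' \cdot A$ to be $T_i$-invertible). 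Independence of the chosen $i$ and of the minimal representative is guaranteed by Lemma \ref{lem:relations_respect_invert} and Theorem \ref{thm:representability_of_permutations}, exactly as in the definition of $\sigma$-invertibility.

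\emph{Last assertion.} For $\sigma \neq 1$ take a minimal-length representative $u \in \mathbb{T}_n$; then $u \neq 1$, and $A$ is $\sigma$-invertible iff $A$ is $u$-invertible. By Proposition \ref{prop:u_inv_in_sells} the latter is equivalent to "$A$ is invertible and $\bm{\partial} A$ is $u$-invertible in $\Box_{n-1}\CC$", and since $u$ is a minimal representative of $\sigma$, the cell $\bm{\partial} A$ is $u$-invertible iff it is $\sigma$-invertible; combining gives the equivalence. The main obstacle is the bookkeeping in the inductive step of the shell characterization — tracking the index shifts $j \cdot \sigma$ versus $j \cdot \sigma'$ and the behaviour of $\partial_j$ on a length-decreasing factorization so that they line up with Lemmas \ref{lem:u_invertibility_of_shells} and \ref{lem:Ti_inverses_in_shells} — but this requires no idea beyond those lemmas and Theorem \ref{thm:representability_of_permutations}.
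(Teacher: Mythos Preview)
Your proposal is correct and follows essentially the same route as the paper, which simply invokes Proposition~\ref{prop:u_inv_in_sells} together with Lemmas~\ref{lem:image_by_partial} and~\ref{lem:u_invertibility_of_shells} and a direct check of compatibility with the relations (citing \eqref{eq:braid_relation_and_boundary} for the braid relation). Your use of Lemma~\ref{lem:image_by_partial} to deduce well-definedness of $\partial_i$ on $S_n$ from the fact that the action of $\overline{\partial_i u}$ depends only on $\bar u$ is a slightly slicker variant of the paper's direct verification, but otherwise the arguments coincide.
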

\begin{proof}
For the first point we simply verify the equalities as needed (note in particular that the compatibility of $\partial_i$ with Equation \eqref{eq:braid_relation} is a consequence of Equation \eqref{eq:braid_relation_and_boundary}.

The rest of the results is a consequence of Proposition \ref{prop:u_inv_in_sells}, together with Lemma \ref{lem:image_by_partial} and \ref{lem:u_invertibility_of_shells}.
\end{proof}

\begin{remq}
The operations $\partial_i$ applied to a permutation $\sigma$ correspond to deleting the $i$-th string in the string diagram  representation of $\sigma$. For example, by definition we have:
\[
\partial_1 (\tau_1\tau_2) = (\partial_1 \tau_1) \cdot (\partial_2 \tau_2) = 1
\qquad
\partial_2 (\tau_1\tau_2)  = (\partial_2 \tau_1) \cdot (\partial_1 \tau_2) = \tau_1
\qquad
\partial_3 (\tau_1\tau_2) = (\partial_3 \tau_1) \cdot (\partial_3 \tau_2) = \tau_1
\]
Which can be diagrammatically represented as:
\[
\partial_1(\twocell{braid3}) = \twocell{2}
\qquad
\partial_2(\twocell{braid3}) = \twocell{braid2}
\qquad 
\partial_3(\twocell{braid3}) = \twocell{braid2}
\]
More generally, the relation $\partial_i (\sigma \cdot \tau) = \partial_i \sigma \cdot \partial_{i \cdot \sigma} \tau$ corresponds to the diagram:
\[
\partial_i \twocell{dots *1 sigma2 *1 dots *1 tau2 *1 dots} = \partial_i \twocell{(dots *0 i0 *0 dots) *1 sigma5 *1 (dots *0 isigma *0 dots) *1 tau5 *1 (dots *0 ist *0 dots)}
= \twocell{(dots *0 dots) *1 sigma4 *1 (dots *0 dots) *1 tau4 *1 (dots *0 dots)}
\]

Finally, Equation \eqref{eq:sigma_and_faces} corresponds to the diagram:
\[
\twocell{(2 *0 partial *0 2) *1 (dots *0 i0 *0 dots) *1 sigma5 *1 (dots *0 isigma *0 dots)}
 = 
\twocell{(dots *0 dots) *1 sigma4 *1 (dots *0 partial *0 dots) *1 (2 *0 isigma *0 2)}
\]
\end{remq}

\begin{lem}\label{lem:sigma_and_others}
Let $\CC$ be a cubical $\omega$-category, and $A \in \CC_n$. If $\epsilon_i A$  is $\sigma$-invertible, then $A$ is $\partial_{i \cdot \sigma^-} \sigma$-invertible and:
\[
\sigma \cdot \epsilon_i A = \epsilon_{i \cdot \sigma^-} (\partial_{i \cdot \sigma^-} \sigma \cdot A)
\]
If $\Gamma_i^\alpha$ is $\sigma$-invertible then $A$ is also $\partial_{i \cdot \sigma^-} \sigma$-invertible and if $(i+1) \cdot \sigma^-  = i \cdot \sigma^-  +1$ we have: 
\[
\sigma \cdot \Gamma_i^\alpha A = \Gamma_{i \cdot \sigma^-}^\alpha (\partial_{i \cdot \sigma^-} \sigma \cdot A)
\]
\end{lem}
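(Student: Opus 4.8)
The plan is to establish first a version for $u$-invertibility with $u\in\mathbb T_n$ --- namely, that if $\epsilon_i A$ is $u$-invertible then $A$ is $(\partial_j u)$-invertible and $u\cdot\epsilon_i A=\epsilon_j\bigl((\partial_j u)\cdot A\bigr)$, where $j:=i\cdot\bar u^{-1}$, and similarly for $\Gamma_i^\alpha A$ under the hypothesis $(i+1)\cdot\bar u^{-1}=i\cdot\bar u^{-1}+1$ --- and then to transfer this to $\sigma$-invertibility.

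I would prove the $\mathbb T_n$-version by induction on $\l(u)$. The case $u=1$ is immediate, since both sides reduce to $\epsilon_i A$ and $\partial_j 1=1$. For the inductive step write $u=T_k\cdot w$; by definition $\epsilon_iA$ is $u$-invertible iff it is $w$-invertible and $w\cdot\epsilon_iA$ is $T_k$-invertible. Put $j':=i\cdot\bar w^{-1}$; the induction hypothesis gives that $A$ is $(\partial_{j'}w)$-invertible and that $w\cdot\epsilon_iA=\epsilon_{j'}B$ with $B:=(\partial_{j'}w)\cdot A$, so that $u\cdot\epsilon_iA=T_k\epsilon_{j'}B$. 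Now apply Proposition~\ref{prop:Ti_inv_and_other} to $\epsilon_{j'}B$, splitting into the three cases $k=j'$, $k=j'-1$, and $k\notin\{j',j'-1\}$. In the first two cases $\epsilon_{j'}B$ is automatically $T_k$-invertible and $T_k\epsilon_{j'}B=\epsilon_{j'+1}B$, resp.\ $\epsilon_{j'-1}B$. In the third case $\epsilon_{j'}B$ is $T_k$-invertible iff $B$ is $T_{k_{j'}}$-invertible: the forward implication is \eqref{eq:Ti_and_epsilon2}, and for the converse one applies the shell characterisation (Proposition~\ref{prop:carac_Ti_invertibility_inv}) together with Lemma~\ref{lem:Ti_inverses_in_shells} to the $j'$-face of $\epsilon_{j'}B$, which is $B$; this is legitimate precisely because $j'\notin\{k,k+1\}$, and then $T_k\epsilon_{j'}B=\epsilon_{j'}(T_{k_{j'}}B)$. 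In every case a direct computation using $\partial_j(T_k\cdot w)=\partial_jT_k\cdot\partial_{j\cdot T_k}w$ and the value of $j=i\cdot\bar u^{-1}$ identifies $\partial_j u$ with $\partial_{j'}w$ (cases $k=j',j'-1$) or with $T_{k_{j'}}\cdot\partial_{j'}w$ (third case); unwinding the definition of $u$-invertibility then gives that $A$ is $(\partial_j u)$-invertible with $(\partial_j u)\cdot A$ equal to $B$, resp.\ $T_{k_{j'}}B$, which is exactly $u\cdot\epsilon_iA=\epsilon_j((\partial_j u)\cdot A)$. The statement for $\Gamma_i^\alpha A$ is proved in the same way, invoking \eqref{eq:Ti_and_Gamma1}--\eqref{eq:Ti_and_star} instead; the hypothesis $(i+1)\cdot\bar u^{-1}=i\cdot\bar u^{-1}+1$ is needed to keep the two distinguished strands of the connection adjacent throughout the induction, so that the stated branch of Proposition~\ref{prop:Ti_inv_and_other} always applies.

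To pass to $\sigma$-invertibility, suppose $\epsilon_iA$ is $\sigma$-invertible, witnessed by a minimal-length representative $u$ of $\sigma$. The previous part gives that $A$ is $(\partial_j u)$-invertible with $j=i\cdot\sigma^-$ and $u\cdot\epsilon_iA=\epsilon_j((\partial_j u)\cdot A)$, and $u\cdot\epsilon_iA=\sigma\cdot\epsilon_iA$ by definition of $\sigma$-invertibility. It remains to check that $\partial_j u$ is again of minimal length; it represents $\partial_j\sigma$ since $\overline{\partial_j u}=\partial_j\bar u=\partial_j\sigma$ by Proposition~\ref{prop:sigma_inv}. This holds because applying $\partial_j$ to a word deletes exactly the letters acting, at the moment they occur, on the strand issued from position $j$, and these are in bijection with the inversions of $\bar u$ involving $j$; hence the length of $\partial_j u$ is $\l(u)$ minus that number, which is $\l(\partial_j\sigma)$. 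Thus $A$ is $(\partial_j\sigma)$-invertible with $\partial_j\sigma\cdot A=(\partial_j u)\cdot A$, yielding $\sigma\cdot\epsilon_iA=\epsilon_{i\cdot\sigma^-}(\partial_{i\cdot\sigma^-}\sigma\cdot A)$; the case of $\Gamma_i^\alpha A$ is handled identically.

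The main obstacle is the index bookkeeping in the inductive step: one must verify that the three-way split on $k$ versus $j'$ lines up exactly with the clauses of Proposition~\ref{prop:Ti_inv_and_other}, that the identity $j=i\cdot\bar u^{-1}$ transforms correctly when a generator is peeled off, and, in the $\Gamma_i^\alpha$ case, that the adjacency condition persists; together with the standard but slightly fiddly length count showing that $\partial_j$ preserves minimal-length representatives.
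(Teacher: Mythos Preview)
Your approach via induction on the word length $\ell(u)$ is genuinely different from the paper's, which instead proves the equalities by induction on the \emph{dimension} $n$ of $A$: both sides are thin (Lemma~\ref{lem:carac_Ti_invertibility_Ri} applied iteratively), hence by Theorem~\ref{thm:thin_cells} it suffices to compare shells, and the face formula of Proposition~\ref{prop:sigma_inv} reduces this to the same statement one dimension down. For the $\epsilon_i A$ case your argument is correct and pleasant; the three-way split on $k$ versus $j'$ matches Proposition~\ref{prop:Ti_inv_and_other} exactly, the index identities check out, and your length argument for $\partial_j u$ is the standard ``delete a strand from a wiring diagram'' count.

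The $\Gamma_i^\alpha A$ case, however, has a real gap. Your induction needs the adjacency condition $(i+1)\cdot\bar w^{-1}=i\cdot\bar w^{-1}+1$ to hold for the right-factor $w$ of $u=T_k w$, not just for $u$ itself, and this fails even for reduced words. Concretely, take $n=3$, $i=1$, and $\sigma$ given by $1\mapsto 3$, $2\mapsto 1$, $3\mapsto 2$; then $1\cdot\sigma^-=2$ and $2\cdot\sigma^-=3$, so the hypothesis of the Lemma holds, and the unique reduced word is $u=T_1T_2$. But for $w=T_2$ one finds $1\cdot\bar w^{-1}=1$ and $2\cdot\bar w^{-1}=3$, so the strands are not adjacent at the intermediate step and the clean branch of Proposition~\ref{prop:Ti_inv_and_other} does not apply. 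One can still push the computation through using \eqref{eq:Ti_and_star} (here $T_2\Gamma_1^\alpha A = T_1\Gamma_2^\alpha T_1 A$, and the outer $T_1$ then cancels against the next letter), but this produces extra $T$-factors outside the $\Gamma$ that your inductive hypothesis does not track. The paper's thinness-plus-shell argument avoids this difficulty entirely, since it never needs to decompose $\sigma$ into generators.
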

\begin{proof}
If $\epsilon_i A$  is $\sigma$-invertible, then $A = \partial_i^- \epsilon_i A$ is $\partial_{i \cdot \sigma^-} \sigma$ by Proposition \ref{prop:sigma_inv}. 

To show the equality, we reason by induction on $n$. If $n  = 0$ then $\sigma = 1$ and the result is verified. Otherwise, suppose $n> 0$. By Lemma \ref{lem:carac_Ti_invertibility_Ri}, both sides of the equation are thin, and so they are equal if and only if their shells are equal. Note first that for $j = i \cdot \sigma^-$:
\[
\partial_j^\alpha (\sigma \cdot  \epsilon_i A) = 
\partial_j \sigma \cdot \partial_{i}^\alpha \epsilon_i A =
\partial_j \sigma \cdot A
=
\partial_j^\alpha \epsilon_j (\partial_j \sigma \cdot A)
\]
Now for $j \neq i \cdot \sigma^-$:
\begin{align*}
\partial_j^\alpha (\sigma \cdot \epsilon_i A) 
& = 
\partial_j \sigma \cdot \partial_{j \cdot \sigma}^\alpha \epsilon_i A
=
\partial_j \sigma \cdot \epsilon_{i_{j \cdot \sigma}} \partial_{(j \cdot \sigma)_i}^\alpha A
\end{align*}
Note that $\partial_j(\sigma \cdot \sigma^-) = \partial_j \sigma \cdot \partial_{j \cdot \sigma} \sigma^- = 1$, so $(\partial_j \sigma)^- = \partial_{j \cdot \sigma} \sigma^-$, and so by Proposition \ref{prop:sigma_inv}:
\[
i_{j \cdot \sigma} \cdot (\partial_j \sigma)^- = 
(i_{j \cdot \sigma}^j \cdot \sigma^-)_{j \cdot \sigma \cdot \sigma^-} = (i \cdot \sigma^-)_j
\]
So by induction hypothesis, we have $\partial_j^\alpha(\sigma \cdot \epsilon_i A) =
\epsilon_{(i \cdot \sigma^-)_j} (\partial_{(i \cdot \sigma^-)_j} \partial_j \sigma \cdot \partial_{(j \cdot \sigma)_i}^\alpha A)$.
On the other hand, note that $j_{i \cdot \sigma^-} \cdot \partial_{i \cdot \sigma^-} \sigma = (j_{i \cdot \sigma^-}^{i \cdot \sigma^-} \cdot \sigma)_{i \cdot \sigma^- \cdot \sigma} =
(j \cdot \sigma)_i$. Applying this we get:
\[
\partial_j^\alpha \epsilon_{i \cdot \sigma^-} (\partial_{i \cdot \sigma^-} \sigma \cdot A) =
\epsilon_{(i \cdot \sigma^-)_j} \partial_{j_{i \cdot \sigma^-}}^\alpha (\partial_{i \cdot \sigma^-} \sigma \cdot A)
=
\epsilon_{(i \cdot \sigma^-)_j} (\partial_{j_{i \cdot \sigma^-}} \partial_{i \cdot \sigma^-} \sigma \cdot \partial_{(j \cdot \sigma)_i}^\alpha A).
\]
Finally it remains to show that $\partial_{j_{i \cdot \sigma^-}} \partial_{i \cdot \sigma^-} \sigma = \partial_{(i \cdot \sigma^-)_j} \partial_j \sigma$. More generally, let us show that for any $i \neq j$, $\partial_{i_j} \partial_j \sigma = \partial_{j_i} \partial_i \sigma$. Indeed, for any $k$:
\begin{equation}\label{eq:commute_partial_sigma}
\partial_{i_j} \partial_j \sigma  \cdot k =
(((k^j)^{i_j} \cdot \sigma)_{i_j})_j = (k^{j,i} \cdot \sigma)_{i,j}
\end{equation}
And this formula is symmetric in $i$ and $j$ by Lemma \ref{lem:permut_indices}.

We now move on to the second equality. Once again if $\Gamma_i^\alpha A$ is $\sigma$-invertible, then $A = \partial_i^\alpha \Gamma_i^\alpha A$ is $\partial_{i \cdot \sigma^-} \sigma$-invertible by Proposition \ref{prop:sigma_inv}. We show the equality by induction on $n$. If $n = 1$, then the only permutation $\sigma$ satisfying $(i+1) \cdot \sigma^-  = i \cdot \sigma^-  +1$ is the identity, and the result is verified. Suppose now $n \geq 1$, and let $\sigma \in S_n$ such that  $(i+1) \cdot \sigma^-  = i \cdot \sigma^-  +1$. As previously, Lemma \ref{lem:carac_Ti_invertibility_Ri} show that both sides of the equation are thin, and so they are equal if and only if their shells are equal. Let us calculate their faces. Let $1 \leq j \leq n$ and $\beta = \pm$. We start by treating the case where $j  = i \cdot \sigma^- $. For $\beta = \alpha$ we have:
\begin{align*}
\partial_j^\alpha (\sigma \cdot \Gamma_i^\alpha A) & = 
\partial_j \sigma \cdot \partial_{j \cdot \sigma}^\alpha \Gamma_i^\alpha A  =
\partial_j \sigma \cdot \partial_i^\alpha \Gamma_i^\alpha A \\
& =
\partial_j \sigma \cdot A = \partial_j^\alpha \Gamma_j^\alpha (\partial_j \sigma \cdot A)
\end{align*}
Now for $\beta = - \alpha$. Note first that $j \cdot \partial_j \sigma = (j^j \cdot \sigma)_i = ((j+1) \cdot \sigma)_i = (i+1)_i = i$ (we here use the hypothesis on $\sigma$). As a consequence $i \cdot (\partial_j \sigma)^- = j$, and:
\begin{align*}
\partial_j^{-\alpha} (\sigma \cdot \Gamma_i^\alpha A) & =
\partial_j \sigma \cdot \partial_i^{-\alpha} \Gamma_i^\alpha A \\
& =
\partial_j \sigma \cdot \epsilon_i \partial_i^{-\alpha} A \\
& =
\epsilon_j (\partial_j \partial_j \sigma \cdot \partial_i^{-\alpha} A) \\
\partial_j^{-\alpha} \Gamma_j^\alpha (\partial_j \sigma \cdot A) & =
\epsilon_j \partial_j^{-\alpha} (\partial_j \sigma \cdot A) \\
& =
\epsilon_j (\partial_j \partial_j \sigma \cdot \partial_i^{-\alpha} A)
\end{align*}
The case where $j = i \cdot \sigma^- +1$ is similar. We now study the general case where $\beta = \pm$ and $j \neq i \cdot \sigma^-, i \cdot \sigma^- +1$:
\begin{align*}
\partial_j^\beta (\sigma \cdot \Gamma_i^\alpha A) & =
\partial_j \sigma \cdot \partial_{j \cdot \sigma}^\beta \Gamma_i^\alpha A \\
& =
\partial_j \sigma \cdot \Gamma_{i_{j \cdot \sigma}}^\alpha \partial_{(j\cdot \sigma)_i}^\beta A
\end{align*}
\begin{align*}
\partial_j^\beta \Gamma_{i \cdot \sigma^-}^\alpha (\partial_{i \cdot \sigma^-} \sigma \cdot A) & =
\Gamma_{(i \cdot \sigma^-)_j}^\alpha \partial_{j_{i \cdot \sigma^-}}^\beta (\partial_{i \cdot \sigma^-} \sigma \cdot A) \\
& =
\Gamma_{(i \cdot \sigma^-)_j}^\alpha (\partial_{j_{i \cdot \sigma^-}} \partial_{i \cdot \sigma^-} \sigma \cdot \partial_{j_{i \cdot \sigma^-} \cdot \partial_{i \cdot \sigma^-} \sigma}^\beta A)
\end{align*}
To conclude using the induction hypothesis, we need to show that $j_{i \cdot \sigma^-} \cdot \partial_{i \cdot \sigma^-} \sigma = (j \cdot \sigma)_i$, and that $i_{j \cdot \sigma} \cdot (\partial_j \sigma)^- = (i \cdot \sigma^-)_j$. These equations hold because we have:
\[
j_{i \cdot \sigma^-} \cdot \partial_{i \cdot \sigma^-} \sigma
=
(j_{i \cdot \sigma^-}^{i \cdot \sigma^-} \cdot \sigma)_{i \cdot \sigma^- \cdot \sigma} = (j \cdot \sigma)_i
\]
\[
(i \cdot \sigma^-)_j \cdot \partial_j \sigma =
((i \cdot \sigma^-)_j^j \cdot \sigma)_{j \cdot \sigma} = i_{j \cdot \sigma}
\]
\end{proof}

\begin{remq}
Diagrammatically, the equations from Lemma \ref{lem:sigma_and_others} correspond to the following diagrams:
\[
\twocell{(dots *0 isigmaminus *0 dots) *1 sigma5 *1 (2 *0 i0 *0 2) *1 (dots *0 epsilon *0 dots)}
=
\twocell{(dots *0 isigmaminus *0 dots) *1 (2 *0 epsilon *0 2) *1 sigma4minus *1 (dots *0 dots)}
\qquad
\twocell{(dots *0 isigmaminus *0 1 *0 dots) *1 sigma6  *1 (2 *0 gamma *0 2) *1 (dots *0 i0 *0 dots)} 
=
\twocell{(dots *0 gamma *0 dots) *1 (2 *0 isigmaminus *0 2) *1 sigma5minus *1 (dots *0 i0 *0 dots)}
\]
\end{remq}

\begin{remq}
In this Section, we restricted ourselves to the $T_i$-inverses. However, all the previous results can be adapted to also consider the $R_i$-inverses. The action of the symmetric groups are then extended into an action of the Hyperoctahedral groups $BC_n$, which are the full groups of permutations of the hypercubes. A presentation of the group $BC_n$ is given by the generators $R_i$ (for $1 \leq i \leq n$ and $T_i$ (for $1 \leq i < n$), subject to the relations:

\begin{gather*}
T_i T_i = 1 
\qquad
T_i T_{i+1} T_i = T_{i+1} T_i T_{i+1}
\qquad
T_j  T_i = T_j T_i \quad |i-j| \geq 2  \\
R_iR_i = 1 
\qquad 
R_i R_j = R_j R_i \quad i \neq j \\
T_iR_i = R_{i+1}T_i 
\qquad 
T_iR_{i+1} = R_iT_i 
\qquad
T_iR_j = R_jT_i \quad j \neq i,i+1
\end{gather*}

In particular the groups $BC_n$ are Coxeter groups and they hence verify an analogue to Theorem \ref{thm:representability_of_permutations}, often called Matsumoto's Theorem \cite{M64}.
\end{remq}

\subsection{Transfors between cubical \texorpdfstring{$\omega$}{omega}-categories}
\label{subsec:transfors}

Let $\C$ and $\D$ be two categories, and $F,G : \C \to \D$ be functors. Recall that a natural transformation $\eta$ from $F$ to $G$ is given by a map $\eta : \C_0 \to \D_1$ such that, for all $x \in \C_0$, $\s(\eta_x) = F(x)$, $\t(\eta_x) = G(x)$, and for all $f:x \to y \in \C_1$ the following diagram commutes:  
\begin{equation}\label{eq:naturality_square}
\begin{tikzpicture}[baseline=(current  bounding  box.center)]
\matrix (m) [matrix of math nodes, 
			nodes in empty cells,
			column sep = 1cm, 
			row sep = 1cm] 
{
F(x) & F(y) \\
G(x) & G(y) \\
};
\draw[-to] (m-1-1) to node [above] {$F(f)$} (m-1-2.west|-m-1-1);
\draw[-to] (m-2-1) to node [below] {$G(f)$} (m-2-2.west|-m-2-1);
\draw[-to] (m-1-1) to node [left] {$\eta_x$} (m-2-1);
\draw[-to] (m-1-2) to node [right] {$\eta_y$} (m-2-2);
\end{tikzpicture}
\end{equation}
Natural transformations compose, and so for any categories $\C$ and $\D$ there is a category $\mathbf{Cat}(\C,\D)$. 

If $\C$ and $\D$ are two globular $2$-categories, and $F,G : \C \to \D$ are two functors, then there are multiple ways to extend the notion of natural transformation. A \emph{lax natural transformation} from $F$ to $G$ consists in maps $\eta : \C_0 \to \D_1$ and $\eta : \C_1 \to \D_2$, satisfying some compatibility conditions. In particular, for $f : x \to y \in \C_1$, the $2$-cell $\eta_f \in \D_2$ is required to have the following source and target:
\[
\begin{tikzpicture}[baseline=(current  bounding  box.center)]
\matrix (m) [matrix of math nodes, 
			nodes in empty cells,
			column sep = 1cm, 
			row sep = 1cm] 
{
F(x) & F(y) \\
G(x) & G(y) \\
};
\draw[-to] (m-1-1) to node [above] {$F(f)$} (m-1-2.west|-m-1-1);
\draw[-to] (m-2-1) to node [below] {$G(f)$} (m-2-2.west|-m-2-1);
\draw[-to] (m-1-1) to node [left] {$\eta_x$} (m-2-1);
\draw[-to] (m-1-2) to node [right] {$\eta_y$} (m-2-2);
\doublearrow{arrows = {->}, shorten <= .2cm, shorten >= .2cm}
{(m-1-2) -- node [fill = white] {$\eta_f$} (m-2-1)}
\end{tikzpicture}
\]
An \emph{oplax natural transformation} requires the $2$-cell $\eta_f$ to be in the opposite direction. This leads to two different notions of the $2$-category of functors between $\C$ and $\D$, where objects are functors from $\C$ to $\D$, $1$-cells are lax (resp. oplax) natural transformations, and $2$-cells are modifications. Modifications consist of a map $\C_0 \to \D_2$ satisfying some compatibility conditions. Notice that, if $\eta$ is a lax natural transformation and $\eta_f$ is invertible for all $f \in C_1$, then replacing $\eta_f$ by its inverse yields an oplax natural transformation (and reciprocally when reversing the role of lax and oplax natural transformation). Such natural transformations are called \emph{pseudo}.

More generally, if $\C$ and $\D$ are $\omega$-categories, there are notions of lax and oplax $k$-transfors between them (following terminology by Crans \cite{C03}), consisting of maps $\C_n \to \D_{n+k}$, for all $n \geq 0$. In particular, $0$-transfors correspond to functors, and lax (resp. oplax) $1$-transfors to lax (resp. oplax) natural transformations. 

Similar constructions can be made in cubical $\omega$-categories, and are recalled in Definition \ref{def:transfor}. This definition uses the notion of Crans-Grey tensor product between cubical $\omega$-categories. One benefit of working in cubical categories is that this tensor product has a very natural expression in this setting, and so we are able to make explicit the conditions that transfors between cubical $\omega$-categories have to satisfy. Next we define the notion of pseudo transfor, using the notion of $\sigma$-invertibility defined in Section \ref{subsec:sym_cubical_categories}. In Proposition \ref{prop:carac_pseudo} we give an alternative characterisation of pseudo transfors. Lastly we prove that the notions of pseudo lax and oplax transfors coincide in Proposition \ref{prop:transfors_isomorphism}.

\begin{defn}\label{def:transfor}
We exhibited in Section \ref{sec:recall_cub_cat} a structure of \emph{cubical $\omega$-category} object in $\Cat{\omega}^{\op}$ on the family $\cube{n}^\G$. Applying the functor $\lambda$ gives the family $\cube{n}^\CC := \lambda(\cube{n}^\G)$ the structure of a cubical $\omega$-category object in $\CCat{\omega}^{\op}$.

Consequently, if $\CC$ and $\DD$ are cubical $\omega$-categories, then both the families (of sets) $\Lax(\CC,\DD)_n = \CCat{\omega}(\cube{n}^C \otimes \CC, \DD)$ and $\oplax(\CC,\DD)_n = \CCat{\omega}(\CC \otimes \cube{n}^C,\DD)$ come equipped with cubical $\omega$-category structures (where we denote by $\otimes$ the monoidal product on $\CCat{\omega}$ as defined in \cite{ABS02}).

We call an element $F \in \Lax(\CC,\DD)_n$ (resp. $F \in \oplax(\CC,\DD)_n$) an \emph{lax $n$-transfor} (resp. an \emph{oplax $n$-transfor}) from $\CC$ to $\DD$. Unfolding the definition of the monoidal product on $\CCat{\omega}$ as defined in \cite{ABS02}, Section 10, a lax $p$-transfor (resp. oplax $p$-transfor) is a family of maps $F_n : \CC_n \to \DD_{n+p}$ satisfying the equations \eqref{eq:Lax_boundary} to \eqref{eq:Lax_composition} (resp. \eqref{eq:Oplax_boundary} to \eqref{eq:Oplax_composition}).
\begin{multicols}{2}
\begin{equation}\label{eq:Lax_boundary}
\partial_{p+i}^\alpha F_n (A) = F_{n-1} (\partial_{i}^\alpha A) 
\end{equation}
\begin{equation}\label{eq:Lax_epsilon}
F_n (\epsilon_i A) =  \epsilon_{p+i} F_{n-1} (A) 
\end{equation}
\begin{equation}\label{eq:Lax_gamma}
F_n (\Gamma_i^\alpha A) = \Gamma_{p+i}^\alpha F_{n-1} (A) 
\end{equation}
\begin{equation}\label{eq:Lax_composition}
F_n (A \star_i B) = F_n(A) \star_{p+i} F_n(B) 
\end{equation}

\begin{equation}\label{eq:Oplax_boundary}
\partial_{i}^\alpha F_n (A) = F_{n-1} (\partial_{i}^\alpha A) 
\end{equation}
\begin{equation}
F_n (\epsilon_i A) =  \epsilon_{i} F_{n-1} (A) 
\end{equation}
\begin{equation}
F_n (\Gamma_i^\alpha A) = \Gamma_{i}^\alpha F_{n-1} (A)
\end{equation}
\begin{equation}\label{eq:Oplax_composition}
F_n (A \star_i B) = F_n(A) \star_{i} F_n(B) 
\end{equation}
\end{multicols}

Moreover, the cubical $\omega$-category structure on $\Lax(\CC,\DD)$ (resp. on $\oplax(\CC,\DD)$) is given by the equations \eqref{eq:boundary_of_lax} to \eqref{eq:composition_of_lax} (resp. \eqref{eq:boundary_of_oplax} to \eqref{eq:composition_of_oplax}).
\begin{multicols}{2}
\begin{equation}\label{eq:boundary_of_lax}
(\partial_i^\alpha F)_n (A) = \partial_i^\alpha (F_n (A))
\end{equation}
\begin{equation}
(\epsilon_i F)_n (A) = \epsilon_i (F_n(A))
\end{equation}
\begin{equation}
(\Gamma_i^\alpha F)_n(A) = \Gamma_i^\alpha (F_n(A))
\end{equation}
\begin{equation}\label{eq:composition_of_lax}
(F \star_i G)_n (A) = F_n(A) \star_i G_n(A)
\end{equation}

\begin{equation}\label{eq:boundary_of_oplax}
(\partial_i^\alpha F)_n (A) = \partial_{n+i}^\alpha (F_n (A))
\end{equation}
\begin{equation}
(\epsilon_i F)_n (A) = \epsilon_{n+i} (F_n(A))
\end{equation}
\begin{equation}
(\Gamma_i^\alpha F)_n(A) = \Gamma_{n+i}^\alpha (F_n(A))
\end{equation}
\begin{equation}\label{eq:composition_of_oplax}
(F \star_i G)_n (A) = F_n(A) \star_{n+i} G_n(A)
\end{equation}
\end{multicols}
\end{defn}

The following Proposition is a consequence of \cite{ABS02}, Section 10.
\begin{prop}
Let $\CC$ be a cubical $\omega$-category. The functors $(\_ \otimes \CC)$ and $(\CC \otimes \_)$ are respectively left-adjoint to the functors $\Lax(\CC,\_)$ and $\oplax(\CC,\_)$. This implies that $\CCat{\omega}$ is a biclosed monoidal category.
\end{prop}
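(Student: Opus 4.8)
The plan is to deduce the statement from two ingredients: the corepresentability of the $n$-cells functor by the standard cube $\cube n^\CC$, and the construction of the Crans--Gray tensor product on $\CCat{\omega}$ from \cite{ABS02}, Section~10. The lax/oplax dichotomy will emerge from which of the two variables one curries.

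First I would establish a corepresentability lemma: for every cubical $\omega$-category $\mathcal E$ there is a bijection $\CCat{\omega}(\cube n^\CC,\mathcal E)\cong\mathcal E_n$, natural in $\mathcal E$ and compatible with the full cubical $\omega$-category structure. Since $\cube n^\CC=\lambda(\cube n^\G)$ and $\lambda$ is left adjoint to $\gamma$ (they form an equivalence), one gets $\CCat{\omega}(\lambda(\cube n^\G),\mathcal E)\cong\Cat{\omega}(\cube n^\G,\gamma\mathcal E)=(\lambda\gamma\mathcal E)_n\cong\mathcal E_n$; tracing this isomorphism, the co-cubical structure maps $\check\partial_i^\alpha$, $\check\epsilon_i$, $\check\Gamma_i^\alpha$ making the family $\cube n^\CC$ a cubical $\omega$-category object in $\CCat{\omega}^{\op}$ are sent to the corresponding face, degeneracy and connection operations of $\mathcal E$. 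Moreover, being an equivalence, $\lambda$ preserves the pushout \eqref{eq:diagram_for_Rectn}, so $\Rect{(n,i)}^\CC:=\lambda(\Rect{(n,i)}^\G)$ is the pushout $\cube n^\CC\sqcup_{\cube{(n-1)}^\CC}\cube n^\CC$ in $\CCat{\omega}$; hence $\CCat{\omega}(\Rect{(n,i)}^\CC,\mathcal E)$ is the set of $i$-composable pairs of $n$-cells of $\mathcal E$, with $\star_i$ induced by $\check\star_i$. In particular a morphism $\mathcal E\to\mathcal F$ in $\CCat{\omega}$ is precisely a family of maps $\mathcal E_n\to\mathcal F_n$ commuting with all faces, degeneracies, connections and compositions.

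Next I would set up the bijection for $(\_\otimes\CC)\dashv\Lax(\CC,\_)$. By the construction of $\otimes$ in \cite{ABS02}, a morphism $\mathcal E\otimes\CC\to\DD$ amounts to a \emph{bimorphism} $\mathcal E\times\CC\to\DD$, that is, a family of maps $\phi_{p,q}\colon\mathcal E_p\times\CC_q\to\DD_{p+q}$ compatible, separately in each variable, with all face, degeneracy, connection and composition operations. Currying $\phi$ in its second variable gives, for each $A\in\mathcal E_p$, a family $\phi(A,-)\colon\CC_q\to\DD_{p+q}$: compatibility of $\phi$ in the $\CC$-variable is exactly the list of equations \eqref{eq:Lax_boundary}--\eqref{eq:Lax_composition} asserting that $\phi(A,-)$ is a lax $p$-transfor, while compatibility in the $\mathcal E$-variable is exactly the assertion that $A\mapsto\phi(A,-)$ is a morphism of cubical $\omega$-categories $\mathcal E\to\Lax(\CC,\DD)$ for the structure \eqref{eq:boundary_of_lax}--\eqref{eq:composition_of_lax} (the face operations of $\Lax(\CC,\DD)$ being the unshifted ones on $\DD$, since the $\mathcal E$-factor sits on the left). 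This yields a bijection $\CCat{\omega}(\mathcal E\otimes\CC,\DD)\cong\CCat{\omega}(\mathcal E,\Lax(\CC,\DD))$, natural in $\mathcal E$ by precomposition and in $\DD$ by postcomposition; under the corepresentability lemma it sends $g$ to the morphism whose $n$-th component takes $A\in\mathcal E_n$, viewed as $\bar A\colon\cube n^\CC\to\mathcal E$, to $g\circ(\bar A\otimes\CC)\in\CCat{\omega}(\cube n^\CC\otimes\CC,\DD)=\Lax(\CC,\DD)_n$.

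The case $(\CC\otimes\_)\dashv\oplax(\CC,\_)$ is obtained the same way, currying the bimorphism $\CC\times\mathcal F\to\DD$ in its \emph{first} variable instead: for each $B\in\mathcal F_q$, $\psi(-,B)\colon\CC_p\to\DD_{p+q}$ is now an oplax $q$-transfor (its $\CC$-side face operations being unshifted, equations \eqref{eq:Oplax_boundary}--\eqref{eq:Oplax_composition}), and $B\mapsto\psi(-,B)$ is a morphism $\mathcal F\to\oplax(\CC,\DD)$ for the structure \eqref{eq:boundary_of_oplax}--\eqref{eq:composition_of_oplax}, where the shift by $p$ now appears on the $\mathcal F$/$\oplax$ side because the $\mathcal F$-factor is on the right. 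Having thus exhibited a right adjoint to $(\_\otimes\CC)$ and to $(\CC\otimes\_)$ for every $\CC$, and $(\CCat{\omega},\otimes)$ being monoidal by \cite{ABS02}, we conclude it is biclosed. I expect the only genuine obstacle to be bookkeeping: one must verify, operation by operation, that the combinatorial description of $\otimes$ (equivalently, of bimorphisms) and of the two transfor structures from \cite{ABS02} match the two curryings, taking particular care of the index shifts --- which is exactly the point that distinguishes the lax from the oplax internal hom.
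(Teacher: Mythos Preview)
The paper does not actually prove this proposition: it simply records it as ``a consequence of \cite{ABS02}, Section 10'' with no further argument. Your proposal, by contrast, sketches an actual proof --- corepresenting $n$-cells by $\cube n^\CC$, identifying morphisms out of $\mathcal E\otimes\CC$ with bimorphisms, and currying in each variable to recover the lax and oplax internal homs. This is precisely the content of the cited section of \cite{ABS02}, so your reconstruction is correct and faithful to the intended argument; you have merely unpacked what the paper leaves as a black-box citation. The only minor quibble is that $\lambda$ as defined in the paper is the nerve-type functor $\Cat{\omega}\to\CCat{\omega}$, so it is most naturally a \emph{right} adjoint to $\gamma$; but since the pair is an equivalence this makes no difference to your corepresentability computation.
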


\begin{defn}
Let $n,m \geq 0$ be integers. We denote by $\rho_{n,m} \in S_{n+m}$ the following permutations:
\[
i \cdot \rho_{n,m} := \begin{cases}
i+n & i \leq n \\
i-n & i > n
\end{cases}
\]

Let $\CC$ and $\DD$ be cubical $\omega$-categories. We say that a lax $p$-transfor $F : \CC \to \DD$ is \emph{pseudo} if for all $A \in \CC_n$, $F(A)$ is $\rho_{n,p}$-invertible. We say that an oplax $p$-transfor $F : \CC \to \DD$ is \emph{pseudo} if for all $A \in \CC_n$, $F(A)$ is $\rho_{p,n}$-invertible.
\end{defn}

\begin{prop}\label{prop:carac_pseudo}
Let $\CC$ and $\DD$ be cubical $\omega$-categories, and $F : \CC \to \DD$ a lax $p$-transfor (resp. an oplax $p$-transfor). Then $F$ is pseudo if and only if:
\begin{itemize}
\item Either $p = 0$,
\item Or $p > 0$, for all $n > 0$ and all $A \in \CC_n$, $F(A)$ is invertible, and for all $1 \leq i \leq p$, $\partial_i^\alpha F$ is pseudo.
\end{itemize}

Moreover, if $F$ is pseudo, then so are $\Gamma_i^\alpha F$ ($1 \leq i \leq p$), $\epsilon_i F$ ($1 \leq i \leq p+1$) and, if $G$ is a pseudo lax $p$-transfor (resp. pseudo oplax $p$-transfor) then $F \star_i G$ (if defined) is also pseudo, for $1 \leq i \leq p$. 
\end{prop}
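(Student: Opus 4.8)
The plan is to reduce everything to the $\sigma$-invertibility machinery from Proposition~\ref{prop:sigma_inv} and Lemma~\ref{lem:sigma_and_others}, treating the lax case in detail and noting that the oplax case is symmetric (swap the roles of $n$ and $p$ in the permutation $\rho_{n,p}$). The point of the permutation $\rho_{n,p}$ is that it is precisely the permutation that moves the ``$\CC$-directions'' $1,\dots,n$ past the ``$\cube{n}^\CC$-directions'', and $\rho_{n,p}$ has a canonical reduced word $w_{n,p}$; since $p$ is a fixed global parameter, $\partial_i \rho_{n,p}$ for $1 \le i \le p$ is again a permutation of the same ``shift'' shape (namely $\rho_{n,p-1}$, up to reindexing), which is what makes the recursion close.

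First I would establish the characterisation. For the forward direction, assume $F$ is pseudo and $p > 0$. For $A \in \CC_n$ with $n > 0$, $F(A)$ is $\rho_{n,p}$-invertible, and since $\rho_{n,p} \neq 1$, Proposition~\ref{prop:u_inv_in_sells} (via Proposition~\ref{prop:sigma_inv}) gives that $F(A)$ is invertible; that handles the first bullet. For the second, fix $1 \le i \le p$ and $A \in \CC_n$; then $(\partial_i^\alpha F)_n(A) = \partial_i^\alpha(F_{n}(A))$ by \eqref{eq:boundary_of_lax}, and $F_n(A)$ is $\rho_{n,p}$-invertible, so by \eqref{eq:sigma_and_faces} (Proposition~\ref{prop:sigma_inv}) $\partial_i^\alpha(F_n(A))$ is $\partial_i \rho_{n,p}$-invertible. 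The remaining task is the bookkeeping identity $\partial_i \rho_{n,p} = \rho_{n,p-1}$ for $1 \le i \le p$, which follows from the defining formula \eqref{eq:image_by_partial} for $\partial_i \sigma$ by a direct case check on $j \lessgtr$ the relevant thresholds; combined with the fact that $\partial_i^\alpha F$ is an $(p-1)$-transfor (which is part of the cubical $\omega$-category structure on $\Lax(\CC,\DD)$, since $\partial_i^\alpha : \Lax(\CC,\DD)_p \to \Lax(\CC,\DD)_{p-1}$), this says exactly that $\partial_i^\alpha F$ is pseudo. Conversely, given the two bullets, I run the argument backwards: for $A \in \CC_n$ with $n = 0$ there is nothing to check ($\rho_{0,p}$ is an identity-type permutation, or one uses that $0$-cells are always $1$-invertible); for $n > 0$, $F(A)$ is invertible by hypothesis, and $\bm\partial F(A)$ is $\rho_{n,p}$-invertible because each face $\partial_i^\alpha F(A) = (\partial_i^\alpha F)(A)$ is $\partial_i\rho_{n,p} = \rho_{n,p-1}$-invertible by the inductive hypothesis that $\partial_i^\alpha F$ is pseudo (using Proposition~\ref{prop:sigma_inv}'s shell criterion in the reverse direction), so Proposition~\ref{prop:u_inv_in_sells} concludes that $F(A)$ is $\rho_{n,p}$-invertible.

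For the closure statements, I would argue by the same induction on $p$ using the characterisation just proved. For $\epsilon_i F$ with $1 \le i \le p+1$: given $A \in \CC_n$, $(\epsilon_i F)_n(A) = \epsilon_i(F_n(A))$, and since $F_n(A)$ is $\rho_{n,p}$-invertible (when $n>0$; when $n = 0$ use invertibility of the resulting cell directly), Lemma~\ref{lem:sigma_and_others} shows $\epsilon_i(F_n(A))$ is $\rho_{n,p+1}$-invertible provided the index arithmetic $(i \cdot \rho_{n,p+1}^-) $ and $\partial_{\,\cdot\,}\rho_{n,p+1} = \rho_{n,p}$ works out — this is again a routine check against the definition of $\rho$. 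The cases $\Gamma_i^\alpha F$ ($1 \le i \le p$) and $F \star_i G$ ($1 \le i \le p$) are handled identically, using the $\Gamma$-clause of Lemma~\ref{lem:sigma_and_others} for the former (after checking the adjacency condition $(i+1)\cdot\sigma^- = i\cdot\sigma^- + 1$, which holds here because $i \le p$ keeps both indices inside the ``$\cube{n}$-block'') and Proposition~\ref{prop:sigma_inv} together with the fact that $\sigma$-invertible cells are closed under $\star_j$ (which one extracts from Proposition~\ref{prop:Ti_inv_and_other} propagated through the word, exactly as $\sigma$-invertibility was built up) for the latter.

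The main obstacle is not conceptual but combinatorial: correctly computing $\partial_i \rho_{n,p}$, $\rho_{n,p}^-$, and the composites $i \cdot \rho_{n,p}^{\pm}$ appearing in the hypotheses of Lemma~\ref{lem:sigma_and_others}, and verifying that for the relevant ranges of $i$ these all stay in the block where the clean formulas apply (in particular the adjacency hypothesis for the $\Gamma$-clause). These are all finite case distinctions driven by the piecewise definition of $\rho_{n,m}$ and the maps $(\_)^i$, $(\_)_i$, so once the right normal form for $\rho$ is fixed they reduce to inequalities between indices; I would state the key identity $\partial_i \rho_{n,p} = \rho_{n,p-1}$ (for $1 \le i \le p$) as a small sub-lemma inside the proof and verify it from \eqref{eq:image_by_partial}, then let the rest follow formally. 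As this is entirely analogous to the verifications already carried out for $T_i$ and $\sigma$, I would not reproduce the calculations in full detail.
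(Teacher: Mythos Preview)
Your overall strategy matches the paper's, but there is a genuine gap in the converse direction of the characterisation. When you argue that $\bm\partial F_n(A)$ is $\rho_{n,p}$-invertible, you only treat the faces $\partial_i^\alpha F_n(A) = (\partial_i^\alpha F)_n(A)$ for $1 \le i \le p$, which you handle via the hypothesis that $\partial_i^\alpha F$ is pseudo. But $F_n(A)$ is an $(n+p)$-cell, and the shell criterion in Proposition~\ref{prop:sigma_inv} requires you to check \emph{all} $n+p$ faces. The remaining faces, those in directions $p+1,\dots,p+n$, are governed by \eqref{eq:Lax_boundary}: $\partial_{p+k}^\alpha F_n(A) = F_{n-1}(\partial_k^\alpha A)$ for $1 \le k \le n$. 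These are images under $F_{n-1}$ of $(n-1)$-cells of $\CC$, and nothing in your two bullets says anything about them directly. The paper closes this gap by running an induction on $n$ (not only on $p$): assuming the $\rho_{n-1,p}$-invertibility of $F_{n-1}(B)$ for all $B \in \CC_{n-1}$, one checks that $\partial_j\rho_{n,p} = \rho_{n-1,p}$ for the corresponding range of $j$, and then the shell criterion applies. Without this second half of the shell, your converse does not go through.

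A smaller point: in both directions you write that $\partial_i^\alpha F_n(A)$ is $\partial_i\rho_{n,p}$-invertible, citing \eqref{eq:sigma_and_faces}. That equation concerns the faces of $\sigma\cdot A$, not of $A$; the shell criterion in Proposition~\ref{prop:sigma_inv} says $\partial_{j\cdot\sigma}^\alpha A$ is $\partial_j\sigma$-invertible, so $\partial_i^\alpha F_n(A)$ is $\partial_{i\cdot\rho_{n,p}^{-1}}\rho_{n,p}$-invertible, not $\partial_i\rho_{n,p}$-invertible. This is exactly the kind of index shift you flagged as the main obstacle, and it matters: the identity you need is $\partial_{i\cdot\rho_{n,p}^{-1}}\rho_{n,p} = \rho_{n,p-1}$ for $1 \le i \le p$, and the analogous one with $\rho_{n-1,p}$ for the other block. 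For the closure statements, note also that Lemma~\ref{lem:sigma_and_others} is stated in the direction ``if $\epsilon_i A$ is $\sigma$-invertible then \dots'', so it does not by itself give you $\rho_{n,p+1}$-invertibility of $\epsilon_i(F_n(A))$ from $\rho_{n,p}$-invertibility of $F_n(A)$. The paper instead uses the characterisation you just proved together with induction on $p$: $(\epsilon_i F)_n(A) = \epsilon_i(F_n(A))$ and $(\Gamma_i^\alpha F)_n(A) = \Gamma_i^\alpha(F_n(A))$ are thin, hence invertible, and $(F\star_i G)_n(A) = F_n(A)\star_i G_n(A)$ is a composite of invertible cells, hence invertible by Lemma~\ref{lem:thin_and_composite_invertible}; the faces $\partial_j^\alpha(\epsilon_i F)$, $\partial_j^\alpha(\Gamma_i^\alpha F)$, $\partial_j^\alpha(F\star_i G)$ are then handled by the cubical identities and the inductive hypothesis.
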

\begin{proof}
Let us prove the result for pseudo lax $p$-transfors, the case of pseudo oplax $p$-transfors being similar. If $p = 0$, then for all $n$, $\rho_{n,p} = 1$. Since any cell in $\DD$ is $1$-invertible, any lax $0$-transfor is pseudo.

Suppose now $p > 0$. Let $F \in \Lax(\CC,\DD)_p$, and suppose $F$ is pseudo.  Let $n > 0$ and $A \in \CC_n$. Then $\rho_{n,p} \neq 1$, and by Proposition \ref{prop:sigma_inv}, $F_n(A)$ is invertible. Moreover, for $1 \leq i \leq p$, $(\partial_i^\alpha F)_n(A) = \partial_{(p+i) \cdot \rho_{n,p}}^\alpha (F_n(A))$ is $\partial_{p+i} \rho_{n,p}$-invertible. Since $\partial_{p+i} \rho_{n,p} = \rho_{n,p-1}$, we just proved that for all $A \in \CC_n$, $(\partial^\alpha_i F)_n(A)$ is $\rho_{n,p-1}$-invertible, and so $\partial^\alpha_i F$ is pseudo.

Reciprocally, suppose that for all $n > 0$, $F_n(A)$ is invertible, and for all $1 \leq i \leq p$, $\partial_i^\alpha F$ is pseudo. We reason by induction on $n$ to show that for all $A  \in \CC_n$, $F_n(A)$ is $\rho_{n,p}$-invertible. If $n = 0$, $\rho_{n,p} = 1$ and $F_n(A)$ is $\rho_{n,p}$-invertible. If $n \geq 1$, then $F(A)$ is invertible and for all $1 \leq i \leq p$, $\partial_{(i+n) \cdot \rho_{n,p}}^\alpha (F_n(A)) = (\partial_i^\alpha F)(A) $ is $\rho_{n,p-1}$-invertible, while for all $1 \leq i \leq n$, $\partial_{i \cdot \rho_{n,p}}^\alpha (F_n(A)) = F_{n-1}(\partial_i^\alpha A)$ is $\rho_{n-1,p}$-invertible by induction. In conclusion, $F_n(A)$ is invertible, and for all $1 \leq i \leq p+n$, $\partial_i^\alpha (F_n(A))$ is $\partial_i \rho_{n,p}$-invertible. By Proposition \ref{prop:sigma_inv}, $F_n(A)$ is $\rho_{n,p}$-invertible.

We reason by induction on $p$ to show that, for any pseudo lax $p$-transfor.  $F$,  $\epsilon_i F$ and $\Gamma_i^\alpha F$ are pseudo. Let $A \in \CC_n$. By equations \eqref{eq:Lax_epsilon} and \eqref{eq:Lax_gamma}, $(\epsilon_i F) (A)$ and $(\Gamma_i^\alpha F)(A)$ are thin cells, and so in particular are invertible. Moreover the cubical $\omega$-category structure on $\Lax(\CC,\DD)$ show that for all $j$, we have:
\[
\partial_i^\alpha \epsilon_j F = 
\begin{cases}
\epsilon_{j_i} \partial_{i_j}^\alpha F & i \neq j \\
F & i = j
\end{cases}
\qquad 
\partial_i^\alpha \Gamma_j^\beta F = 
\begin{cases}
\Gamma_{j_i}^\beta \partial^\alpha_{i_j} F & i \neq j,j+1 \\
F & i = j,j+1 \text{ and } \alpha = \beta \\
\epsilon_j \partial_j^\alpha F & i = j,j+1 \text{ and } \alpha = - \beta
\end{cases}
\]
Using what we proved previously, $\partial_k^\alpha F$ is pseudo for all $k$, so by induction, $\partial_j^\alpha \epsilon_i F$ and $\partial_j^\beta \Gamma_i^\alpha F$ are always pseudo. Applying the criterion that we proved previously for a $p$-transfor to be pseudo, $\epsilon_i F$ and $\Gamma_i^\alpha F$ are pseudo.

Finally, we reason by induction on $p$ to show that for any two pseudo lax $p$-transfors $F$ and $G$, $F \star_i G$ is pseudo (if it is defined). Since any lax $0$-transfor is pseudo, it is true if $p=0$. Take now $p > 0$, and $A \in \CC_n$, for some $n > 0$. Then $F(A)$ and $G(A)$ are invertible, and so is $(F\star_i G)_n (A) = F_n(A) \star_i G_n(A)$ by Lemma \ref{lem:thin_and_composite_invertible}. Moreover, using the cubical $\omega$-category structure on $\Lax(\CC,\DD)$, we have:
\[
\partial_i^\alpha (F \star_j G) = 
\begin{cases}
\partial_i^\alpha F \star_{j_i} \partial_i^\alpha G & i \neq j \\
\partial_i^- F & i = j \text{ and } \alpha = - \\
\partial_i^+ G & i = j \text{ and } \alpha = +
\end{cases}
\]
So by the induction hypothesis, $\partial_j^\alpha (F \star_i G)$ is pseudo for all $j$, and therefore $F \star_i G$ is pseudo.
\end{proof}

\begin{defn}
Let $\CC$ and $\DD$ be cubical $\omega$-categories. We denote by $\plax(\CC,\DD)$ (resp. $\poplax(\CC,\DD)$) the pseudo lax transfors  (resp. the pseudo oplax transfors) from $\CC$ to $\DD$. By Proposition \ref{prop:carac_pseudo}, $\plax(\CC,\DD)$ and $\poplax(\CC,\DD)$ are cubical $\omega$-categories.
\end{defn}

\begin{prop}\label{prop:transfors_isomorphism}
For all cubical $\omega$-categories $\CC$ and $\DD$, the cubical $\omega$-categories $\plax(\CC,\DD)$ and $\poplax(\CC,\DD)$ are isomorphic.
\end{prop}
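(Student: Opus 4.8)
The plan is to produce an explicit isomorphism $\Theta : \plax(\CC,\DD) \to \poplax(\CC,\DD)$ of cubical $\omega$-categories, built out of the invertibility machinery developed in Section~\ref{subsec:sym_cubical_categories}. Recall that a pseudo lax $p$-transfor $F$ sends $A \in \CC_n$ to a $\rho_{n,p}$-invertible cell $F_n(A) \in \DD_{n+p}$, while a pseudo oplax $p$-transfor wants the cell to be $\rho_{p,n}$-invertible; since $\rho_{p,n} = \rho_{n,p}^{-1}$ (both are the block-transposition permutation in $S_{n+p}$), the natural candidate is to set $\Theta(F)_n(A) := \rho_{p,n} \cdot F_n(A)$, i.e.\ apply the action of the inverse block transposition. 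One must first check that $\Theta(F)$ so defined is a genuine oplax $p$-transfor, that is, it satisfies equations~\eqref{eq:Oplax_boundary}–\eqref{eq:Oplax_composition}. This is where Lemma~\ref{lem:sigma_and_others} and Proposition~\ref{prop:sigma_inv} do the work: equation~\eqref{eq:sigma_and_faces} converts $\partial_i^\alpha(\rho_{p,n}\cdot F_n(A))$ into $\partial_i\rho_{p,n} \cdot \partial_{i\cdot\rho_{p,n}}^\alpha F_n(A)$, and using $\partial_i^\alpha F_n(A) = F_{n-1}(\partial_{j}^\alpha A)$ together with the identity $\partial_i \rho_{p,n} = \rho_{p,n-1}$ (resp.\ $\rho_{p-1,n}$ depending on whether $i$ is in the $\CC$-block or the $\cube{p}$-block) one recovers exactly the oplax face axiom. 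The axioms for $\epsilon_i$ and $\Gamma_i^\alpha$ follow the same way from Lemma~\ref{lem:sigma_and_others}, and the one for $\star_i$ from the last clause of Proposition~\ref{prop:sigma_inv} (compatibility of $\sigma$-invertibility with composition, which in turn rests on Lemma~\ref{lem:thin_and_composite_invertible} and Proposition~\ref{prop:Ti_inv_and_other}).

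Next I would verify that $\Theta$ is compatible with the cubical $\omega$-category structures: comparing equations~\eqref{eq:boundary_of_lax}–\eqref{eq:composition_of_lax} with \eqref{eq:boundary_of_oplax}–\eqref{eq:composition_of_oplax}, the only discrepancy is an index shift by $n$ in the direction labels, which is precisely accounted for by the block transposition $\rho_{p,n}$. Here one needs that $\Theta$ applied to $\partial_i^\alpha F$, $\epsilon_i F$, $\Gamma_i^\alpha F$, $F\star_i G$ commutes with the corresponding operations on transfors; since all of these are defined pointwise in $\DD$ and $\Theta$ is itself defined pointwise by a permutation action, this reduces again to Lemma~\ref{lem:sigma_and_others} and Proposition~\ref{prop:sigma_inv}. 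Finally, to see $\Theta$ is an isomorphism, I would exhibit the inverse $\Theta' : \poplax(\CC,\DD) \to \plax(\CC,\DD)$ by the symmetric formula $\Theta'(F)_n(A) := \rho_{n,p}\cdot F_n(A)$, and check $\Theta'\Theta = \id$ and $\Theta\Theta' = \id$ using the relation $\rho_{n,p}\cdot(\rho_{p,n}\cdot C) = (\rho_{n,p}\rho_{p,n})\cdot C = 1\cdot C = C$, which is valid because $\rho_{n,p}$ and $\rho_{p,n}$ are mutually inverse minimal-length representatives and $\sigma$-invertibility composes correctly along such products (Lemma~\ref{lem:relations_respect_invert}, Theorem~\ref{thm:representability_of_permutations}, and Proposition~\ref{prop:sigma_inv}).

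The main obstacle I anticipate is bookkeeping the index arithmetic: one must carefully distinguish the two blocks of directions $\{1,\dots,n\}$ (the ``$\CC$-directions'') and $\{n+1,\dots,n+p\}$ (the ``transfor directions'' in the lax case) versus $\{1,\dots,p\}$ and $\{p+1,\dots,p+n\}$ in the oplax case, and confirm that $\partial_i \rho_{n,p}$ really equals $\rho_{n-1,p}$ or $\rho_{n,p-1}$ according to which block $i$ falls in — this uses the explicit description~\eqref{eq:image_by_partial} of $\partial_i\sigma$ and the defining formula for $\rho_{n,m}$. A secondary subtlety is making sure that ``$\rho_{p,n}\cdot F_n(A)$ is defined'', i.e.\ that $F_n(A)$ really is $\rho_{p,n}$-invertible whenever $F$ is pseudo lax; but this is immediate since $F_n(A)$ is $\rho_{n,p}$-invertible and, being invertible with a $\rho_{p,n}$-invertible shell (by downward induction on dimension using Proposition~\ref{prop:sigma_inv}), it is $\rho_{p,n}$-invertible by the last clause of Proposition~\ref{prop:sigma_inv}. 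Once the index conventions are pinned down, every verification is a direct application of the lemmas already in hand, so no genuinely new idea is required.
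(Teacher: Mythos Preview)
Your overall strategy is exactly the paper's: define the map from $\plax$ to $\poplax$ pointwise by a block-permutation action, verify the oplax transfor axioms using Proposition~\ref{prop:sigma_inv} and Lemma~\ref{lem:sigma_and_others}, check compatibility with the cubical structure, and exhibit the inverse by the same recipe with the inverse permutation. So structurally you are on target.

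However, you have the two permutations swapped. The correct functor sends a pseudo lax $p$-transfor $F$ to $G_n(A) = \rho_{n,p}\cdot F_n(A)$, not $\rho_{p,n}\cdot F_n(A)$. The point is that the face formula \eqref{eq:sigma_and_faces} gives
\[
\partial_i^\alpha(\sigma\cdot F_n(A)) \;=\; \partial_i\sigma \cdot \partial_{i\cdot\sigma}^\alpha F_n(A),
\]
and for the oplax axiom \eqref{eq:Oplax_boundary} (with $1\le i\le n$) to match the lax axiom \eqref{eq:Lax_boundary} one needs $i\cdot\sigma = p+i$, which is exactly $\sigma=\rho_{n,p}$. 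With your choice $\sigma=\rho_{p,n}$ one gets $i\cdot\rho_{p,n}=i+n$ (for $i\le p$) or $i-p$ (for $i>p$), so $\partial_1^\alpha G_n(A)$ lands on $\partial_{n+1}^\alpha F_n(A)$ rather than $\partial_{p+1}^\alpha F_n(A)$, and the oplax face axiom fails whenever $n\neq p$. (You also invoked ``$\partial_i^\alpha F_n(A)=F_{n-1}(\partial_j^\alpha A)$'' in the verification, but that is the \emph{oplax} axiom; the hypothesis on $F$ is the lax one \eqref{eq:Lax_boundary}.)

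Once you swap the roles of $\rho_{n,p}$ and $\rho_{p,n}$, two things improve at once: the index arithmetic becomes clean ($\partial_i\rho_{n,p}=\rho_{n-1,p}$ for $i\le n$ and $\rho_{n,p-1}$ for $i>n$), and your ``secondary subtlety'' evaporates, since $\rho_{n,p}\cdot F_n(A)$ is defined directly from the pseudo-lax hypothesis without any extra induction. The resulting cell is then automatically $\rho_{p,n}$-invertible (its $\rho_{p,n}$-inverse is $F_n(A)$ itself), as required for the image to be pseudo oplax. With this correction your proof is precisely the paper's.
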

\begin{proof}
Let $F \in \plax(\CC,\DD)$, and define maps $G_n : \CC_n \to \DD_{n+p}$ as: $G_n(A) = \rho_{n,p} \cdot F_n(A)$. Let us show that $G$ is an oplax $p$-transfor (using formulas from Lemma \ref{lem:sigma_and_others}):
\begin{align*}
\partial_i^\alpha G_n(A) 
& = \partial_i^\alpha (\rho_{n,p} \cdot F_n(A)) 
= \partial_i \rho_{n,p} \cdot \partial_{i \cdot \rho_{n,p}}^\alpha F_n(A)
\\ & = \rho_{n-1,p} \cdot \partial_{i+p} F_n(A)
= \rho_{n-1,p} \cdot F_{n-1} (\partial_i^\alpha(A))
 = G_{n-1}(\partial_i^\alpha(A))
\end{align*}
\begin{align*}
G_{n}(\epsilon_i A) 
& = \rho_{n,p} \cdot F_{n}(\epsilon_i A) 
= \rho_{n,p} \cdot \epsilon_{p+i} F_{n-1}(A)
\\ & = \epsilon_{(p+i) \cdot \rho_{p,n}} (\partial_{(p+i) \cdot \rho_{p,n}} \rho_{n,p} \cdot F_{n-1}(A))
\\ & = \epsilon_i (\partial_{i} \rho_{n,p} \cdot F_{n-1}(A)) 
= \epsilon_i (\rho_{n-1,p} \cdot F_{n-1}(A)) 
 = \epsilon_i G_{n-1}(A)
\end{align*}
\begin{align*}
G_n(\Gamma_i^\alpha A)
& = \rho_{n,p} \cdot F_n (\Gamma_i^\alpha A) 
= \rho_{n,p} \cdot \Gamma_{p+i}^\alpha F_{n-1} (A) 
\\ & = \Gamma_{(p+i) \cdot \rho_{p,n}}^\alpha (\partial_{(p+i) \cdot \rho_{p,n}} \rho_{n,p} \cdot F_{n-1}(A))
\\ & = \Gamma_i^\alpha (\partial_i \rho_{n,p} \cdot F_{n-1} (A))
= \Gamma_i^\alpha (\rho_{n-1,p} \cdot F_{n-1} (A))
 = \Gamma_i^\alpha G_{n-1}(A)
\end{align*}
\begin{align*}
G_n(A \star_i B) 
& = \rho_{n,p} \cdot F_n(A \star_i B) 
 = \rho_{n,p} \cdot (F_n (A) \star_{p+i} F_n(B))
\\ & =  (\rho_{n,p} \cdot F_n(A)) \star_{(p+i) \cdot \rho_{p,n}} (\rho_{n,p} \cdot F_n(B))
= G_n(A) \star_i G_n(B) 
\end{align*}

We denote by $\mathbf P (F)$ this oplax $p$-transfor. For $A \in \CC_n$, $\rho \cdot F(A) = \rho_{n,p} \cdot F(A)$ is $\rho_{p,n}$-invertible (with $\rho_{p,n}$-inverse $A$), and so $\mathbf P(F)$ is actually pseudo. Let us show that $\mathbf P$ is functorial. Let $F \in \plax(\CC,\DD)_p$:
\begin{align*}
(\partial_i^\alpha (\mathbf{P}(F)))_n (A) 
& = \partial_{n+i}^\alpha ((\mathbf{P}(F))_n(A))
 = \partial_{n+i}^\alpha (\rho_{n,p} \cdot F(A))
 \\ & =
 \partial_{n+i} \rho_{n,p} \cdot \partial_{(n+i) \cdot \rho_{n,p}}^\alpha F(A) 
\\ & = \rho_{n,p-1} \cdot \partial_i^\alpha F(A)
 = \mathbf{P}(\partial_i^\alpha F) (A)
\end{align*}
\begin{align*}
(\mathbf{P}(\Gamma_i^\alpha F))_n(A) 
& = \rho_{n,p} \cdot ((\Gamma_i^\alpha F)_n (A))
= \rho_{n,p} \cdot \Gamma_i^\alpha (F_n(A))
\\ & = 
\Gamma_{i \cdot \rho_{p,n}}^\alpha (\partial_{i \cdot \rho_{p,n}} \rho_{n,p} \cdot F_n(A))
\\ & =
\Gamma_{n+i}^\alpha (\partial_{p+i} \rho_{n,p} \cdot F_n(A) )
= 
\Gamma_{n+i}^\alpha (\rho_{n,p-1} \cdot F_n(A) )
= 
(\Gamma_i^\alpha (\mathbf{P}(F)))_n(A)
\end{align*}
\begin{align*}
(\mathbf{P}(\epsilon_i F))_n(A) 
& = \rho_{n,p} \cdot ((\epsilon_i F)_n (A))
= \rho_{n,p} \cdot \epsilon_i (F_n(A))
\\ & = 
\epsilon_{i \cdot \rho_{p,n}} (\partial_{i \cdot \rho_{p,n}} \rho_{n,p} \cdot F_n(A))
\\ & =
\epsilon_{n+i} (\partial_{p+i} \rho_{n,p} \cdot F_n(A) )
= 
\epsilon_{n+i} (\rho_{n,p-1} \cdot F_n(A) )
= 
(\epsilon_i (\mathbf{P}(F)))_n(A)
\end{align*}
\begin{align*}
(\mathbf{P}(F \star_i G))_n(A) 
& =
\rho_{n,p} \cdot ((F \star_i G)_n(A)) 
= \rho_{n,p} \cdot (F_n(A) \star_i G_n(A))
\\ & = (\rho_{n,p} \cdot F_n(A)) \star_{i \cdot \rho_{p,n}} (\rho_{n,p} \cdot G_n(A))
\\ & 
= \mathbf{P}(F)_n(A) \star_i \mathbf{P}(G)_n(A)
= (\mathbf{P}(F) \star_{i} \mathbf{P}(G))_n(A)
\end{align*}

So $\mathbf{P}$ is a functor from $\plax(\CC,\DD)$ to $\poplax(\CC,\DD)$. Reciprocally, if $F$ is a pseudo oplax $p$-transfor, we define a family of maps $\mathbf R(F)_n : \CC_n \to \DD_{n+p}$ by setting $\mathbf R(F)_n(A) = \rho_{p,n} \cdot F_n(A)$. As we did for $\mathbf{P}$, we show that $\mathbf R$ induces a functor from $\poplax(\CC,\DD)$ to $\plax(\CC,\DD)$. Finally, since $\rho_{p,n} \cdot \rho_{n,p} = 1$, $\mathbf{P}$ and $\mathbf R$ are inverses of each other, and $\plax(\CC,\DD)$ is isomorphic to $\poplax(\CC,\DD)$.
\end{proof}

\bibliography{omega_cub_cat}
\bibliographystyle{plain}

\end{document}